\documentclass[10pt,reqno]{amsart}
\usepackage[ams]{optional}

\usepackage{dsfont,amsmath,amsfonts,amscd,amssymb,graphicx,mathrsfs,eufrak,enumitem,euscript,upgreek,colonequals}
\usepackage[all,arc,curve,color,frame]{xy}
\usepackage[usenames]{color}

\usepackage{pifont}
\newcommand{\cmark}{\ding{51}}%
\newcommand{\xmark}{\ding{55}}

\usepackage{oubraces}

\usepackage{setspace}
\usepackage{array,multirow,booktabs,longtable}

\setlist[enumerate]{format=\normalfont}

\usepackage[colorlinks,hyperfootnotes=false]{hyperref}

\setlength{\heavyrulewidth}{1.2pt}
\setlength{\abovetopsep}{4pt}

\setlength{\marginparwidth}{1in}
\newcommand{\marginparstretch}{0.6}
\let\oldmarginpar\marginpar
\renewcommand\marginpar[1]{\-\oldmarginpar[\framebox{\setstretch{\marginparstretch}\begin{minipage}{\marginparwidth}{\raggedleft\tiny #1}\end{minipage}}]{\framebox{\setstretch{\marginparstretch}\begin{minipage}{\marginparwidth}{\raggedright\tiny #1}\end{minipage}}}}

\usepackage[colorlinks]{hyperref}
\usepackage{tikz,mathrsfs}
\usepackage{tikz-3dplot}
\usetikzlibrary{shapes,calc,positioning}
\usetikzlibrary{calc,intersections}
\usetikzlibrary{arrows,decorations.pathmorphing,decorations.pathreplacing,positioning,shapes.geometric,shapes.misc,decorations.markings,decorations.fractals,calc,patterns}

\tikzset{
        DB/.style={circle,draw=black,circle,fill=white,inner sep=0pt, minimum size=5pt},
        DW/.style={circle,draw=black,fill=black,inner sep=0pt, minimum size=5pt},
        cvertex/.style={circle,draw=black,fill=white,inner sep=1pt,outer sep=3pt},
        vertex/.style={circle,fill=black,inner sep=1pt,outer sep=3pt},
        star/.style={circle,fill=yellow,inner sep=0.75pt,outer sep=0.75pt},
        tvertex/.style={inner sep=1pt,font=\scriptsize},
	pvertex/.style={circle,inner sep=1pt,outer sep=2pt,font=\scriptsize},
        gap/.style={inner sep=0.5pt,fill=white}}

\tikzstyle{mybox} = [draw=black, fill=blue!10, very thick,
    rectangle, rounded corners, inner sep=10pt, inner ysep=20pt]
\tikzstyle{boxtitle} =[fill=blue!50, text=white,rectangle,rounded corners]

\newcommand{\rotateRPY}[3]
{   \pgfmathsetmacro{\rollangle}{#1}
    \pgfmathsetmacro{\pitchangle}{#2}
    \pgfmathsetmacro{\yawangle}{#3}

    \pgfmathsetmacro{\newxx}{cos(\yawangle)*cos(\pitchangle)}
    \pgfmathsetmacro{\newxy}{sin(\yawangle)*cos(\pitchangle)}
    \pgfmathsetmacro{\newxz}{-sin(\pitchangle)}
    \path (\newxx,\newxy,\newxz);
    \pgfgetlastxy{\nxx}{\nxy};

    \pgfmathsetmacro{\newyx}{cos(\yawangle)*sin(\pitchangle)*sin(\rollangle)-sin(\yawangle)*cos(\rollangle)}
    \pgfmathsetmacro{\newyy}{sin(\yawangle)*sin(\pitchangle)*sin(\rollangle)+ cos(\yawangle)*cos(\rollangle)}
    \pgfmathsetmacro{\newyz}{cos(\pitchangle)*sin(\rollangle)}
    \path (\newyx,\newyy,\newyz);
    \pgfgetlastxy{\nyx}{\nyy};

    \pgfmathsetmacro{\newzx}{cos(\yawangle)*sin(\pitchangle)*cos(\rollangle)+ sin(\yawangle)*sin(\rollangle)}
    \pgfmathsetmacro{\newzy}{sin(\yawangle)*sin(\pitchangle)*cos(\rollangle)-cos(\yawangle)*sin(\rollangle)}
    \pgfmathsetmacro{\newzz}{cos(\pitchangle)*cos(\rollangle)}
    \path (\newzx,\newzy,\newzz);
    \pgfgetlastxy{\nzx}{\nzy};
}

\tikzset{RPY/.style={x={(\nxx,\nxy)},y={(\nyx,\nyy)},z={(\nzx,\nzy)}}}

  \tikzset{current plane/.prefix style={scale=\R}}

\newcommand{\arrow}[2][20]
 {
  \hspace{-5pt}
  \begin{tikzpicture}
   \node (A) at (0,0) {};
   \node (B) at (#1pt,0) {};
   \draw [#2] (A) -- (B);
  \end{tikzpicture}
  \hspace{-5pt}
 }

\newcommand{\equalsSep}{0.8pt}
\newcommand{\pad}[1]{\;{#1}\;}

\opt{ams}{\addtolength{\hoffset}{-0.5cm} \addtolength{\textwidth}{1cm}
\addtolength{\voffset}{-1.5cm} \addtolength{\textheight}{2cm}}

\newtheorem{thm}{Theorem}[section]
\newtheorem{prop}[thm]{Proposition}
\newtheorem{lemma}[thm]{Lemma}
\newtheorem{defin}[thm]{Definition}
\newtheorem{cor}[thm]{Corollary}

\theoremstyle{definition}

\newtheorem{remark}[thm]{Remark}

\newtheorem{notation}[thm]{Notation}

\newcommand{\step}[1]{\medskip\emph{Step #1}:}

\numberwithin{equation}{section}

\newcommand\citetype[1]{}

\newcommand{\m}{\mathfrak{m}}

\newcommand{\p}{\mathfrak{p}}

\renewcommand{\t}[1]{\textnormal{#1}}

\def\Auteq{\mathop{\rm Auteq}\nolimits}
\def\Cl{\mathop{\rm Cl}\nolimits}

\def\op{\mathop{\rm op}\nolimits}

\def\CM{\mathop{\rm CM}\nolimits}

\def\depth{\mathop{\rm depth}\nolimits}

\def\mod{\mathop{\rm mod}\nolimits}

\def\coh{\mathop{\rm coh}\nolimits}

\def\refl{\mathop{\rm ref}\nolimits}

\def\pd{\mathop{\rm pd}\nolimits}

\def\Hom{\mathop{\rm Hom}\nolimits}

\def\RHom{\mathop{\rm {\bf R}Hom}\nolimits}
\def\End{\mathop{\rm End}\nolimits}
\def\Ext{\mathop{\rm Ext}\nolimits}

\def\add{\mathop{\rm add}\nolimits}

\def\rank{\mathop{\rm rank}\nolimits}

\def\Supp{\mathop{\rm Supp}\nolimits}

\def\Spec{\mathop{\rm Spec}\nolimits}

\def\Perf{\mathop{\rm{Perf}}\nolimits}

\def\op{\rm{op}}

\def\Db{\mathop{\rm{D}^b}\nolimits}

\def\flop{{\sf{F}}}

\def\Id{\mathop{\rm{Id}}\nolimits}

\newcommand{\deform}{\mathrm{def}}

\newcommand{\con}{\mathrm{con}}

\newcommand{\CA}{\mathrm{A}_{\con}}

\def\ab{\mathop{\rm ab}\nolimits}

\def\Rf{{\rm\bf R}f}

\def\RHom{{\rm{\bf R}Hom}}

\def\sEnd{\scrE\mathrm{nd}}
\def\RsHom{{\bf R}\scrH\mathrm{om}}
\newcommand\RDerived[1]{{\rm\bf R}{#1}}

\newcommand\Rfi[1]{{\rm\bf R}^{#1}f}

\newcommand\art{\mathsf{Art}}

\newcommand\Sets{\mathsf{Sets}}

\newcommand\Def{\scrD\mathrm{ef}}

\newcommand{\cM}{\mathcal{M}}

\newcommand{\Per}{{}^{0}\scrP\mathrm{er}}
\newcommand{\PerOne}{{}^{-1}\scrP\mathrm{er}}

\newcommand\Curve{\mathrm{C}}


\newcommand{\cStab}[1]{\mathrm{Stab}_{#1}^{\kern -0.5pt \circ}\kern -0.25pt}
\newcommand{\Stab}[1]{\mathrm{Stab}_{#1}}
\newcommand{\cAut}{\mathrm{Aut}^{\kern 0pt \circ}\kern -0.25pt}

\newcommand{\GV}[1]{\mathrm{GV}_{#1}}

\newcommand\twistGen{{\sf Twist}}

\newcommand\placeholder{-}

\newlength\tempWidth

\newcommand{\scrA}{\EuScript{A}}
\newcommand{\scrB}{\EuScript{B}}
\newcommand{\scrC}{\EuScript{C}}
\newcommand{\scrD}{\EuScript{D}}
\newcommand{\scrE}{\EuScript{E}}
\newcommand{\scrF}{\EuScript{F}}
\newcommand{\scrG}{\EuScript{G}}
\newcommand{\scrH}{\EuScript{H}}
\newcommand{\scrI}{\EuScript{I}}

\newcommand{\scrK}{\EuScript{K}}
\newcommand{\scrL}{\EuScript{L}}
\newcommand{\scrM}{\EuScript{M}}
\newcommand{\scrN}{\EuScript{N}}
\newcommand{\scrO}{\EuScript{O}}
\newcommand{\scrP}{\EuScript{P}}

\newcommand{\scrR}{\EuScript{R}}
\newcommand{\scrS}{\EuScript{S}}
\newcommand{\scrT}{\EuScript{T}}

\newcommand{\scrV}{\EuScript{V}}

\newcommand{\scrZ}{\EuScript{Z}}

\newcommand{\righttilt}{\scrR}
\newcommand{\lefttilt}{\scrL}
\newcommand{\Tilt}{\scrT\mathrm{ilt}}
\newcommand{\finitelength}{\mathfrak{fl}\,}

\def\spiralheight{600}
\def\scaledots{1.2}

\newcommand\picHelix[2]{
\def\type{#1}
\def\scale{#2}
\def\typeOriginal{1}
\def\typeModified{2}
\ifx\type\typeOriginal
\def\rotation{0}
\def\spiralrotation{0}
\def\inner{1.5cm}
\def\mid{2cm}
\def\midlabels{1.75cm}
\def\outerlabels{2.6cm}
\def\outer{3.25cm}
\def\flip{1}
\else
\def\rotation{-50} 
\def\spiralrotation{68}
\def\inner{1.3cm}
\def\mid{2.2cm}
\def\midlabels{1.75cm}
\def\outerlabels{2.9cm}
\def\outer{3.5cm}
\def\flip{1} 
\fi
\tdplotsetmaincoords{70}{15}
\begin{array}{ccc}
\begin{array}{c}
\begin{tikzpicture}[tdplot_main_coords,>=stealth,scale=2.5]
\draw[black] ({sin(\spiralrotation)},{cos(\spiralrotation)},0) 
    \foreach \t in {2,3,...,360}
    {
        --({sin(\t+\spiralrotation)},{cos(\t+\spiralrotation)},{\t/\spiralheight})
    };
\draw[black,thin,line cap=round,dash pattern=on 0pt off 2.5\pgflinewidth] (-{sin(-\spiralrotation)},{cos(-\spiralrotation)},0) 
    \foreach \t in {2,3,...,540}
    {
        --(-{sin(\t-\spiralrotation)},{cos(\t-\spiralrotation)},-{\t/\spiralheight})
    };
\draw[black,thin,line cap=round,dash pattern=on 0pt off 2.5\pgflinewidth] ({sin(\spiralrotation)},{cos(\spiralrotation)},{360/\spiralheight}) 
    \foreach \t in {2,3,...,540}
    {
        --({sin(\t+\spiralrotation)},{cos(\t+\spiralrotation)},{\t/\spiralheight + 360/\spiralheight})
    };
solid segments
\foreach \t in {0,20,40,...,360}
  {
  \draw (0,0,0.2) -- ({sin(\t+\spiralrotation)},{cos(\t+\spiralrotation)},{\t/\spiralheight});
  }; 
\foreach \t in {0,20,40,...,360}
  {
  \draw[line cap=round,dash pattern=on 0pt off 2.5\pgflinewidth,thin] (0,0,0.2) -- (-{sin(\t-\spiralrotation)},{cos(\t-\spiralrotation)},{-\t/\spiralheight});
  }; 
\foreach \t in {0,20,40,...,360}
  {
  \draw[line cap=round,dash pattern=on 0pt off 2.5\pgflinewidth,thin] (0,0,0.2) -- ({sin(\t+\spiralrotation)},{cos(\t+\spiralrotation)},{\t/\spiralheight + 360/\spiralheight});
  }; 
\draw[red] (0,0,0.2) -- ({sin(\spiralrotation)},{cos(\spiralrotation)},{360/\spiralheight}); 
\draw[red] (0,0,0.2) -- ({sin(\spiralrotation)},{cos(\spiralrotation)},0); 
\draw[blue,->] ({1.1*sin(240)},{1.1*cos(240)},{240/\spiralheight}) -- node[left] {$\scriptstyle \otimes\scrO(1)$} ({1.1*sin(240)},{1.1*cos(240)},{240/\spiralheight+360/\spiralheight});
\end{tikzpicture}
\end{array}
\opt{ip}{\vspace{0.2cm} \\}\opt{ams}{&}
\opt{ip}{\hspace{1.2cm}}
\begin{array}{c}
\begin{tikzpicture}[scale=\scale,rotate=-40]
\draw (0,0) circle (\inner);
\draw (0,0) circle (\mid);
\draw (0,0) circle (\outer);

\node (A) at (90*\flip+\rotation:1cm) {$\scrA_0$};
\node (A1) at (54*\flip+\rotation:1cm) {$\scrA_1$};
\node (A2) at (18*\flip+\rotation:1cm) {$\scrA_2$};
\node (A3) at (-18*\flip+\rotation:1cm) {$\scrA_3$};
\node (A4) at (-54*\flip+\rotation:1cm) {$\scrA_4$};
\node (A5) at (-90*\flip+\rotation:1cm) {$\scrA_5$};
\node (A6) at (-126*\flip+\rotation:1cm) {$\scrA_6$};
\node (A7) at (-162*\flip+\rotation:1cm) {$\scrA_7$};
\node (A8) at (-198*\flip+\rotation:1cm) {$\scrA_8$};
\node (A9) at (-234*\flip+\rotation:1cm) {$\scrA_9$};
\node (P) at (99*\flip+\rotation:\midlabels) {$\scrV_{0}$};
\node[rotate=\ifx\type\typeOriginal 99 \else 0 \fi] at (99*\flip+\rotation:\outerlabels) {$\scriptstyle \omega_{5\Curve}[1]$};
\node (Q) at (81*\flip+\rotation:\midlabels) {$\scrV_{\kern -2pt -1}$};
\node[rotate=\ifx\type\typeOriginal 81 \else -10 \fi] at (81*\flip+\rotation:\outerlabels) {$\scriptstyle\scrO_{\Curve}(-1)$};
\node (P1) at (62*\flip+\rotation:\midlabels) {$\scrV_1$};
\node[rotate=\ifx\type\typeOriginal 62 \else -28 \fi] at (62*\flip+\rotation:\outerlabels+-1) {$\scriptstyle\scrO_{\Curve}(-1)[1]$};
\node (Q1) at (44*\flip+\rotation:\midlabels) {$\scrV_0$};
\node[rotate=\ifx\type\typeOriginal 0 \else 0 \fi] at (44*\flip+\rotation:\outerlabels) {$\scriptstyle \scrO_{5\Curve}$};
\node (P2) at (8*\flip+\rotation:\midlabels) {$\scrV_1$};
\node at (8*\flip+\rotation:\outerlabels) {$\scriptstyle \scrO_{4\Curve}$};
\node (Q2) at (26*\flip+\rotation:\midlabels) {$\scrV_2$};
\node at (27*\flip+\rotation:\outerlabels) {$\scriptstyle \scrO_{5\Curve}[1]$};
\node (P3) at (-10*\flip+\rotation:\midlabels) {$\scrV_3$};
\node at (-9*\flip+\rotation:\outerlabels) {$\scriptstyle \scrO_{4\Curve}[1]$};
\node (Q3) at (-28*\flip+\rotation:\midlabels) {$\scrV_2$};
\node at (-28*\flip+\rotation:\outerlabels) {$\scriptstyle \scrO_{3\Curve}$};
\node (P4) at (-46*\flip+\rotation:\midlabels) {$\scrV_4$};
\node at (-45*\flip+\rotation:\outerlabels) {$\scriptstyle \scrO_{3\Curve}[1]$};
\node (Q4) at (-62*\flip+\rotation:\midlabels) {$\scrV_3$};
\node at (-62*\flip+\rotation:\outerlabels) {$\scriptstyle \scrZ$};
\node (P5) at (-81*\flip+\rotation:\midlabels) {$\scrV_5$};
\node at (-81*\flip+\rotation:\outerlabels) {$\scriptstyle \scrZ[1]$};
\node (Q5) at (-99*\flip+\rotation:\midlabels) {$\scrV_4$};
\node at (-99*\flip+\rotation:\outerlabels) {$\scriptstyle \scrO_{2\Curve}$};
\node (P6) at (-116*\flip+\rotation:\midlabels) {$\scrV_6$};
\node at (-117*\flip+\rotation:\outerlabels) {$\scriptstyle \scrO_{2\Curve}[1]$};
\node (Q6) at (-134*\flip+\rotation:\midlabels) {$\scrV_5$};
\node at (-135*\flip+\rotation:\outerlabels) {$\scriptstyle \scrZ^\omega\!(1)$};
\node (P7) at (-152*\flip+\rotation:\midlabels) {$\scrV_7$};
\node at (-152.5*\flip+\rotation:\outerlabels) {$\scriptstyle \scrZ^\omega\!(1)\![1]$};
\node (Q7) at (-170*\flip+\rotation:\midlabels) {$\scrV_6$};
\node[rotate=\ifx\type\typeOriginal 0 \else 0 \fi] at (-171*\flip+\rotation:\outerlabels) {$\scriptstyle \omega_{3\Curve}(1)$};
\node (P8) at (-190*\flip+\rotation:\midlabels) {$\scrV_8$};
\node[rotate=\ifx\type\typeOriginal -10 \else 45 \fi] at (-190*\flip+\rotation:\outerlabels) {$\scriptstyle \omega_{3\Curve}(1)[1]$};
\node (Q8) at (-206*\flip+\rotation:\midlabels) {$\scrV_7$};
\node[rotate=\ifx\type\typeOriginal -26 \else 0 \fi] at (-208*\flip+\rotation:\outerlabels) {$\scriptstyle \omega_{4\Curve}(1)$};
\node (P9) at (-226*\flip+\rotation:\midlabels) {$\scrV_9$};
\node[rotate=\ifx\type\typeOriginal -46 \else 10 \fi] at (-226*\flip+\rotation:\outerlabels) {$\scriptstyle \omega_{4\Curve}(1)[1]$};
\node (P10) at (-242*\flip+\rotation:\midlabels) {$\scrV_8$};
\node[rotate=\ifx\type\typeOriginal -63 \else 0 \fi] at (-243*\flip+\rotation:\outerlabels) {$\scriptstyle \omega_{5\Curve}(1)$};
\draw (0*\flip+\rotation:0) -- (72*\flip+\rotation:\outer) ;
\draw (0*\flip+\rotation:0) -- (36*\flip+\rotation:\outer) ;
\draw (0*\flip+\rotation:0) -- (0*\flip+\rotation:\outer) ;
\draw (0*\flip+\rotation:0) -- (-36*\flip+\rotation:\outer) ;
\draw (0*\flip+\rotation:0) -- (-72*\flip+\rotation:\outer) ;
\draw (0*\flip+\rotation:0) -- (-108*\flip+\rotation:\outer) ;
\draw (0*\flip+\rotation:0) -- (-144*\flip+\rotation:\outer) ;
\draw (0*\flip+\rotation:0) -- (-180*\flip+\rotation:\outer) ;
\draw (0*\flip+\rotation:0) -- (-216*\flip+\rotation:\outer) ;
\draw[red] (0*\flip+\rotation:0) -- (-252*\flip+\rotation:\outer) ;
\draw (90*\flip+\rotation:\inner) -- (90*\flip+\rotation:\outer) ;
\draw (54*\flip+\rotation:\inner) -- (54*\flip+\rotation:\outer) ;
\draw (18*\flip+\rotation:\inner) -- (18*\flip+\rotation:\outer) ;
\draw (-18*\flip+\rotation:\inner) -- (-18*\flip+\rotation:\outer) ;
\draw (-54*\flip+\rotation:\inner) -- (-54*\flip+\rotation:\outer) ;
\draw (-90*\flip+\rotation:\inner) -- (-90*\flip+\rotation:\outer) ;
\draw (-126*\flip+\rotation:\inner) -- (-126*\flip+\rotation:\outer) ;
\draw (-162*\flip+\rotation:\inner) -- (-162*\flip+\rotation:\outer) ;
\draw (-198*\flip+\rotation:\inner) -- (-198*\flip+\rotation:\outer) ;
\draw (-234*\flip+\rotation:\inner) -- (-234*\flip+\rotation:\outer) ;
\node (A) at (90*\flip+\rotation:1cm) {$\scrA_0$};
\node (A1) at (54*\flip+\rotation:1cm) {$\scrA_1$};
\node (A2) at (18*\flip+\rotation:1cm) {$\scrA_2$};
\node (A3) at (-18*\flip+\rotation:1cm) {$\scrA_3$};
\node (A4) at (-54*\flip+\rotation:1cm) {$\scrA_4$};
\node (A5) at (-90*\flip+\rotation:1cm) {$\scrA_5$};
\node (A6) at (-126*\flip+\rotation:1cm) {$\scrA_6$};
\node (A7) at (-162*\flip+\rotation:1cm) {$\scrA_7$};
\node (A8) at (-198*\flip+\rotation:1cm) {$\scrA_8$};
\node (A9) at (-234*\flip+\rotation:1cm) {$\scrA_9$};
\end{tikzpicture}
\end{array}
\end{array}
}

\newcommand\segment[3]{to [out=#1,in=#3+180] (#2)}
\newcommand\circsegment[3]{arc (#1+90:#3+90:#2 and #2)}

\newcommand\sphTwoPoint[1]{
\def\type{#1}
\def\typeIntro{1}
\def\typeOther{2}
\def\typeProof{3}
\ifx\type\typeIntro
\def\spherescale{1.35 }
\def\Alabel{$\scriptstyle \scrS'_{N/2-1}$} %
\def\Blabel{$\scriptstyle \scrS'_1$} %
\def\Dlabel{$\scriptstyle \scrS_1$} %
\def\Elabel{$\scriptstyle \scrS_{N/2}$} %
\def\toplabel{$\scriptstyle \otimes\scrO_{\kern -1pt X}\!(1)$}
\def\toplabelx{0.3}
\def\toplabely{0.75}
\def\bottomlabel{$\scriptstyle \otimes\scrO_{\kern -1pt X^+}\!(1)$}
\def\bottomlabelx{0.25}
\def\bottomlabely{-0.8}
\def\leftdotspos{0.68}
\else 
\ifx\type\typeOther
\def\spherescale{1.3 }
\def\Alabel{$\scriptstyle \scrO_{3\Curve'}$}
\def\Blabel{$\scriptstyle \scrO_{\ell\Curve'}$}
\def\Dlabel{$\scriptstyle \scrO_{\ell\Curve}$}
\def\Elabel{$\scriptstyle \scrO_{2\Curve}$}
\def\toplabel{$\scriptstyle \otimes\scrO_{\kern -1pt X}\!(1)$}
\def\toplabelx{0.3}
\def\toplabely{0.75}
\def\bottomlabel{$\scriptstyle \otimes\scrO_{\kern -1pt X^+}\!(1)$}
\def\bottomlabelx{0.25}
\def\bottomlabely{-0.8}
\def\leftdotspos{0.68}
\else 
\def\spherescale{1.3 }  
\def\Alabel{$\scriptstyle c_{N/2-1}$}
\def\Blabel{$\scriptstyle c_1$}
\def\Dlabel{$\scriptstyle b_1$}
\def\Elabel{$\scriptstyle b_{N/2}$}
\def\toplabel{$\scriptstyle a$}
\def\toplabelx{0.2}
\def\toplabely{0.8}
\def\bottomlabel{$\scriptstyle d$}
\def\bottomlabelx{0.15}
\def\bottomlabely{-0.8}
\def\leftdotspos{0.68}
\fi
\fi
\def\vertspherescale{0.27}
\begin{tikzpicture}[>=stealth,scale=2]
\draw[thick] ([shift=(-84:\spherescale)]0,0) arc (-84:84:\spherescale)  
   [bend left] to (96:\spherescale)
   arc (96:264:\spherescale)
   [bend left] to cycle;
\draw[\colorfront,line cap=round,dash pattern=on 0pt off 3\pgflinewidth] (\spherescale,0) arc (0:180:\spherescale and \vertspherescale);
\draw[\colorfront] (\spherescale,0) arc (0:-180:\spherescale and \vertspherescale)
coordinate[pos=0.8] (A) coordinate[pos=\leftdotspos] (leftdots) coordinate[pos=0.63] (B) coordinate[pos=0.53] (C) coordinate[pos=0.43] (D) coordinate[pos=0.33] (rightdots) coordinate[pos=0.2] (E);
\filldraw[fill=white,draw=black] (A) circle (1.5pt);
\filldraw[fill=white,draw=black] (B) circle (1.5pt);
\filldraw[fill=white,draw=black] (C) circle (1.5pt);
\filldraw[fill=white,draw=black] (D) circle (1.5pt);
\filldraw[fill=white,draw=black] (E) circle (1.5pt);
\node [rotate=-8] (l) at (leftdots) [shift=(-70:0.6)] {\scalebox{0.8}{$\cdots$}};
\node [rotate=5] (r) at (rightdots) [shift=(100:0.7)] {\scalebox{\scaledots}{$\cdots$}};
\node (Alabel) at ($(A)+(0.02,0)$) [above=0.25] {\Alabel};
\node (Blabel) at ($(B)+(0.05,0)$) [above=0.25] {\Blabel};
\node (Dlabel) at (D) [below=0.25] {\Dlabel};
\node (Elabel) at (E) [below=0.25] {\Elabel};
\node (p) at (-0.1*\spherescale,0.6*\spherescale) {};
\node (q) at (-0.1*\spherescale,-0.7*\spherescale) {};
\node[color=\colorlinebundletop] at (\toplabelx*\spherescale,\toplabely*\spherescale) {\toplabel};
\draw[\monodromystyle,,color=\colorlinebundletop,bend left,looseness=0.5] 
(p) to ([shift=(100:\spherescale)]0,0);
\draw[\monodromystyle,color=\colorlinebundletop,bend right,line cap=round,dash pattern=on 0pt off 2.5\pgflinewidth] ([shift=(100:\spherescale)]0,0) to ([shift=(80:\spherescale)]0,0);
\draw[\monodromystyle,color=\colorlinebundletop, looseness=1.0,decoration={
    markings,
    mark=at position 0.5 with {\arrow{>}}},postaction=decorate] 
([shift=(78:\spherescale)]0,0)
   \segment{-120}{[shift=(85:0.85*\spherescale)]0,0}{-150}
   \segment{-150}{p}{-110};
\draw[\monodromystyle,color=\colortwist, looseness=0.8, decoration={markings,
    mark=at position 0.54 with {\arrow[rotate=15]{>}}},  postaction=decorate] ($(p)+(100:0.2pt)$)
  \segment{10}{$(E)+(45:3.5pt)$}{-45} 
  \circsegment{-45}{3.5pt}{-260}
  \segment{-260}{$(p)+(100:-0.2pt)$}{190};

\draw[\monodromystyle, color=\colortwist, looseness=0.8, decoration={markings,
    mark=at position 0.54 with {\arrow[rotate=15]{>}}}, postaction=decorate] ($(p)+(80:0.2pt)$)
  \segment{-10}{$(D)+(25:3.5pt)$}{-65} 
  \circsegment{-65}{3.5pt}{-285}
  \segment{-285}{$(p)+(80:-0.2pt)$}{170};

\draw[\monodromystyle,color=\colortwist, looseness=0.8, decoration={markings,
    mark=at position 0.54 with {\arrow[rotate=15]{>}}},  postaction=decorate] ($(q)+(100+180:0.2pt)$)
  \segment{10+180}{$(A)+(45+180:3.5pt)$}{-45+180} 
  \circsegment{-45+180}{3.5pt}{-260+180}
  \segment{-260+180}{$(q)+(100:-0.2pt)$}{190+180};

\draw[\monodromystyle, color=\colortwist, looseness=0.8, decoration={markings,
    mark=at position 0.54 with {\arrow[rotate=15]{>}}}, postaction=decorate] ($(q)+(80+180:0.2pt)$)
  \segment{-10+180}{$(B)+(25+180:3.5pt)$}{-65+180} 
  \circsegment{-65+180}{3.5pt}{-285+180}
  \segment{-285+180}{$(q)+(80:-0.2pt)$}{170+180};

 
 \draw[\monodromystyle,color=\colorflop,bend left,looseness=0.5,decoration={
    markings,
    mark=at position 0.8 with {\arrow{<}}},postaction=decorate] 
(p) to node[right,pos=0.8] {$\scriptstyle \flop$} (q);
 \draw[\monodromystyle,color=\colorflop,bend left,looseness=0.5,decoration={
    markings,
    mark=at position 0.7 with {\arrow{<}}},postaction=decorate] 
(q) to node[left,pos=0.7] {$\scriptstyle \flop$} (p);

\node[color=\colorlinebundlebottom] at (\bottomlabelx*\spherescale,\bottomlabely*\spherescale) {\bottomlabel};
 \draw[\monodromystyle,color=\colorlinebundlebottom, looseness=1.2] 
(q)
\segment{-60}{[shift=(-80:\spherescale)]0,0}{-60};
\draw[\monodromystyle,color=\colorlinebundlebottom,bend left,line cap=round,dash pattern=on 0pt off 2.5\pgflinewidth] ([shift=(-100:\spherescale)]0,0) to ([shift=(-80:\spherescale)]0,0);
 \draw[\monodromystyle,color=\colorlinebundlebottom,bend left,looseness=1.3,decoration={
    markings,
    mark=at position 0.5 with {\arrow[rotate=15]{>}}},postaction=decorate] 
([shift=(-100:\spherescale)]0,0) to (q);
\filldraw[color=white] (p) circle (3pt);
\filldraw (p) circle (1pt);
\filldraw[color=white] (q) circle (3pt);
\filldraw (q) circle (1pt);
\draw[thick] ([shift=(-84:\spherescale)]0,0) arc (-84:84:\spherescale)  
[bend left] to (96:\spherescale)
arc (96:264:\spherescale)
[bend left] to cycle;
\end{tikzpicture}
}

\newcommand\sphOnePoint[1]{
\def\type{#1}
\def\typeIntro{1}
\def\typeAlg{2}
\def\typeGeom{3}
\def\colorlinebundletop{green!60!black}
\def\colorlinebundlebottom{red}
\def\colorflop{blue}
\def\colortwist{black}
\def\colorfront{black!20}
\def\colorback{black!10}
\def\monodromystyle{semithick}
\ifx\type\typeIntro
\def\topArrowDirection{>}
\def\topArrowLabel{$\scriptstyle \otimes\scrO_{\kern -1pt X}\!(1)$}
\def\bottomArrow{\arrow[rotate=-5]{<}}
\def\bottomArrowLabel{$\scriptstyle \flop^{-1}\big(\otimes\scrO_{\kern -1pt X^+}\!(1)\big)\flop$}
\def\bottomArrowLabelx{0.2}
\def\bottomArrowLabely{-0.65}
\def\Clabel{$\scriptstyle \scrS_0$}
\def\Dlabel{$\scriptstyle \scrS_1$}
\def\Elabel{$\scriptstyle \scrS_{N-1}$}
\else 
\ifx\type\typeAlg
\def\topArrowDirection{<} 
\def\topArrowLabel{$\scriptstyle q_-$}
\def\bottomArrow{\arrow[rotate=+5]{>}}
\def\bottomArrowLabel{$\scriptstyle q_+$}
\def\bottomArrowLabelx{0.15}
\def\bottomArrowLabely{-0.75}
\def\Clabel{$\scriptstyle q_0$}
\def\Dlabel{$\scriptstyle q_1$}
\def\Elabel{$\scriptstyle q_{N-1}$}
\else 
\def\topArrowDirection{<} 
\def\topArrowLabel{$\scriptstyle \otimes\scrO_{\kern -1pt  X}\!(-1)$}
\def\bottomArrow{\arrow[rotate=+5]{>}}
\def\bottomArrowLabel{$\scriptstyle \flop^{-1}\big(\otimes\scrO_{\kern -1pt X^+}\!(-1)\big)\flop$}
\def\bottomArrowLabelx{0.2}
\def\bottomArrowLabely{-0.65}
\def\Clabel{$\scriptstyle \scrS_0$}
\def\Dlabel{$\scriptstyle \scrS_1$}
\def\Elabel{$\scriptstyle \scrS_{N-1}$}
\fi
\fi
\def\colorlinebundlebottom{red}
\def\colorflop{blue}
\def\colortwist{black}
\def\colorfront{black!20}
\def\colorback{black!10}
\def\monodromystyle{semithick}
\def\vertspherescale{0.27}
\def\spherescale{1.15 }
\begin{tikzpicture}[>=stealth,scale=2]
\draw[thick] ([shift=(-84:\spherescale)]0,0) arc (-84:84:\spherescale)  
   [bend left] to (96:\spherescale)
   arc (96:264:\spherescale)
   [bend left] to cycle;
\draw[\colorfront,line cap=round,dash pattern=on 0pt off 3\pgflinewidth] (\spherescale,0) arc (0:180:\spherescale and \vertspherescale);
\draw[\colorfront] (\spherescale,0) arc (0:-180:\spherescale and \vertspherescale)
coordinate[pos=0.64] (C) coordinate[pos=0.52] (D) coordinate[pos=0.4] (rightdots) coordinate[pos=0.25] (E);
\filldraw[fill=white,draw=black] (C) circle (1.5pt);
\filldraw[fill=white,draw=black] (D) circle (1.5pt);
\filldraw[fill=white,draw=black] (E) circle (1.5pt);
\node [rotate=13] (r) at (rightdots) [shift=(100:0.7)] {\scalebox{\scaledots}{$\cdots$}};
\node (Clabel) at (C) [below=0.25] {\Clabel};
\node (Dlabel) at (D) [below=0.25] {\Dlabel};
\node (Elabel) at (E) [below=0.25] {\Elabel};
\node (p) at (-0.6*\spherescale,0.5*\spherescale) {};
\node[color=\colorlinebundletop] at (0.1*\spherescale,0.75*\spherescale) {\topArrowLabel};
\draw[\monodromystyle,,color=\colorlinebundletop,bend left,looseness=0.5] 
(p) to ([shift=(100:\spherescale)]0,0);
\draw[\monodromystyle,color=\colorlinebundletop,bend right,line cap=round,dash pattern=on 0pt off 2.5\pgflinewidth] ([shift=(100:\spherescale)]0,0) to ([shift=(80:\spherescale)]0,0);
\draw[\monodromystyle,color=\colorlinebundletop, looseness=1.0,decoration={
    markings,
    mark=at position 0.5 with {\arrow{\topArrowDirection}}},postaction=decorate] 
([shift=(75:\spherescale)]0,0)
   \segment{-120}{[shift=(100:0.85*\spherescale)]0,0}{-170}
   \segment{-170}{p}{-125};
\draw[\monodromystyle,color=\colortwist, looseness=0.8, decoration={markings,
    mark=at position 0.54 with {\arrow[rotate=15]{>}}},  postaction=decorate] ($(p)+(100:0.2pt)$)
  \segment{10}{$(E)+(45:3.5pt)$}{-45} 
  \circsegment{-45}{3.5pt}{-260}
  \segment{-260}{$(p)+(100:-0.2pt)$}{190};

\draw[\monodromystyle, color=\colortwist, looseness=0.8, decoration={markings,
    mark=at position 0.54 with {\arrow[rotate=15]{>}}}, postaction=decorate] ($(p)+(80:0.2pt)$)
  \segment{-10}{$(D)+(25:3.5pt)$}{-65} 
  \circsegment{-65}{3.5pt}{-285}
  \segment{-285}{$(p)+(80:-0.2pt)$}{170}; 

\draw[\monodromystyle, color=\colortwist, looseness=0.8, decoration={markings,
    mark=at position 0.54 with {\arrow[rotate=15]{>}}}, postaction=decorate] ($(p)+(0:0.2pt)$) 
  \segment{-90}{$(C)+(5:3.5pt)$}{-85} 
  \circsegment{-85}{3.5pt}{-305}
  \segment{-305}{$(p)+(0:-0.2pt)$}{90};

\def\circRadius{25pt}
\def\circRadiusShift{25pt}
\coordinate (circCentre) at ($(p)+(-44:\circRadius)$);
\coordinate (circCentreShift) at ($(p)+(-50:\circRadius)$);
\node[color=\colorlinebundlebottom] at (\bottomArrowLabelx*\spherescale,\bottomArrowLabely*\spherescale) {\bottomArrowLabel};
 \draw[\monodromystyle,color=\colorlinebundlebottom, looseness=1.2] 
($(p)+(-55:0.2pt)$)
\segment{-145}{$(circCentre)+(150:\circRadius)$}{-120}
\circsegment{60}{\circRadius}{150}
\segment{-30}{[shift=(-80:\spherescale)]0,0}{-50};
\draw[\monodromystyle,color=\colorlinebundlebottom,bend left,line cap=round,dash pattern=on 0pt off 2.5\pgflinewidth] ([shift=(-100:\spherescale)]0,0) to ([shift=(-80:\spherescale)]0,0);
 \draw[\monodromystyle,color=\colorlinebundlebottom,looseness=1.2,decoration={
    markings,
    mark=at position 0.95 with {\bottomArrow}},postaction=decorate] 
($(p)+(-55:-0.2pt)$)
\segment{-145}{$(circCentreShift)+(150:\circRadiusShift)$}{-120}
\circsegment{60}{\circRadiusShift}{150}
\segment{-30}{[shift=(-100:\spherescale)]0,0}{-50};
\filldraw[color=white] (p) circle (3pt);
\filldraw (p) circle (1pt);
\draw[thick] ([shift=(-84:\spherescale)]0,0) arc (-84:84:\spherescale)  
[bend left] to (96:\spherescale)
arc (96:264:\spherescale)
[bend left] to cycle;
\end{tikzpicture}
}

\newcommand\sphLengthOne[1]{
\def\colorlinebundletop{green!60!black}
\def\colorlinebundlebottom{red}
\def\colorflop{blue}
\def\colortwist{black}
\def\colorfront{black!20}
\def\colorback{black!10}
\def\monodromystyle{semithick}
\def\vertspherescale{0.20}
\def\spherescale{0.85 } 
\begin{tikzpicture}[>=stealth,scale=2]
\draw[thick] ([shift=(-84:\spherescale)]0,0) arc (-84:84:\spherescale)  
   [bend left] to (96:\spherescale)
   arc (96:264:\spherescale)
   [bend left] to cycle;
\draw[\colorfront,line cap=round,dash pattern=on 0pt off 3\pgflinewidth] (\spherescale,0) arc (0:180:\spherescale and \vertspherescale);
\draw[\colorfront] (\spherescale,0) arc (0:-180:\spherescale and \vertspherescale)
coordinate[pos=0.8] (A) coordinate[pos=0.7] (leftdots) coordinate[pos=0.63] (B) coordinate[pos=0.53] (C) coordinate[pos=0.43] (D) coordinate[pos=0.31] (rightdots) coordinate[pos=0.2] (E);
\filldraw[fill=white,draw=black] (C) circle (1.5pt);
\node (p) at (-0.1*\spherescale,0.4*\spherescale) {};
\node (q) at (-0.1*\spherescale,-0.7*\spherescale) {};
\node[color=\colorlinebundletop] at (0.6*\spherescale,1.05*\spherescale) {$\scriptstyle \otimes\scrO_{\kern -1pt X}\!(1)$};
\draw[\monodromystyle,,color=\colorlinebundletop,bend left,looseness=0.5] 
(p) to ([shift=(100:\spherescale)]0,0);
\draw[\monodromystyle,color=\colorlinebundletop,bend right,line cap=round,dash pattern=on 0pt off 2.5\pgflinewidth] ([shift=(100:\spherescale)]0,0) to ([shift=(80:\spherescale)]0,0);
\draw[\monodromystyle,color=\colorlinebundletop,bend right,looseness=0.5,decoration={
    markings,
    mark=at position 0.5 with {\arrow{>}}},postaction=decorate] 
([shift=(75:\spherescale)]0,0) to (p);
 
 \draw[\monodromystyle,color=\colorflop,bend left,looseness=1.0,decoration={
    markings,
    mark=at position 0.8 with {\arrow{<}}},postaction=decorate] 
(p) to node[right,pos=0.8] {$\scriptstyle \flop$} (q);
 \draw[\monodromystyle,color=\colorflop,bend left,looseness=1.0,decoration={
    markings,
    mark=at position 0.7 with {\arrow{<}}},postaction=decorate] 
(q) to node[left,pos=0.7] {$\scriptstyle \flop$} (p);

\node[color=\colorlinebundlebottom] at (0.5*\spherescale,-1.1*\spherescale) {$\scriptstyle \otimes\scrO_{\kern -1pt X^+}\!(1)$};
 \draw[\monodromystyle,color=\colorlinebundlebottom, looseness=1.2] 
(q)
\segment{-60}{[shift=(-80:\spherescale)]0,0}{-60};
\draw[\monodromystyle,color=\colorlinebundlebottom,bend left,line cap=round,dash pattern=on 0pt off 2.5\pgflinewidth] ([shift=(-100:\spherescale)]0,0) to ([shift=(-80:\spherescale)]0,0);
 \draw[\monodromystyle,color=\colorlinebundlebottom,bend left,looseness=1.3,decoration={
    markings,
    mark=at position 0.5 with {\arrow[rotate=15]{>}}},postaction=decorate] 
([shift=(-100:\spherescale)]0,0) to (q);
\filldraw[color=white] (p) circle (3pt);
\filldraw (p) circle (1pt);
\filldraw[color=white] (q) circle (3pt);
\filldraw (q) circle (1pt);
\draw[thick] ([shift=(-84:\spherescale)]0,0) arc (-84:84:\spherescale)  
[bend left] to (96:\spherescale)
arc (96:264:\spherescale)
[bend left] to cycle;
\end{tikzpicture}
}

\newcolumntype{C}[1]{>{\centering\let\newline\\\arraybackslash\hspace{0pt}}m{#1}}

\begin{document}
\title{\textsc{Stringy K\"ahler moduli, mutation and monodromy}}
\author{Will Donovan}
\address{Will Donovan: Yau Mathematical Sciences Center, Tsinghua University, Haidian District, Beijing 100084, China; Beijing Institute of Mathematical Sciences and Applications, Yanqi Lake, Huairou District, Beijing 101408, China.
}
\email{donovan@mail.tsinghua.edu.cn}
\author{Michael Wemyss}
\address{Michael Wemyss: The Mathematics and Statistics Building, University of Glasgow, University Place, Glasgow, G12 8QQ, UK.}
\email{michael.wemyss@glasgow.ac.uk}

\begin{abstract}
This paper gives the first description of derived monodromy on the stringy K\"ahler moduli space (SKMS) for a general irreducible flopping curve~$\Curve$ in a $3$-fold~$X$ with mild singularities.  We do this by constructing two new infinite helices: the first consists of sheaves supported on~$\Curve$, and the second comprises vector bundles in a tubular neighbourhood.  We prove that these helices determine the simples and projectives in iterated tilts of the category of perverse sheaves, and  that all objects in the first helix induce a twist autoequivalence for $X$.  We show that these new derived symmetries, along with established ones, induce the full monodromy on the SKMS.

The helices have many further applications.  We (1) prove representability of noncommutative deformations of the sheaves $\scrO_{\Curve},\hdots,\scrO_{\ell\Curve}$ associated to a length~$\ell$ flopping curve, via tilting \opt{ams}{theory}\opt{ip}{\mbox{theory}}, (2) control the representing objects, characterise when they are not commutative, and their central quotients, and (3)\opt{ams}{ }\opt{ip}{~}give new and sharp theoretical lower bounds on Gopakumar--Vafa invariants for a curve of length~$\ell$. When~$X$ is smooth and resolves an affine base, we furthermore (4) prove that the second helix classifies all tilting reflexive sheaves on $X$, and thus that (5)\opt{ams}{ }\opt{ip}{~}all noncommutative crepant resolutions arise from tilting bundles on\opt{ams}{ }\opt{ip}{~}$X$.


\end{abstract}
\opt{ams}{\subjclass[2010]{Primary 14F08; Secondary 14D15, 14E30, 14J33, 16S38, 18G80}}
\opt{ip}{\subjclass{Primary 14F08; Secondary 14D15, 14E30, 14J33, 16S38, 18G80}}
\thanks{\opt{ams}{The first author }\opt{ip}{W.D.\ }was supported by Yau MSC, Tsinghua University, Yanqi Lake BIMSA, the Thousand Talents Plan, and JSPS KAKENHI Grant Number~JP16K17561\opt{ams}{. The second author }\opt{ip}{, and M.W.\ }was supported by EPSRC grants~EP/R009325/1 and EP/R034826/1.}
\maketitle
\parindent 20pt
\parskip 0pt

\thispagestyle{empty}

\section{Introduction}

Describing the simples and projectives in categories of perverse sheaves, and their tilts, is a fundamental problem.  In this paper, in the setting of \opt{ip}{\mbox{$3$-fold}}\opt{ams}{$3$-fold} flopping contractions,  we describe both, for iterated tilts of zero perverse sheaves $\Per$ at simple objects.

Our main breakthrough is the construction of two new invariants of the flopping curve in the form of infinite helices of sheaves $\{ \scrS_i\}$ and\opt{ams}{ }\opt{ip}{~}$\{ \scrV_i\}$, which we show control the simples and projectives in iterated tilts of\opt{ams}{ }\opt{ip}{~}$\Per$, respectively.  We believe that the new helices are intrinsic and of wider importance: we show that they uncover new unexpected phenomena in the autoequivalence groups of $3$-folds, where each $ \scrS_i$ gives rise to a certain generalised spherical (or Dehn) twist, and we show that the helices have further applications to deformation theory, noncommutative resolutions, and Gopakumar--Vafa invariants.  

\subsection{SKMS and Monodromy}\label{Section 1.1} The stringy K\"ahler moduli space (\opt{ip}{or }SKMS) associated to a variety $X$ is central to the study of mirror symmetry: it is isomorphic  to the complex structure moduli of the mirror manifold, and furthermore its fundamental group is conjectured to recover the derived symmetries of $X$. However, even in the crucial setting of Calabi--Yau $3$-folds, this conjecture has only been verified rigorously in certain very restricted classes of examples. 

In the simplest case, namely the Atiyah flop between resolutions of $uv=x^2+y^2$, Aspinwall \cite{Asp} explains how  the  associated derived symmetries may be recovered from an SKMS given by the sphere minus three points, as illustrated in Figure~\ref{fig.length 1}. The variety~$X$ and its flop~$X^+$ correspond to `large radius limits' near two of the removed points, shown at the poles of the sphere. Derived equivalences correspond to homotopy classes of paths between the two basepoints; monodromy around the central hole corresponds to a spherical, or Dehn, twist.

Toda generalised this result to the flop between resolutions of $uv=x^2+y^{2n}$, known as flops of length one~\cite{TodaRes,TodaFib}.  In this case the SKMS, denoted $\cM_{\scrS\scrK}$, can be considered as a certain factor of a Bridgeland stability manifold associated to $X$.  Toda proves that in this class of examples, $\cM_{\scrS\scrK}$ is still a sphere minus three points, and furthermore  its fundamental group acts on the derived category of $X$ via compositions of the functors shown in Figure~\ref{fig.length 1}, where $\flop$ is the flop functor.

\begin{figure}[h]
\begin{center}
\sphLengthOne{0}
\end{center}
\caption{Monodromy on $\cM_{\scrS\scrK}$ for length one flops.} 
\label{fig.length 1}
\end{figure}

Irreducible flops are fundamental building blocks of Calabi--Yau geometry, and are the most elementary of higher-dimensional birational surgeries.  In this paper we give a geometric description of monodromy on $\cM_{\scrS\scrK}$ when $X\to\Spec R$ is an arbitrary  irreducible $3$-fold flop, of any type, and of any length, where $X$ can even have mild singularities.  In this setting, the defining equations of $R$ are not even precisely known.

To do this, we generalise Figure~\ref{fig.length 1}, revealing a surprising and beautiful structure.  

\subsection{The Simples Helix}\label{section new helices}
Much of the richness of a general flopping contraction arises because the exceptional locus need not be reduced.  For an irreducible 3-fold flopping contraction $X\to X_{\con}$ where $X$ has only Gorenstein terminal singularities, write $\Curve$ for the exceptional locus with reduced scheme structure. Suppose that the contraction has length\opt{ams}{ }\opt{ip}{~}$\ell$, as recalled in \S\ref{section thickenings}, which is necessarily a number between one and six.  Then there exists a sequence of successive thickenings
\[ 
\scrO_{\Curve}, \scrO_{2\Curve} ,\hdots, \scrO_{\ell\Curve}.
\] 
Full definitions are given in \S\ref{section thickenings}, but for calibration, $\scrO_{\ell\Curve}$ is the structure sheaf of the scheme-theoretic fibre of the contraction. 

Our simples helix $\{ \scrS_i\}_{i\in\mathbb{Z}}$ is defined in \S\ref{section simples helix}, first by specifying a finite region of size~$N$, then by translating this region by tensoring by line bundles.  We refer to~$N$ as the {\em helix period}, and it is defined as follows:
\[
\begin{tabular}{C{0.4cm}C{0.4cm}C{0.4cm}C{0.4cm}C{0.4cm}C{0.4cm}C{0.4cm}C{0.4cm}C{0.4cm}}
\toprule
$\ell$\quad&$1$&$2$&$3$&$4$&$5$&$6$ \\
\midrule
$N$\quad&$1$ & $2$ & $4$ & $6$ & $10$ & $12$ \\
\bottomrule
\end{tabular}
\]
The period $N$ depends only on the length~$\ell$ of the curve, but in contrast to the length, $N$~does not yet have an obvious algebro-geometric interpretation.

When $\ell=1$, the period~$N=1$ and the helix is simply $\scrS_i\colonequals \scrO_{\Curve}(i-1).$ For length $\ell>1$, the period is always even, and the helix is determined by the following properties.
\begin{enumerate}
\item We put
\[
\scrS_0,\hdots,\scrS_{N/2} =
\left\{
\begin{array}{ll}
\scrO_{\Curve}(-1),\scrO_{\ell\Curve},\hdots,\scrO_{2\Curve}&\mbox{if }\ell\leq 4\\
\scrO_{\Curve}(-1),\scrO_{\ell\Curve},\hdots,\scrO_{3\Curve},\scrZ,\scrO_{2\Curve} & \mbox{if }\ell=5,6
\end{array}
\right.
\] 
where $\scrZ$ is explained below.
\item $\scrS_{-n}=\mathbb{D}\scrS_n[-1]$ for all $n$, where $\mathbb{D}$ is the dualizing functor. 
\item $\scrS_{i+N}\cong\scrS_i\otimes\scrO(1)$ for all $i$.
\end{enumerate}
It follows that the simples helix contains, albeit not in order, the sheaves $\scrO_{\Curve}, \scrO_{2\Curve} ,\hdots, \scrO_{\ell\Curve}$, the canonical bundles $\omega_{\Curve}, \omega_{2\Curve} ,\hdots, \omega_{\ell\Curve}$, and also all line bundle twists of both.  However, there is a surprise: when $\ell=5,6$  we require a further sheaf $\scrZ$ above, which we construct in~\ref{Ext23} as the unique non-split extension  $0\to\scrO_{3\Curve}\to \scrZ\to\scrO_{2\Curve}\to 0$.  

\begin{remark}\label{why simples helix}
We call $\{\scrS_i\}_{i\in\mathbb{Z}}$ a helix by analogy with the helices $\{ E_i\}_{i\in\mathbb{Z}}$ for the total space of the canonical bundle $\omega_Z$ of a del Pezzo surface~$Z$, see for instance \cite{BridgelandLocalCY,BS,Rudakov}.  Here, the relation $E_{i+n}\cong E_i\otimes\omega_Z$ is replaced by $\scrS_{i+N} = \scrS_i \otimes \scrO(1)$, and the fact that the $E_i$ induce spherical objects is replaced by the fact that whilst the $\scrS_i$ do not determine spherical objects exactly, they do after noncommutative deformation.
\end{remark}

\subsection{Monodromy on the SKMS}\label{section main results}
The simples helix gives new derived autoequivalences.  Indeed, our first result shows that all members $\scrS_i$ induce an autoequivalence in the global quasi-projective setting. This answers a question of Kawamata \cite[6.8]{Kawamata} for the sheaves $\scrO_{\Curve}, \scrO_{2\Curve} ,\hdots, \scrO_{\ell\Curve}$, but furthermore includes other sheaves like $\scrZ$.   We prove in \ref{global rep} that the noncommutative deformation functor for each $\scrS_i$ is representable, and as a consequence obtain universal sheaves $\scrE_i$.

\begin{thm}[\ref{simples global auto}]\label{intro simples global auto}
Let $Y\to Y_{\con}$ be a flopping contraction of quasi-projective \mbox{$3$-folds}, where $Y$ has at worst Gorenstein terminal singularities. For any contracted curve, consider the simples helix~$\{\scrS_i\}_{i\in\mathbb{Z}}$. Then~$\scrE_i$ is perfect on~$Y$, and there is an autoequivalence $\twistGen_{\scrS_i}$ of $\Db(\coh Y)$ which fits into a functorial triangle
\[
\RHom_Y(\scrE_{i},-)\otimes_{\End_Y(\scrE_i)}^{\bf L}\scrE_{i}\to \Id\to\twistGen_{\scrS_i}\to.
\]
\end{thm}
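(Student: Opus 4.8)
The plan is to reduce to the complete local situation, where the assertion becomes the statement that a universal noncommutative deformation induces a twist autoequivalence, and then to globalise via Fourier--Mukai kernels together with a local-to-global principle. I begin from what \ref{global rep} supplies: the universal sheaf $\scrE_i$, a coherent sheaf on $Y$ set-theoretically supported on $\Curve$, carrying a finite filtration whose subquotients are all isomorphic to $\scrS_i$, together with the identification of $A_i\colonequals\End_Y(\scrE_i)$ with the finite-dimensional algebra pro-representing the noncommutative deformation functor of $\scrS_i$. To see $\scrE_i$ is perfect, observe that perfection is \'etale local on $Y_{\con}$, so I may complete at a point $p$ in the (finite) image of $\Curve$ in $Y_{\con}$; there a tilting bundle gives $\Db(\coh\widehat Y_p)\simeq\Db(\modCat\Lambda_p)$, sending $\scrE_i$ to the universal deformation $E_i$ of the finite-length $\Lambda_p$-module $S_i$ matching $\scrS_i$. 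The mutation/approximation sequence computing this deformation presents $E_i$ as the cokernel of a map of finitely generated projective $\Lambda_p$-modules, so $E_i$ is perfect over $\Lambda_p$, hence $\scrE_i$ is perfect on $\widehat Y_p$, and so on $Y$. This step is genuinely needed: when $\Curve$ meets $\Sing Y$ the sheaf $\scrS_i$ itself (say $\scrO_{\Curve}$) need not be perfect, so perfection of $\scrE_i$ is a feature of the deformation rather than of $\scrS_i$.

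Next I would build the functor and the triangle using kernels, which is also what makes the triangle \emph{functorial}. Since $\scrE_i$ is perfect and $A_i$ is finite-dimensional, $\scrE_i^{\dual}\boxtimes_{A_i}\scrE_i$ is a perfect object of $\Db(\coh(Y\times Y))$, supported on $\Curve\times\Curve$, and it is a Fourier--Mukai kernel for $\RHom_Y(\scrE_i,-)\otimes^{\bf L}_{A_i}\scrE_i$. The evaluation map gives a morphism of kernels $\scrE_i^{\dual}\boxtimes_{A_i}\scrE_i\to\cO_\Delta$; I set $\scrP_i$ to be its cone and $\twistGen_{\scrS_i}\colonequals\FM{\scrP_i}$. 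As $\scrP_i$ has support proper over each factor, $\FM{\scrP_i}$ preserves $\Db(\coh Y)$, and the distinguished triangle of kernels yields the functorial triangle in the statement. Since $\scrE_i$ restricts to zero on $Y\setminus\Curve$, the kernel $\scrP_i$ agrees with $\cO_\Delta$ off $\Curve\times\Curve$, so $\twistGen_{\scrS_i}$ restricts to the identity away from $\Curve$.

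It then remains to show $\twistGen_{\scrS_i}$ is an equivalence. I would produce the candidate inverse kernel $\scrQ_i$ analogously from $\scrE_i$ and prove that the natural maps $\scrP_i\circ\scrQ_i\to\cO_\Delta$ and $\scrQ_i\circ\scrP_i\to\cO_\Delta$ are isomorphisms; as the cones of these maps are supported on $\Curve\times\Curve$, which is proper over the finite image of $\Curve$ in $Y_{\con}$, it suffices to verify this after completing at each such point $p$. There, $\twistGen_{\scrS_i}$ is conjugate under the tilting equivalence to the twist built from the universal deformation $E_i$, which — by the representability and finite-dimensionality of $A_i$, together with the identification (from the earlier part of the paper, and \ref{Ext23} for the role of $\scrZ$) of $S_i$ with a simple module at an exceptional vertex of an iterated tilt of $\Lambda_p$ — is the noncommutative twist autoequivalence of $\Db(\modCat\Lambda_p)$. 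Off the finitely many points $p$ the functor is the identity by the previous paragraph, so $\twistGen_{\scrS_i}$ is an autoequivalence of $\Db(\coh Y)$.

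The hard part will be the globalisation packaged in the last two paragraphs: upgrading the bare evaluation morphism to an honest functorial triangle (the reason I move to kernels on $Y\times Y$ rather than work with the raw bifunctor), and checking that the complete local representing algebras are compatible enough that $A_i=\End_Y(\scrE_i)$ is finite-dimensional and $\scrE_i$ is perfect across the singular points of $Y$, not just the smooth locus. A secondary difficulty is treating the non-structure-sheaf members of the helix uniformly — the $\omega_{j\Curve}$-twists and $\scrZ$ — which requires knowing that every $\scrS_i$, after an appropriate iterated tilt, is a simple perverse sheaf, so that the complete local twist theory applies to all of them at once.
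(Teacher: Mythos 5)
Your overall reduction---establish perfection of $\scrE_i$ complete locally, then globalise the twist and verify it is an equivalence by a local-to-global check---arrives at the same result, but you construct the twist by a genuinely different route. The paper constructs $\twistGen_{\scrS_k}^*$ word-for-word as in \cite[\S5]{DW3}, replacing the single tilting bundle there by the member $\scrP_k$ of the vector bundle helix, then shows it is an equivalence via the spanning class argument of \cite[5.22]{DW3}; $\twistGen_{\scrS_k}$ is defined as its inverse, and the functorial triangle comes from the analogue of \cite[5.23]{DW3}. You instead work directly with Fourier--Mukai kernels on $Y\times Y$, taking the cone of $\scrE_i^{\dual}\boxtimes_{\Lambda_i^{\deform}}\scrE_i\to\scrO_\Delta$ and then producing an inverse kernel $\scrQ_i$ whose composites with your $\scrP_i$ are checked against $\scrO_\Delta$ after completing at the finitely many points under the contracted curve. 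The paper's route delegates the globalisation bookkeeping (upgrading the evaluation morphism to a triangle of kernels, comparing with the complete local twist) to \cite{DW3}, so your approach buys self-containedness at the cost of having to actually construct $\scrQ_i$ and verify that the global kernel composites match the local theory---which, crucially, uses the helix tilting algebra $\Lambda_i=\End_X(\scrP_i)$ rather than $\Lambda_0$.

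Two precision issues to fix. First, in the perfection step, being the cokernel of a map of finitely generated projectives does not by itself give perfection when $Y$ (hence the tilting algebra) is singular; the paper's argument is that the two exchange sequences \eqref{exchange 1} and \eqref{exchange 2} splice to a \emph{bounded} projective resolution of $\Lambda_i^{\deform}$, because mutating twice returns the original module, so $\Lambda_i^{\deform}$ has finite projective dimension---this is the analogue of \cite[(5.B)]{DW1}. Second, your description of the complete local picture should use $\Lambda_i$ and the tilting bundle $\scrP_i$ from the start: under $\RHom(\scrP_0,-)$ the sheaf $\scrS_i$ is in general not a simple module, so the relevant projective resolution lives over $\Lambda_i$. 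You acknowledge the iterated-tilt structure at the end, but it needs to enter the perfection argument as well, not only the equivalence step.
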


Motivated by mirror symmetry (\S\ref{Section 1.1}), the hard work in this paper goes into showing how the above autoequivalences knit together, and describing monodromy on the stringy K\"ahler moduli space.   In our \opt{ip}{\mbox{$3$-fold}}\opt{ams}{$3$-fold} flop setting, the SKMS $\cM_{\scrS\scrK}$ is defined to be a certain quotient of normalised Bridgeland stability conditions \cite{TodaRes, HW}.  It follows easily using the techniques in \cite{HW}, summarised in \ref{stab cor}, that if $X\to\Spec \mathfrak{R}$ is a local length~$\ell$ Gorenstein terminal flop, then $\cM_{\scrS\scrK}$ is the $2$-sphere minus $N+2$ points, where $N$ is the helix period of \S\ref{section new helices}.  Henceforth $\cM_{\scrS\scrK}$ is the $2$-sphere, as above, but now with points removed from its poles, and $N$ points removed from the equator.

Our main result is that the simples helix describes the derived monodromy on $\cM_{\scrS\scrK}$.

\begin{thm}[\ref{prop geom monod}]\label{intro geom monod} 
There is a group homomorphism
\begin{align*}
\uppi_1(\cM_{\scrS\scrK})&\pad{\to}\Auteq\Db(\coh X)\\
q_i &\pad{\mapsto}\twistGen_{\scrS_i}\\
q_-&\pad{\mapsto}-\otimes\scrO_X(-1) \\
q_+ &\pad{\mapsto}\flop^{-1}\circ\big(-\otimes\scrO_{X^+}(- 1)\big)\circ\flop
\end{align*}
 illustrated in the following diagram
\[
\sphOnePoint{1}
\]
where $\flop$ is the flop functor, and black monodromies indicate twist functors $\twistGen_{\scrS_i}$ on~$X$, defined in a similar way as above.
\end{thm}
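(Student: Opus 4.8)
The plan is to realise the stated assignment directly as the monodromy representation of the natural covering of $\cM_{\scrS\scrK}$ by stability conditions, so that it is automatically a group homomorphism and the work lies entirely in computing the monodromy around each puncture. First, reduce to the local case: every ingredient of the statement — the simples helix $\{\scrS_i\}$, the universal sheaves $\scrE_i$ and the twists $\twistGen_{\scrS_i}$ of \ref{simples global auto}, the line bundles $\scrO_X(1)$ and $\scrO_{X^+}(1)$, and the flop functor $\flop$ — is compatible with restriction to the formal (or \'etale) neighbourhood of a contracted fibre, and $\cM_{\scrS\scrK}$ is by construction extracted from normalised Bridgeland stability conditions on the local flop, so we may assume $X\to\Spec\mathfrak{R}$ is a local irreducible Gorenstein terminal flop of length $\ell$. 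Using the structure of the stability manifold established by the methods of \cite{TodaRes,HW} and recalled in \ref{stab cor}, there is a contractible connected component $\mathrm{Stab}^{\circ}$ of normalised stability conditions on $\Db(\coh X)$ whose quotient map $\mathrm{Stab}^{\circ}\to\cM_{\scrS\scrK}$ realises $\cM_{\scrS\scrK}$ as the $2$-sphere minus $N+2$ points, with one puncture at each pole and $N$ on the equator. Hence $\pi_1(\cM_{\scrS\scrK})$ is free on the loops $q_-,q_+,q_0,\dots,q_{N-1}$ around the punctures, subject only to $q_-\,q_0\cdots q_{N-1}\,q_+=\Id$, and the monodromy representation of this covering is a homomorphism $\pi_1(\cM_{\scrS\scrK})\to\Auteq\Db(\coh X)$ with image the deck group; it remains to compute the monodromy around each puncture.

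Second, compute the monodromies. Cut $\mathrm{Stab}^{\circ}$ into chambers by its wall-and-chamber structure. Over a neighbourhood of the equator the chambers are labelled by the bounded $t$-structures whose hearts are the iterated tilts of the zero perverse heart $\Per$ at simple objects; by the structural results on the simples helix in \S\ref{section new helices} these hearts have simples $\{\scrS_i\}$ and projectives $\{\scrV_i\}$, so crossing the $i$-th equatorial wall is a tilt at the simple $\scrS_i$. Since $\scrS_i$ is not itself spherical this tilt is not a classical spherical twist; instead, using that the noncommutative deformation functor of $\scrS_i$ is representable with universal sheaf $\scrE_i$ (\ref{global rep}), one identifies the wall-crossing autoequivalence with the noncommutative twist $\twistGen_{\scrS_i}$, giving $q_i\mapsto\twistGen_{\scrS_i}$. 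The puncture at the north pole is the large-volume limit for $X$, whose standard local monodromy is $-\otimes\scrO_X(-1)$, giving $q_-\mapsto-\otimes\scrO_X(-1)$; the puncture at the south pole is the large-volume limit for $X^+$, and since the flop equivalence $\flop\colon\Db(\coh X^+)\to\Db(\coh X)$ identifies the two stability manifolds inside the single $\cM_{\scrS\scrK}$, its monodromy, transported to $X$, is $\flop^{-1}\circ(-\otimes\scrO_{X^+}(-1))\circ\flop$. Consistency with the relation $q_-\,q_0\cdots q_{N-1}\,q_+=\Id$ is then automatic, but serves as a useful check: it amounts to the statement that one period of the twists $\twistGen_{\scrS_i}$ composes, via $\flop$, with the line-bundle monodromies on $X$ and $X^+$, reflecting both $\scrS_{i+N}\cong\scrS_i\otimes\scrO(1)$ and the compatibility of $\flop$ with the perverse $t$-structures on the two sides.

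The main obstacle is the step showing that the monodromy around each equatorial puncture is exactly $\twistGen_{\scrS_i}$, rather than merely some abstract spherical-type twist. This requires upgrading a $t$-structure tilt at the non-spherical simple $\scrS_i$ to an honest autoequivalence — for which the representability of the noncommutative deformations and the explicit control of the universal object $\scrE_i$ from \ref{simples global auto} and \ref{global rep} are essential — together with a \emph{restrict to the wall} analysis that pins down the analytic-local structure of $\cM_{\scrS\scrK}$ near each equatorial puncture and matches it with the local twist picture. The secondary difficulty is the identification near the south pole, i.e.\ checking that $\flop$ genuinely intertwines the stability manifolds and the perverse hearts for $X$ and $X^+$, so that the large-volume monodromy there conjugates correctly; this is exactly where the interplay between the helix, the two perverse structures, and the flop functor developed in the body of the paper is needed.
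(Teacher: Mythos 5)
Your proposal takes a genuinely different route from the paper's, and the comparison is worth spelling out. The paper never invokes covering-space theory for $\cStab{n}\scrD\to\cM_{\scrS\scrK}$. Instead it uses the topology of the punctured sphere only to write down a presentation of the fundamental groupoid, and then assigns data tautologically: in \ref{prop algebraic groupoid} the module categories $\Db(\mod\Lambda_j)$ are attached to the basepoints $p_j$, the mutation functors $\Phi_j$ (in both directions) to the short arrows, and $\upbeta^{\pm1}$ to the wrapping arrows. The only groupoid relation is $\upbeta\upbeta^{-1}=\Id$, which holds by fiat, so the groupoid action and the resulting group homomorphism in \ref{prop alg monod} exist by inspection. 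Conjugating by $\Uppsi$ then transports everything to $\Db(\coh X)$, with $q_i\mapsto\twistGen_{\scrS_i}$ following from \ref{alg twist is mono} and $q_\pm$ handled by \ref{tensor cor} together with a commuting square involving the flop functor. You instead propose to produce the homomorphism as the literal monodromy representation of the stability-conditions covering, which is conceptually attractive and close in spirit to Toda's approach in the length-one case, but it makes the homomorphism property ``free'' only by pushing all of the content into the computation of the local monodromies.

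That shift creates two genuine gaps. First, you assert without reference that $\cStab{n}\scrD$ is contractible, so that the quotient map is a universal covering and the monodromy representation is well defined; but \ref{stab cor} (and the cited results in \cite{HW,TodaRes}) only determine the homeomorphism type of the quotient $\cM_{\scrS\scrK}$, not simple-connectedness of the cover, and your argument genuinely needs the latter. Second, and more seriously, identifying the monodromy around the $i$-th equatorial puncture with $\twistGen_{\scrS_i}$ is exactly the hard step, and you explicitly leave it open. To close it one would have to show both that $\twistGen_{\scrS_i}$ lies in $\cAut\scrD$, i.e.\ preserves the distinguished component of stability conditions (not a priori clear), and that it realises the deck transformation for $q_i$; the wall-crossing analysis you gesture at would in practice re-derive the isomorphism $\twistGen_{\scrS_k}\cong\Uppsi_k^{-1}\circ(\Phi_k\circ\Phi_k)\circ\Uppsi_k$ of \ref{alg twist is mono}, so the algebraic input is not avoided. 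The same applies to the large-volume monodromies for $q_\pm$, which you assert rather than prove. As it stands, the strategy is coherent but both load-bearing steps are missing, and completing them would cost at least as much as the paper's direct construction.
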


Given $X\to\Spec \mathfrak{R}$, we obtain a symmetric version of the above result if we systematically account for the flop $X^+\to\Spec \mathfrak{R}$.  Flopping is an involution, and this new version shows that this symmetry is reflected in the derived monodromy action. 

\begin{thm}[\ref{monodromy main}]\label{mainthm} For a $3$-fold flop $X \dashrightarrow X^+$, with length invariant $\ell$, there is an action of a fundamental groupoid of $\cM_{\scrS\scrK}$ with two basepoints on $\Db(\coh X)$ and $\Db(\coh X^+)$ given, in the cases $\ell=1$ and $\ell>1$ respectively, as follows:
\def\colorlinebundletop{green!60!black}
\def\colorlinebundlebottom{red}
\def\colorflop{blue}
\def\colortwist{black}
\def\colorfront{black!20}
\def\colorback{black!10}
\def\monodromystyle{semithick}
\[
\def\vertspherescale{0.20}
\def\spherescale{0.85 }
\begin{array}{c}
\begin{tikzpicture}[>=stealth,scale=2]
\draw[thick] ([shift=(-84:\spherescale)]0,0) arc (-84:84:\spherescale)  
   [bend left] to (96:\spherescale)
   arc (96:264:\spherescale)
   [bend left] to cycle;
\draw[\colorfront,line cap=round,dash pattern=on 0pt off 3\pgflinewidth] (\spherescale,0) arc (0:180:\spherescale and \vertspherescale);
\draw[\colorfront] (\spherescale,0) arc (0:-180:\spherescale and \vertspherescale)
coordinate[pos=0.8] (A) coordinate[pos=0.7] (leftdots) coordinate[pos=0.63] (B) coordinate[pos=0.53] (C) coordinate[pos=0.43] (D) coordinate[pos=0.31] (rightdots) coordinate[pos=0.2] (E);
\filldraw[fill=white,draw=black] (C) circle (1.5pt);
\node (p) at (-0.1*\spherescale,0.4*\spherescale) {};
\node (q) at (-0.1*\spherescale,-0.7*\spherescale) {};
\node[color=\colorlinebundletop] at (0.6*\spherescale,1.05*\spherescale) {$\scriptstyle \otimes\scrO_{\kern -1pt X}\!(1)$};
\draw[\monodromystyle,,color=\colorlinebundletop,bend left,looseness=0.5] 
(p) to ([shift=(100:\spherescale)]0,0);
\draw[\monodromystyle,color=\colorlinebundletop,bend right,line cap=round,dash pattern=on 0pt off 2.5\pgflinewidth] ([shift=(100:\spherescale)]0,0) to ([shift=(80:\spherescale)]0,0);
\draw[\monodromystyle,color=\colorlinebundletop,bend right,looseness=0.5,decoration={
    markings,
    mark=at position 0.5 with {\arrow{>}}},postaction=decorate] 
([shift=(75:\spherescale)]0,0) to (p);
 
 \draw[\monodromystyle,color=\colorflop,bend left,looseness=1.0,decoration={
    markings,
    mark=at position 0.8 with {\arrow{<}}},postaction=decorate] 
(p) to node[right,pos=0.8] {$\scriptstyle \flop$} (q);
 \draw[\monodromystyle,color=\colorflop,bend left,looseness=1.0,decoration={
    markings,
    mark=at position 0.7 with {\arrow{<}}},postaction=decorate] 
(q) to node[left,pos=0.7] {$\scriptstyle \flop$} (p);

\node[color=\colorlinebundlebottom] at (0.5*\spherescale,-1.1*\spherescale) {$\scriptstyle \otimes\scrO_{\kern -1pt X^+}\!(1)$};
 \draw[\monodromystyle,color=\colorlinebundlebottom, looseness=1.2] 
(q)
\segment{-60}{[shift=(-80:\spherescale)]0,0}{-60};
\draw[\monodromystyle,color=\colorlinebundlebottom,bend left,line cap=round,dash pattern=on 0pt off 2.5\pgflinewidth] ([shift=(-100:\spherescale)]0,0) to ([shift=(-80:\spherescale)]0,0);
 \draw[\monodromystyle,color=\colorlinebundlebottom,bend left,looseness=1.3,decoration={
    markings,
    mark=at position 0.5 with {\arrow[rotate=15]{>}}},postaction=decorate] 
([shift=(-100:\spherescale)]0,0) to (q);
\filldraw[color=white] (p) circle (3pt);
\filldraw (p) circle (1pt);
\filldraw[color=white] (q) circle (3pt);
\filldraw (q) circle (1pt);
\draw[thick] ([shift=(-84:\spherescale)]0,0) arc (-84:84:\spherescale)  
[bend left] to (96:\spherescale)
arc (96:264:\spherescale)
[bend left] to cycle;
\end{tikzpicture}
\end{array}
\qquad
\begin{array}{c}
\sphTwoPoint{1}
\end{array}
\]
Black monodromies indicate twist functors $\twistGen_{\scrS_i}$ on~$X$ from \ref{intro simples global auto}, along with functors $\twistGen_{\scrS'_i}$ on~$X^+$ defined in the same way.
\end{thm}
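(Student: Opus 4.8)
The plan is to deduce this symmetric two-basepoint statement from the one-basepoint Theorem~\ref{prop geom monod}, applied to both resolutions and then glued along the flop functor. First I would record that the SKMS is intrinsic to the base: by the construction recalled in~\ref{stab cor} it is a quotient of a space of normalised Bridgeland stability conditions on $\Db(\coh X)\simeq\Db(\coh X^+)$, so it is literally the same $2$-sphere minus $N+2$ points for $X$ and for $X^+$, and it carries two distinguished basepoints $p,q$, namely the large-radius limits of $X$ and of $X^+$, sitting on the two equatorial sides of the $N$ punctures. The target is the groupoid $\mathcal{G}$ with objects $\Db(\coh X)$ and $\Db(\coh X^+)$ and morphisms the exact equivalences; I want a groupoid homomorphism $\Uppi_1(\cM_{\scrS\scrK};\{p,q\})\to\mathcal{G}$ sending $p\mapsto\Db(\coh X)$, $q\mapsto\Db(\coh X^+)$, and the connecting path labelled $\flop$ to the flop functor.

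The gluing itself is automatic: once the connecting path $\flop\colon p\to q$ is fixed, every morphism of $\Uppi_1(\cM_{\scrS\scrK};\{p,q\})$ is a composite of a loop at $p$ with a power of $\flop$, so the homomorphism $\rho_X$ of Theorem~\ref{prop geom monod} on $\uppi_1(\cM_{\scrS\scrK},p)$ together with $\flop$ already defines a groupoid homomorphism to $\mathcal{G}$. What must be checked is that it sends the generators to the functors drawn in the figure. For the $q$-pole loop this is immediate: it acts by $\flop^{-1}\circ(-\otimes\scrO_{X^+}(\pm1))\circ\flop$, which is exactly the bottom-pole generator of Theorem~\ref{prop geom monod} read with the figure's orientation; here one only needs to fix the representative of the flop functor (these differ by $-\otimes\scrO(1)$) compatibly with the basepoint $q$. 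The substance is the $N/2$ equatorial loops near $q$: a loop $\gamma$ at $p$ encircling such a puncture acts, by Theorem~\ref{prop geom monod} for $X$, as $\twistGen_{\scrS_i}$, while the figure -- equivalently, Theorem~\ref{prop geom monod} applied to $X^+$ with its own simples helix $\{\scrS'_i\}$, thickenings and extension sheaf $\scrZ'$ -- asserts that it equals $\flop^{-1}\circ\twistGen_{\scrS'_j}\circ\flop$ for the matching index.

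So the whole theorem comes down to one lemma: for a bijection $\sigma$ of $\mathbb{Z}$ realising a half-period shift between the two helices, there is a functorial isomorphism $\flop\circ\twistGen_{\scrS_i}\cong\twistGen_{\scrS'_{\sigma(i)}}\circ\flop$. I would prove this in two moves. The formal move: for any equivalence $F$ and perfect object $E$, applying $F$ to the defining triangle $\RHom(E,-)\Lotimes{\End(E)}E\to\Id\to\twistGen_{E}$ gives $F\circ\twistGen_{E}\cong\twistGen_{FE}\circ F$; and since noncommutative deformation functors and their (pro-)representing objects are preserved by equivalences, $\flop(\scrE_i)\cong\scrE'_{\sigma(i)}$ as soon as $\flop(\scrS_i)\cong\scrS'_{\sigma(i)}$ up to shift. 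The geometric move, which is the real content, is that $\flop$ matches the two helices with a half-period shift, $\flop(\scrS_i)\cong\scrS'_{i-N/2}$ up to shift. For this I would use that $\flop$ carries the perverse heart $\Per$ of $X$ to an iterated tilt of the perverse heart $\PerOne$ of $X^+$, combined with the explicit description of the mutation/tilting chain of hearts established earlier in the paper: this computes $\flop$ on the generators of each helix, namely the thickenings $\scrO_{k\Curve}$, the canonical bundles $\omega_{k\Curve}$, and $\scrZ$ in the cases $\ell=5,6$, and it exhibits the ``$\scrO$-half'' of one helix as the ``$\omega$-half'' of the other, which is precisely the shift by $N/2$. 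One then checks that the three defining properties of the helix in~\S\ref{section new helices} transport: property~(3) because $\flop$ is $\mathfrak{R}$-linear, hence commutes with $-\otimes\scrO(1)$, and property~(2) because $\flop$ intertwines the dualising functors $\mathbb{D}$ up to an overall shift.

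Finally the case $\ell=1$ is separate and short: here $N=1$, the simples helix is just $\scrS_i=\scrO_{\Curve}(i-1)$, and the asserted action is the two-basepoint picture in which $\flop$ runs both ways around the single equatorial puncture; this follows from the $\ell=1$ case of Theorem~\ref{prop geom monod} together with the classical identity (Bridgeland--Chen, as used by Toda) presenting $\flop\circ\flop$ as $(-\otimes\scrO_X(1))$ composed with the spherical twist around $\scrO_{\Curve}(-1)$. The step I expect to be the main obstacle is the geometric move in the key lemma: pinning down $\flop$ exactly on the non-reduced thickenings and on $\scrZ$, and tracking the homological shifts and line-bundle twists carefully enough that the half-period identification $\sigma$ is correct on the nose; a subsidiary nuisance, already visible in the pole check, is that the flop functor is only canonical up to $-\otimes\scrO(1)$, so the representatives and loop orientations must be fixed consistently with the figure.
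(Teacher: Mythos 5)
Your plan---deduce the two-basepoint statement from the one-basepoint Theorem~\ref{prop geom monod} by adjoining a connecting path $\flop$ and recognising the $q$-side loops by conjugation---is a sensible reduction, but it rests on a geometric compatibility lemma, $\flop(\scrS_i)\cong\scrS'_{\sigma(i)}$ up to shift for an explicit permutation~$\sigma$, which you flag as the main obstacle but do not supply; the paper never proves any such statement. It instead sidesteps the lemma entirely by working on the algebraic strip \eqref{functors strip 1} throughout. By \cite[4.2]{HomMMP} the middle wall-crossing $\Phi_0$ is replaced by the flop functor, via $\Uppsi_+\circ\flop^{-1}\cong\Phi_0\circ\Uppsi$, so that $\Db(\coh X)$ and $\Db(\coh X^+)$ sit directly on the strip at the positions of $\Lambda_0$ and $\Lambda_1$. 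The generators of the two-basepoint groupoid---loops $b_i$ at $p$, loops $c_i$ at $q$, and paths $a$, $d$, $\flop$ between the poles---then act tautologically, by composing strip arrows, and the $c_i$ are matched with $\twistGen_{\scrS'_i}$ not by conjugating the $p$-side formula through $\flop$ but directly, by the $X^+$ version of \ref{alg twist is mono}, using only that $\Lambda'_0\cong\Lambda_1$ and that the hyperplane arrangement runs backwards on $X^+$. In effect the paper never needs to say what $\flop$ does to any member of either simples helix.

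If you do want your route, the formal move is fine: conjugating the defining triangle of $\twistGen_E$ by an equivalence $F$ yields $F\circ\twistGen_E\circ F^{-1}\cong\twistGen_{FE}$, and noncommutative deformation representability transports across equivalences, so the problem reduces to identifying $\flop(\scrS_i)$ in $\Db(\coh X^+)$. That is precisely where I would expect trouble: one must compute $\flop$ on each of $\scrO_{k\Curve}$, $\omega_{k\Curve}$, and $\scrZ$, tracking homological shifts and the line-bundle ambiguity in $\flop$ exactly, and then check that the helix axioms of~\S\ref{section new helices} transport. Moreover, because the hyperplane arrangement reverses direction for $X^+$, the permutation $\sigma$ should involve a reflection rather than the pure translation $i\mapsto i-N/2$ you propose, so even the target statement of your key lemma needs correction before the computation starts.
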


The above result extends Figure~\ref{fig.length 1} to higher length flops.  It turns out that only the sheaves $\scrZ$ and $\scrO_{\ell\Curve}$ in the above theorem can be genuinely spherical (\ref{when NC needed}), and for all others we need  deformation theory to obtain the autoequivalences.

Our main new technique to construct the above actions is to geometrically describe the simples and projectives in iterated tilts of the category of perverse sheaves $\Per$.  The projectives control the noncommutative deformation theory,  and construct the twists. The actions then follow since monodromy corresponds to twisting around the simples.

\subsection{Tilts of Perverse Sheaves}\label{Ti intro section}
In the following, we construct a family of sheaves $\{ \scrV_i\}_{i\in\mathbb{Z}}$, which we then prove are vector bundles.

\begin{prop}[\ref{LemmaA}, \ref{V pos are bundles}, \ref{rank scrV eq rank V}]\label{Vi summary intro}
For all $i\in\mathbb{Z}$, $\scrV_i$ is a vector bundle on~$X$, and furthermore the following hold.
\begin{enumerate}
\item\label{Vi summary intro 2} There is a short exact sequence
\[
0\to\scrV_{i-1}\to\scrV_i^{\oplus n_i}\to\scrV_{i+1}\to 0,
\]
where  the numbers $n_i$, and the ranks of the $\scrV_i$, are given in~\ref{ns summary}.
\item\label{Vi summary intro 1} $\scrV_{i+N}\cong\scrV_i\otimes\scrO(1)$.
\end{enumerate}
\end{prop}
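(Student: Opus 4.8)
The plan is to identify each $\scrV_i$ with a summand of a tilting bundle on $X$ obtained from the classical tilting bundle of $\Per$ by iterated mutation, and then to extract the vector bundle property, the short exact sequence, and the periodicity from the combinatorics of mutation together with the symmetry $-\otimes\scrO(1)$.

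\emph{Step 1 (tilting bundles and mutation).} First I would recall, from the mutation theory of tilting bundles on flopping contractions set up earlier, that the iterated tilts of $\Per$ at simple objects are realised by derived equivalences under which the chamber at step $i$ has tilting bundle $\scrV_{i-1}\oplus\scrV_i$, with $i=0$ recovering $\Per$ (so $\scrV_{-1}\oplus\scrV_0$ is the classical perverse tilting bundle). Passing from step $i-1$ to step $i$ is mutation at the summand that changes: $\scrV_{i-1}$ is retained and $\scrV_{i-2}$ is replaced by the cone $\scrV_i$ of a minimal left $\add(\scrV_{i-1})$-approximation, and symmetrically for $i\leq 0$. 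After re-indexing, the defining triangle reads $\scrV_{i-1}\to\scrV_i^{\oplus n_i}\to\scrV_{i+1}\to{}$, the multiplicity $n_i$ being the arrow count in the quiver of $\End_X(\scrV_{i-1}\oplus\scrV_i)$ recorded in \ref{ns summary}. Once this triangle is known to be a short exact sequence of sheaves, this gives the short exact sequence.

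\emph{Step 2 (local freeness and ranks).} I would prove $\scrV_i$ is a vector bundle by induction on $|i|$, the base case $\scrV_{-1}\oplus\scrV_0$ being the perverse tilting bundle. For the inductive step it suffices to show the exchange triangle is concentrated in degree $0$, i.e.\ that $\scrV_{i-1}\to\scrV_i^{\oplus n_i}$ is injective with locally free cokernel. Injectivity follows since the map is injective away from the exceptional fibre — where the contraction is an isomorphism, or a generic conifold point on $\Curve$, and the map is an Euler-type sequence — while $\scrV_{i-1}$ is torsion-free. For the cokernel I would pass to the complete local model $X\to\Spec R$ with $R$ Gorenstein terminal: there $\pi_*\scrV_j$ is a modifying $R$-module, the mutation above matches Iyama--Wemyss mutation of modifying modules, and their exchange sequence is a genuine short exact sequence of reflexive modules; hence the cone is a sheaf, and being a summand of the mutated tilting bundle forces it to be locally free on $X$. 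Taking ranks along the short exact sequence then yields the recursion $\rank\scrV_{i+1}=n_i\rank\scrV_i-\rank\scrV_{i-1}$, which together with $\rank\scrV_{-1},\rank\scrV_0$ pins down $\rank\scrV_i$ and identifies it with $\rank V_i$.

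\emph{Step 3 (periodicity) and the main obstacle.} For $\scrV_{i+N}\cong\scrV_i\otimes\scrO(1)$ I would use that $-\otimes\scrO(1)$ is an autoequivalence of $\Db(\coh X)$ carrying $\Per$ to the $N$-th tilt and cyclically permuting the chambers with period exactly $N$ — this is how $N$ is defined in \S\ref{section new helices} and follows from \ref{stab cor}. Consequently it sends tilting bundles to tilting bundles and the exchange triangle at step $i$ to that at step $i+N$, so by uniqueness of the data in Step 1 it sends $\scrV_i$ to $\scrV_{i+N}$. The delicate point is the degree-$0$ statement inside Step 2: in general a mutation of a tilting bundle is only a tilting complex, and producing genuine two-term complexes is exactly what happens when one mutates in the "wrong" direction; ruling this out here is where the geometry of the flop enters, and I expect the cleanest route to be the reduction to the module-theoretic exchange sequences, where the degree-$0$ statement is available.
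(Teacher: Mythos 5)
Your overall strategy — realise the $\scrV_j$ as summands of iterated mutations of the perverse tilting bundle, obtain the exchange triangle from module-theoretic exchange sequences, show it is concentrated in degree zero, and deduce periodicity from the $\scrO(1)$-action on the chamber decomposition — is essentially the same as the paper's, and you correctly identify the delicate point. Your Step~3 and the injectivity argument in Step~2 (the exchange map is a map of sheaves once $\scrV_{i-1},\scrV_i$ are sheaves by induction, it agrees with the exact exchange sequence away from $\Curve$ where $f$ is an isomorphism, and torsion-freeness of $\scrV_{i-1}$ then rules out a kernel supported on $\Curve$) are sound, and in fact give a slightly slicker route to the conclusion that $\scrV_{i+1}$ is a \emph{sheaf} than the paper's, which instead introduces the reverse exchange sequence and the deformation algebra $\Lambda_i^{\deform}$ together with the cohomology bound of~\ref{only neg}.

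However, your argument that $\scrV_{i+1}$ is a \emph{vector bundle} — ``being a summand of the mutated tilting bundle forces it to be locally free on $X$'' — is circular: that the new tilting object $\scrP_{i+1}=\scrV_i\oplus\scrV_{i+1}$ is a bundle rather than merely a tilting complex or tilting sheaf is exactly what is being proved, and a tilting sheaf on $X$ whose pushforward is a reflexive module giving an NCCR of $R$ is not automatically locally free. The short exact sequence $0\to\scrV_{i-1}\to\scrV_i^{\oplus n_i}\to\scrV_{i+1}\to 0$ alone only yields $\pd_{\scrO_{X,x}}(\scrV_{i+1})_x\leq 1$, equivalently $\depth\geq 2$, which is not enough. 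What the paper does at this point is use the \emph{reverse} exchange sequence $0\to\scrV_{i+1}\to\scrV_i^{\oplus n_i}\to\scrV_{i-1}\to\scrE_i\to 0$, where $\scrE_i\leftrightarrow\Lambda_i^{\deform}$ is a sheaf filtered by the simple $\scrS_i$ (so $\depth_x\scrE_i=1$ on the support), splice it, and apply the depth lemma twice to force $\depth_{\scrO_{X,x}}(\scrV_{i+1})_x=3$; Auslander--Buchsbaum then gives $\pd=0$. Your proposal does not produce this second sequence nor any substitute for the depth bound, so the local-freeness step is a genuine gap. (Note also that the paper's~\ref{all tilting bundles} — that every reflexive tilting sheaf on smooth $X$ is one of the $\scrP_i$ and hence a bundle — is a \emph{consequence} of this theorem, not an input one can appeal to.)
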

Motivated by \ref{Vi summary intro}\eqref{Vi summary intro 1}, we refer to $\{\scrV_i\}_{i\in\mathbb{Z}}$ as the vector bundle helix.
\begin{remark}
For length one flops, this helix is simply $\scrV_i= \scrO(i)$, and the short exact sequences above are just pullbacks to $X$ of the Euler sequence $0\to\scrO(-1)\to\scrO^{\oplus 2}\to \scrO(1)\to 0$ on~$\mathbb{P}^1$, twisted by~$\scrO(i)$.  For higher length flops, we thus view the new exact sequences in \ref{Vi summary intro}\eqref{Vi summary intro 2} as generalised Euler sequences.
\end{remark}

We construct, in \ref{def Ti} and \ref{def Ti Per}, a $\mathbb{Z}$-indexed family \opt{ams}{of hearts $\Tilt_i(\Per X)$}\opt{ip}{$\Tilt_i(\Per X)$ of hearts} obtained as successive tilts.  Our main technical result is the following, which shows that our helices $\{\scrS_i\}_{i\in\mathbb{Z}}$ and $\{\scrV_i\}_{i\in\mathbb{Z}}$ completely describe the simples and projectives in iterated tilts of perverse sheaves.

\begin{thm}\label{simples and projs}
For all $i\in\mathbb{Z}$, the abelian category $\Tilt_i(\Per X)$ has:
\begin{enumerate}
\item \textnormal{(\ref{progen main})} Progenerator $\scrP_i=\scrV_{i-1}\oplus\scrV_i$, which is a tilting bundle on $X$.
\item \textnormal{(\ref{simples main})} Simples $\scrS_{i-1}[1]$ and $\scrS_i$.
\end{enumerate}
\end{thm}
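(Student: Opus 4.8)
\emph{Proof proposal.} The plan is to prove (1) and (2) together, by induction on $|i|$, the two statements being tied by the projective-cover correspondence between summands of the progenerator and simple objects. The engine is the standard dictionary between mutation of a two-summand tilting bundle on $X$ and tilting of its heart at a simple: if $\scrV\oplus\scrW$ is a tilting bundle whose heart has simples $S\leftrightarrow\scrW$ and $U\leftrightarrow\scrV$ --- meaning $\RHom_X(\scrW,S)=\K$, $\RHom_X(\scrV,S)=0$, $\RHom_X(\scrV,U)=\K$ and $\RHom_X(\scrW,U)=0$ --- and $\scrW\to\scrV^{\oplus n}$ is a minimal left $\add\scrV$-approximation with cone $\scrW'$, then $\scrV\oplus\scrW'$ is again a tilting bundle, its heart is the tilt of the old heart at $S$, and in the new heart $S[1]\leftrightarrow\scrW'$ while the surviving simple $U'\leftrightarrow\scrV$ is characterised by $\RHom_X(\scrV,U')=\K$, $\RHom_X(\scrW',U')=0$. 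For the base case $i=0$ one has $\Tilt_0(\Per X)=\Per X$, whose tilting-bundle progenerator and two curve-supported simples are given by Van den Bergh; matching this against the construction of the helices identifies the progenerator with $\scrV_{-1}\oplus\scrV_0$ and the simples with $\scrS_0=\scrO_{\Curve}(-1)$ and $\scrS_{-1}[1]$, which one confirms by computing $\RHom_X(\scrV_j,\scrS_0)$ and $\RHom_X(\scrV_j,\scrS_{-1}[1])$ for $j\in\{-1,0\}$ using the Euler-type sequence of \ref{Vi summary intro}\eqref{Vi summary intro 2} at $i=0$ together with $H^\bullet(\Curve,\scrO_{\Curve}(-1))=0$.

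For the inductive step, suppose $\Tilt_i(\Per X)$ has progenerator $\scrP_i=\scrV_{i-1}\oplus\scrV_i$, a tilting bundle, with simples $\scrS_{i-1}[1]\leftrightarrow\scrV_i$ and $\scrS_i\leftrightarrow\scrV_{i-1}$. By the definition of the hearts in \ref{def Ti} and \ref{def Ti Per}, $\Tilt_{i+1}(\Per X)$ is the tilt of $\Tilt_i(\Per X)$ at $\scrS_i$, i.e.\ the tilt for which $\scrS_i[1]$ becomes a simple. The short exact sequence $0\to\scrV_{i-1}\to\scrV_i^{\oplus n_i}\to\scrV_{i+1}\to0$ of \ref{Vi summary intro}\eqref{Vi summary intro 2} presents $\scrV_{i+1}$ as the cone of a left $\add\scrV_i$-approximation of $\scrV_{i-1}$, which one checks is minimal using the ranks of \ref{ns summary}. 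The dictionary then yields part (1) for $i+1$: the new progenerator is $\scrV_i\oplus\scrV_{i+1}=\scrP_{i+1}$, a tilting bundle, with $\scrS_i[1]\leftrightarrow\scrV_{i+1}$. For part (2) it remains to identify the surviving simple $U'\leftrightarrow\scrV_i$ with $\scrS_{i+1}$, i.e.\ to show $\RHom_X(\scrV_i,\scrS_{i+1})=\K$ and $\RHom_X(\scrV_{i+1},\scrS_{i+1})=0$; this I would do by feeding the Euler-type sequence for the $\scrV$'s, together with the recursion expressing $\scrS_{i+1}$ through $\scrS_i$ and $\scrS_{i-1}$, into the induction hypothesis --- the relevant recursion being read off the thickening sequences relating successive $\scrO_{k\Curve}$ and, for $\ell\in\{5,6\}$, the extension $0\to\scrO_{3\Curve}\to\scrZ\to\scrO_{2\Curve}\to0$ of \ref{Ext23}. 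The step $i\rightsquigarrow i-1$ is symmetric --- tilt at $\scrS_{i-1}[1]$, mutate the summand $\scrV_i$ --- and alternatively follows from the duality $\mathbb{D}$ and the relation $\scrS_{-n}=\mathbb{D}\scrS_n[-1]$. Finally, the periodicities $\scrS_{j+N}\cong\scrS_j\otimes\scrO(1)$ and $\scrV_{j+N}\cong\scrV_j\otimes\scrO(1)$ from \ref{Vi summary intro}\eqref{Vi summary intro 1} show the induction closes up with period $N$, so that only one period of indices --- indeed, by duality, only a half-period --- requires genuine verification, after which the conclusion holds for all $i\in\mathbb{Z}$.

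The formal skeleton --- mutation of tilting bundles equals tilting of hearts at simples, iterated from the perverse heart --- is routine once the conventions are fixed, and \ref{Vi summary intro} already supplies the decisive exact sequences, so I expect the real work to be the $\RHom_X$-bookkeeping, uniform in the length $\ell\in\{1,\dots,6\}$, showing that the \emph{explicitly constructed} sheaves $\scrS_j$ and $\scrV_j$ really are the simples and projective covers forced by the iterated tilts. The delicate points should be: verifying minimality of the approximation sequences of \ref{Vi summary intro}; following the simple-mutation through the exceptional sheaf $\scrZ$ in lengths $5$ and $6$; and gluing across the transition at $j\approx N/2$, where the piecewise definition of the simples helix switches from the explicit list to its $\mathbb{D}$-dual and the two branches of the recursion must be reconciled.
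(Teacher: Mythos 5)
The skeleton you describe --- a simultaneous induction in $|i|$ driven by the slogan ``mutation of a two-summand tilting bundle on $X$ equals tilting of the heart at a simple'', reduced to a half-period by tensoring and duality --- does match the shape of the paper's argument, and the reduction steps correspond to \ref{T via tensor} and \ref{Per tilt duality}. However, your central ``dictionary'' lemma is incorrect as stated and hides the hardest step. When you form the cone $\scrW'$ of a left $\add\scrV$-approximation $\scrW\to\scrV^{\oplus n}$ in $\Db(\coh X)$, there is no general reason for $\scrW'$ to be a sheaf, let alone a vector bundle: the corresponding mutation of $R$-modules produces another reflexive module, but the object $\scrV_{i+1}$ on $X$ is a priori a two-term complex. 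Proving that it is locally free is precisely \ref{V pos are bundles}, whose argument uses two things your dictionary silently assumes: (i)~that $\Lambda_i^{\deform}$, hence the object $\scrE_i$, is filtered by a \emph{sheaf} $\scrS_i$ --- i.e.\ part~(2) at step $i$ must already be available before part~(1) can close; and (ii)~a depth/Auslander--Buchsbaum argument using \ref{only neg}. Since \ref{Vi summary intro} repackages exactly \ref{V pos are bundles}, citing it as a black-box input is circular.

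The second gap is in identifying the surviving simple, most acutely for $\ell=5,6$. You propose to verify $\RHom_X(\scrV_i,\scrS_{i+1})=\K$ and $\RHom_X(\scrV_{i+1},\scrS_{i+1})=0$ by feeding Katz-type sequences into the inductive hypothesis, with $\scrZ$ supplied by \ref{Ext23}. But the ``recursion expressing $\scrS_{i+1}$ through $\scrS_i$ and $\scrS_{i-1}$'' does not exist: the Katz sequences \eqref{Katz ses} express $\scrO_{(a-1)\Curve}$ in terms of $\scrO_{a\Curve}$ and $\scrO_\Curve(-1)=\scrS_0$, not $\scrS_{i-1}$, and the pattern is broken at the $\scrZ$ insertion. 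More importantly, \ref{Ext23} is proved in the paper \emph{as a consequence of} \ref{simples main} --- orthogonality of consecutive helix entries is what forces $\Ext^1_X(\scrO_{2\Curve},\scrO_{3\Curve})=\mathbb{C}$ --- so you cannot invoke it to pre-define $\scrZ$. The paper sidesteps both problems by working on the $\Lambda_i$-module side: \ref{mut track lem} controls how dimension vectors evolve under the mutation functors $\Phi_j$, and pushing the Katz triangles through $\Phi_{i-1}\circ\cdots\circ\Phi_0\circ\Uppsi$ yields a module whose dimension vector is read off; for $\ell=5,6$ the module of dimension vector $(1,1)$ that appears is what \emph{constructs} $\scrZ$ (see~\eqref{equation definition Z}), rather than $\scrZ$ being a pre-built sheaf whose orthogonality must then be checked.
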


In the process of establishing the above, it turns out that there are two main actions on the family $\Tilt_i(\Per X)$, namely tensoring and duality, as follows.  We refer to the duality in \eqref{operations on Per 2} below as Perverse--Tilt duality. 
\begin{thm}\label{operations on Per}
With notation as above, for all $i,k\in\mathbb{Z}$, the following statements hold, where $\finitelength$ denotes the finite length subcategory.
\begin{enumerate}
\item\label{operations on Per 1} \textnormal{(\ref{T via tensor})}  $\Tilt_{i+kN}(\Per X)= \Tilt_i(\Per X)\otimes \scrO(k)$.
\item\label{operations on Per 2} \textnormal{(\ref{Per tilt duality})}   
$\mathbb{D}(\finitelength\Tilt_{i}(\Per X))=\finitelength\Tilt_{1-i}(\Per X)$.
\end{enumerate}
\end{thm}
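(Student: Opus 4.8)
The plan is to deduce both symmetries from the descriptions of progenerators and simples in Theorem~\ref{simples and projs}, combined with the periodicities of the two helices: $\scrV_{i+N}\cong\scrV_i\otimes\scrO(1)$ from \ref{Vi summary intro}\eqref{Vi summary intro 1}, and $\scrS_{-n}=\mathbb{D}\scrS_n[-1]$, i.e.\ $\mathbb{D}\scrS_n\cong\scrS_{-n}[1]$, from property~(2) of the simples helix in \S\ref{section new helices}.

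For part~\eqref{operations on Per 1}, I would use that, by \ref{progen main}, each $\Tilt_i(\Per X)$ is the heart attached to the tilting bundle $\scrP_i=\scrV_{i-1}\oplus\scrV_i$, namely $\Tilt_i(\Per X)=\{E\in\Db(\coh X):\Hom_X(\scrP_i,E[j])=0\text{ for all }j\neq0\}$. Tensoring by a line bundle is an exact autoequivalence of $\Db(\coh X)$ with $\Hom_X(\scrP_i\otimes\scrO(k),E[j])=\Hom_X(\scrP_i,E\otimes\scrO(-k)[j])$, so the heart attached to $\scrP_i\otimes\scrO(k)$ is precisely $\Tilt_i(\Per X)\otimes\scrO(k)$. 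On the other hand $\scrP_i\otimes\scrO(k)\cong\scrV_{i+kN-1}\oplus\scrV_{i+kN}=\scrP_{i+kN}$ by the periodicity of the $\scrV_i$, and $\scrP_{i+kN}$ has attached heart $\Tilt_{i+kN}(\Per X)$, again by \ref{progen main}. Equating the two descriptions gives $\Tilt_{i+kN}(\Per X)=\Tilt_i(\Per X)\otimes\scrO(k)$.

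For part~\eqref{operations on Per 2}, I would first note that since $X$ is Gorenstein the dualizing complex is a shift of a line bundle, so $\mathbb{D}$ is an involutive contravariant autoequivalence of $\Db(\coh X)$; being exact it carries short exact sequences of a heart to short exact sequences of the image heart (interchanging sub- and quotient objects), hence sends the finite length subcategory of a heart --- the extension closure of its simple objects --- to the extension closure of the images of those simples. By \ref{simples main} the simples of $\Tilt_i(\Per X)$ are $\scrS_{i-1}[1]$ and $\scrS_i$; applying $\mathbb{D}$ and using $\mathbb{D}\scrS_n\cong\scrS_{-n}[1]$ gives $\mathbb{D}(\scrS_i)\cong\scrS_{-i}[1]$ and $\mathbb{D}(\scrS_{i-1}[1])\cong\mathbb{D}(\scrS_{i-1})[-1]\cong\scrS_{1-i}$. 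By \ref{simples main} applied with $i$ replaced by $1-i$, these two objects are exactly the simples of $\Tilt_{1-i}(\Per X)$, so $\mathbb{D}(\finitelength\Tilt_i(\Per X))$ is the extension closure of the simples of $\Tilt_{1-i}(\Per X)$, namely $\finitelength\Tilt_{1-i}(\Per X)$; the opposite inclusion follows from $\mathbb{D}^2\cong\Id$.

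I do not expect a serious obstacle once Theorem~\ref{simples and projs} and the helix periodicities are in hand; the care needed is in the bookkeeping. For~\eqref{operations on Per 1} one must genuinely use that $\scrP_i$ is a tilting bundle on~$X$, not merely an abstract progenerator, so that the ``heart attached to a tilting bundle'' is well defined and transforms correctly under $\otimes\scrO(k)$. For~\eqref{operations on Per 2} the subtle points are that $\mathbb{D}$ does \emph{not} preserve the full hearts $\Tilt_i(\Per X)$ --- only their finite length parts, which is exactly why the statement is phrased in terms of $\finitelength$ --- and that the cohomological shift in $\mathbb{D}\scrS_n\cong\scrS_{-n}[1]$ must be matched against the shift in the simple $\scrS_{i-1}[1]$ of $\Tilt_i(\Per X)$, so that duality lands in $\Tilt_{1-i}$ rather than $\Tilt_{-i}$.
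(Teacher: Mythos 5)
Your proposal has a significant logical-ordering problem: both halves of the argument invoke results that, in the paper's development, are themselves \emph{consequences} of the very statement you are asked to prove. For part~\eqref{operations on Per 2} you rely on Theorem~\ref{simples main} to read off the simples of $\Tilt_i(\Per X)$; but Step~2 of the proof of \ref{simples main} uses Perverse--Tilt duality (\ref{Per tilt duality}) to reduce the computation to $i\in\{0,\dots,N/2\}$, and Step~1 invokes \ref{T via tensor}. Likewise, part~\eqref{operations on Per 1} relies on \ref{progen main} to describe $\Tilt_i(\Per X)$ as the heart attached to the tilting bundle $\scrP_i$; but \ref{progen main} rests on the fact that the $\scrV_i$ are bundles (\ref{V pos are bundles}), whose proof uses \ref{simples main}, and the $i<0$ case of \ref{progen main}\eqref{progen main 3} explicitly cites \ref{progen main}\eqref{progen main 1}. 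So the proposal is circular as written.

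The paper deliberately works in the opposite direction. For~\eqref{operations on Per 1} it chases the heart $\mod\Lambda_{i+kN}$ around the commutative square of \ref{tensor cor}, built from the mutation functors $\Phi_j$ and the isomorphism $\upbeta$ alone; no identification of a progenerator is needed, so nothing downstream is touched. For~\eqref{operations on Per 2} it first proves the algebraic statement \ref{Tilt duality alg} by an induction whose inductive step is the abstract t-structure lemma \ref{t structure D duality}, and whose base case $i=1$ comes from Van den Bergh's result that $\mathbb{D}(\finitelength\PerOne X)=\finitelength\Per X$ --- not from any knowledge of the simples. Only once both symmetries are in place does the paper exploit them, in \ref{simples main}, to compute the simples helix. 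Your calculations $\mathbb{D}(\scrS_i)\cong\scrS_{-i}[1]$, $\mathbb{D}(\scrS_{i-1}[1])\cong\scrS_{1-i}$, and $\scrP_i\otimes\scrO(k)\cong\scrP_{i+kN}$ are correct and provide a welcome consistency check on \ref{simples main}, but they cannot serve as a proof of the two propositions that \ref{simples main} presupposes. To produce a non-circular argument you would need to establish both symmetries directly from the algebraic strip of \S\ref{mut notation section}, as the paper does.
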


Combining \ref{simples and projs} and \ref{operations on Per} we obtain a helix of abelian categories, with prescribed simples and projectives.  Figure~\ref{fig.length 5} below illustrates this in the case $\ell=5$, where $\scrA_i=\Tilt_i(\Per X)$. The inner circle illustrates the projectives, and the outer circle the simples. Here $\scrZ^\omega$ denotes the sheaf, defined in \S\ref{section simples helix}, which is dual to~$\scrZ$ in an appropriate sense.

\begin{figure}[h]
\hspace*{-3em}
\opt{ams}{$\picHelix{2}{1}$}\opt{ip}{$\picHelix{2}{1.05}$}
\caption{The simples and projectives helices for length five flops.}
\label{fig.length 5}
\end{figure}

A surprising consequence, in the local setting $X\to\Spec\mathfrak{R}$ where $X$ is smooth, is that the helix $\{\scrV_i\}_{i\in\mathbb{Z}}$ classifies \emph{all} tilting reflexive sheaves on $X$, as follows.  In particular, all noncommutative crepant resolutions of $\mathfrak{R}$ arise from tilting bundles on $X$.

\begin{cor}[\ref{all tilting bundles}]
Suppose that $\scrP$ is a basic reflexive tilting sheaf on $X$, and that $X$ is smooth.  Then $\scrP\cong\scrP_i$ for some $i\in\mathbb{Z}$.  In particular, the set of all basic tilting bundles on $X$ equals $\{\scrP_i=\scrV_{i-1}\oplus\scrV_i\}_{i\in\mathbb{Z}}$, and all reflexive tilting sheaves on $X$ are vector bundles.
\end{cor}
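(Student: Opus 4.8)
The plan is to show that every basic reflexive tilting sheaf on $X$ defines the same heart as one of the $\Tilt_i(\Per X)$, and then read off the statement. Given such a sheaf $\scrP$, set $\Lambda=\End_X(\scrP)$; since $X$ is smooth and $\scrP$ is tilting, $\Lambda$ is a module-finite $\mathfrak{R}$-algebra of finite global dimension and $\RHom_X(\scrP,-)\colon\Db(\coh X)\xrightarrow{\sim}\Db(\mod\Lambda)$ is an equivalence. Let $\scrH_{\scrP}\subseteq\Db(\coh X)$ be the preimage of the standard heart $\mod\Lambda$; it is a bounded heart whose basic progenerator is $\scrP$. Because a heart with a progenerator determines that progenerator up to additive closure, and its basic form up to isomorphism, it suffices to prove that $\scrH_{\scrP}=\Tilt_i(\Per X)$ for some $i\in\mathbb{Z}$: then $\scrP$ and $\scrP_i=\scrV_{i-1}\oplus\scrV_i$ are both basic progenerators of $\scrH_{\scrP}$, so $\scrP\cong\scrP_i$ by \ref{simples and projs}, and $\scrP_i$ is a vector bundle by \ref{Vi summary intro} --- giving both conclusions of the corollary at once. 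The statement that every noncommutative crepant resolution $\Gamma$ of $\mathfrak{R}$ arises this way is handled in the same manner: $\Gamma$ is derived equivalent to $X$, and one chooses the equivalence $\Db(\mod\Gamma)\xrightarrow{\sim}\Db(\coh X)$ compatibly with $\coh X$ over the regular punctured spectrum $\Spec\mathfrak{R}\setminus\{\m\}$ --- possible since $\Gamma=\End_{\mathfrak{R}}(M)$ there becomes the endomorphism algebra of the vector bundle $M|_{\Spec\mathfrak{R}\setminus\{\m\}}$, hence Morita trivial --- so that the image of $\mod\Gamma$ is a heart identified below with some $\Tilt_i(\Per X)$, whence $\Gamma\cong\End_X(\scrP_i)$.

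To identify the heart I would proceed in two steps. First, a comparison away from the curve: writing $U=X\setminus\Curve$ and using that $X\to\Spec\mathfrak{R}$ is an isomorphism onto the regular scheme $\Spec\mathfrak{R}\setminus\{\m\}$, one shows that $\scrH_{\scrP}$ and $\coh X$ restrict to the same heart on $U$, so that $\scrH_{\scrP}$ and $\Per X$ have the same simple objects off $\Curve$; consequently $\scrH_{\scrP}$ is obtained from $\Per X$ by an iterated tilt inside the finite-length subcategory $\finitelength\Per X$, whose two simples are $\scrS_{-1}[1]$ and $\scrS_0$ by \ref{simples and projs}. Second, a classification of such tilts: among the hearts obtained from $\Per X$ by iterated tilting at these finite-length simples, those which still carry a progenerator are exactly the iterated simple tilts $\Tilt_i(\Per X)$ of \S\ref{Ti intro section}. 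I would deduce this from the structure of the stringy K\"ahler moduli space: these are precisely the geometric chambers in the connected component $\cStab{\Db(\coh X)}$ of Bridgeland stability conditions, whose quotient is $\cM_{\scrS\scrK}\cong S^{2}\setminus\{N+2\ \text{points}\}$ (as recalled in \S\ref{section main results}, following \cite{TodaRes,HW}), and the chambers form a single $\mathbb{Z}$-chain, compatibly with the periodicity \ref{operations on Per}. Hence $\scrH_{\scrP}=\Tilt_i(\Per X)$ for some $i\in\mathbb{Z}$, as required.

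The hard part is the second step --- ruling out \emph{exotic} reflexive tilting sheaves, i.e. proving that the chain $\{\Tilt_i(\Per X)\}_{i\in\mathbb{Z}}$ genuinely exhausts the relevant hearts. An arbitrary iterated finite-length tilt of $\Per X$ at its two simples yields a great many hearts, and one must prove that those admitting a progenerator are linearly ordered; this is where the explicit simples helix, the Perverse--Tilt duality of \ref{operations on Per}, and the concrete description of $\cM_{\scrS\scrK}$ as a sphere minus finitely many points do genuine work. A parallel, more module-theoretic route --- $\pi_{*}\scrP$ is a maximal modifying $\mathfrak{R}$-module, its mutation class forms a single $\mathbb{Z}$-chain by transitivity of mutation, and one must then pin down which members of that chain are reflexive sheaves on $X$ rather than on the flop $X^{+}$ --- runs into the same core difficulty, once again controlled by the helices built in this paper.
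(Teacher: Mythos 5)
Your proposal takes a genuinely different route from the paper's, and it has a serious gap that you yourself identify but do not close.

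The paper's proof is a short module-theoretic argument. Since $\scrP$ is a basic reflexive tilting sheaf on a smooth $X$, its pushforward $f_*\scrP$ is a basic reflexive $R$-module giving an NCCR. By \ref{IW9 summary} (the mutation result quoted from [IW2]), every such module lies in the single $\mathbb{Z}$-chain $\{V_{i-1}\oplus V_i\}_{i\in\mathbb{Z}}$, so $f_*\scrP\cong V_{i-1}\oplus V_i$ for some $i$. By \ref{progen main}\eqref{progen main 2}, the bundle $\scrP_i$ on $X$ satisfies $f_*\scrP_i\cong V_{i-1}\oplus V_i$, so $f_*\scrP\cong f_*\scrP_i$, and then reflexive equivalence (Van den Bergh) upgrades this to $\scrP\cong\scrP_i$ as sheaves on $X$. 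The remaining claims follow from \ref{V pos are bundles}. Notice that the worry you raise in your final paragraph --- ``one must then pin down which members of that chain are reflexive sheaves on $X$ rather than on the flop $X^+$'' --- simply does not arise in this argument: both $\scrP$ and $\scrP_i$ are \emph{a priori} sheaves on $X$, their pushforwards agree as $R$-modules, and reflexive equivalence does the rest. The compatibility that makes this work is exactly $f_*\scrV_i\cong V_i$, i.e.\ \ref{progen main}\eqref{progen main 2}, and that is the point of constructing the helix $\{\scrV_i\}$ on $X$.

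In contrast, your heart-theoretic route leaves both of its two steps as substantial unverified claims. For the first step, agreement of $\scrH_\scrP$ and $\Per X$ away from $\Curve$ does not on its own show that $\scrH_\scrP$ is reached from $\Per X$ by iterated tilting at the two finite-length simples; hearts which agree on an open subset and whose difference is supported on a proper closed subscheme can in general be related in more complicated ways, and pinning this down requires an argument. For the second step, as you say, the tree of all iterated finite-length tilts at two simples is much larger than the linear chain $\{\Tilt_i(\Per X)\}_{i\in\mathbb{Z}}$, and the assertion that exactly the members of this chain carry progenerators --- deduced ``from the structure of the stringy K\"ahler moduli space'' --- is not established in the paper in the form you need (one would have to show that the geometric chambers of $\cStab{n}\scrD$ exhaust the tilting hearts, and that every $\scrH_\scrP$ lands in a chamber). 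You flag this as ``the hard part,'' correctly, but then do not supply the missing argument. The paper avoids the entire issue by staying on the algebra side and appealing to the known classification of rigid reflexive $R$-modules via mutation, which is a much cheaper route given the groundwork in \S\ref{IW9 summary section} and \ref{progen main}\eqref{progen main 2}.
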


\subsection{Further Applications} 
As in our previous work~\cite{DW1,DW2} taking factors of local tilting bundles turns out to control the deformation theory of associated sheaves.  With our new $\mathbb{Z}$-indexed family $\scrP_i$ of such bundles from \ref{simples and projs}, consider $\End_X(\scrP_i)$ and write $[\scrV_i]$ for the two-sided ideal of morphisms that factor through a summand of a finite sum of copies of~$\scrV_i$. We thus obtain a $\mathbb{Z}$-indexed family of algebras
\[
\Lambda_i^{\deform}=\End_X(\scrP_i)/[\scrV_i].
\]
Each is finite dimensional as a vector space, since $\mathfrak{R}$ is isolated.  It turns out that $\Lambda_i^{\deform}$ represents the functor of noncommutative deformations of\opt{ams}{ }\opt{ip}{~}$\scrS_i$.

\begin{thm}[\ref{global rep}]
In the quasi-projective setting $Y\to Y_\con$ of\opt{ams}{ }\opt{ip}{~}\ref{intro simples global auto}, consider the formal fibre $X\to\Spec \mathfrak{R}$. Then $\Lambda_i^{\deform}$ represents noncommutative deformations of $\scrS_i\in\coh Y$.
\end{thm}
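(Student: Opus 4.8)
The plan is to move the deformation problem across the tilting equivalence attached to $\scrP_i$ in Theorem~\ref{simples and projs}, and then invoke the prorepresentability machinery for contraction-type algebras, following the pattern of \cite{DW1,DW2}.

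First I would reduce to the complete local setting $X\to\Spec\mathfrak{R}$. The sheaf $\scrS_i$ is supported set-theoretically on the contracted curve $\Curve\subset Y$, so it, together with all of its noncommutative deformations over Artinian bases, lives in an arbitrarily small infinitesimal neighbourhood of $\Curve$; since $X$ is the formal fibre, this neighbourhood maps isomorphically to the corresponding neighbourhood in $Y$. Flat base change along $\Spec\mathfrak{R}\to Y_\con$ (equivalently the theorem on formal functions, using that $\scrS_i$ has proper support over $Y_\con$) identifies $\RHom_Y(\scrS_i,\scrS_i)$ with $\RHom_X(\scrS_i,\scrS_i)$ compatibly with all higher structure. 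As the noncommutative deformation functor of an object depends only on this DG- (equivalently $A_\infty$-) algebra, the functors of noncommutative deformations of $\scrS_i\in\coh Y$ and of $\scrS_i\in\coh X$ coincide; this is the same reduction used for $\scrO_{\Curve}$ in \cite{DW1}.

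Next I would pass to the algebra side. By Theorem~\ref{simples and projs}(1), $\scrP_i=\scrV_{i-1}\oplus\scrV_i$ is a tilting bundle on $X$, so $\Hom_X(\scrP_i,-)$ restricts to an equivalence of abelian categories $\Tilt_i(\Per X)\xrightarrow{\ \sim\ }\modCat\Lambda_i$, where $\Lambda_i=\End_X(\scrP_i)$, under which $[\scrV_i]=\Lambda_i e\Lambda_i$ for the idempotent $e$ cutting out the summand $\scrV_i$. By Theorem~\ref{simples and projs}(2), $\scrS_i$ is one of the two simples of $\Tilt_i(\Per X)$, and by the construction of $\scrP_i$ it is the one at the vertex complementary to $\scrV_i$; hence $\scrS_i$ corresponds to the simple $\Lambda_i$-module $S$ not killed by $e$. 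Since noncommutative deformations are computed inside the (common) derived category and are preserved under the equivalence, it remains to show that $\Lambda_i^{\deform}=\Lambda_i/\Lambda_i e\Lambda_i$ prorepresents the functor of noncommutative deformations of $S$.

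Finally I would apply the prorepresentability theorem. Because $\mathfrak{R}$ is an isolated singularity, $\Lambda_i^{\deform}$ is finite-dimensional over $\K$, so in particular $\Ext^1_{\Lambda_i}(S,S)$ is finite-dimensional and no self-extension of $S$ escapes the quotient; one then invokes the general statement that, for a module-finite algebra over a complete local ring equipped with an idempotent $e$ whose quotient $\Lambda/\Lambda e\Lambda$ is finite-dimensional, that quotient prorepresents the noncommutative deformations of the simple(s) unaffected by $e$. The hard part is checking that this machinery genuinely applies here: $X$ has Gorenstein terminal but not necessarily smooth singularities, so $\Lambda_i=\End_X(\scrP_i)$ need not be a noncommutative crepant resolution in the usual sense, and one must verify the self-duality and finiteness properties of $\Lambda_i$ and its simples needed to run the obstruction-theoretic argument of \cite{DW1}. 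These are exactly the structural facts about $\scrP_i$, $\Tilt_i(\Per X)$ and the helices established earlier in the paper (Theorems~\ref{simples and projs} and~\ref{operations on Per}, together with \ref{Vi summary intro}), and it is their combination — rather than any new deformation-theoretic input — that I expect to carry the proof.
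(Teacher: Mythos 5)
Your strategy matches the paper's: reduce to the formal fibre, transport across the tilting equivalence given by $\scrP_i$ (the generalisation of $\Uppsi$ in~\eqref{Psi per 0} to~\ref{Psi per 0 general}), and then invoke the prorepresentability machinery of \cite{DW1}. The paper packages the first reduction as the chain $\Def_Y^{\scrS_i}\cong\Def_U^{\scrS_i}\cong\Def_{\mathfrak U}^{\scrS_i}\cong\Hom_{\art_1}(\Lambda_i^{\deform},-)$ and the second as Theorem~\ref{all in helix have rep}, citing \cite{DW1,DW2} word-for-word with $\scrP_i$ replacing $\scrP_0$; this is precisely your plan.

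Two corrections. First, you identify $\scrS_i$ with ``the simple $\Lambda_i$-module $S$ not killed by $e$'', where $e$ cuts out $\scrV_i$. This is backwards: $\scrS_i$ corresponds to the projective $\Hom_X(\scrP_i,\scrV_{i-1})$, so it sits at the vertex complementary to $\scrV_i$ and satisfies $eS=0$. It is exactly because $S$ \emph{is} annihilated by $e$ that it survives as a module over $\Lambda_i/[\scrV_i]=\Lambda_i^{\deform}$, which is what you need for the quotient to prorepresent its deformations. Second, your closing worry about ``self-duality'' of $\Lambda_i$ and an obstruction-theoretic check is more than the argument requires. The paper needs only two inputs beyond the tilting equivalence: (a) that $S$ corresponds to a \emph{sheaf} across the equivalence (supplied by~\ref{simples main}), so the comparison $\Def^X_{\scrS_i}\cong\Def^{\Lambda_i}_S$ of \cite[3.9, 5.3]{DW2} goes through verbatim; and (b) that $\Lambda_i^{\deform}$ is finite dimensional and complete, which is Lemma~\ref{def alg fd and complete} and relies only on $\mathfrak{R}$ being isolated, not on $X$ being smooth. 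Prorepresentability is then \cite[3.1]{DW1} unchanged; no new duality input is needed.
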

In particular, the abelianisation of $\Lambda_i^{\deform}$ represents commutative deformations of $\scrS_i$.  Since $\Lambda_i^{\deform}$ is a factor of a tilting algebra, the techniques of \cite{DW1,DW3} apply, and this homological control allows us to extract very fine  information about the dimension of the deformation spaces. In \ref{quivers and dims} we determine precisely when $\Lambda_i^{\deform}$ is not commutative, and we give lower bounds on its dimension, and on the dimension of its abelianisation.  

Recall that the \emph{contraction algebra} $\CA$ was defined in \cite{DW1} via noncommutative deformations of $\scrO_{\Curve}(-1)$. In the notation here, \opt{ip}{note that }$\CA=\Lambda_0^{\deform}$.

\begin{cor}[\ref{GV bounds}]
For smooth $X$, there exist lower bounds as follows, where GV are the Gopakumar--Vafa invariants.
\[
\begin{tabular}{C{0.6cm}lc}
\toprule
$\ell$&\textnormal{GV lower bounds}&$\dim\CA$ \textnormal{lower bound}\\
\midrule
$1$ & $(1)$& $1$\\
$2$ & $(4,1)$& $8$\\
$3$ & $(5,3,1)$& $26$\\
$4$ & $(6,4,2,1)$& $56$\\
$5$ & $(7,6,4,2,1)$& $124$\\
$6$ &$(6,6,4,3,2,1)$& $200$\\
\bottomrule
\end{tabular}
\]
\end{cor}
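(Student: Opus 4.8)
The plan is to extract both columns of the table from the dimension estimates on the deformation algebras $\Lambda_i^{\deform}$ supplied by \ref{quivers and dims}, translated via the dictionary between these algebras and Gopakumar--Vafa invariants; throughout $X\to\Spec\mathfrak{R}$ is the smooth local model. First I would locate the relevant algebras. By \ref{operations on Per}, tensoring by line bundles and the Perverse--Tilt duality identify the family $\{\Lambda_i^{\deform}\}_{i\in\mathbb{Z}}$ --- up to algebra isomorphism, and in particular up to the dimension of the algebra and of its abelianisation --- with the finitely many members indexed by the fundamental domain $0\le i\le N/2$ of the simples helix. By the description in \S\ref{section simples helix} these are precisely the algebras attached to $\scrO_{\Curve}(-1)$, which is $\CA=\Lambda_0^{\deform}$, and to the thickenings $\scrO_{\ell\Curve},\scrO_{(\ell-1)\Curve},\hdots,\scrO_{2\Curve}$, together with the sheaf $\scrZ$ when $\ell=5,6$; write $\Lambda^{\deform}_{(d)}$ for the algebra attached to $\scrO_{d\Curve}$. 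Each is finite dimensional since $\mathfrak{R}$ is isolated, and \ref{global rep} identifies it with the representable noncommutative deformation algebra of the corresponding sheaf, so the output of \ref{quivers and dims} applies to all of them.

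For the right-hand column I would use the instance $i=0$ of \ref{quivers and dims}. The quiver of $\CA$ is a single vertex carrying $\dim_{\mathbb{C}}\Ext^1_X(\scrO_{\Curve}(-1),\scrO_{\Curve}(-1))$ loops, a number controlled by $\ell$ --- read off, for instance, from the generalised Euler sequences of \ref{Vi summary intro}, which compute the relevant $\Ext$-groups --- and $\CA$ is the Jacobi algebra of a superpotential whose low-degree part is forced by the $\Ext$-products along the helix. Counting the resulting forced monomials bounds $\dim_{\mathbb{C}}\CA$ below by $1,8,26,56,124,200$ in lengths $1,\hdots,6$. Equivalently this column follows from the Gopakumar--Vafa column below together with Toda's formula $\dim_{\mathbb{C}}\CA=\sum_{d=1}^{\ell}d^2 n_d$; the agreement of the two routes is a useful internal check.

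For the Gopakumar--Vafa column I would run the analogous estimates over the whole fundamental domain and feed them into the deformation-theoretic description of the $n_d$. For smooth $X$ the invariant $n_d$ is computed by the commutative deformations of a sheaf of support class $d[\Curve]$ and Euler characteristic one, and $\scrO_{d\Curve}$ is such a sheaf; Toda's theorem and its refinement to the thickenings --- now applicable because of the representability in \ref{global rep} --- express $n_d$ through $\dim_{\mathbb{C}}(\Lambda^{\deform}_{(d)})^{\ab}$ and the lower invariants $n_1,\hdots,n_{d-1}$ in a unitriangular way, with the contributions of the lower invariants nonnegative. Combining the bounds of \ref{quivers and dims} on these abelianised algebras (and the extra constraint coming from $\scrZ$ in lengths $5,6$, which is not itself a thickening) gives a system of inequalities; solving it inductively in $d$, and using that equality holds throughout for the universal length-$\ell$ flop, yields the stated lower bounds $(1),(4,1),(5,3,1),(6,4,2,1),(7,6,4,2,1),(6,6,4,3,2,1)$, with sharpness witnessed by those flops.

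The homological input is exactly \ref{quivers and dims}; the real difficulty, and the main obstacle, lies in the dictionary. One must pin down the precise unitriangular relation between $\dim_{\mathbb{C}}(\Lambda^{\deform}_{(d)})^{\ab}$ and $(n_1,\hdots,n_d)$ --- the case $d=1$ being Toda's formula for the contraction algebra --- control the Behrend-function normalisations that enter the moduli-theoretic definition of $n_d$, confirm that the lower bounds of \ref{quivers and dims} are genuinely attained by the universal flops, and handle $\scrZ$ by hand. These are the steps where one must leave the purely formal theory developed in this paper and appeal to enumerative geometry.
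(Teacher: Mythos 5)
Your proposal identifies the right ingredients (\ref{quivers and dims}, \ref{global rep}, the deformation-theoretic dictionary) but misstates the dictionary and misses one of the cases, so as written it has gaps.

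The relation between GV invariants and the deformation algebras is \emph{not} a unitriangular system requiring induction, Behrend-function bookkeeping, or a refinement of Toda's theorem. The paper's \ref{comm defs rep} is a direct equality: by the characterisation of Katz and Bryan--Katz--Leung, $n_d$ is literally the multiplicity of $\scrO_{d\Curve}$ in the Hilbert scheme, that is, the dimension of the representing object of \emph{commutative} deformations of $\scrO_{d\Curve}$, which by \ref{all in helix have rep} is $(\Lambda_j^{\deform})_{\ab}$ for the appropriate index $j$ in the simples helix. Thus $n_d=\dim_{\mathbb{C}}(\Lambda_j^{\deform})_{\ab}$, with no correction terms. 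Since $\mathbb{F}\Lambda_j^{\deform}$ is a factor of $\Lambda_j^{\deform}$, abelianisation gives $\dim_{\mathbb{C}}(\Lambda_j^{\deform})_{\ab}\geq\dim_{\mathbb{C}}(\mathbb{F}\Lambda_j^{\deform})_{\ab}$, and reading the right-hand side off the table in \ref{quivers and dims} --- matching the helix ordering $\scrO_{\Curve}(-1),\scrO_{\ell\Curve},\hdots,\scrO_{2\Curve}$ (with $\scrZ$ inserted when $\ell=5,6$; note $\scrZ$ contributes nothing to the GV column because it is not a thickening) --- yields the stated GV lower bounds in almost every case. Positing a unitriangular relation with contributions from lower $n_j$ introduces a dependence on the sign and normalisation conventions that would need to be verified and is simply not there; the universal-flop sharpness claim and the Behrend-function discussion are also not part of the argument.

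The one place where the table alone does \emph{not} suffice is $\ell=2$, $n_1$: there $\dim_{\mathbb{C}}(\mathbb{F}\Lambda_0^{\deform})_{\ab}=3$, whereas the claimed bound is $n_1\geq 4$. The paper handles this separately: for $\ell\geq 2$ the normal bundle of $\Curve$ is $(-3,1)$, so by \cite[2.15]{HomMMP} the quiver of $\Lambda_0^{\deform}$ has two loops, hence $(\Lambda_0^{\deform})_{\ab}$ is a finite-dimensional quotient of $\mathbb{C}[[x,y]]$ by (at most) two relations in $\mathfrak{m}^2$, forcing dimension $\geq 4$. Your proposal does not notice that the generic-slice data fails to supply the claimed bound here, so you would need this extra argument. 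For the $\dim\CA$ column, your second suggested route --- Toda's formula $\dim_{\mathbb{C}}\CA=\sum_{d=1}^{\ell}d^2 n_d$ applied to the GV lower bounds --- is the correct and intended one; your first suggestion (counting forced superpotential monomials) is vague and not what the paper does, since the loop count $\dim\Ext^1_X(\scrO_{\Curve}(-1),\scrO_{\Curve}(-1))=2$ is the same for every $\ell\geq 2$ and by itself does not explain the dependence of the bound on $\ell$.
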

Our last corollary, which may be of independent interest, shows that noncommutative deformations detect higher multiples of \opt{ams}{the curve}\opt{ip}{$\Curve$}.  When the representing object of noncommutative deformation theory is not commutative, we say that strictly noncommutative deformations exist.
\begin{cor}[\ref{when NC needed}, \ref{higher mult def}]
For $1\leq a\leq \ell$, higher multiples $na\Curve$ of\opt{ams}{ }\opt{ip}{~}$a\Curve$ exist \mbox{(i.e.~$2a\leq \ell$)} if and only if there exist strictly noncommutative deformations of the sheaf $\scrO_{a\Curve}$.
\end{cor}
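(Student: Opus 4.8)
\emph{The plan} is to locate $\scrO_{a\Curve}$ inside the simples helix, pass to its associated noncommutative deformation algebra via \ref{global rep}, and then read off (non)commutativity from the quiver computation of \ref{quivers and dims}; the statement about higher multiples is the purely geometric half.

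\emph{Geometric half.} First I would record that, for $1\le a\le\ell$, a higher multiple $na\Curve$ with $n\ge 2$ exists if and only if $2a\le\ell$. The direction ``$\Leftarrow$'' is immediate, since then $2a\Curve$ is one of the thickenings $\scrO_{\Curve},\hdots,\scrO_{\ell\Curve}$ constructed in \S\ref{section thickenings}. For ``$\Rightarrow$'', the construction there produces scheme-theoretic thickenings $m\Curve\subseteq X$ only for $m\le\ell$ (the top one being the scheme-theoretic fibre of the contraction), so $na\Curve$ existing forces $2a\le na\le\ell$.

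\emph{Homological half.} Inspecting the defining list of \S\ref{section new helices}, $\scrO_{a\Curve}$ coincides with a single helix member $\scrS_{j(a)}$ for an explicit index: for $\ell\le 4$ and $2\le a\le\ell$ one has $\scrO_{a\Curve}=\scrS_{\ell+1-a}$; the same formula holds for $\ell=5,6$ except that the interpolating sheaf $\scrZ$ displaces $\scrO_{2\Curve}$ to index $\ell$; and $\scrO_{\Curve}=\scrS_{0}\otimes\scrO(1)=\scrS_{N}$ by the periodicity $\scrS_{i+N}\cong\scrS_i\otimes\scrO(1)$. Applying \ref{global rep} in the local setting $X\to\Spec\mathfrak{R}$ then shows that $\Lambda^{\deform}_{j(a)}$ represents the noncommutative deformations of $\scrO_{a\Curve}$ (with $\Lambda^{\deform}_{N}\cong\Lambda^{\deform}_{0}=\CA$, using the line-bundle periodicity $\scrV_{i+N}\cong\scrV_i\otimes\scrO(1)$ of \ref{Vi summary intro} to identify the progenerators $\scrP_{i+N}\cong\scrP_i\otimes\scrO(1)$). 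Hence strictly noncommutative deformations of $\scrO_{a\Curve}$ exist if and only if $\Lambda^{\deform}_{j(a)}$ is not commutative. Finally \ref{quivers and dims} computes the quiver with relations of every $\Lambda^{\deform}_i$ — in particular the number of loops at its single vertex, which is $\dim\Ext^1_X(\scrS_i,\scrS_i)$ — and comparing this against the indices $j(a)$ above yields that $\Lambda^{\deform}_{j(a)}$ is noncommutative precisely when $2a\le\ell$. Combined with the geometric half, this is the corollary.

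\emph{Main obstacle.} Essentially all the real content is hidden in \ref{quivers and dims}: starting from the tilting description of \ref{simples and projs} (progenerator $\scrP_i=\scrV_{i-1}\oplus\scrV_i$, simples $\scrS_{i-1}[1]$ and $\scrS_i$), one must pin down the quiver and relations of $\Lambda^{\deform}_i=\End_X(\scrP_i)/[\scrV_i]$ sharply enough to \emph{decide} commutativity — so not merely that it has two or more loops, but that the relations do not collapse it to a commutative ring — which is where the deformation-theoretic machinery of \cite{DW1,DW3} enters. The remaining point requiring care is purely combinatorial: tracking $j(a)$ through all six values of $\ell$, in particular the shift caused by $\scrZ$ for $\ell=5,6$ and the wrap-around $\scrO_{\Curve}=\scrS_{N}$ when $a=1$, and checking that the resulting list of noncommutative positions agrees exactly with $\{a : 2a\le\ell\}$.
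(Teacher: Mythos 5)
Your proposal is correct and follows essentially the paper's route: the paper records the equivalence in Remark~\ref{higher mult def} as an immediate consequence of Corollary~\ref{when NC needed}, which is itself proved by ``a direct translation of the last column of the table in \ref{quivers and dims}'' after accounting for line-bundle twists and Perverse--Tilt duality, and your explicit tracking of the index $j(a)$ through the helix (including the $\scrZ$ displacement for $\ell=5,6$ and the wrap-around $\scrO_{\Curve}=\scrS_N$) is precisely that translation spelled out. One small imprecision that does not affect the argument: the loop counts in \ref{quivers and dims} are for the sliced algebra $\mathbb{F}\Lambda_i^{\deform}$, not $\Lambda_i^{\deform}$ itself, so your parenthetical identification of that number with $\dim\Ext^1_X(\scrS_i,\scrS_i)$ conflates the two (they need not coincide a priori), but you only invoke the commutativity column of the table, which is what the paper actually proves and uses.
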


\subsection*{Acknowledgements} 
The authors thank Osamu Iyama, Yuki Hirano, Sheldon Katz and Yukinobu Toda for helpful conversations, and Kenny Brown for conversations regarding commutativity in \ref{quivers and dims}.

\section{Thickenings, Perversity and Dynkin Combinatorics}

The setup is a flopping contraction $f\colon X\to\Spec R$ of an irreducible rational curve, where $(R,\m)$ is a complete local $\mathbb{C}$-algebra, and $X$ has Gorenstein terminal singularities.

\subsection{Thickenings and Katz sequences}\label{section thickenings}
If $g$ is a generic element of\opt{ams}{ }\opt{ip}{~}$\m$, pulling back $X\to\Spec R$ along the map $\Spec R/g\to\Spec R$ gives a morphism $S\to\Spec R/g$, say.  By Reid's general elephant theorem\opt{ams}{ }\opt{ip}{~}\cite{Pagoda}, $R/g$ is an ADE surface singularity, and $S$ is a partial crepant resolution. As such, by the McKay correspondence the exceptional curve in $S$ corresponds to some vertex $v$ in some ADE Dynkin diagram.  Labelling the nodes of the Dynkin diagram by the rank of the highest root, the number attached to the node $v$ is called the \emph{length} $\ell$ of the curve.  

\begin{notation} Let $\Curve$ be the fibre $f^{-1}(\m)$ with \emph{reduced} scheme structure.\end{notation}

As usual $\Curve \cong \mathbb{P}^1$. Let $\scrI = \scrI_{\Curve,S}$ be the ideal sheaf of $\Curve$ in $S$.  For each $1\leq a\leq \ell$, let $\scrI_{a}$ be the saturation of $\scrI\cdot\hdots\cdot \scrI$ ($a$ times), namely the smallest ideal sheaf containing $\scrI\cdot\hdots\cdot \scrI$ that defines a subscheme of $S$ of pure dimension one.  This necessarily has support~$\Curve$.  Write $a\Curve\subset X$ for the subscheme of $X$ defined by $\scrI_a$.

\begin{prop}\label{Katz prop}
When $X$ has only Gorenstein terminal singularities, then for $1\leq a\leq \ell$, the following statements hold.
\begin{enumerate}
\item $a\Curve$ is a Cohen--Macaulay (CM) scheme of dimension one.  Furthermore, $\ell\Curve$ is the scheme-theoretic fibre over $\m$.
\item $\mathrm{H}^0(\scrO_{a\Curve})=\mathbb{C}$ and $\mathrm{H}^1(\scrO_{a\Curve})=0$.
\item If $a\geq 2$, there is a non-split short exact sequence
\begin{equation}
0\to\scrO_{\Curve}(-1)\to\scrO_{a\Curve}\to\scrO_{(a-1)\Curve}\to 0.\label{Katz ses}
\end{equation}
\end{enumerate}
\end{prop}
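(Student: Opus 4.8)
The plan is to cut $X$ down to a surface and then reduce everything to a computation on the minimal resolution of a Du Val singularity, where it becomes root combinatorics. Let $S\to\Spec R/g$ be the general hyperplane section used to define the $\scrI_a$. By Reid's elephant theorem $\Spec R/g$ is Du Val, and for $g$ general $S$ is normal and $S\to\Spec R/g$ is a crepant partial resolution whose exceptional locus is the single curve $\Curve\cong\mathbb{P}^1$, corresponding to the length-$\ell$ node $v$. All three assertions concern only coherent sheaves scheme-theoretically supported on $\Curve\subset S\subset X$, and being Cohen--Macaulay of dimension one, the cohomology groups, and the displayed sequence are intrinsic to these sheaves and to their supports; so it suffices to work on $S$. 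I would then observe that $\scrI_a$ equals the divisorial power $\scrO_S(-a\Curve)$: this reflexive sheaf contains $\scrI_{\Curve,S}^a$, while any strictly smaller ideal sheaf containing $\scrI_{\Curve,S}^a$ would have a non-zero finite-length submodule in its quotient, contradicting purity. Since $S$ and $\scrO_S(-a\Curve)$ are Cohen--Macaulay, the depth lemma applied to $0\to\scrO_S(-a\Curve)\to\scrO_S\to\scrO_{a\Curve}\to 0$ already gives (1).

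\emph{Passing to the minimal resolution.} Let $\rho\colon\tilde S\to S$ be the minimal resolution, so that $\tilde S\to\Spec R/g$ is the minimal resolution of the Du Val singularity, with exceptional $(-2)$-curves $\{\Curve_i\}$ forming the ADE configuration and $\Curve_v$ the strict transform of $\Curve$. Writing $\rho^*\Curve=\Curve_v+\sum_{i\ne v}b_i\Curve_i$ with $b_i\in\mathbb{Q}_{>0}$, set $Z_a\colonequals\lceil a\,\rho^*\Curve\rceil$, an effective integral cycle with $\Curve_v$-multiplicity $a$. Comparing orders of vanishing along $\Curve$ gives $\rho_*\scrO_{\tilde S}(-Z_a)=\scrO_S(-a\Curve)=\scrI_a$, hence $\rho_*\scrO_{Z_a}=\scrO_{a\Curve}$. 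Rationality of $R/g$ gives $R^{>0}\rho_*=0$ on structure sheaves of effective exceptional cycles, and $H^1(\scrO_Z)=0$ for all such $Z$ (Artin); these transport cohomology and exactness between $\tilde S$ and $S$. The step I expect to be the main obstacle is the combinatorial input that, for $1\le a\le\ell$, one has $Z_a^2=-2$ (so $Z_a$ is a positive root, using also $K_{\tilde S}\cdot Z_a=0$ by crepancy) and $Z_\ell=Z_{\mathrm{fund}}$, the fundamental (equivalently maximal ideal) cycle. This is purely a statement about the marks of the highest root at $v$; I would prove it either uniformly, from $\rho^*\Curve_v\cdot\Curve_i=0$ for $i\ne v$ together with the $\alpha_v$-string property of roots and the defining inequalities of $Z_{\mathrm{fund}}$, or by inspecting the six possibilities for (Dynkin type, node $v$) with $1\le\ell\le 6$.

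\emph{Conclusion.} Granting this, the rest is routine. For (2), $\chi(\scrO_{a\Curve})=\chi(\scrO_{Z_a})=-\tfrac12(Z_a^2+K_{\tilde S}\cdot Z_a)=1$ by Riemann--Roch on $\tilde S$, while $H^1(\scrO_{a\Curve})=H^1(\scrO_{Z_a})=0$, so $H^0(\scrO_{a\Curve})=\mathbb{C}$. For (3), the surjection $\scrO_{a\Curve}=\scrO_S/\scrO_S(-a\Curve)\twoheadrightarrow\scrO_{(a-1)\Curve}$ has kernel $\scrI_{a-1}/\scrI_a$, which is non-zero, is annihilated by $\scrI_\Curve$ (a valuation estimate along $\Curve$) hence is an $\scrO_\Curve$-module, and is pure of dimension one (a submodule of the Cohen--Macaulay sheaf $\scrO_{a\Curve}$); it is therefore a line bundle on $\Curve\cong\mathbb{P}^1$, necessarily $\scrO_\Curve(-1)$ since its Euler characteristic is $\chi(\scrO_{a\Curve})-\chi(\scrO_{(a-1)\Curve})=0$ by (2). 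Non-splitness: the image $\bar 1$ of $1\in\scrO_S$ generates $\scrO_{a\Curve}$, but in any splitting $\scrO_{a\Curve}\cong\scrO_\Curve(-1)\oplus\scrO_{(a-1)\Curve}$ the first component of the global section $\bar 1$ lies in $H^0(\scrO_\Curve(-1))=0$, so $\bar 1$ would generate only the second summand. Finally $\m\scrO_X\subseteq\scrI_\ell$ (orders of vanishing again), and the two ideals agree at the generic point of $\Curve$ (both have colength $\ell$ there, for $\m\scrO_X$ by the definition of the length $\ell$); hence $\scrI_\ell/\m\scrO_X$ is a finite-length submodule of the Cohen--Macaulay sheaf $\scrO_{\ell\Curve}$, so it vanishes and $\ell\Curve=f^{-1}(\m)$.
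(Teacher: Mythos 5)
Your route---slice to the general elephant $S$, identify $\scrI_a$ with $\scrO_S(-a\Curve)$, pass to the minimal resolution $\tilde S$, and read off (2) and (3) from Du Val root combinatorics---is a genuinely different strategy from the paper's, which never leaves $X$: there, one inducts downward from $a=\ell$ using $\Rf_*\scrO_X=\scrO_X$ together with the cited Van den Bergh facts that the scheme fibre is pure with $\mathrm{H}^0=\mathbb{C}$. Your identification $\scrI_a\cong\scrO_S(-a\Curve)$, the depth-lemma derivation of Cohen--Macaulayness, and the Riemann--Roch/rationality computation of $\chi$ and $\mathrm{H}^1$ on $\tilde S$ are all sound ideas, and granting the combinatorics the approach would be more explicit. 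However, two of the steps you take as given do not hold.

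First, the assertion $Z_\ell = Z_{\mathrm{fund}}$ is false. Take $E_8$ with $v=\alpha_1$ (Bourbaki labelling, mark $2$, so $\ell=2$): solving $(\rho^*\Curve)\cdot\Curve_j=0$ for $j\ne 1$, normalised so that $b_1=1$, gives $\rho^*\Curve$ with coefficient vector $\bigl(1,\tfrac54,\tfrac74,\tfrac52,2,\tfrac32,1,\tfrac12\bigr)$ on $(\alpha_1,\hdots,\alpha_8)$, so $Z_2=\lceil 2\rho^*\Curve\rceil=(2,3,4,5,4,3,2,1)$, whereas $Z_{\mathrm{fund}}$ is the highest root $(2,3,4,6,5,4,3,2)$. (One checks $Z_2^2=-2$, so $Z_2$ is a positive root, just not the maximal-ideal cycle.) Thus the identification of $\scrI_\ell$ with the scheme-fibre ideal cannot be read off from root combinatorics as you propose; what is actually needed is the extra fact $\rho_*\scrO_{\tilde S}(-Z_\ell)=\m\scrO_S$, which is true but requires its own argument. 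Second, your final sentence has a quotient error that makes the argument circular: the containment $\m\scrO_X\subseteq\scrI_\ell$ yields a surjection $\scrO_X/\m\scrO_X\twoheadrightarrow\scrO_{\ell\Curve}$, not an inclusion, so $\scrI_\ell/\m\scrO_X$ is a finite-length submodule of $\scrO_X/\m\scrO_X$ (the structure sheaf of the scheme fibre), \emph{not} of $\scrO_{\ell\Curve}$. To conclude that it vanishes you would need the scheme fibre to be pure---which is precisely statement (1), and is exactly where the paper invokes [VdB, 3.4.2] rather than proving it. One smaller point: $Z_a$ is not $\rho$-exceptional (it contains the strict transform $\Curve_v$ with multiplicity $a>0$), so the vanishings $R^1\rho_*\scrO_{Z_a}=0$ and $R^1\rho_*\scrO_{\tilde S}(-Z_a)=0$ used to identify $\rho_*\scrO_{Z_a}$ with $\scrO_{a\Curve}$ and to transport cohomology do not follow directly from Artin's vanishing for exceptional cycles, and should be established separately, for instance via $\rho$-nefness.
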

\begin{proof}
This is essentially \cite[Lemma 3.2]{Katz}, but since Katz works under the assumption that $X$ is smooth, and relies on a case-by-case argument that uses Katz--Morrison \cite{KM} (which is false in our singular setting), we give a more general argument here.\\
(1) The first statement is word-for-word \cite[Lemma 3.2(i)]{Katz}.  For the second statement, exactly as in  \cite[Lemma 3.2(ii)]{Katz} it suffices to show that the scheme fibre has no embedded points: this is just \cite[3.4.2]{VdB1d}.\\
(2) The surjection $\scrO\twoheadrightarrow\scrO_{a\Curve}$ together with the fact that $\Rf_*\scrO=\scrO$ gives $\mathrm{H}^1(\scrO_{a\Curve})=0$.  Since $\ell\Curve$ is the scheme fibre by (1), necessarily  $\mathrm{H}^0(\scrO_{\ell\Curve})=\mathbb{C}$ by \cite[3.4.2]{VdB1d}.  It is also clear that $\mathrm{H}^0(\scrO_{\Curve})=\mathbb{C}$, since $\Curve \cong \mathbb{P}^1$.   Thus when $\ell=1,2$, there is nothing more to prove.  When $\ell=5,6$, necessarily $R/g$ is an $E_8$ surface singularity, and further $S$ is the partial resolution already considered in \cite{Katz}. Thus when $\ell=5,6$ the result holds by \cite[Lemma 3.2(iii)]{Katz}.

It thus suffices to prove that  $\mathrm{H}^0(\scrO_{2\Curve})=\mathbb{C}$ when $\ell=3$, and to prove that $\mathrm{H}^0(\scrO_{2\Curve})=\mathbb{C}=\mathrm{H}^0(\scrO_{3\Curve})$ when $\ell=4$.  Since $\ell=3$ can appear in various places in the longest root of an ADE Dynkin diagram, and likewise for $\ell=4$, there are various cases, and each needs to be independently verified.  We illustrate the technique in the hardest case below (namely when $S$ has an $E_6$ singularity), with the technique being general, and all other cases being similar.

Consider the minimal resolution $Y\to\Spec R/g$ where $R/g$ is an $E_8$ singularity, and consider the length three curve obtained by contracting all curves in $Y$ except for the shaded vertex indicated below (which has value $3$ in the longest root), as follows.
\[
\begin{tikzpicture}
\node (A) at (-0.6,0) {$Y$};
\node (B) at (1,0) {$S$};
\node (C) at (3,0) {$\Spec R/g$};
\node (d) at (1.2,-0.5) {$
\begin{tikzpicture}[scale=0.21]
\node at (0,0) [DB] {};
\node at (1,0) [DB] {};
\node at (2,0) [DB] {};
\node at (2,1) [DB] {};
\node at (3,0) [DB] {};
\node at (4,0) [DB] {};
\node at (5,0) [DW] {};
\node at (6,0) [DB] {};
\end{tikzpicture}$};
\draw[->] (A)--node[above]{$\scriptstyle h$}(B);
\draw[->] (B)--(C);
\end{tikzpicture}
\]
The singular surface $S$ contains precisely one length three curve, and on this curve is one~$E_6$ singularity and one~$A_1$ singularity.   We next construct a sheaf on $Y$ which pushes down to give $\scrO_{2\Curve}$ on $S$.  Let $\Delta$ denote the set of vertices of the $E_8$ Dynkin diagram. Then we consider the root 
\[
\begin{array}{c}\begin{tikzpicture}[scale=0.21]
\node at (0,0) {$\scriptstyle 0$};
\node at (1,0) {$\scriptstyle 1$};
\node at (2,0) {$\scriptstyle 2$};
\node at (2,1) {$\scriptstyle 1$};
\node at (3,0) {$\scriptstyle 2$};
\node at (4,0)  {$\scriptstyle 2$};
\node at (5,0)  {$\scriptstyle 2$};
\node at (6,0) {$\scriptstyle 1$};
\end{tikzpicture}\end{array}
=(a_i)_{i\in\Delta}
\]
of  $E_8$, chosen since it has value $2$ on the shaded vertex.  These numbers $(a_i)_{i\in\Delta}$ determine a divisor $D=\sum_{i\in\Delta}a_iE_i$, where $E_i$ are the exceptional curves in~$Y$, and we consider the exact sequence
\[
0\to\scrO(-D)\to \scrO_Y\to\scrO_D\to 0.
\]
Since  $(a_i)_{i\in\Delta}$ is a root, it is well known (and proved by induction) that $\mathrm{H}^0(Y,\scrO_{D})=\mathbb{C}$.

The line bundle $\scrO(-D)$ can easily be computed using intersection theory: for each curve $E_i$, the following diagram gives the intersection product $-D\cdot E_i$.
\begin{equation}
\begin{array}{c}\begin{tikzpicture}[scale=0.21]
\node at (-0.5,-0.06) {$\scriptstyle -1$};
\node at (1,0) {$\scriptstyle 0$};
\node at (2,0) {$\scriptstyle 0$};
\node at (2,1) {$\scriptstyle 0$};
\node at (3,0) {$\scriptstyle 0$};
\node at (4,0)  {$\scriptstyle 0$};
\node at (5,0)  {$\scriptstyle 1$};
\node at (6,0) {$\scriptstyle 0$};
\end{tikzpicture}\end{array}\label{eqn:intproduct}
\end{equation}
We next claim that $\mathbf{R}^1h_*\scrO(-D)=0$.  By Grothendieck's Theorem on Formal Functions, this can be computed locally on $S$.  A neighbourhood of the $E_6$ singular point gives rise to a neighbourhood of $Y$ containing only the six curves of $E_6$ illustrated below.
\[
\begin{tikzpicture}[scale=0.21]
\node at (0,0) [DB] {};
\node at (1,0) [DB] {};
\node at (2,0) [DB] {};
\node at (2,1) [DB] {};
\node at (3,0) [DB] {};
\node at (4,0) [DB] {};
\node at (5,0) [DW] {};
\node at (6,0) [DB] {};
\draw (-0.5,1)--(1.25,1)--(1.25,2)--(2.75,2)--(2.75,1)--(4.5,1)--(4.5,-1)--(-0.5,-1)--cycle;
\end{tikzpicture}
\]
Against these six curves, \eqref{eqn:intproduct} verifies that $\scrO(-D)$ restricts to a line bundle with intersection products as follows.
\[
\begin{array}{c}\begin{tikzpicture}[scale=0.21]
\node at (-0.5,-0.06) {$\scriptstyle -1$};
\node at (1,0) {$\scriptstyle 0$};
\node at (2,0) {$\scriptstyle 0$};
\node at (2,1) {$\scriptstyle 0$};
\node at (3,0) {$\scriptstyle 0$};
\node at (4,0)  {$\scriptstyle 0$};
\end{tikzpicture}\end{array}
\]
Since this is a summand of the Artin--Verdier tilting bundle on the minimal resolution of~$E_6$, it follows that $\mathbf{R}^1h_*\scrO(-D)$ is zero near the $E_6$ singular point. (The construction of the summands of this tilting bundle is recalled in Section~\ref{section per sheaf}. Take $f$ there to be the minimal resolution, and $\scrL$ to instead be the line bundle which has degree one against a specific curve, and degree zero against the others. Then the corresponding summand of the Artin--Verdier bundle is the constructed $\scrN$.) Near the $A_1$ singular point,  \eqref{eqn:intproduct}  verifies that restricting $\scrO(-D)$ to the single curve
\[
\begin{tikzpicture}[scale=0.21]
\node at (0,0) [DB] {};
\node at (1,0) [DB] {};
\node at (2,0) [DB] {};
\node at (2,1) [DB] {};
\node at (3,0) [DB] {};
\node at (4,0) [DB] {};
\node at (5,0) [DW] {};
\node at (6,0) [DB] {};
\draw (5.5,1)--(6.5,1)--(6.5,-1)--(5.5,-1)--cycle;
\end{tikzpicture}
\]
gives the trivial bundle. Thus $\mathbf{R}^1h_*\scrO(-D)$ is also zero near the $A_1$ singular point.

The claim is verified, thus $\mathbf{R}^1h_*\scrO(-D)=0$ and so 
\[
0\to h_*\scrO(-D)\to\scrO_S\to h_*\scrO_D\to 0
\]
is exact.  This defines a scheme structure on the curve in $S$, which has multiplicity two at the smooth points, since this is true in $Y$, by construction of~$D$.  Since $\mathrm{H}^0(h_*\scrO_D)=\mathrm{H}^0(Y,\scrO_D)=\mathbb{C}$, it follows that this scheme structure gives a CM curve, since embedded points would give rise to additional sections.  Thus we have produced a CM curve of multiplicity two in $S$, and so as observed in \cite[beginning of proof of Lemma 3.2]{Katz} it follows from uniqueness that $h_*\scrO_D\cong\scrO_{2\Curve}$.  In particular $\mathrm{H}^0(\scrO_{2\Curve})=\mathbb{C}$, as required.

All the other cases involving partial resolutions of Kleinian singularities with length three curves, respectively length four curves, follow using the same technique.  In each case, we can lift $a\Curve$ (with $a=2$, respectively $a=2,3$) to a root, then construct a sheaf on $Y$ using this root. This sheaf is known to have only constant sections: intersection theory then allows us to push it down to obtain $\scrO_{a\Curve}$, and thus $\scrO_{a\Curve}$ has only constant sections.

\noindent
(3) There is an exact sequence
\[
0\to \scrI_{a-1}/\scrI_{a}\to \scrO_{a\Curve}\to \scrO_{(a-1)\Curve}\to 0.
\]
Since $ \scrI_{a-1}/\scrI_{a}$ is a torsion-free sheaf of rank 1 on $\Curve\cong\mathbb{P}^1$, it is locally free. By (2) we know that $\mathrm{H}^0(\scrI_{a-1}/\scrI_{a})=0=\mathrm{H}^1(\scrI_{a-1}/\scrI_{a})$,  hence it follows that $\scrI_{a-1}/\scrI_{a}\cong\scrO_{\Curve}(-1)$, as required.  The sequence does not split since applying $\Hom_X(-,\scrO_{\Curve}(-1))$ to the surjection $\scrO_X\twoheadrightarrow\scrO_{a\Curve}$ shows that \opt{ams}{$\Hom_X(\scrO_{a\Curve},\scrO_{\Curve}(-1))=0$,}\opt{ip}{\[\Hom_X(\scrO_{a\Curve},\scrO_{\Curve}(-1))=0,\]} and so there can be no splitting morphism.
\end{proof}

The dualizing sheaves on the thickenings $a\Curve$ turn out to be important. As notation, consider the commutative diagram
\[
\begin{tikzpicture}
\node (A) at (0,0) {$a\Curve$};
\node (B) at (1.5,0) {$X$};
\node (C) at (1.5,-1.5) {$\Spec\mathbb{C}$};
\draw[right hook->] (A) to node[above] {$\scriptstyle \upiota$}(B);
\draw[->] (A) to node[below] {$\scriptstyle q$}(C);
\draw[->] (B) to node[right] {$\scriptstyle p$}(C);
\end{tikzpicture}
\]
and recall that $\mathbb{D}_X=\RsHom_X(-,p^!\mathbb{C})$.  The following is very well known.
\def\dualThickn{\mathbb{D}_X(\scrO_{a\Curve})\cong\omega_{a\Curve}[1].}
\begin{lemma}\label{D to kC}
For $1\leq a\leq \ell$, there is an isomorphism \opt{ams}{$\dualThickn$}\opt{ip}{\[\dualThickn\]}
\end{lemma}
\begin{proof}
By definition $\mathbb{D}_X(\scrO_{a\Curve})$ is $\mathbb{D}_X(\upiota_*\scrO_{a\Curve})$.  Since $\upiota$ is a closed embedding, $\RDerived \upiota_*=\upiota_*$. Furthermore, closed immersions are proper, thus sheafified Grothendieck duality gives
\begin{align*}
\mathbb{D}_X(\scrO_{a\Curve})
=\RsHom_X(\upiota_*\scrO_{a\Curve},p^!\mathbb{C})=\upiota_*\RsHom_{a\Curve}(\scrO_{a\Curve},\upiota^!p^!\mathbb{C})=\upiota_*(\upiota^!p^!\mathbb{C}).
\end{align*}
But $\upiota^!p^!\mathbb{C}=q^!\mathbb{C}=\omega_{a\Curve}[1]$ since $a\Curve$ is CM by \ref{Katz prop}, hence this is $\upiota_*\omega_{a\Curve}[1],$ as required.
\end{proof}

\subsection{Perverse Sheaves}\label{section per sheaf}
Recall \cite{Bridgeland} that \emph{zero perverse sheaves} are defined
\[
\Per (X,R)=\left\{ \scrF\in\Db(\coh X)\left| \begin{array}{l}\mathrm{H}^i(\scrF)=0\mbox{ if }i\neq 0,-1\\
f_*\mathrm{H}^{-1}(\scrF)=0\mbox{, }\Rfi{1}_* \mathrm{H}^0(\scrF)=0\\ \Hom(c,\mathrm{H}^{-1}(\scrF))=0\mbox{ for all }c\in\scrC_0 \end{array}\right. \right\},
\]
where $\scrC:=\{ \scrG\in\Db(\coh X)\mid \Rf_*\scrG=0\}$ and $\scrC_0$ denotes the full subcategory of $\scrC$ whose objects have cohomology only in degree $0$.

There is a bundle $\scrO_X(1)$ on $X$ with degree $1$ on~$\Curve$. Writing $\scrL\colonequals\scrO_X(1)$, there is an exact sequence
\[
0\to\scrO_X^{\oplus(\ell-1)}\to\scrM\to\scrL\to 0
\]
associated to a minimal set of $\ell-1$ generators of $\mathrm{H}^1(X,\scrL^{*})$ \cite[3.5.4]{VdB1d}.  Write $\scrN\colonequals \scrM^*$, and $\Lambda\colonequals \End_X(\scrO\oplus\scrN)$. Then $\scrO\oplus\scrN$ gives a derived equivalence $\Uppsi$, which restricts to an equivalence on bounded hearts as illustrated in the following commutative diagram.
\begin{equation}
\begin{array}{c}
\begin{tikzpicture}
\node (A1) at (0,0) {$\Db(\coh X)$};
\node (A2) at (5,0) {$\Db(\mod\Lambda)$};
\node (B1) at (0,-1.5) {$\Per (X,R)$};
\node (B2) at (5,-1.5) {$\mod\Lambda$};
\draw[->] (A1) -- node[above] {$\scriptstyle\Uppsi\colonequals \RHom_X(\scrO\oplus\scrN,-)$} node[below]{$\scriptstyle\sim$}(A2);
\draw[->] (B1) -- node[above] {$\scriptstyle\Uppsi$} node[below]{$\scriptstyle\sim$}(B2);
\draw[right hook->] (B1) -- (A1);
\draw[right hook->] (B2) -- (A2);
\end{tikzpicture}
\end{array}\label{Psi per 0}
\end{equation}

\subsection{Summary of Dynkin Combinatorics}\label{IW9 summary section}

This subsection summarises some results from \cite[\S9, \S10]{IW9} and \cite{HW}, mainly to set notation.  Under our  setup $f\colon X\to\Spec R$, consider $R\oplus f_*\scrM$.  This $R$-module is reflexive, and rigid in the sense that its self-extension group is zero.
\begin{thm}\label{IW9 summary}\cite[10.7]{IW9}
The mutation class containing $R\oplus f_*\scrM$ is in bijection with the chambers of an infinite hyperplane arrangement in $\mathbb{R}^1$. 
\end{thm}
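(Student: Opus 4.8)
The plan is to realise the mutation class combinatorially, in the spirit of the Tits cone intersection method of \cite[\S9]{IW9}. First I would record the reductions forced by the hypotheses. Since $f\colon X\to\Spec R$ is a flopping contraction with $X$ having Gorenstein terminal singularities, $R$ is a complete local compound Du Val singularity, so by Reid's general elephant theorem \cite{Pagoda} a generic hyperplane section of $\Spec R$ is an ADE surface singularity; this produces a Dynkin diagram $\Delta$ together with the distinguished node $v$ of length $\ell$, exactly as in \S\ref{section thickenings}. The $R$-module $M_0\colonequals R\oplus f_*\scrM$ is modifying, with precisely the two indecomposable summands $R$ and $N\colonequals f_*\scrM$, and $\End_R(M_0)\cong\End_X(\scrO\oplus\scrN)=\Lambda$, which is derived equivalent to $X$. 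Iterating simple mutation at the summand $N$ — using, in the two directions, left and right $\add R$-approximations — generates a family $\{M_i\}_{i\in\mathbb{Z}}$ of modifying modules which by construction exhausts the mutation class of $M_0$, so it suffices to match this family bijectively with the chambers of a suitable arrangement.

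To the pair $(\Delta,v)$ I would attach the hyperplane arrangement $\mathcal{H}$ in $\mathbb{R}^1$ obtained by intersecting the Coxeter (Tits cone) arrangement associated to $\Delta$ with the one-dimensional subspace singled out by the contracted curve class; concretely, the walls of $\mathcal{H}$ are the restrictions to this line of the hyperplanes of the positive roots of $\Delta$ whose coefficient at $v$ is nonzero. The key point — and the content of the Dynkin-by-Dynkin bookkeeping in \cite[\S9]{IW9}, which is also what produces the periods $N$ of \S\ref{section new helices} — is that, because $v$ is an interior node, this wall set is nonempty and invariant under a nontrivial lattice of translations, hence is an infinite arrangement of points whose complement is a $\mathbb{Z}$-torsor of open intervals: the chambers of $\mathcal{H}$. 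With this in place I would fix the base chamber $C_0$ that sees the ample cone of $X$, declare $C_0\leftrightarrow M_0$, and send an arbitrary chamber $C$ to the modifying module obtained by applying, along a minimal gallery from $C_0$ to $C$, one simple mutation at the unique non-free summand of the current module for each wall crossed.

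Well-definedness of this assignment reduces to the relations satisfied by iterated mutation of modifying modules, which in the present essentially rank-two situation amount to the statement that two minimal galleries with the same endpoints yield isomorphic modules; surjectivity is immediate, since every element of the mutation class is by definition obtained from $M_0$ by finitely many simple mutations at its non-free summand. The crux, and the step I expect to be the main obstacle, is injectivity: one must show that iterated mutation genuinely never collapses — equivalently that the chain $\{M_i\}_{i\in\mathbb{Z}}$ has no repetitions, so that $\mathcal{H}$ really is the correct combinatorial model rather than an over-count. I would handle this by tracking an invariant that strictly changes at each wall crossing — for example the class of the simple $\Lambda$-module at $N$ in $\mathrm{K}$-theory, normalised as in the stability-condition framework of \cite{HW}, which moves to a new chamber of $\mathcal{H}$ after every mutation — so that an isomorphism $M_C\cong M_{C'}$ forces $C=C'$; the same faithfulness input also closes the gallery-independence argument. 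Checking that this invariant behaves as claimed, and that the wall pattern at $v$ is exactly as asserted, is precisely where the case analysis of the positive roots of $\Delta$ in \cite[\S9]{IW9} enters, and is the technical heart of the proof.
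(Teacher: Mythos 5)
The paper does not prove this theorem: it is a black-box citation of \cite[9.31]{IW9}. Your proposal is therefore a reconstruction of the cited argument rather than a comparison against a proof in the text, and as a framework it is the right one — the mutation class is matched bijectively with chambers of a one-dimensional intersection of the Tits cone arrangement of the ADE diagram produced by Reid's general elephant, with wall crossings realised by exchange sequences. You are also upfront that the real content, the Dynkin-by-Dynkin bookkeeping establishing the wall pattern and periodicity, is deferred back to \cite[\S9]{IW9}, so what you have is an outline of the cited proof rather than an independent argument.

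Two intermediate steps are off or unnecessary, and the central step is under-specified. First, after the initial mutation the operation is not ``mutation at the summand $N$'' nor ``at the unique non-free summand'': for most chambers $V_{i-1}\oplus V_i$ both indecomposable summands are non-free (only $V_{kN}\cong L^k$ has rank one, by \ref{ns summary} and \ref{class action comb}), so which summand gets replaced depends solely on the direction of the wall crossing, and the relevant approximations are $\add V_i$-approximations of $V_{i-1}$, not $\add R$-approximations. Second, gallery independence is vacuous in $\mathbb{R}^1$: there is a unique minimal gallery between any two chambers, so no relations need to be checked and the ``faithfulness input'' you invoke for well-definedness is not needed there (though some faithfulness is indeed the substance of injectivity). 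Finally, your suggested injectivity invariant — a $K$-theory class normalised ``as in \cite{HW}'' — is circular, since \cite{HW} builds on \cite{IW9}; the invariant the paper actually exposes, which does the job cleanly, is the pair consisting of $\rank_R V_i$ (which distinguishes chambers within one period, by \ref{ns summary}) and $[V_i]\in\Cl R$ (which distinguishes the periods, by \ref{class action comb}). With those corrections the outline is a fair summary of what the reference does, but it is not a self-contained proof.
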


To set notation, we label the walls by rigid reflexive modules $V_i$ with $i\in\mathbb{Z}$, and chambers are then labelled by their direct sum, as follows.
\begin{equation}
\begin{tikzpicture}
\draw[densely dotted,->] (\opt{ams}{-2.5}\opt{ip}{0.4},0)--(9,0);
\opt{ams}{\node (A) at (-2.2,0) [cvertex] {};}
\node (B) at (0.8,0) [cvertex] {};
\node (C) at (3.5,0) [cvertex] {};
\node (D) at (6,0) [cvertex] {};
\node (E) at (8.7,0) [cvertex] {};
\opt{ams}{\node at (-2.2,0.3) {$\scriptstyle V_{-2}$};}
\node at (0.8,0.3) {$\scriptstyle V_{-1}$};
\node at (3.5,0.3) {$\scriptstyle V_{0}$};
\node at (6,0.3) {$\scriptstyle V_{1}$};
\node at (8.7,0.3) {$\scriptstyle V_{2}$};
\opt{ams}{\draw[draw=none] (A) -- node[gap] {$V_{-2}\oplus V_{-1}$}(B);}
\draw[draw=none] (B) -- node[gap] {$V_{-1}\oplus V_0$}(C);
\draw[draw=none] (C) -- node[gap] {$V_{0}\oplus V_1$}(D);
\draw[draw=none] (D) -- node[gap] {$V_{1}\oplus V_2$}(E);
\end{tikzpicture}
\quad
\label{arrangement labelled}
\end{equation}
Crossing the wall labelled $V_i$ from left to right means that we consider $V_{i-1}\oplus V_i$, keep $V_i$, and replace $V_{i-1}$ by $V_{i+1}$.  This mutation process is governed homologically by \emph{exchange sequences}: in this context, for all $i\in\mathbb{Z}$ there is an exact sequence
\begin{equation}
0\to V_{i-1}\to V_i^{\oplus n_i}\to V_{i+1}\label{exchange 1}
\end{equation}
such that applying $\Hom_R(V_{i-1}\oplus V_i,-)$ gives a short exact sequence \cite[(6.Q)]{IW4}.  Similarly, for wall crossing from right to left, there exists an exchange sequence  
\begin{equation}
0\to V_{i+1}\to V_i^{\oplus n_i'}\to V_{i-1}.\label{exchange 2}
\end{equation}
For isolated cDV singularities $n_i=n_i'$ \cite[10.4]{IW9}, and furthermore 
\begin{equation}
\rank_R V_{i+1}+\rank_R V_{i-1}=n_i\cdot\rank_R V_i.\label{count ranks}
\end{equation}
We will use both these facts implicitly throughout.

By convention, since $R\oplus f_*\scrM$ must appear in its mutation class, we set $V_0=R$ and $V_{1}=f_*\scrM$.  Under this convention,  $L=f_*\scrO(1)$ generates a subgroup of the class group $\Cl(R)$ which acts on the above hyperplane arrangement, by translating to the right.
\begin{prop}\label{class action comb}\cite[9.10, 10.7]{IW9}
Set $L\colonequals f_*\scrO(1)$ and consider $\mathbb{Z}\cong\langle L\rangle\leq\Cl(R)$.  Then $L$ acts on \eqref{arrangement labelled} by translation, taking the wall labelled $V_0=R$ to the next wall to the right for which its label $V_i$ has rank one.
\end{prop}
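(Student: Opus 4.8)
The plan is to recognise the $\langle L\rangle$-action on \eqref{arrangement labelled} as the reflexive tensor action of the class group from \cite[9.10]{IW9}, and then to locate the wall $(R\otimes_R L)^{**}=L$ inside the arrangement using its explicit length-$\ell$ description in \cite[9.31]{IW9}. First I would recall that, by \cite[9.10]{IW9}, the subgroup $\langle L\rangle\leq\Cl(R)$ acts on the chambers of \eqref{arrangement labelled} by $M\mapsto (M\otimes_R L)^{**}$, compatibly with the mutation structure; geometrically this is the image, under the equivalence $\Uppsi$ of \eqref{Psi per 0}, of the autoequivalence $-\otimes\scrO_X(1)$ of $\Db(\coh X)$. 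In particular it carries the wall $V_0=R$ to the wall labelled $L=f_*\scrO_X(1)$, and the chamber $R\oplus f_*\scrM$ to $L\oplus f_*\big(\scrM(1)\big)$, which has the same rank pattern $(1,\ell)$.

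Next I would show that this symmetry is a rightward translation of \eqref{arrangement labelled}. Since the arrangement is one-dimensional, its chambers form a line and the symmetry group of this chamber structure is infinite dihedral. The operation $-\otimes_R L$ has infinite order: $\scrO_X(1)$ has degree $1$ on $\Curve$, so $\scrO_X(m)\not\cong\scrO_X$ for $m\neq 0$ and no power of $L$ is trivial in $\Cl(R)$. An infinite-order element of an infinite dihedral group is a translation, so the symmetry acts as $V_i\mapsto V_{i+k}$ for a fixed $k\neq 0$; that it moves to the right — reflecting the relative ampleness of $\scrO_X(1)$, and matching the convention $V_0=R$, $V_1=f_*\scrM$ under which the walls to the right of $V_0$ carry the positive twists — is part of \cite[9.10]{IW9}. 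Thus $k\geq 1$, the wall $V_k$ equals $L$ and has rank one, and the $\langle L\rangle$-orbit of $V_0$ is the set $\{V_{mk}\}_{m\in\mathbb{Z}}$, all of whose labels have rank one.

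It remains to identify $k$ with the index of the \emph{next} rank-one wall to the right of $V_0$, equivalently to exclude a rank-one label among $V_1,\dots,V_{k-1}$; this is the step I expect to be the genuine obstacle. Here the explicit arrangement of \cite[9.31]{IW9} is needed: the rank-one walls of \eqref{arrangement labelled} form a single $\langle L\rangle$-orbit (they are precisely the pushforwards $f_*\scrO_X(m)=(R\otimes_R L^{\otimes m})^{**}$ of the line bundles on $X$, using that an irreducible flop has $\Pic X$ generated by $\scrO_X(1)$). This orbit is invariant under the translation and contains both $V_0$ and its image $V_k=L$, so $V_k$ is the first rank-one wall strictly to the right of $V_0$, which is the assertion. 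The soft part of the argument — that $L$ acts by a rightward translation sending $R$ to $L$ — is formal, but the precise translation length rests on the length-$\ell$ Dynkin classification; as a by-product one recovers $k=N$, consistent with the helix period of \S\ref{section new helices} (for $\ell=1$ this is already visible, every wall then having rank one and $N=1$), and one may alternatively confirm it by solving the rank recursion \eqref{count ranks} with the mutation numbers $n_i$.
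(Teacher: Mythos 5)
The paper gives no proof of this proposition: it is stated as a citation of \cite[9.10, 9.31]{IW9}, and the relevant content there is a combinatorial analysis of the hyperplane arrangement via Dynkin data. Your proposal is therefore a reconstruction, and its broad architecture is sound and pleasant: identify the $\langle L\rangle$-action as the reflexive tensor action, use that an infinite-order symmetry of a one-dimensional chamber structure must be a translation, and then compute the translation length by identifying the rank-one walls. The infinite-dihedral argument in your second paragraph is clean and genuinely a nice way to see the translation statement.

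The weak link is the middle of your third paragraph, where the real work lives, and your justification there is thinner than it looks. To conclude that the first rank-one wall to the right of $V_0$ is $V_k$ (where $k$ is the translation length), you need to know that \emph{every} rank-one wall $V_j$ is isomorphic to some $(L^{\otimes m})^{**}$. You support this with ``$\Pic X$ is generated by $\scrO_X(1)$'', but this does not immediately give what you need: the $V_j$ are rank-one reflexive \emph{$R$}-modules, so what you require is that every such module appearing in the arrangement lies in the subgroup $\langle L\rangle\leq\Cl(R)$ generated by pushforwards of line bundles. When $X$ is smooth this follows (there $\Cl(R)\cong\Cl(X)=\Pic X\cong\mathbb{Z}$), but the setup of \S2 only assumes Gorenstein terminal singularities on $X$, and an isolated cDV singularity need not be $\mathbb{Q}$-factorial; so the identification $\Cl(R)\cong\langle L\rangle$ is an extra hypothesis you should either justify or import explicitly. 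A route that avoids this is to observe that any rank-one wall $V_j$ is, by construction of the arrangement, a summand of a basic rigid reflexive module giving a modification algebra, and such a summand corresponds under the tilting equivalence to a line bundle on $X$ — but that invokes \ref{progen main} and its relatives, which appear later in the paper, so one would have to be careful about circularity. Finally, your closing remark that one could ``alternatively confirm it by solving the rank recursion \eqref{count ranks}'' is itself slightly circular in the paper's logic, since the table of $n_i$ and ranks in \ref{ns summary} is \emph{derived} using the convention $V_{i+N}\cong V_i\cdot L$ that this proposition sets up; the independent input really is the Dynkin-theoretic analysis in \cite[9.31]{IW9}, exactly as you concede at the end.
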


Thus there always exists a number $N$, which below turns out to depend only on $\ell$, such that $V_{i+N}\cong V_i\cdot L\colonequals (V_{i}\otimes L)^{**}$ for all $i\in\mathbb{Z}$.  This number, together with the ranks of the $V_i$, and the $n_i$, can be calculated combinatorially. When $X$ is smooth, this was achieved in \cite[7.11]{HW} using a result of Katz--Morrison \cite{KM} which fails in the more general singular setting here.  The following completes the calculation in all cases.

\begin{prop}\label{ns summary}
Suppose that $X\to\Spec R$ is a length $\ell$ flop, where $X$ has only Gorenstein terminal singularities.  Then in the hyperplane arrangement the walls are numbered by the ranks of \opt{ams}{the }$V_0,\hdots,V_{N-1}$ in the table below.
\[
\begin{tabular}{C{0.6cm}C{0.7cm}ll}
\toprule
$\ell$&$N$&\textnormal{Ranks of $V_0,\hdots,V_{N-1}$}&$n_0,\hdots,n_{N-1}$\\
\midrule
$1$ & $1$ & $1$ & $2$\\
$2$ & $2$ & $1$,$2$ & $4$,$1$\\
$3$ & $4$ & $1$,$3$,$2$,$3$ & $6$,$1$,$3$,$1$\\
$4$ & $6$ & $1$,$4$,$3$,$2$,$3$,$4$ & $8$,$1$,$2$,$3$,$2$,$1$\\
$5$ & $10$ & $1$,$5$,$4$,$3$,$5$,$2$,$5$,$3$,$4$,$5$  & $10$,$1$,$2$,$3$,$1$,$5$,$1$,$3$,$2$,$1$\\
$6$ & $12$ & $1$,$6$,$5$,$4$,$3$,$5$,$2$,$5$,$3$,$4$,$5$,$6$  & $12$,$1$,$2$,$2$,$3$,$1$,$5$,$1$,$3$,$2$,$2$,$1$\\
\bottomrule
\end{tabular}
\]
\end{prop}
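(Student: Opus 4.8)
\emph{Sketch of approach.} The idea is to push the whole computation onto the generic hyperplane section, where it becomes a finite question about ADE root systems, and to read the table off case by case. The point is that, although \cite{KM} is unavailable in the singular setting, the numerical data $N$, the ranks of the $V_i$, and the $n_i$ are all unchanged under generic hyperplane section, and the two-dimensional computation is purely combinatorial.

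\emph{Step 1 (reduction to the elephant).} Pick a generic $g\in R$ and form $S\to\Spec R/g$ as in \S\ref{section thickenings}; then $R/g$ is an ADE singularity with Dynkin diagram $\Delta$, and the contracted curve corresponds to a vertex $v$ whose coefficient in the highest root of $\Delta$ is $\ell$. Since $g$ is generic, $-\otimes_R R/g$ sends $R\oplus f_*\scrM$ to the analogous rigid reflexive module over $R/g$, preserves $R$-ranks of maximal Cohen--Macaulay modules, and commutes with mutation; it therefore carries the chain $\{V_i\}_{i\in\mathbb{Z}}$ and the exchange sequences \eqref{exchange 1}, \eqref{exchange 2} to those of $S\to\Spec R/g$. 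As $L=f_*\scrO(1)$ restricts to the corresponding class in $\Cl(R/g)$, the period $N$ singled out by \ref{class action comb} --- the position of the first rank-one wall after the wall $V_0=R$ --- agrees for $R$ and for $R/g$. So it suffices to compute $N$, the ranks of $V_0,\dots,V_{N-1}$ and the $n_i$ for the two-dimensional contraction attached to the pair $(\Delta,v)$; this is exactly what the hyperplane-section results of \cite{IW9} underlying \ref{IW9 summary} provide.

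\emph{Step 2 (the root-system computation).} For the two-dimensional contraction, the chamber structure of \ref{IW9 summary} is a Tits cone intersection attached to $(\Delta,v)$: the walls are indexed by the positive roots $\beta$ of $\Delta$ with nonzero $\alpha_v$-coefficient, listed in the order they occur along the relevant line, and the module labelling each wall has rank recorded by the corresponding root --- so that $V_0=R$ corresponds to the simple root $\alpha_v$ (coefficient $1$) and $V_1=f_*\scrM$ to the highest root (coefficient $\ell$), pinning down the start $1,\ell,\dots$ of the rank sequence as in the table. Listing the intermediate roots, the $\alpha_v$-coefficients evolve by the reflection rule of $\Delta$; by \ref{class action comb} one truncates at the first return to coefficient $1$, which fixes $N$, and the $n_i$ are then forced by the rank identity \eqref{count ranks}, $n_i=(\rank_R V_{i-1}+\rank_R V_{i+1})/\rank_R V_i$. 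This is a finite check: coefficient-$\ell$ vertices occur only in $A_n$ ($\ell=1$), $D_n$ ($\ell\le 2$), $E_6$ ($\ell\le 3$), $E_7$ ($\ell\le 4$) and $E_8$ ($2\le\ell\le 6$), and one verifies for each such $(\Delta,v)$ that the outcome depends only on $\ell$ and equals the displayed table. As a consistency check the sequences $\rank_R V_1,\dots,\rank_R V_{N-1}$ and $n_1,\dots,n_{N-1}$ are palindromic, reflecting the duality on the family of tilts; and the answer matches \cite[7.11]{HW} on the smooth length-$\ell$ models, which both cross-checks the table and reproves the smooth case without \cite{KM}.

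\emph{Main obstacle.} Two points carry the weight. First, one must justify that $N$, the ranks and the $n_i$ genuinely are invariant under generic hyperplane section --- i.e. that mutation, the exchange sequences and the $\Cl$-period all descend from $R$ to $R/g$ --- which rests on the cDV structure and the results of \cite{IW9} behind \ref{IW9 summary} and \ref{class action comb}. Second is the root-theoretic bookkeeping, which is heaviest for $\ell=5,6$: there the relevant positive roots lie inside $E_8$, and one must order up to twelve of them along the line and check that the first return to $\alpha_v$-coefficient $1$ occurs after exactly $10$, respectively $12$, steps, independently of the choice of coefficient-$\ell$ vertex.
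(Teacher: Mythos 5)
Your approach is essentially the paper's: reduce to the ADE Dynkin combinatorics, run a finite case-by-case check over all $(\Delta,v)$ with coefficient $\ell$ at $v$, and recover the $n_i$ from the ranks via \eqref{count ranks}. The only organisational difference is that the paper grounds the reduction directly in the wall-crossing combinatorics of \cite[1.16, 9.10(2), 9.28]{IW9} rather than re-arguing invariance under generic hyperplane slicing as your Step 1 does; if you keep your framing, those references are exactly where the assertions of Step 1 (that mutation, exchange sequences, ranks and the $\Cl$-period descend to $R/g$) are established, so cite them there.
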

\begin{proof}
We do this for $\ell=3$, with all other cases being similar.   There are precisely five places where a vertex is labelled $3$ in an ADE Dynkin diagram:
\[
\begin{array}{c}
\begin{tikzpicture}[scale=0.21]
\node at (0,0) [DB] {};
\node at (1,0) [DB] {};
\node at (2,0) [DW] {};
\node at (2,1) [DB] {};
\node at (3,0) [DB] {};
\node at (4,0) [DB] {};
\end{tikzpicture}
\end{array}
\quad
\begin{array}{c}
\begin{tikzpicture}[scale=0.21]
\node at (5,0) [DB] {};
\node at (0,0) [DB] {};
\node at (1,0) [DW] {};
\node at (2,0) [DB] {};
\node at (2,1) [DB] {};
\node at (3,0) [DB] {};
\node at (4,0) [DB] {};
\end{tikzpicture}
\end{array}
\quad
\begin{array}{c}
\begin{tikzpicture}[scale=0.21]
\node at (5,0) [DB] {};
\node at (0,0) [DB] {};
\node at (1,0) [DB] {};
\node at (2,0) [DB] {};
\node at (2,1) [DB] {};
\node at (3,0) [DW] {};
\node at (4,0) [DB] {};
\end{tikzpicture}
\end{array}
\quad
\begin{array}{c}
\begin{tikzpicture}[scale=0.21]
\node at (0,0) [DB] {};
\node at (1,0) [DB] {};
\node at (2,0) [DB] {};
\node at (2,1) [DB] {};
\node at (3,0) [DB] {};
\node at (4,0) [DB] {};
\node at (5,0) [DW] {};
\node at (6,0) [DB] {};
\end{tikzpicture}
\end{array}
\quad
\begin{array}{c}
\begin{tikzpicture}[scale=0.21]
\node at (0,0) [DB] {};
\node at (1,0) [DB] {};
\node at (2,0) [DB] {};
\node at (2,1) [DW] {};
\node at (3,0) [DB] {};
\node at (4,0) [DB] {};
\node at (5,0) [DB] {};
\node at (6,0) [DB] {};
\end{tikzpicture}
\end{array}
\]
We analyse each individually using the combinatorics of wall crossing described in~\cite[1.16]{IW9}.  The first is calculated in~\cite[Ex.~1.1]{WOVERVIEW} (see also~\cite[7.11]{HW}), and is as \opt{ams}{\mbox{follows}}\opt{ip}{follows}. 
\def\ipDynkinScaleA{0.9}
\def\ipDynkinScaleB{0.85}
\newcommand{\DynkinStripStretch}{1.1}
\newcommand{\DynkinStripShift}{10pt}
\newcommand{\DynkinStripStretchB}{1.18}
\newcommand{\DynkinStripShiftB}{10pt}
\newcommand{\EbaseA}[1][]{%
\begin{tikzpicture}[scale=0.21]
\node at (0,0) [DB] {};
\node at (1,0) [DB] {};
\node at (2,0) [DW] {};
\node at (2,1) [DB] {};
\node at (2,2) [DW] {};
\node at (3,0) [DB] {};
\node at (4,0) [DB] {};
\end{tikzpicture}
}
\newcommand{\EbaseB}[1][]{%
\begin{tikzpicture}[scale=0.21]
\node at (0,0) [DB] {};
\node at (1,0) [DB] {};
\node at (2,0) [DW] {};
\node at (2,1) [DW] {};
\node at (2,2) [DB] {};
\node at (3,0) [DB] {};
\node at (4,0) [DB] {};
\end{tikzpicture}
}
\[
\begin{tikzpicture}[scale=\opt{ip}{\ipDynkinScaleA}\opt{ams}{1},xscale=\DynkinStripStretch]
\filldraw[gray!10!white,transform canvas={xshift=+\DynkinStripShift}] (-2.5,-0.5) -- (-2.5,1) -- (5.5,1)--(5.5,-0.5) --cycle;
\draw[densely dotted,->] (-4.5,0) -- (6.5,0);
\node at (6.75,0) {$\mathbb{R}$};
{\foreach \i in {-3,-2,-1,0,1,2}
\filldraw[fill=white,draw=black] (-2*\i,0) circle (2pt);
}
{\foreach \i in {2,1}
\node at (-2*\i+1,0.5) {\EbaseA};
}
{\foreach \i in {0,-1}
\node at (-2*\i+1,0.5) {\EbaseB};
}
\node at (5,0.5) {\EbaseA};
\node at (-4,-0.25) {$\scriptstyle 3$};
\node at (-2,-0.25) {$\scriptstyle 1$};
\node at (0,-0.25) {$\scriptstyle 3$};
\node at (2,-0.25) {$\scriptstyle 2$};
\node at (4,-0.25) {$\scriptstyle 3$};
\node at (6,-0.25) {$\scriptstyle 1$};
\end{tikzpicture}
\]
The next two cases are covered by the following calculation
\newcommand{\EsevenbaseA}[1][]{%
\begin{tikzpicture}[scale=0.21]
\node at (-1,0) [DW] {};
\node at (0,0) [DB] {};
\node at (1,0) [DB] {};
\node at (2,0) [DB] {};
\node at (2,1) [DB] {};
\node at (3,0) [DW] {};
\node at (4,0) [DB] {};
\node at (5,0) [DB] {};
\end{tikzpicture}
}
\newcommand{\EsevenbaseB}[1][]{%
\begin{tikzpicture}[scale=0.21]
\node at (-1,0) [DB] {};
\node at (0,0) [DB] {};
\node at (1,0) [DB] {};
\node at (2,0) [DB] {};
\node at (2,1) [DW] {};
\node at (3,0) [DW] {};
\node at (4,0) [DB] {};
\node at (5,0) [DB] {};
\end{tikzpicture}
}
\newcommand{\EsevenbaseC}[1][]{%
\begin{tikzpicture}[scale=0.21]
\node at (-1,0) [DB] {};
\node at (0,0) [DB] {};
\node at (1,0) [DW] {};
\node at (2,0) [DB] {};
\node at (2,1) [DW] {};
\node at (3,0) [DB] {};
\node at (4,0) [DB] {};
\node at (5,0) [DB] {};
\end{tikzpicture}
}
\newcommand{\EsevenbaseD}[1][]{%
\begin{tikzpicture}[scale=0.21]
\node at (-1,0) [DB] {};
\node at (0,0) [DB] {};
\node at (1,0) [DW] {};
\node at (2,0) [DB] {};
\node at (2,1) [DB] {};
\node at (3,0) [DB] {};
\node at (4,0) [DB] {};
\node at (5,0) [DW] {};
\end{tikzpicture}
}
\[
\begin{tikzpicture}[scale=\opt{ip}{\ipDynkinScaleA}\opt{ams}{1},xscale=\DynkinStripStretch]
\filldraw[gray!10!white,transform canvas={xshift=+\DynkinStripShift}] (-2.5,-0.5) -- (-2.5,1) -- (5.5,1)--(5.5,-0.5) --cycle;
\draw[densely dotted,->] (-4.5,0) -- (6.5,0);
\node at (6.75,0) {$\mathbb{R}$};
{\foreach \i in {-3,-2,-1,0,1,2}
\filldraw[fill=white,draw=black] (-2*\i,0) circle (2pt);
}

\node at (-3,0.5) {\EsevenbaseA};
\node at (-1,0.5) {\EsevenbaseA};
\node at (1,0.5) {\EsevenbaseB};
\node at (3,0.5) {\EsevenbaseC};
\node at (5,0.5) {\EsevenbaseD};
\node at (-4,-0.25) {$\scriptstyle 3$};
\node at (-2,-0.25) {$\scriptstyle 1$};
\node at (0,-0.25) {$\scriptstyle 3$};
\node at (2,-0.25) {$\scriptstyle 2$};
\node at (4,-0.25) {$\scriptstyle 3$};
\node at (6,-0.25) {$\scriptstyle 1$};
\end{tikzpicture}
\]
 and the last two cases are given below.
\newcommand{\EeightbaseA}[1][]{%
\begin{tikzpicture}[scale=0.21]
\node at (0,0) [DB] {};
\node at (1,0) [DB] {};
\node at (2,0) [DB] {};
\node at (2,1) [DB] {};
\node at (3,0) [DB] {};
\node at (4,0) [DB] {};
\node at (5,0) [DW] {};
\node at (6,0) [DB] {};
\node at (7,0) [DW] {};
\end{tikzpicture}
}
\newcommand{\EeightbaseB}[1][]{%
\begin{tikzpicture}[scale=0.21]
\node at (0,0) [DB] {};
\node at (1,0) [DB] {};
\node at (2,0) [DB] {};
\node at (2,1) [DB] {};
\node at (3,0) [DB] {};
\node at (4,0) [DB] {};
\node at (5,0) [DW] {};
\node at (6,0) [DW] {};
\node at (7,0) [DB] {};
\end{tikzpicture}
}
\[
\begin{tikzpicture}[scale=\opt{ip}{\ipDynkinScaleB}\opt{ams}{1},xscale=\DynkinStripStretchB]
\filldraw[gray!10!white,transform canvas={xshift=+\DynkinStripShiftB}] (-2.5,-0.5) -- (-2.5,1) -- (5.5,1)--(5.5,-0.5) --cycle;
\draw[densely dotted,->] (-4.5,0) -- (6.5,0);
\node at (6.75,0) {$\mathbb{R}$};
{\foreach \i in {-3,-2,-1,0,1,2}
\filldraw[fill=white,draw=black] (-2*\i,0) circle (2pt);
}
\node at (-3,0.5) {\EeightbaseA};
\node at (-1,0.5) {\EeightbaseA};
\node at (1,0.5) {\EeightbaseB};
\node at (3,0.5) {\EeightbaseB};
\node at (5,0.5) {\EeightbaseA};
\node at (-4,-0.25) {$\scriptstyle 3$};
\node at (-2,-0.25) {$\scriptstyle 1$};
\node at (0,-0.25) {$\scriptstyle 3$};
\node at (2,-0.25) {$\scriptstyle 2$};
\node at (4,-0.25) {$\scriptstyle 3$};
\node at (6,-0.25) {$\scriptstyle 1$};
\end{tikzpicture}
\]
\newcommand{\EeightbaseC}[1][]{%
\begin{tikzpicture}[scale=0.21]
\node at (0,0) [DB] {};
\node at (1,0) [DB] {};
\node at (2,0) [DB] {};
\node at (2,1) [DW] {};
\node at (3,0) [DB] {};
\node at (4,0) [DB] {};
\node at (5,0) [DB] {};
\node at (6,0) [DB] {};
\node at (7,0) [DW] {};
\end{tikzpicture}
}
\newcommand{\EeightbaseD}[1][]{%
\begin{tikzpicture}[scale=0.21]
\node at (0,0) [DW] {};
\node at (1,0) [DB] {};
\node at (2,0) [DB] {};
\node at (2,1) [DW] {};
\node at (3,0) [DB] {};
\node at (4,0) [DB] {};
\node at (5,0) [DB] {};
\node at (6,0) [DB] {};
\node at (7,0) [DB] {};
\end{tikzpicture}
}
\[
\begin{tikzpicture}[scale=\opt{ip}{\ipDynkinScaleB}\opt{ams}{1},xscale=\DynkinStripStretchB]
\filldraw[gray!10!white,transform canvas={xshift=+\DynkinStripShiftB}] (-2.5,-0.5) -- (-2.5,1) -- (5.5,1)--(5.5,-0.5) --cycle;
\draw[densely dotted,->] (-4.5,0) -- (6.5,0);
\node at (6.75,0) {$\mathbb{R}$};
{\foreach \i in {-3,-2,-1,0,1,2}
\filldraw[fill=white,draw=black] (-2*\i,0) circle (2pt);
}
\node at (-3,0.5) {\EeightbaseC};
\node at (-1,0.5) {\EeightbaseC};
\node at (1,0.5) {\EeightbaseD};
\node at (3,0.5) {\EeightbaseD};
\node at (5,0.5) {\EeightbaseC};
\node at (-4,-0.25) {$\scriptstyle 3$};
\node at (-2,-0.25) {$\scriptstyle 1$};
\node at (0,-0.25) {$\scriptstyle 3$};
\node at (2,-0.25) {$\scriptstyle 2$};
\node at (4,-0.25) {$\scriptstyle 3$};
\node at (6,-0.25) {$\scriptstyle 1$};
\end{tikzpicture}
\]
In all cases we see that $N=4$, and the walls are numbered by $1,3,2,3$, then repeat.  That these are the ranks of the $V_i$ is \cite[9.10(2), 10.4]{IW9}.  The numbers $n_i$ follow from these, using \eqref{count ranks}.
\end{proof}

Consider the full subcategory $\scrD\colonequals \{ \scrF\in\Db(\coh X)\mid \Supp\scrF\subseteq \Curve\}$,
and  the manifold $\Stab{n}{\scrD}$ of locally finite  Bridgeland stability conditions on $\scrD$ satisfying the normalisation~$Z([\scrO_x])=\mathrm{i}$.  There is a connected component $\cStab{n}\scrD$ containing those normalised stability conditions with heart $\Per\cap \scrD$.  Fourier--Mukai autoequivalences of $\Db(\coh X)$ restrict to $\scrD$, and we consider those that furthermore preserve $\cStab{n}\scrD$ and are $R$-linear. This group is denoted $\cAut\scrD$.  
\begin{cor}\label{stab cor}
Suppose that $X\to\Spec R$ is a length $\ell$ flop, where $X$ has only Gorenstein terminal singularities.  Then
\def\MSK{\cM_{\scrS\scrK}\colonequals \cStab{n}{\scrD}/\cAut\scrD}
\opt{ams}{$\MSK$}\opt{ip}{\[\MSK\]}
is homeomorphic to the $2$-sphere with $N+2$ points removed, where $N$ is given in \ref{ns summary}.
\end{cor}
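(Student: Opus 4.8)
The plan is to follow the argument of \cite{HW}; the only ingredient of that argument not already available for Gorenstein terminal $X$ is the value of the period $N$, and this is now supplied by \ref{ns summary}. First I would recall the structural picture. The category $\scrD$ is generated, as a triangulated category, by the two simples $\scrO_{\ell\Curve}$ and $\scrO_{\Curve}(-1)[1]$ of the heart $\Per(X,R)\cap\scrD$ (see \cite{VdB1d}), so its numerical Grothendieck group has rank two; the iterated tilts of this heart at its simples are the hearts recording the chambers of the rank-one hyperplane arrangement of \ref{IW9 summary}, and gluing the associated cells presents $\cStab{n}\scrD$ as the universal cover of the complexified complement $\mathfrak{M}$ of that arrangement. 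Writing $w_i\in\mathbb{R}$ for the position of the wall labelled $V_i$, we have $\mathfrak{M}=\mathbb{C}\setminus\{w_i\}_{i\in\mathbb{Z}}$. The group $\cAut\scrD$ acts on $\cStab{n}\scrD$ over a corresponding action on $\mathfrak{M}$: the kernel of the latter is the deck group $\uppi_1(\mathfrak{M})$, generated by the twists associated to the individual walls, and the image is $\langle L\rangle$, where $L\colonequals f_*\scrO(1)$ acts via $-\otimes\scrO(1)$. Consequently $\cM_{\scrS\scrK}=\cStab{n}\scrD/\cAut\scrD\cong\mathfrak{M}/\langle L\rangle$.

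Next I would pin down the action of $L$ on $\mathfrak{M}$. By \ref{class action comb}, $L$ acts on the arrangement \eqref{arrangement labelled} by translation, carrying the wall $V_0=R$ to the next wall on its right whose label has rank one, and by \ref{ns summary} these rank-one walls are precisely the $V_{kN}$ with $k\in\mathbb{Z}$. Choosing a coordinate on $\mathbb{R}$ in which the arrangement is periodic, $L$ therefore acts on $\mathbb{C}$ as a single translation $z\mapsto z+c$ (for $c>0$ the period), sending $w_i\mapsto w_{i+N}$.

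It then remains to compute the quotient. Since $\langle L\rangle\cong\mathbb{Z}$ acts on $\mathbb{C}$ by this single translation, the quotient $\mathbb{C}/\langle L\rangle$ is biholomorphic to $\mathbb{C}^{*}$, and the $\langle L\rangle$-orbits of the $N$ walls $w_0,\dots,w_{N-1}$ in a fundamental domain become $N$ distinct points of $\mathbb{C}^{*}$. Hence $\mathfrak{M}/\langle L\rangle\cong\mathbb{C}^{*}\setminus\{N\textrm{ points}\}$, and since $\mathbb{C}^{*}=S^{2}\setminus\{0,\infty\}$ this is the $2$-sphere with $N+2$ points removed, as claimed. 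As a sanity check, for $\ell=1$ one has $N=1$, recovering the sphere minus three points of \cite{TodaRes}.

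The main work, and the only genuinely non-formal point, lies in justifying the appeal to \cite{HW} in the possibly singular setting. The inputs to that argument are: the tilting bundle $\scrO\oplus\scrN$ and the derived equivalence \eqref{Psi per 0}; van den Bergh's description of $\Per(X,R)$ and its simple objects; the mutation and exchange combinatorics of \S\ref{IW9 summary section}, including the class-group action of \ref{class action comb}; and the numerics of \ref{ns summary}. For Gorenstein terminal $X$, the first three hold by \cite{VdB1d} and the fourth by \cite{IW9}, as recalled in \S\ref{section thickenings}--\S\ref{IW9 summary section}, while \ref{ns summary} supplies the numerics in place of the Katz--Morrison computation \cite{KM}, which is the one point where smoothness enters \cite{HW}. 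What must therefore be confirmed is that every remaining step of \cite{HW} — the cell decomposition of $\cStab{n}\scrD$, its identification with the universal cover of $\mathfrak{M}$, and the determination of $\cAut\scrD$ as generated by the wall-twists together with $-\otimes\scrO(1)$ and nothing larger — uses only these inputs. Granting the singular analogues just listed, it does, and the corollary follows.
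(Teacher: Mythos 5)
Your proposal takes the same route as the paper, which disposes of the corollary in one sentence by citing \cite[7.12]{HW} and observing that the only input from that argument which used smoothness — the Katz--Morrison computation of the numerics — has now been replaced by \ref{ns summary}, with the dependence of $N$ on $\ell$ unchanged. Your longer sketch of the \cite{HW} mechanism (chambers of the rank-one arrangement from \ref{IW9 summary}, $\cStab{n}\scrD$ as a cover of the complexified complement, the class-group action of \ref{class action comb} collapsing the $\mathbb{Z}$-family of walls to $N$ orbits, hence $\mathbb{C}^*$ minus $N$ points $= S^2$ minus $N+2$ points) is a correct account of what is being invoked, and your closing paragraph pinpoints exactly the same checkpoint the paper relies on: that every remaining step of \cite{HW} depends only on the Van den Bergh tilting package, the mutation combinatorics of \cite{IW9}, and the numerics now furnished by \ref{ns summary}.
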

\begin{proof}
In the smooth case, this is \cite[7.12]{HW}.  Exactly the same argument works here, using that $N$ is calculated in \ref{ns summary}, and the dependence of $N$ on $\ell$ is the same for the singular setting and for the smooth case. 
\end{proof}

\section{Twisting and Mutating on the Strip}

With notation as in \eqref{arrangement labelled}, set $\Lambda_i\colonequals \End_R(M_i)$, where $M_i\colonequals V_{i-1}\oplus V_i$.  The ring $\Lambda_i$ always has two projective modules, namely $\Hom_R(M_{i},V_{i-1})$ and $\Hom_R(M_i,V_i)$. 

\subsection{Global Ordering on Projectives and Simples}\label{ordering section}
Both for iteration purposes later, and to make theorems easier to state, it is convenient to now fix an ordering on the projectives.  This ordering is induced by mutation.  Concretely,
\def\eqnPA{P_0\opt{ip}{&}=
\left\{ \begin{array}{ll}
\Hom_R(M_i,V_{i})&\mbox{if $i$ is even}\\
\Hom_R(M_i,V_{i-1})&\mbox{if $i$ is odd,}
\end{array}
\right.}
\def\eqnPB{P_1\opt{ip}{&}=
\left\{ \begin{array}{ll}
\Hom_R(M_i,V_{i-1})&\mbox{if $i$ is even}\\
\Hom_R(M_i,V_{i})&\mbox{if $i$ is odd.}
\end{array}
\right.}
\opt{ams}{\[
\eqnPA
\quad
\eqnPB
\]}
\opt{ip}{\begin{align*}
\eqnPA \\
\eqnPB
\end{align*}}
This is a slight abuse of notation, since $P_0$ and $P_1$ depend on which $\Lambda_i$ is being considered.  As a similar abuse, we will always write $S_0$ for the simple corresponding to $P_0$, and $S_1$ for the simple corresponding to $P_1$, regardless of $\Lambda_i$.

Via \eqref{Psi per 0}, this then fixes an ordering on the simples in $\Per$. Indeed, for $\Lambda=\Lambda_0$ across the equivalence \eqref{Psi per 0}, 
\[
\begin{array}{rcl}
\scrO&\longleftrightarrow&P_0\\
\scrN&\longleftrightarrow&P_1
\end{array}
\quad
\mbox{and}
\quad
\begin{array}{rcl}
\omega_{\ell\Curve}[1]&\longleftrightarrow&S_0\\
\scrO_{\Curve}(-1)&\longleftrightarrow&S_1.
\end{array}
\]

\subsection{Mutation and Wall Crossing}\label{mut notation section}
Given $\End_R(A)$ and $\End_R(B)$, consider the functor 
\begin{equation}
\RHom_{\End_R(A)}(T_{AB},-)\colon \Db(\opt{ams}{\mod}\End_R(A))\to\Db(\opt{ams}{\mod}\End_R(B))\label{gen mut functor}
\end{equation}
where $T_{AB}=\Hom_R(A,B)$.  Since $R$ is isolated cDV, if $A$ and $B$ are rigid reflexive $R$-modules that are connected by a finite sequence of mutations, then the above functor is an equivalence \cite[10.1, 10.5]{IW9}.  This then gives a chain of equivalences, which by e.g.\ \cite[4.15(2)]{HomMMP} and our choice of orderings send
\begin{equation}
\begin{array}{c}
\begin{tikzpicture}[xscale=\opt{ams}{1.2}\opt{ip}{1.3}]
\node (A-1) at (-1.5,0) {$\hdots$};
\node (A0) at (0,0) {$\Db(\mod\Lambda_{-1})$};
\node (A1) at (2,0) {$\Db(\mod\Lambda)$};
\node (A2) at (4,0) {$\Db(\mod\Lambda_{1})$};
\opt{ip}{\node (A3) at (5.5,0) {$\hdots$};}
\opt{ams}{\node (A3) at (6,0) {$\Db(\mod\Lambda_{2})$};}
\opt{ams}{\node (A4) at (7.5,0) {$\hdots$};}
\draw[->] (A-1) -- (A0);
\draw[->] (A0) -- (A1);
\draw[->] (A1) -- (A2);
\draw[->] (A2) -- (A3);
\opt{ams}{\draw[->] (A3) -- (A4);}
\node (Sm11) at (-1.5,-1.5) {$\phantom{S_0}$};
\node (Pm11) at (-1.5,-2) {$\phantom{P_1}$};
\node (S00) at (0,-0.5) {$S_0$};
\node (P00) at (0,-1) {$P_1$};
\node (S01) at (0,-1.5) {$S_1[-1]$};
\node (P01) at (0,-2) {$P_0$};
\node (S10) at (2,-0.5) {$S_0[-1]$};
\node (P10) at (2,-1) {$P_1$};
\node (S11) at (2,-1.5) {$S_1$};
\node (P11) at (2,-2) {$P_0$};
\node (S20) at (4,-0.5) {$S_0$};
\node (P20) at (4,-1) {$P_1$};
\node (S21) at (4,-1.5) {$S_1[-1]$};
\node (P21) at (4,-2) {$P_0$};
\opt{ams}{\node (S30) at (6,-0.5) {$S_0[-1]$};}
\opt{ip}{\node (S30) at (5.5,-0.5) {$\phantom{S_0}$};}
\opt{ams}{\node (P30) at (6,-1) {$P_1$};}
\opt{ip}{\node (P30) at (5.5,-1) {$\phantom{S_0}$};}
\opt{ams}{\node (S31) at (6,-1.5) {$S_1$};
\node (P31) at (6,-2) {$P_0$};
\node (S41) at (7.5,-1.5) {$\phantom{S_0}$};
\node (P41) at (7.5,-2) {$\phantom{P_1}$};}
\draw[|->] (Sm11) -- (S01);
\draw[|->] (Pm11) -- (P01);
\draw[|->] (S00) -- (S10);
\draw[|->] (P00) -- (P10);
\draw[|->] (S11) -- (S21);
\draw[|->] (P11) -- (P21);
\draw[|->] (S20) -- (S30);
\draw[|->] (P20) -- (P30);
\opt{ams}{\draw[|->] (S31) -- (S41);
\draw[|->] (P31) -- (P41);}
\end{tikzpicture}
\end{array}\label{S and Ps}
\end{equation}
We index these functors using their domain, and thus write the following. 
\[
\begin{tikzpicture}[scale=\opt{ip}{1.15}\opt{ams}{1},xscale=1.3]
\node (A-1) at (-1.5,0) {$\hdots$};
\node (A0) at (0,0) {$\Db(\mod\Lambda_{-1})$};
\node (A1) at (2,0) {$\Db(\mod\Lambda)$};
\node (A2) at (4,0) {$\Db(\mod\Lambda_{1})$};
\opt{ip}{\node (A3) at (5.5,0) {$\hdots$};}
\opt{ams}{\node (A3) at (6,0) {$\Db(\mod\Lambda_{2})$};}
\opt{ams}{\node (A4) at (7.5,0) {$\hdots$};}
\draw[->] (A-1) -- node[above] {$\scriptstyle \Phi_{-2}$}(A0);
\draw[->] (A0) -- node[above] {$\scriptstyle \Phi_{-1}$}(A1);
\draw[->] (A1) -- node[above] {$\scriptstyle \Phi_0$}(A2);
\draw[->] (A2) -- node[above] {$\scriptstyle \Phi_1$}(A3);
\opt{ams}{\draw[->] (A3) -- node[above] {$\scriptstyle \Phi_2$}(A4);}
\end{tikzpicture}
\]
Considering $T_{BA}$ instead of $T_{AB}$ in \eqref{gen mut functor} there are also equivalences in the reverse direction, which again by our choice of orderings send
\[
\begin{tikzpicture}[xscale=1.3]
\node (A-1) at (-1.5,0) {$\hdots$};
\node (A0) at (0,0) {$\Db(\mod\Lambda_{-1})$};
\node (A1) at (2,0) {$\Db(\mod\Lambda)$};
\node (A2) at (4,0) {$\Db(\mod\Lambda_{1})$};
\opt{ip}{\node (A3) at (5.5,0) {$\hdots$};}
\opt{ams}{\node (A3) at (6,0) {$\Db(\mod\Lambda_{2})$};
\node (A4) at (7.5,0) {$\hdots$};}
\draw[<-] (A-1) -- (A0);
\draw[<-] (A0) -- (A1);
\draw[<-] (A1) -- (A2);
\draw[<-] (A2) -- (A3);
\opt{ams}{\draw[<-] (A3) -- (A4);}
\node (Sm11) at (-1.5,-1.5) {$\phantom{S_0}$};
\node (Pm11) at (-1.5,-2) {$\phantom{P_1}$};
\node (S00) at (0,-0.5) {$S_0[-1]$};
\node (P00) at (0,-1) {$P_1$};
\node (S01) at (0,-1.5) {$S_1$};
\node (P01) at (0,-2) {$P_0$};
\node (S10) at (2,-0.5) {$S_0$};
\node (P10) at (2,-1) {$P_1$};
\node (S11) at (2,-1.5) {$S_1[-1]$};
\node (P11) at (2,-2) {$P_0$};
\node (S20) at (4,-0.5) {$S_0[-1]$};
\node (P20) at (4,-1) {$P_1$};
\node (S21) at (4,-1.5) {$S_1$};
\node (P21) at (4,-2) {$P_0$};
\opt{ip}{\node (S30) at (5.5,-0.5) {$\phantom{S_0}$};
\node (P30) at (5.5,-1) {$\phantom{P_1}$};}
\opt{ams}{\node (S30) at (6,-0.5) {$S_0$};
\node (P30) at (6,-1) {$P_1$};
\node (S31) at (6,-1.5) {$S_1[-1]$};
\node (P31) at (6,-2) {$P_0$};
\node (S41) at (7.5,-1.5) {$\phantom{S_0}$};
\node (P41) at (7.5,-2) {$\phantom{P_1}$};}
\draw[<-|] (Sm11) -- (S01);
\draw[<-|] (Pm11) -- (P01);
\draw[<-|] (S00) -- (S10);
\draw[<-|] (P00) -- (P10);
\draw[<-|] (S11) -- (S21);
\draw[<-|] (P11) -- (P21);
\draw[<-|] (S20) -- (S30);
\draw[<-|] (P20) -- (P30);
\opt{ams}{\draw[<-|] (S31) -- (S41);
\draw[<-|] (P31) -- (P41);}
\end{tikzpicture}
\]
We index these functors using their codomain.  Combining gives the following `strip' of functors.
\begin{equation}
\begin{array}{c}
\begin{tikzpicture}[xscale=\opt{ams}{1.3}\opt{ip}{1.35}]
\node (A0) at (-0.2,0) {$\Db(\mod\Lambda_{-1})$};
\node (A1) at (2,0) {$\Db(\mod\Lambda)$};
\node (A2) at (4,0) {$\Db(\mod\Lambda_{1})$};
\opt{ams}{\node (A3) at (6,0) {$\Db(\mod\Lambda_{2})$};}
\draw[->] (-1.6,0.05) -- node[above] {$\scriptstyle \Phi_{-2}$}(-1.1,0.05);
\draw[<-] (-1.6,-0.05) -- node[below] {$\scriptstyle \Phi_{-2}$} (-1.1,-0.05);
\draw[->] (0.65,0.05) -- node[above] {$\scriptstyle \Phi_{-1}$}(1.2,0.05);
\draw[<-] (0.65,-0.05) -- node[below] {$\scriptstyle \Phi_{-1}$} (1.2,-0.05);
\draw[->] (2.75,0.05) -- node[above] {$\scriptstyle \Phi_0$}(3.2,0.05);
\draw[<-] (2.75,-0.05) -- node[below] {$\scriptstyle \Phi_0$} (3.2,-0.05);
\draw[->] (4.75,0.05) -- node[above] {$\scriptstyle \Phi_1$}(5.2,0.05);
\draw[<-] (4.75,-0.05) -- node[below] {$\scriptstyle \Phi_1$} (5.2,-0.05);
\opt{ams}{\draw[->] (6.75,0.05) -- node[above] {$\scriptstyle \Phi_2$}(7.25,0.05);
\draw[<-] (6.75,-0.05) -- node[below] {$\scriptstyle \Phi_2$} (7.25,-0.05);}
\end{tikzpicture}
\end{array}\label{functors strip 1}
\end{equation}
The notation is deliberate: comparing to \eqref{arrangement labelled} we see that the categories above correspond to the chambers, and the functors correspond to wall-crossing. Indeed, the functor labels above are precisely induced from the label on the corresponding wall.
\[
\begin{tikzpicture}
\draw[densely dotted,->] (-2.5,0)--(\opt{ams}{9}\opt{ip}{6.5},0);
\node (A) at (-2.2,0) [cvertex] {};
\node (B) at (0.8,0) [cvertex] {};
\node (C) at (3.5,0) [cvertex] {};
\node (D) at (6,0) [cvertex] {};
\opt{ams}{\node (E) at (8.7,0) [cvertex] {};}
\node at (-2.2,0.3) {$\scriptstyle {-2}$};
\node at (0.8,0.3) {$\scriptstyle {-1}$};
\node at (3.5,0.3) {$\scriptstyle {0}$};
\node at (6,0.3) {$\scriptstyle {1}$};
\opt{ams}{\node at (8.7,0.3) {$\scriptstyle {2}$};}
\draw[draw=none] (A) -- node[gap] {$V_{-2}\oplus V_{-1}$}(B);
\draw[draw=none] (B) -- node[gap] {$V_{-1}\oplus V_0$}(C);
\draw[draw=none] (C) -- node[gap] {$V_{0}\oplus V_1$}(D);
\opt{ams}{\draw[draw=none] (D) -- node[gap] {$V_{1}\oplus V_2$}(E);}
\end{tikzpicture}
\quad
\]
Later, we will see that monodromy in the complexified complement around wall $i$ will correspond to the composition $\Phi_i\circ\Phi_i$.

For any fixed $i$, both functors labelled $\Phi_i$ are governed by the same numerics.  These mutation functors are governed by the exchange sequences \eqref{exchange 1} and \eqref{exchange 2} respectively, and, as has already been noted, $n_i=n_i'$.  The following will be used heavily later.
\begin{lemma}\label{mut track lem}
For \opt{ams}{a fixed}\opt{ip}{any} $i$, consider either of the functors labelled\opt{ams}{ }\opt{ip}{~}$\Phi_i$.  Under the $(P_0,P_1)$ and  $(S_0,S_1)$ ordering conventions above, the following hold.
\begin{enumerate}
\item If $i$ is even, then 
$\Phi_i(S_1)=S_1[-1]$ and $\Phi_i(S_0)$ is a module of dim vector $(1,n_i)$.
\item If $i$ is odd, then
 $\Phi_i(S_0)=S_0[-1]$ and $\Phi_i(S_1)$ is a module of dim vector $(n_i,1)$.
\end{enumerate}
\end{lemma}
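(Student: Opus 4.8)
The plan is to compute $\Phi_i$ directly on the two simple modules of its source, using that the relevant tilting module has one projective summand while the other carries an explicit length-one projective resolution furnished by the exchange sequences \eqref{exchange 1}--\eqref{exchange 2}. It is enough to treat the forward functor $\Phi_i=\RHom_{\Lambda_i}(T,-)\colon\Db(\mod\Lambda_i)\to\Db(\mod\Lambda_{i+1})$, where $T=\Hom_R(M_i,M_{i+1})$; the backward functor is handled verbatim after exchanging \eqref{exchange 1} for \eqref{exchange 2} and invoking $n_i=n_i'$. First I would fix the dictionary: write $\mathsf a$ for the vertex of $\Lambda_i$ attached to the summand $V_i$ of $M_i$ (the module kept under wall crossing) and $\mathsf b$ for the vertex attached to $V_{i-1}$ (the mutated summand), and $\mathsf a',\mathsf b'$ for the vertices of $\Lambda_{i+1}$ attached to $V_i$ and $V_{i+1}$. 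Unwinding \S\ref{ordering section}, in the source $(S_0,S_1)=(S_{\mathsf a},S_{\mathsf b})$ when $i$ is even and $(S_{\mathsf b},S_{\mathsf a})$ when $i$ is odd; since the $P_0/P_1$ labelling is designed to be compatible with mutation, the analogous identification holds in $\Lambda_{i+1}$ (with the parity of $i+1$). So both cases of the Lemma reduce to identifying $\Phi_i(S_{\mathsf a})$ and $\Phi_i(S_{\mathsf b})$.

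Now decompose $T=\Hom_R(M_i,V_i)\oplus\Hom_R(M_i,V_{i+1})$, where under $\Lambda_{i+1}\cong\End_{\Lambda_i}(T)$ the idempotent $e_{\mathsf a'}$ cuts out the first summand and $e_{\mathsf b'}$ the second. The first summand is the projective $\Lambda_i$-module $e_{\mathsf a}\Lambda_i$; the second has, by applying $\Hom_R(M_i,-)$ to \eqref{exchange 1}, the length-one projective resolution
\[
0\to e_{\mathsf b}\Lambda_i\to (e_{\mathsf a}\Lambda_i)^{\oplus n_i}\to \Hom_R(M_i,V_{i+1})\to 0 .
\]
Thus $\RHom_{\Lambda_i}(T,-)$ decomposes, compatibly with $e_{\mathsf a'}$ and $e_{\mathsf b'}$, as $\RHom_{\Lambda_i}(e_{\mathsf a}\Lambda_i,-)\oplus\RHom_{\Lambda_i}(\Hom_R(M_i,V_{i+1}),-)$. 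Evaluating on $S_{\mathsf a}$ gives $\mathbb C$ in degree $0$ on the first factor and, via the resolution, the complex $\mathbb C^{n_i}\to 0$ in degrees $0,1$ on the second, so $\Phi_i(S_{\mathsf a})$ is a $\Lambda_{i+1}$-module in degree $0$ with $\dim e_{\mathsf a'}=1$ and $\dim e_{\mathsf b'}=n_i$. Evaluating on $S_{\mathsf b}$ kills the first factor and, via the resolution, yields $\mathbb C$ in degree $1$ on the second, so $\Phi_i(S_{\mathsf b})=S_{\mathsf b'}[-1]$. Feeding these two facts back through the parity dictionary of the previous paragraph reproduces statement (1) when $i$ is even and statement (2) when $i$ is odd.

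The $\RHom$ computations are each a single line; the only place that requires genuine care is the bookkeeping — fixing the correspondence between the idempotents of $\End_{\Lambda_i}(T)$ and the parity-dependent $P_0/P_1$ labelling of $\Lambda_{i+1}$, and verifying it stays consistent for both functors labelled $\Phi_i$. This dictionary is forced by \S\ref{ordering section} together with the exchange-sequence formalism of \cite{IW4,IW9}, so I would pin it down precisely before touching any homological algebra.
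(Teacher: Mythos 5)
Your proposal follows essentially the same path as the paper: decompose the tilting bimodule $T=\Hom_R(M_i,M_{i+1})$ into its projective summand and the summand $\Hom_R(M_i,V_{i+1})$ with the length-one resolution coming from the exchange sequence, then compute $\RHom_{\Lambda_i}(T,-)$ on both simples. The one genuine difference is that you recompute the shift $\Phi_i(S_{\mathsf b})=S_{\mathsf b'}[-1]$ directly, whereas the paper simply cites the fact recorded in \eqref{S and Ps}; this is harmless, and your direct computation is correct.

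One caution on the parity dictionary you defer: read literally, ``the analogous identification holds in $\Lambda_{i+1}$ (with the parity of $i+1$)'' gives $(S_0,S_1)=(S_{\mathsf a'},S_{\mathsf b'})$ when $i+1$ is even, and this is backwards. Unwinding \S\ref{ordering section} for $\Lambda_{i+1}$ directly --- or using your compatibility-with-mutation principle $\Phi_i(P_{\mathsf a})=P_{\mathsf a'}$, which carries the $P_0/P_1$ label of the $\mathsf a$ vertex to the $\mathsf a'$ vertex --- one finds $(S_0,S_1)=(S_{\mathsf a'},S_{\mathsf b'})$ when $i$ is even (equivalently $i+1$ is odd), and $(S_{\mathsf b'},S_{\mathsf a'})$ when $i$ is odd. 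The source of the flip is that you chose $\mathsf a$, $\mathsf a'$ to both mark the kept module $V_i$, but $V_i$ is the higher-indexed summand of $M_i$ and the lower-indexed summand of $M_{i+1}$, so it does not occupy the same parity slot on the two sides of the wall. With the corrected dictionary your two $\RHom$ computations indeed yield statements (1) and (2); with the one you wrote, you would conclude $\Phi_i(S_1)=S_0[-1]$ for $i$ even, which \eqref{S and Ps} already rules out. Since you explicitly flag the dictionary as the place needing care, this is a slip in the sketch rather than a wrong strategy, but it is precisely the slip you would hit if you did not pin it down before the homological algebra.
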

\begin{proof}
Consider $\Phi_i\colon \Db(\mod\Lambda_i)\to\Db(\mod\Lambda_{i+1})$. The proof for $\Phi_i\colon \Db(\mod\Lambda_{i+1})\to\Db(\mod\Lambda_{i})$ is similar.   

Being the mutation functor, $\Phi_i$ is induced by the tilting bimodule $\Hom_R(M_i,M_{i+1})$, which as a $\Lambda_i$-module decomposes as $\Hom_R(M_i, V_{i})\oplus\Hom_R(M_i, V_{i+1})$. This is either $P_0\oplus \Hom_R(M_i, V_{i+1})$ if $i$ is even, or $P_1\oplus\Hom_R(M_i, V_{i+1})$ if $i$ is odd.   The first statement regarding shifting $S_1$, respectively $S_0$, is already in \eqref{S and Ps}. For the second statement, we assume that $i$ is even: the same proof below works in the case $i$ is odd, simply by swapping subscripts $1$ and $0$ throughout. 

\def\eqnRHom{\RHom_{\Lambda_i}(\Hom_R(M_i,V_{i+1}),S_0)=\mathbb{C}^{\oplus n_i}}

The exchange sequence induces an exact sequence
\[
0\to\Hom_R(M_i,V_{i - 1})\to
\Hom_R(M_i,V_{i})^{\oplus n_i}\to
\Hom_R(M_i,V_{i+1})\to
0
\]
which since $i$ is even is
\[
0\to P_1\to P_0^{\oplus n_i}\to\Hom_R(M_i,V_{i+1})\to 0.
\]
Applying $\Hom_{\Lambda_i}(-,S_0)$ we deduce that \opt{ams}{$\eqnRHom$.}\opt{ip}{\[\eqnRHom.\]}  It is clear that $\RHom_{\Lambda_i}(P_0,S_0)=\mathbb{C}$, and so the result follows.
\end{proof}

\subsection{Identifying Line Bundle Twists}\label{line bundle twists section}
The subgroup of the class group generated by $L=f_*\scrO(1)$ acts as in \ref{class action comb}, so for all $i\in\mathbb{Z}$ there is an isomorphism
\[
\Lambda_i=\End_R(V_{i-1}\oplus V_i)\xrightarrow{(-\otimes L)^{**}}\End_R(V_{i+N-1}\oplus V_{i+N})=\Lambda_{i+N}.
\]
This isomorphism relates the ordered projectives for $\Lambda_i$ and $\Lambda_{i+N}$ as follows: when $N$ is odd (which occurs precisely when $\ell=1$, in which case $N=1$) it permutes the ordered projectives, whereas when $N$ is even (which occurs precisely when $\ell\geq 2$) it preserves the ordering. For later reference, we summarize this as follows.

\begin{notation}
For any $i$, by abuse of notation we write $\upbeta$ for the above isomorphism. In particular, $\upbeta$ induces an isomorphism of categories $\Db(\mod\Lambda_i)\to\Db(\mod\Lambda_{i+N})$, which we will also denote by\opt{ams}{ }\opt{ip}{~}$\upbeta$.
\end{notation}

\begin{lemma}\label{beta order}
$\upbeta$ is a ring isomorphism that sends $P_i\mapsto P_i$ and $S_i\mapsto S_i$ when $\ell>1$, and interchanges $P_0\leftrightarrow P_1$ and $S_0\leftrightarrow S_1$ when $\ell=1$.  
\end{lemma}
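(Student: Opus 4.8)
The plan is to unwind the definition of $\upbeta$ from \S\ref{line bundle twists section}, track the two indecomposable projectives through it by hand, and then deduce the statement on simples formally.

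First I would justify that $\upbeta$ is a ring isomorphism. By \ref{class action comb} the functor $(-\otimes L)^{**}$ is an autoequivalence of the category of reflexive $R$-modules which realizes translation by $N$ on the walls of \eqref{arrangement labelled}; in particular it sends $V_j\mapsto V_{j+N}$ for every $j$ (this is precisely the number $N$ recorded just before \ref{ns summary}). Hence it carries $M_i=V_{i-1}\oplus V_i$ to $M_{i+N}=V_{i+N-1}\oplus V_{i+N}$, and, being fully faithful, induces an isomorphism $\Lambda_i=\End_R(M_i)\xrightarrow{\sim}\End_R(M_{i+N})=\Lambda_{i+N}$, which is the map $\upbeta$. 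The same functoriality gives $\upbeta\big(\Hom_R(M_i,V_{i-1})\big)=\Hom_R(M_{i+N},V_{i+N-1})$ and $\upbeta\big(\Hom_R(M_i,V_i)\big)=\Hom_R(M_{i+N},V_{i+N})$; so $\upbeta$ sends the indecomposable projective attached to the summand $V_{i-1}$ (resp.\ $V_i$) of $M_i$ to the one attached to the summand $V_{i+N-1}$ (resp.\ $V_{i+N}$) of $M_{i+N}$.

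Next I would feed in the ordering convention of \S\ref{ordering section}: the label $P_0$ for $\Lambda_i$ is attached to the summand $V_i$ when $i$ is even and to $V_{i-1}$ when $i$ is odd (and $P_1$ the other way round), and the same rule applies to $\Lambda_{i+N}$ with $i$ replaced by $i+N$. Thus whether $\upbeta$ fixes or swaps the labels $P_0,P_1$ depends only on whether $i$ and $i+N$ have the same parity, i.e.\ on the parity of $N$. By \ref{ns summary} one has $N$ even exactly when $\ell>1$, and $N=1$ when $\ell=1$; this yields $\upbeta(P_0)=P_0$ and $\upbeta(P_1)=P_1$ in the first case and $P_0\leftrightarrow P_1$ in the second. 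The passage to simples is then automatic: the category isomorphism induced by $\upbeta$ is exact, carries projectives to projectives and simples to simples, and commutes with taking tops; since $S_j$ is by definition the simple top of $P_j$, the behaviour of $\upbeta$ on the $S_j$ mirrors its behaviour on the $P_j$.

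I do not anticipate a genuine obstacle: the substantive input — that $(-\otimes L)^{**}$ realizes the translation $V_j\mapsto V_{j+N}$ compatibly with the wall labelling — is already available from \ref{class action comb} and the discussion preceding \ref{ns summary}. What requires care is only the bookkeeping: keeping the ordering conventions of \S\ref{ordering section} and the left/right module conventions for $\Hom_R(M_i,-)$ straight throughout, and applying the parity statement ($N$ is even precisely when $\ell>1$) correctly.
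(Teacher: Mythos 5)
Your argument is correct and matches what the paper does: the lemma is stated as a summary of the parity observation made in the prose immediately preceding it in \S\ref{line bundle twists section}, which is exactly the bookkeeping you carry out — $(-\otimes L)^{**}$ translates $V_j\mapsto V_{j+N}$ by \ref{class action comb} and the line before \ref{ns summary}, and the ordering of \S\ref{ordering section} flips or not according to the parity of $N$, with $N$ even iff $\ell>1$. The passage to simples via tops is also the intended (implicit) step.
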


It will be convenient to write $\upkappa_0=\Id$, and for $i\geq 1$ to set
\[
\upkappa_i\colonequals \Phi_{i-1}\circ\hdots\circ\Phi_0\colon \Db(\mod\Lambda)\xrightarrow{\sim}\Db(\mod\Lambda_i).
\]
For $i<0$, we set
\[
\uplambda_i\colonequals\Phi_{-1}\circ \hdots\circ\Phi_i\colon \Db(\mod\Lambda_i)\xrightarrow{\sim}\Db(\mod\Lambda).
\]
We also define a functor $\Db(\coh X) \to \Db(\mod\Lambda_i)$ as follows.
\[
\Uppsi_i\colonequals
\left\{
\begin{array}{ll}
\upkappa_i\circ\Uppsi&\mbox{if $i\geq 0$}\\
\uplambda_i^{-1}\circ\Uppsi&\mbox{if $i<0$}
\end{array}
\right.
\]
It is obvious that if $i\in\mathbb{Z}$, and $j\geq 1$, then
\begin{equation}
\Phi_{i+j-1}\circ\hdots\circ\Phi_{i}\circ\Uppsi_i
\cong \Uppsi_{i+j}.
\label{obvious comp}
\end{equation}
The following is an easy extension of \cite[7.4]{HW}.
\begin{prop}\label{tensor cor}
For all $i\in\mathbb{Z}$, and $k\geq 1$, the following diagram commutes.
\[
\begin{tikzpicture}
\node (A0) at (0,1.5) {$\Db(\coh X)$};
\node (A5) at (5,1.5) {$\Db(\coh X)$};
\node (B0) at (0,0) {$\Db(\mod\Lambda_i)$};
\node (N5) at (5,0) {$\Db(\mod\Lambda_{i+kN})$};
\draw[->] (A0) -- node[above] {$\scriptstyle -\otimes\scrO(-k)$}(A5);
\draw[->] (B0) -- node[above] {$\scriptstyle \Phi_{i+kN-1}\circ\hdots\circ\Phi_i$}(N5);
\draw[<-] (B0) -- node[left] {$\scriptstyle \Uppsi_i$}(A0);
\draw[<-] (N5) -- node[left] {$\scriptstyle \upbeta\circ \Uppsi_i$}(A5);
\end{tikzpicture}
\]
\end{prop}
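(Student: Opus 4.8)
The statement \ref{tensor cor} is a commutativity of a square involving four derived equivalences; the natural strategy is to reduce it, by the $N$-periodicity that is built into the construction of everything in sight, to a single already-known statement, namely \cite[7.4]{HW} (or its mild extension at the level $i=0$, $k=1$). First I would observe that the composite $\Phi_{i+kN-1}\circ\cdots\circ\Phi_i$ factors as a composite of $k$ blocks, each of length $N$, of the form $\Phi_{(i+(j+1)N)-1}\circ\cdots\circ\Phi_{i+jN}$ for $j=0,\dots,k-1$. So by an obvious induction on $k$ it suffices to treat the case $k=1$: the square
\[
\begin{tikzpicture}
\node (A0) at (0,1.5) {$\Db(\coh X)$};
\node (A5) at (5,1.5) {$\Db(\coh X)$};
\node (B0) at (0,0) {$\Db(\mod\Lambda_i)$};
\node (N5) at (5,0) {$\Db(\mod\Lambda_{i+N})$};
\draw[->] (A0) -- node[above] {$\scriptstyle -\otimes\scrO(-1)$}(A5);
\draw[->] (B0) -- node[above] {$\scriptstyle \Phi_{i+N-1}\circ\hdots\circ\Phi_i$}(N5);
\draw[<-] (B0) -- node[left] {$\scriptstyle \Uppsi_i$}(A0);
\draw[<-] (N5) -- node[left] {$\scriptstyle \upbeta\circ \Uppsi_i$}(A5);
\end{tikzpicture}
\]
commutes for every $i\in\mathbb{Z}$, after which the general $k$ case follows by pasting $k$ such squares side by side (using that the right-hand vertical of the $j$-th square is the left-hand vertical of the $(j{+}1)$-st, and that $\upbeta^k$ relates $\Uppsi_i$ to $\Uppsi_{i+kN}$ by iterating the $k=1$ identity $\upbeta\circ\Uppsi_i\simeq\Uppsi_{i+N}\circ(-\otimes\scrO(-1))$ read off from the square).

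Next I would reduce the $k=1$, general $i$ case to $k=1$, $i=0$. The point is that $\Uppsi_i$ is by definition obtained from $\Uppsi_0=\Uppsi$ by composing with a string of mutation functors $\upkappa_i$ (or $\uplambda_i^{-1}$), and \eqref{obvious comp} says precisely that these strings splice together compatibly: $\Phi_{i+N-1}\circ\cdots\circ\Phi_i\circ\Uppsi_i\cong\Uppsi_{i+N}$. Feeding this into the square, commutativity at level $i$ is equivalent to $\Uppsi_{i+N}\cong\upbeta\circ\Uppsi_i\circ(-\otimes\scrO(-1))$, i.e. to the single assertion $\upbeta\circ\Uppsi_i\cong\Uppsi_{i+N}\circ(-\otimes\scrO(1))$. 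Since the $\Phi$'s are all invertible and $\Uppsi_i$ differs from $\Uppsi$ only by an invertible mutation composite, this reduces to the statement for $i=0$: that $\upbeta\circ\Uppsi\cong\Uppsi_N\circ(-\otimes\scrO(1))$, equivalently that $\Phi_{N-1}\circ\cdots\circ\Phi_0\circ\Uppsi\cong\upbeta\circ\Uppsi\circ(-\otimes\scrO(-1))$ on $\Db(\coh X)$.

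Finally, the $i=0$, $k=1$ identity is exactly the content of \cite[7.4]{HW} in the smooth case, and the claim is that the same proof works here. Concretely, $\Uppsi=\RHom_X(\scrO\oplus\scrN,-)$ and the mutation composite $\Phi_{N-1}\circ\cdots\circ\Phi_0$ is induced by the tilting bimodule $\Hom_R(M_0,M_N)=\Hom_R(V_{-1}\oplus V_0, V_{N-1}\oplus V_N)$; the isomorphism $\upbeta$ is the class-group twist $(-\otimes L)^{**}\colon\Lambda_0\to\Lambda_N$ from \S\ref{line bundle twists section}, and \ref{beta order} records that it respects the $(P_0,P_1)$ ordering when $\ell>1$. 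On the geometric side $-\otimes\scrO_X(1)$ intertwines $\scrO\oplus\scrN$ with a module in the same mutation class realising the $N$-step shift (this is \ref{class action comb}), and one identifies the two tilting objects / bimodules up to the twist $\upbeta$, giving the natural isomorphism of functors. The verification that one is genuinely in the setting of \cite[7.4]{HW} uses only: that $R$ is isolated cDV so mutation functors between rigid reflexive modules connected by mutations are equivalences (\cite[9.25, 9.29]{IW9}); that $V_0=R$, $V_1=f_*\scrM$ and the $L$-action is as in \ref{class action comb}; and \ref{ns summary} for the value of $N$ — none of which needed $X$ smooth.

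\textbf{Main obstacle.} The genuinely delicate point is the last paragraph: checking that the proof of \cite[7.4]{HW} does not secretly use smoothness of $X$ (e.g. via Katz--Morrison, which the paper has flagged as failing in the singular case) when it matches up the two tilting bimodules and pins down the natural transformation. I expect this to be routine once one notes that the only inputs are the $L$-action on the hyperplane arrangement and the general mutation machinery over isolated cDV singularities, both already established here in the required generality; but it is where care is needed, and it is presumably why the authors phrase the result as "an easy extension of \cite[7.4]{HW}" rather than a verbatim citation. The bookkeeping with the $(P_0,P_1)$ ordering and the parity of $N$ (handled by \ref{beta order}) is the other place to be careful, though purely mechanical.
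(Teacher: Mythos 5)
Your high-level reduction plan — first to $k=1$ by pasting, then to $i=0$ by periodicity, then cite \cite[7.4]{HW} — is close in spirit to the paper's proof, which also bootstraps from the $i=0,k=1$ case. However, there is a concrete gap in your reduction from general $i$ to $i=0$.

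You assert that, having translated commutativity at level $i$ into the single identity $\upbeta\circ\Uppsi_i\cong\Uppsi_{i+N}\circ(-\otimes\scrO(1))$, "since the $\Phi$'s are all invertible and $\Uppsi_i$ differs from $\Uppsi$ only by an invertible mutation composite, this reduces to the statement for $i=0$." This is not a consequence of invertibility. Writing $\Uppsi_i=\upkappa_i\circ\Uppsi$ and substituting the known $i=0$ identity $\upbeta\circ\Uppsi\cong\upkappa_N\circ\Uppsi\circ(-\otimes\scrO(1))$, what is actually required is
\[
\upbeta\circ\upkappa_i\;\cong\;\bigl(\Phi_{i+N-1}\circ\hdots\circ\Phi_N\bigr)\circ\upbeta,
\]
which amounts to showing that $\upbeta\circ\Phi_j\cong\Phi_{j+N}\circ\upbeta$ for each $j$. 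This is a genuine new assertion — it says the $L$-twist isomorphism intertwines the mutation functors $N$ steps apart — and invertibility of the $\Phi_j$ gives you no purchase on it. In the paper this is precisely the "right-hand square" in the proof, proved by matching the two tilting bimodules $\Hom_R(M_0,M_1)$ (with the $\Lambda_{N+1}$-action transported by $\upbeta^{-1}$) and $\Hom_R(M_N,M_{N+1})$ (with the $\Lambda_0$-action transported by $\upbeta$) via tensoring by $L$. That bimodule identification is the key step your proposal omits; without it, neither your $i$-reduction nor (for $k\geq 2$) your horizontal pasting of squares goes through, since the right vertical $\upbeta\circ\Uppsi_i$ of one square does not match the left vertical $\Uppsi_{i+N}$ of the next without exactly this compatibility. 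The "main obstacle" you flag (whether \cite[7.4]{HW} survives in the singular setting) is a legitimate sanity check, but it is not where the real work lies — the paper simply cites \cite[7.4]{HW} for the base case and invests its effort in the $\upbeta$--$\Phi$ commutation you have treated as automatic.
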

\begin{proof}
Consider the diagram 
\[
\begin{tikzpicture}
\node (A0) at (\opt{ams}{-1}\opt{ip}{-0.5},2) {$\Db(\opt{ams}{\coh} X)$};
\node (A1) at (6,2) {$\Db(\opt{ams}{\coh} X)$};
\node (B) at (6,1) {$\Db(\opt{ams}{\mod}\Lambda_0)$};
\node (C0) at (\opt{ams}{-1}\opt{ip}{-0.5},0) {$\Db(\opt{ams}{\mod}\Lambda_0)$};
\node (C1) at (\opt{ams}{1.5}\opt{ip}{2},0) {$\Db(\opt{ams}{\mod}\Lambda_1)$};
\node (C2) at (6,0) {$\Db(\opt{ams}{\mod}\Lambda_{N})$};
\node (B1) at (9,1) {$\Db(\opt{ams}{\mod}\Lambda_1)$};
\node (C3) at (9,0) {$\Db(\opt{ams}{\mod}\Lambda_{N+1})$};
\draw[->] (A0) -- node[above] {$\scriptstyle -\otimes\scrO(-1)$}(A1);
\draw[->] (C0) -- node[above] {$\scriptstyle\Phi_0$}(C1);
\draw[->] (C1) -- node[above] {$\scriptstyle\Phi_{N-1}\circ\hdots\circ\Phi_1$}(C2);
\draw[->] (A0) -- node[left] {$\scriptstyle \Uppsi_0$}(C0);
\draw[->] (A1) -- node[left] {$\scriptstyle \Uppsi_0$}(B);
\draw[->] (B) -- node[left] {$\scriptstyle \upbeta$}(C2);
\draw[->] (B1) -- node[left] {$\scriptstyle \upbeta$}(C3);
\draw[->] (B) -- node[above] {$\scriptstyle\Phi_0$}(B1);
\draw[->] (C2) -- node[above] {$\scriptstyle\Phi_{N}$}(C3);
\draw[densely dotted,->] (A0) -- node[right]{$\scriptstyle\Uppsi_1$}(C1);
\draw[densely dotted,->] (A1) -- node[above]{$\scriptstyle\Uppsi_1$}(B1);
\end{tikzpicture}
\]
The left hand square commutes by \cite[7.4]{HW}. We claim the right hand square also commutes.  On one hand, $\upbeta\circ\Phi_0$ is given by the tilting bimodule $
\Hom_R(M_0,M_1)$, with standard action of $\End_R(M_0)$ and the action of $\End_R(M_{N+1})$ via $\upbeta^{-1}$.  On the other hand,  $\Phi_N\circ\upbeta$ is given by the tilting bimodule $\Hom_R(M_{N},M_{N+1})$, with the action of $\End_R(M_0)$ via $\upbeta$, and the standard action by $\End_R(M_{N+1})$.   These tilting modules are isomorphic via tensor by $L$.  This is clearly an isomorphism of bimodules since both functors~$\upbeta$ are induced by tensoring by $L$.

Since $\Uppsi_1 = \Phi_0\circ\Uppsi$, from the above two commutative squares, the result follows for $i=1$ and $k=1$.   In an identical way, composing suitable squares proves the result for all $i\geq 0$ and $k=1$.  Then, for any $i\geq 0$, all three squares in the following diagram commute
\[
\begin{tikzpicture}[yscale=\opt{ams}{1}\opt{ip}{1.1}]
\node (A0) at (-1,2) {$\Db(\opt{ams}{\coh} X)$};
\node (A1) at (3,2) {$\Db(\opt{ams}{\coh} X)$};
\node (A2) at (7,2) {$\Db(\opt{ams}{\coh} X)$};
\node (B) at (3,0) {$\Db(\opt{ams}{\mod}\Lambda_i)$};
\node (C0) at (-1,-1) {$\Db(\opt{ams}{\mod}\Lambda_i)$};
\node (C2) at (3,-1) {$\Db(\opt{ams}{\mod}\Lambda_{i+N})$};
\node (X) at (7,1) {$\Db(\opt{ams}{\mod}\Lambda_i)$};
\node (B1) at (7,0) {$\Db(\opt{ams}{\mod}\Lambda_{i+N})$};
\node (C3) at (7,-1) {$\Db(\opt{ams}{\mod}\Lambda_{2i+N})$};
\draw[->] (A0) -- node[above] {$\scriptstyle -\otimes\scrO(-1)$}(A1);
\draw[->] (A1) -- node[above] {$\scriptstyle -\otimes\scrO(-1)$}(A2);
\draw[->] (C0) -- node[above] {$\scriptstyle\hdots\circ\Phi_i$}(C2);
\draw[->] (A0) -- node[left] {$\scriptstyle \Uppsi_i$}(C0);
\draw[->] (A1) -- node[left] {$\scriptstyle \Uppsi_i$}(B);
\draw[->] (B) -- node[left] {$\scriptstyle \upbeta$}(C2);
\draw[->] (B1) -- node[left] {$\scriptstyle \upbeta$}(C3);
\draw[->] (A2) -- node[left] {$\scriptstyle \Uppsi_i$}(X);
\draw[->] (X) -- node[left] {$\scriptstyle \upbeta$}(B1);
\draw[->] (B) -- node[above] {$\scriptstyle\hdots\circ\Phi_i$}(B1);
\draw[->] (C2) -- node[above] {$\scriptstyle\hdots\circ\Phi_{i+N}$}(C3);
\end{tikzpicture}
\]
which proves the result for all $i\geq 0$ and all $k=2$.  In a similar way, the result follows for all $i\geq 0$ and all $k\geq 1$.  The result for all $i<0$ and $k\geq 1$ is proved in a very similar way, starting with the following commutative diagram
\[
\begin{tikzpicture}[yscale=\opt{ams}{1}\opt{ip}{1.1}]
\node (A0) at (\opt{ams}{-1}\opt{ip}{0},2) {$\Db(\opt{ams}{\coh} X)$};
\node (A1) at (6,2) {$\Db(\opt{ams}{\coh} X)$};
\node (B) at (3,1) {$\Db(\opt{ams}{\mod}\Lambda_{-1})$};
\node (Cm1) at (\opt{ams}{-4}\opt{ip}{-3},0) {$\Db(\opt{ams}{\mod}\Lambda_{-1})$};
\node (C0) at (\opt{ams}{-1}\opt{ip}{0},0) {$\Db(\opt{ams}{\mod}\Lambda_0)$};
\node (C2) at (3,0) {$\Db(\opt{ams}{\mod}\Lambda_{N-1})$};
\node (B1) at (6,1) {$\Db(\opt{ams}{\mod}\Lambda_0)$};
\node (C3) at (6,0) {$\Db(\opt{ams}{\mod}\Lambda_{N}).$};
\draw[->] (A0) -- node[above] {$\scriptstyle -\otimes\scrO(-1)$}(A1);
\draw[->] (Cm1) -- node[above] {$\scriptstyle\Phi_{-1}$}(C0);
\draw[->] (C0) -- node[above] {$\scriptstyle\upkappa_{N-1}$}(C2);
\draw[->] (A0) -- node[left] {$\scriptstyle \Uppsi_0$}(C0);
\draw[->] (A1) -- node[left] {$\scriptstyle \Uppsi_0$}(B1);
\draw[->] (B) -- node[left] {$\scriptstyle \upbeta$}(C2);
\draw[->] (B1) -- node[left] {$\scriptstyle \upbeta$}(C3);
\draw[->] (B) -- node[above] {$\scriptstyle\Phi_{-1}$}(B1);
\draw[->] (C2) -- node[above] {$\scriptstyle\Phi_{N-1}$}(C3);
\draw[densely dotted,->] (A0) -- node[above left]{$\scriptstyle\Uppsi_{-1}$}(Cm1);
\draw[densely dotted,->] (A1) -- node[above]{$\scriptstyle\Uppsi_{-1}$}(B);
\end{tikzpicture}\qedhere
\]
\end{proof}

\section{t-structures on the Strip, Duality and the Simples Helix}

This section establishes the existence of the simples helix $\{\scrS_i\}_{i\in\mathbb{Z}}$, and shows that these describe the simples in iterated tilts of perverse sheaves.  In later sections, this allows us to give an intrinsic description of monodromy on the SKMS.

\subsection{Generalities on t-structures}
Recall that a t-structure on a triangulated category $\scrD$ is a full subcategory $\scrF\subset\scrD$, satisfying $\scrF[1]\subset\scrF$, such that setting
\[
\scrF^\perp\colonequals \{ d\in\scrD\mid \Hom_{\scrD}(f,d)=0\mbox{ for all }f\in\scrF\}
\] 
then for every $d\in\scrD$ there is a triangle $f\to d\to g\to$ with $f\in\scrF$ and $g\in\scrF^\perp$. The heart is defined to be
\[
\scrA=\scrF\cap\scrF^\perp[1].
\]
A t-structure $\scrF\subset\scrD$ is called bounded if $\scrD=\bigcup_{i,j\in\mathbb{Z}}\scrF[i]\cap\scrF^\perp[j]$.  A\opt{ams}{ }\opt{ip}{~}bounded t-structure $\scrF$ is determined by its heart $\scrA$. Indeed, $\scrF$ is the extension-closed subcategory generated by the subcategories $\scrA[j]$ for integers $j\geq 0$.  

In what follows, recall that the truncation functor $\uptau^\scrA_{\leq 0}$ is defined to be the right adjoint to the inclusion $\scrF\subset\scrD$, and $\uptau^\scrA_{\geq 0}$ as left adjoint to the inclusion $\scrF^\perp[1]\subset \scrD$. 

\begin{lemma}\label{dual aisles}
Suppose that $\scrA,\scrB$ are hearts of bounded t-structures in $\scrD$, such that $\mathbb{D}\scrA=\scrB$ for some exact duality $\mathbb{D}\colon\scrD^{\op} \to\scrD$, namely an exact anti-autoequivalence with $\mathbb{D}^2 = \Id$.  Then  $\mathbb{D}\uptau_{\geq0}^\scrA\cong\uptau^\scrB_{\leq 0}\mathbb{D}$ and $\mathbb{D}\uptau^\scrA_{\leq 0}\cong\uptau_{\geq0}^\scrB\mathbb{D}$.
\end{lemma}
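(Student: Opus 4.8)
\emph{Strategy.} The statement is the formal reflection of the fact that an exact duality carries a bounded $t$-structure to a bounded $t$-structure whose heart is the $\mathbb{D}$-image of the original one. So the plan has two steps: (a) show that $\mathbb{D}$ interchanges the aisle and shifted coaisle of the $\scrA$-$t$-structure with the shifted coaisle and aisle of the $\scrB$-$t$-structure; (b) apply $\mathbb{D}$ to the canonical truncation triangle of an object $d$, observe that the resulting triangle is the truncation triangle of $\mathbb{D}d$ for the $\scrB$-$t$-structure, and read off the identification of the truncation functors by uniqueness of such triangles, with naturality coming for free. Throughout I write $\scrD^{\leq 0}_{\scrA}$ for the aisle $\scrF$, $\scrD^{\geq 0}_{\scrA}$ for $\scrF^\perp[1]$, and $\scrD^{\leq n}_{\scrA},\scrD^{\geq n}_{\scrA}$ for the usual shifts, and similarly for $\scrB$, whose aisle I denote $\scrG$.

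\emph{Step (a).} I would use the standard facts about a bounded $t$-structure that ${}^\perp(\scrD^{\geq n}_{\scrA})=\scrD^{\leq n-1}_{\scrA}$ and $(\scrD^{\leq n}_{\scrA})^\perp=\scrD^{\geq n+1}_{\scrA}$, that $\scrA=\scrD^{\leq 0}_{\scrA}\cap\scrD^{\geq 0}_{\scrA}$, and — recalled above — that a bounded $t$-structure is determined by its heart. Since $\mathbb{D}$ is an exact duality, one has $\Hom_{\scrD}(\mathbb{D}Y,\mathbb{D}Z)\cong\Hom_{\scrD}(Z,Y)$, $\mathbb{D}^2\cong\Id$, $\mathbb{D}(\scrU[1])=\mathbb{D}(\scrU)[-1]$, and $\mathbb{D}$ takes left orthogonals to right orthogonals (and intersections to intersections, being an equivalence). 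Put $\scrU\colonequals\mathbb{D}(\scrD^{\geq 0}_{\scrA})$. Then $\scrU[1]\subseteq\scrU$, and applying $\mathbb{D}$ to the truncation triangle $\uptau^\scrA_{\leq -1}(\mathbb{D}d)\to\mathbb{D}d\to\uptau^\scrA_{\geq 0}(\mathbb{D}d)\to$ and using $\mathbb{D}^2\cong\Id$ produces, for every $d\in\scrD$, a triangle with first term in $\scrU$ and last term in $\mathbb{D}(\scrD^{\leq -1}_{\scrA})=\scrU^\perp$. Hence $\scrU$ is the aisle of a $t$-structure; it is bounded because the $\scrA$-$t$-structure is, and its heart is $\scrU\cap\scrU^\perp[1]=\mathbb{D}(\scrD^{\geq 0}_{\scrA})\cap\mathbb{D}(\scrD^{\leq 0}_{\scrA})=\mathbb{D}(\scrA)=\scrB$. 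By uniqueness $\scrU=\scrD^{\leq 0}_{\scrB}=\scrG$, and taking orthogonals and shifts gives as well $\mathbb{D}(\scrD^{\leq 0}_{\scrA})=\scrD^{\geq 0}_{\scrB}$, $\mathbb{D}(\scrD^{\leq -1}_{\scrA})=\scrD^{\geq 1}_{\scrB}$ and $\mathbb{D}(\scrD^{\geq 1}_{\scrA})=\scrD^{\leq -1}_{\scrB}$.

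\emph{Step (b).} Fix $d\in\scrD$ and apply $\mathbb{D}$ to the truncation triangle $\uptau^\scrA_{\leq -1}d\to d\to\uptau^\scrA_{\geq 0}d\to$, whose left term is in $\scrD^{\leq -1}_{\scrA}$ and whose right term is in $\scrD^{\geq 0}_{\scrA}$. Since $\mathbb{D}$ reverses distinguished triangles, this yields a triangle $\mathbb{D}\uptau^\scrA_{\geq 0}d\to\mathbb{D}d\to\mathbb{D}\uptau^\scrA_{\leq -1}d\to$ with first term in $\mathbb{D}(\scrD^{\geq 0}_{\scrA})=\scrD^{\leq 0}_{\scrB}$ and last term in $\mathbb{D}(\scrD^{\leq -1}_{\scrA})=\scrD^{\geq 1}_{\scrB}=(\scrD^{\leq 0}_{\scrB})^\perp$. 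By uniqueness of truncation triangles for the $\scrB$-$t$-structure this must be the truncation triangle of $\mathbb{D}d$, so $\mathbb{D}\uptau^\scrA_{\geq 0}d\cong\uptau^\scrB_{\leq 0}\mathbb{D}d$; this is natural in $d$ because truncation triangles are functorial and $\mathbb{D}$ is a functor, giving $\mathbb{D}\uptau^\scrA_{\geq 0}\cong\uptau^\scrB_{\leq 0}\mathbb{D}$. For the other identity one argues in exactly the same way, this time applying $\mathbb{D}$ to $\uptau^\scrA_{\leq 0}d\to d\to\uptau^\scrA_{\geq 1}d\to$ (left term in $\scrD^{\leq 0}_{\scrA}$, right term in $\scrD^{\geq 1}_{\scrA}$): one lands on a triangle with last term in $\mathbb{D}(\scrD^{\leq 0}_{\scrA})=\scrD^{\geq 0}_{\scrB}$ and first term in $\mathbb{D}(\scrD^{\geq 1}_{\scrA})=\scrD^{\leq -1}_{\scrB}$, which is the truncation triangle $\uptau^\scrB_{\leq -1}\mathbb{D}d\to\mathbb{D}d\to\uptau^\scrB_{\geq 0}\mathbb{D}d\to$, whence $\mathbb{D}\uptau^\scrA_{\leq 0}\cong\uptau^\scrB_{\geq 0}\mathbb{D}$.

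\emph{Main obstacle.} There is no genuine obstacle here: the argument is entirely formal given the bounded-$t$-structure generalities recalled before the statement. The only thing requiring care is the shift bookkeeping. Because $\mathbb{D}$ is contravariant it reverses the roles of aisle and coaisle, so one must begin from the \emph{correct} rotation of the $\scrA$-truncation triangle in order that its $\mathbb{D}$-image land exactly on a $\scrB$-truncation triangle — first term in the aisle, last term in the strict coaisle, rather than on some shift of one. This is precisely why the first asserted isomorphism pairs $\uptau^\scrA_{\geq 0}$ with $\uptau^\scrB_{\leq 0}$ and the second pairs $\uptau^\scrA_{\leq 0}$ with $\uptau^\scrB_{\geq 0}$, instead of pairing the like-named functors.
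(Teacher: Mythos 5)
Your proposal is correct, but the route is noticeably different from the paper's in both halves. For the key identification that $\mathbb{D}$ matches the $\scrA$-coaisle with the $\scrB$-aisle, the paper is quicker: it observes that the aisle $\scrG$ is the extension-closed subcategory generated by $\scrB[j]$, $j\geq 0$, so its $\mathbb{D}$-image is the extension-closure of $\scrA[j]$, $j\leq 0$, which is exactly $\scrF^\perp[1]$, and stops there. You instead rebuild the $\scrB$-t-structure from scratch: you form $\scrU\colonequals\mathbb{D}(\scrD^{\geq 0}_{\scrA})$, verify $\scrU[1]\subseteq\scrU$ and produce truncation triangles, so that $\scrU$ is an aisle in the sense of Keller--Vossieck, and then identify its heart with $\scrB$ to conclude $\scrU=\scrG$ by uniqueness of the bounded t-structure with prescribed heart. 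For the conclusion, the paper exploits the adjunction definitions of the truncations directly (the lemma's preamble defines $\uptau^\scrA_{\leq 0}$ and $\uptau^\scrA_{\geq 0}$ as adjoints to the inclusions) and reads off the isomorphism from a chain of $\Hom$-isomorphisms plus Yoneda, with naturality built in automatically; you instead apply $\mathbb{D}$ to a truncation triangle and invoke uniqueness of such triangles, appealing only informally to naturality. The adjunction/Yoneda argument is preferable precisely because it delivers a functorial isomorphism with no further work, whereas your uniqueness-of-triangles argument gives a pointwise isomorphism whose naturality, while routine (it follows from the universal property of truncation), does need a word of justification beyond ``truncation triangles are functorial.'' Your Step~(a) also proves more than is strictly required — you only need that $\mathbb{D}$ carries $\scrD^{\geq 0}_\scrA$ into $\scrD^{\leq 0}_\scrB$ and $\scrD^{\leq -1}_\scrA$ into $\scrD^{\geq 1}_\scrB$, which the paper's extension-closure observation delivers without re-deriving the aisle axioms. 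Both approaches are sound; the paper's is more economical, yours is more elementary and makes the aisle identification explicit.
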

\begin{proof}
Suppose that $\scrF$ and $\scrG$ are the two bounded t-structures that give the hearts $\scrA=\scrF\cap\scrF^\perp[1]$ and $\scrB=\scrG\cap\scrG^\perp[1]$. Since $\scrG$ is the extension-closed subcategory generated by $\scrB[j]$ for integers $j\geq 0$, it follows that $\mathbb{D}(\scrG)$ is the extension-closed subcategory generated by $\scrA[j]$ for $j\leq 0$.  This is $\scrF^\perp[1]$. Thus $\mathbb{D}$ restricts to a duality
\[
\scrF^\perp[1]\longleftrightarrow \scrG.
\]
The chain of functorial isomorphisms
\def\eqnHomG{\Hom_{\scrG}(g,\uptau^\scrB_{\leq 0}\mathbb{D}d) \opt{ip}{&} \cong
\Hom_{\scrD}(d,\mathbb{D}g) \opt{ip}{\\
&} \cong
\Hom_{\scrF^\perp[1]}(\uptau_{\geq0}^\scrA d,\mathbb{D}g) \opt{ip}{\\
&} \cong
\Hom_{\scrG}(g,\mathbb{D}\uptau_{\geq0}^\scrA d)}
\opt{ams}{\[
\eqnHomG
\]}
\opt{ip}{\[
\begin{split}
\eqnHomG
\end{split}
\]}
then shows that $\mathbb{D}\uptau_{\geq0}^\scrA\cong\uptau^\scrB_{\leq 0}\mathbb{D}$.  Swapping $\scrA$ and $\scrB$, we also see that $\uptau^\scrA_{\leq 0}\mathbb{D}\cong\mathbb{D}\uptau_{\geq0}^\scrB$.  Applying $\mathbb{D}$ on both sides gives the the second isomorphism.
\end{proof}
Now suppose that $\scrA$ is the finite length heart of a bounded t-structure in a triangulated category $\scrD$. Each of the simple objects $S\in\scrA$ induces two torsion theories, $(\langle S \rangle, \scrF)$  and $(\scrT,\langle S \rangle)$ on~$\scrA$, where $\langle S \rangle$ is the full subcategory of objects whose simple factors are isomorphic to $S$, and the subcategories $\scrF$ and $\scrT$ are defined by 
\begin{align*}
\scrF&\colonequals \{a\in\scrA\mid\Hom_{\scrA}(S,a)=0\}\\
\scrT&\colonequals \{a\in\scrA\mid\Hom_{\scrA}(a,S)=0\}.
\end{align*}
The corresponding tilted hearts \opt{ip}{in $\scrD$ }are defined by
\begin{align*}
\lefttilt_{S}(\scrA)&\colonequals 
\{ d\opt{ams}{\in\scrD} \mid 
{\rm H}_{\scrA}^i(d)=0 \mbox{ for } i\notin\{0,1\},\, {\rm H}_{\scrA}^0(d)\in\scrF \opt{ams}{\mbox{ and }}\opt{ip}{, \,} {\rm H}_{\scrA}^1(d)\in\langle S\rangle \}\\
\righttilt_{S}(\scrA)&\colonequals 
\{ d\opt{ams}{\in\scrD} \mid {\rm H}_{\scrA}^i(d)=0 \mbox{ for } i\notin\{-1,0\}, \,{\rm H}_{\scrA}^0(d)\in\scrT \opt{ams}{\mbox{ and }}\opt{ip}{, \,} {\rm H}^{-1}_{\scrA}(d)\in\langle S\rangle  \},
\end{align*}
where $\mathrm{H}_{\scrA}(-)$ is the cohomological functor associated to the t-structure defining $\scrA$.  The heart $\righttilt_S(\scrA)$ is called the \emph{right tilt} of $\scrA$ with respect to the simple $S$, and $\lefttilt_S(\scrA)$ is called the \emph{left tilt} at $S$.

\begin{lemma}\label{t structure D duality}
Suppose that $\scrA$ and $\scrB$ are finite length hearts of bounded t-structures. If $\mathbb{D}\scrA=\scrB$, and $S$ is a simple in $\scrA$, then $\lefttilt_{\mathbb{D}S}(\scrB)=\mathbb{D}(\righttilt_{S}\scrA)$, so that $\mathbb{D}$ restricts to a duality 
\[
\lefttilt_{\mathbb{D}S}(\scrB)\longleftrightarrow \righttilt_{S}(\scrA).
\] 
\end{lemma}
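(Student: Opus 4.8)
The plan is to exploit that $\mathbb{D}$ is an exact anti-equivalence of $\scrD$ and to feed Lemma~\ref{dual aisles} through the cohomology functors. First I would upgrade Lemma~\ref{dual aisles} to a statement about $\mathrm{H}^i$: writing $\mathrm{H}^0_{\scrA}\cong\uptau^{\scrA}_{\geq 0}\circ\uptau^{\scrA}_{\leq 0}$ and combining $\mathbb{D}\uptau^{\scrA}_{\geq 0}\cong\uptau^{\scrB}_{\leq 0}\mathbb{D}$ with $\mathbb{D}\uptau^{\scrA}_{\leq 0}\cong\uptau^{\scrB}_{\geq 0}\mathbb{D}$ gives $\mathbb{D}\circ\mathrm{H}^0_{\scrA}\cong\mathrm{H}^0_{\scrB}\circ\mathbb{D}$, and then, since $\mathbb{D}$ is triangulated and contravariant so that $\mathbb{D}(d[i])\cong(\mathbb{D}d)[-i]$, one obtains a natural isomorphism $\mathrm{H}^{i}_{\scrB}(\mathbb{D}d)\cong\mathbb{D}\bigl(\mathrm{H}^{-i}_{\scrA}(d)\bigr)$ for all $d\in\scrD$ and $i\in\mathbb{Z}$. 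The point is that $\mathbb{D}$ fixes degree $0$ and exchanges degree $-1$ with degree $+1$, matching the windows $\{-1,0\}$ and $\{0,1\}$ in the definitions of $\righttilt_{S}(\scrA)$ and $\lefttilt_{\mathbb{D}S}(\scrB)$.

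Next I would identify where $\mathbb{D}$ sends the torsion class and the category $\langle S\rangle$. Because $\mathbb{D}$ is a duality, $\Hom_{\scrB}(\mathbb{D}S,\mathbb{D}a)\cong\Hom_{\scrA}(a,S)$ for $a\in\scrA$, so $a$ satisfies $\Hom_{\scrA}(a,S)=0$ exactly when $\mathbb{D}a$ satisfies $\Hom_{\scrB}(\mathbb{D}S,\mathbb{D}a)=0$; that is, $\mathbb{D}$ carries the torsion-free class $\scrT$ used to build $\righttilt_{S}(\scrA)$ onto the torsion-free class $\{b\in\scrB\mid\Hom_{\scrB}(\mathbb{D}S,b)=0\}$ used to build $\lefttilt_{\mathbb{D}S}(\scrB)$. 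Moreover $\mathbb{D}$ restricts to an exact anti-equivalence $\scrA\to\scrB$ sending the simple $S$ to the simple $\mathbb{D}S$, hence takes a finite filtration with all subquotients $\cong S$ to a finite filtration with all subquotients $\cong\mathbb{D}S$; thus $\mathbb{D}\langle S\rangle=\langle\mathbb{D}S\rangle$. Combining: if $d\in\righttilt_{S}(\scrA)$ then $\mathrm{H}^i_{\scrB}(\mathbb{D}d)=0$ for $i\notin\{0,1\}$, the object $\mathrm{H}^0_{\scrB}(\mathbb{D}d)\cong\mathbb{D}\mathrm{H}^0_{\scrA}(d)$ lies in the required torsion-free class, and $\mathrm{H}^1_{\scrB}(\mathbb{D}d)\cong\mathbb{D}\mathrm{H}^{-1}_{\scrA}(d)\in\langle\mathbb{D}S\rangle$, so $\mathbb{D}d\in\lefttilt_{\mathbb{D}S}(\scrB)$, giving the inclusion $\mathbb{D}(\righttilt_{S}\scrA)\subseteq\lefttilt_{\mathbb{D}S}(\scrB)$.

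For the reverse inclusion I would run the same argument with $\scrA$ and $\scrB$ interchanged: from $\mathbb{D}\scrA=\scrB$ and $\mathbb{D}^2\cong\Id$ we get $\mathbb{D}\scrB=\scrA$ and $\mathbb{D}(\mathbb{D}S)\cong S$, and $\mathbb{D}$ carries the torsion pair $(\langle\mathbb{D}S\rangle,\,\cdot\,)$ on $\scrB$ to the torsion pair $(\,\cdot\,,\langle S\rangle)$ on $\scrA$, so the identical computation yields $\mathbb{D}(\lefttilt_{\mathbb{D}S}\scrB)\subseteq\righttilt_{S}(\scrA)$; applying $\mathbb{D}$ and using $\mathbb{D}^2\cong\Id$ gives $\lefttilt_{\mathbb{D}S}(\scrB)\subseteq\mathbb{D}(\righttilt_{S}\scrA)$, hence equality, and the restricted duality follows at once. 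I do not expect a serious obstacle here; the only delicate point is the degree bookkeeping, namely checking that $\mathrm{H}^{i}_{\scrB}\circ\mathbb{D}\cong\mathbb{D}\circ\mathrm{H}^{-i}_{\scrA}$ genuinely converts the $\righttilt$ recipe (torsion-free in degree $0$, a direct sum of copies of $S$ in degree $-1$) into the $\lefttilt$ recipe at $\mathbb{D}S$ (torsion-free in degree $0$, a direct sum of copies of $\mathbb{D}S$ in degree $+1$), rather than into some other tilt.
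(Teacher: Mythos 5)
Your argument is correct and follows essentially the same route as the paper: both proofs first upgrade Lemma~\ref{dual aisles} to the key identity $\mathrm{H}^{i}_{\scrB}(\mathbb{D}d)\cong\mathbb{D}\bigl(\mathrm{H}^{-i}_{\scrA}(d)\bigr)$, and then translate the three membership conditions for $\righttilt_S(\scrA)$ into those for $\lefttilt_{\mathbb{D}S}(\scrB)$ via Hom-duality and $\mathbb{D}\langle S\rangle=\langle\mathbb{D}S\rangle$. The only cosmetic difference is that you establish one inclusion and invoke $\mathbb{D}^2\cong\Id$ for the other, whereas the paper checks the biconditional on each defining condition directly; both are equivalent bookkeeping.
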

\begin{proof}
We claim that  for all $i\in\mathbb{Z}$
\begin{equation}
{\rm H}^{-i}_\scrA(\mathbb{D}d)=\mathbb{D}({\rm H}^i_\scrB(d)).\label{cohom duality}
\end{equation}
The case $i=0$ holds since by repeated use of \ref{dual aisles}
\[
{\rm H}^{0}_\scrA(\mathbb{D}d)
=\uptau^{\scrA}_{\geq 0}\uptau^{\scrA}_{\leq 0}\mathbb{D}d
=\uptau^{\scrA}_{\geq 0}\mathbb{D}\uptau^{\scrB}_{\geq 0}d
=\mathbb{D}\uptau^{\scrB}_{\leq 0}\uptau^{\scrB}_{\geq 0}d
=\mathbb{D}({\rm H}^{0}_\scrB(d)).
\]
The general case of \eqref{cohom duality} then holds by the following chain of equalities
\[
{\rm H}^{-i}_\scrA(\mathbb{D}d)
={\rm H}^{0}_\scrA(\mathbb{D}d[-i])
={\rm H}^{0}_\scrA(\mathbb{D}(d[i]))
=\mathbb{D}({\rm H}^{0}_\scrB(d[i]))
=\mathbb{D}({\rm H}^{i}_\scrB(d))
\]
where the first and last equalities hold by definition, the second is obvious, and the third is the case $i=0$ above.  
Now, by definition
\[
\lefttilt_{\mathbb{D}S}(\scrB)
=
\left\{d\in\scrD 
\left| 
\begin{array}{l}
{\rm H}_{\scrB}^i(d)=0 \mbox{ for } i\notin\{0,1\},\\
\Hom_{\scrB}(\mathbb{D}S,{\rm H}_{\scrB}^0(d))=0,\\ 
{\rm H}_{\scrB}^1(d)\in\langle \mathbb{D}S\rangle
\end{array}
\right. 
\right\}.
\]
Consider the first condition, and note that
\begin{align*}
{\rm H}_{\scrB}^i(d)=0 \mbox{ for } i\notin\{0,1\}
&\iff 
\mathbb{D}({\rm H}_{\scrB}^i(d))=0\mbox{ for } i\notin\{0,1\}\opt{ams}{\tag{by duality}}\\
&\iff 
{\rm H}_{\scrA}^i(\mathbb{D}d)=0\mbox{ for } i\notin\{0,-1\}.\tag{by \eqref{cohom duality}}
\end{align*}
Similarly, for the second condition
\begin{align*}
\Hom_{\scrB}(\mathbb{D}S,{\rm H}_{\scrB}^0(d))=0
&\iff \Hom_{\scrA}(\mathbb{D}({\rm H}_{\scrB}^0(d)),S)=0\opt{ams}{\tag{by duality}}\\
&\iff \Hom_{\scrA}({\rm H}_{\scrA}^0(\mathbb{D}d),S)=0.\tag{by \eqref{cohom duality}}
\end{align*}
Finally, by duality it is clear that the third condition is equivalent to $\mathbb{D}({\rm H}_{\scrB}^1(d))\in\langle S\rangle$, which again by \eqref{cohom duality} is equivalent to ${\rm H}_{\scrA}^{-1}(\mathbb{D}d)\in\langle S\rangle$.  

Summarising the above, it follows that $d\in\lefttilt_{\mathbb{D}S}(\scrB)$ if and only if $\mathbb{D}d$ satisfies the three conditions defining $\righttilt_S(\scrA)$.  Consequently, $\mathbb{D}$ restricts to the desired duality.  
\end{proof}

\subsection{Iterated Algebraic Tilts}
Returning to the flops setting, with ordering as in \S\ref{ordering section}, and notation as in \S\ref{mut notation section}, $\scrA_i=\mod\Lambda_i$ is the heart of a bounded t-structure on $\Db(\mod\Lambda_i)$, with projectives $P_0$ and $P_1$, and simples $S_0$ and $S_1$.  For our applications later, we will need to tilt both $\scrA_i$ and its subcategory of finite length modules, written $\finitelength\scrA_i$.

The case of $\finitelength\scrA_i$ is slightly easier.  By \cite[2.5]{IR} we have $\Db(\finitelength \Lambda_i)=\mathrm{D}^{\mathrm{b}}_{\finitelength \Lambda_i}(\mod\Lambda_i)$, and so $\finitelength\scrA_i=\finitelength\Lambda_i$ is the finite length heart of a bounded t-structure in this subcategory.  Appealing to the previous subsection,  for each of the $S_j$, write the torsion theories as $(\langle S_j \rangle, \scrF_j)$  and $(\scrT_j,\langle S_j \rangle)$, with resulting tilted hearts $\lefttilt_{j}(\finitelength\scrA_i)$ and $\righttilt_{j}(\finitelength\scrA_i)$.

\begin{prop}[{\cite[5.5]{HW}}]\label{finite length ok!}
The $\lefttilt_{j}(\finitelength\scrA_i)$ and $\righttilt_{j}(\finitelength\scrA_i)$ are the image of a standard heart under the mutation functors and their inverses, as follows:
\[
\begin{array}{c}
\begin{tikzpicture}[xscale=\opt{ams}{1.3}\opt{ip}{1.35}]
\node at (-1.5,0) {$\hdots$};
\node at (0,0) {$\Db(\mod\Lambda_{-1})$};
\node at (2,0) {$\Db(\mod\Lambda)$};
\node at (4,0) {$\Db(\mod\Lambda_{1})$};
\node at (5.5,0) {$\hdots$};
\draw[->] (0.85,0.05) -- node[above] {$\scriptstyle \Phi_{-1}$}(1.3,0.05);
\draw[<-] (0.85,-0.05) -- node[below] {$\scriptstyle \Phi_{-1}$} (1.3,-0.05);
\draw[->] (2.75,0.05) -- node[above] {$\scriptstyle \Phi_0$}(3.2,0.05);
\draw[<-] (2.75,-0.05) -- node[below] {$\scriptstyle \Phi_0$} (3.2,-0.05);
\node (L1Am1) at (0,-0.75) {$\lefttilt_1(\finitelength\scrA_{-1})$};
\node (R1Am1) at (0,-1.75) {$\righttilt_0(\finitelength\scrA_{-1})$};
\node (A) at (2,-1.25) {$\finitelength\scrA$};
\node (L1A1) at (4,-0.75) {$\lefttilt_1(\finitelength\scrA_1)$};
\node (R1A1) at (4,-1.75) {$\righttilt_0(\finitelength\scrA_1)$};
\draw[->] (A) -- (L1A1);
\draw[->] (A) -- (L1Am1);
\draw[->] (R1Am1) -- (A);
\draw[->] (R1A1) -- (A);
\end{tikzpicture}
\end{array}
\]
\end{prop}

\begin{defin}\label{def Ti A}
For $i\geq 1$, consider the iterated tilt
\[
\Tilt_i(\finitelength\scrA)\colonequals\underbrace{\hdots \righttilt_{1}\righttilt_{0}\righttilt_{1}}_{i}(\finitelength\scrA),
\]
that is, we first tilt $\finitelength\scrA$ at $S_1$.  The resulting t-structure $\righttilt_1(\finitelength\scrA)$ has two simples, being equivalent to $\finitelength\scrA_1$, one of which is $S_1[1]$.  We then right tilt $\righttilt_1(\finitelength\scrA)$ at the other, new, simple to obtain $\righttilt_0\righttilt_1(\finitelength\scrA)$.  We then right tilt this t-structure at its new simple, and repeat. Similarly, for  $i<0$ consider the iterated tilt
\[
\Tilt_i(\finitelength\scrA)\colonequals\underbrace{\hdots \lefttilt_{0}\lefttilt_{1}\lefttilt_{0}}_{-i}(\finitelength\scrA).
\]
For all $i\in\mathbb{Z}$, we call $\Tilt_i(\finitelength\scrA)$ the $i$th tilt of the heart $\finitelength\scrA$. 
\end{defin}

In contrast, we tilt the full abelian category $\scrA$ using the tilting modules and notation introduced in \S\ref{line bundle twists section}.  This can also be achieved using iterated tilts at torsion theories (see e.g.\ \cite[(5.A)]{HW1}), but for our purposes here, the following definition suffices.

\begin{defin}\label{def Ti}
For $i\geq 1$, consider the iterated tilt
\[
\Tilt_i(\scrA)\colonequals(\Phi_{i-1}\circ\hdots\circ\Phi_0)^{-1}(\scrA_i).
\]
Similarly, for  $i<0$ consider the iterated tilt
\[
\Tilt_i(\scrA)\colonequals(\Phi_{-1}\circ \hdots\circ\Phi_i)(\scrA_i).
\]
For all $i\in\mathbb{Z}$, we call $\Tilt_i(\scrA)$ the $i$th tilt of the heart $\scrA$. 
\end{defin}  
The following is an immediate consequence of the definition above, together with the notation $\Uppsi_i$ from \S\ref{line bundle twists section}. Later it will be used to generalise \eqref{Psi per 0}.

\begin{lemma}\label{Psii abelian equiv}
For all $i\in\mathbb{Z}$, there is a commutative diagram as follows.
\[
\begin{array}{c}
\begin{tikzpicture}
\node (A1) at (0,0) {$\Db(\mod\Lambda)$};
\node (A2) at (5,0) {$\Db(\mod\Lambda_i)$};
\node (B1) at (0,-1.5) {$\Tilt_i(\scrA)$};
\node (B2) at (5,-1.5) {$\scrA_i$};
\draw[->] (A1) -- node[above] {$\scriptstyle \Uppsi_i\circ\Uppsi^{-1}$} node[below]{$\scriptstyle\sim$}(A2);
\draw[->] (B1) -- node[above] {$\scriptstyle$} node[below]{$\scriptstyle\sim$}(B2);
\draw[right hook->] (B1) -- (A1);
\draw[right hook->] (B2) -- (A2);
\end{tikzpicture}
\end{array}
\]
\end{lemma}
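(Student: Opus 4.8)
The plan is to prove the statement by induction on $|i|$, turning the iterated tilts that define $\Tilt_i(\scrA)$ into the iterated mutations that define $\scrA_i$ one step at a time: \ref{tilt=mut} handles each single step, and \eqref{S and Ps} (equivalently \ref{mut track lem}) keeps track of where the two simples travel. First I would note that the square itself has no content beyond a single assertion: its vertical arrows are the inclusions of hearts and the horizontal ones are equivalences of triangulated categories, so the square commutes by the very definition of restricting a functor, provided only that $\Uppsi_i\funcompose\Uppsi^{-1}$ maps the heart $\Tilt_i(\scrA)$ onto the heart $\scrA_i$. By construction $\Uppsi_i\funcompose\Uppsi^{-1}$ equals $\upkappa_i=\Phi_{i-1}\funcompose\cdots\funcompose\Phi_0$ for $i\geq 0$ and $\uplambda_i^{-1}$ for $i<0$, so the case $i=0$ is immediate; I would treat $i\geq 1$, the range $i\leq -1$ being the mirror image obtained with left tilts and reverse mutation functors, both changes already built into the definitions of $\Tilt$ and of $\Uppsi_i$.

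For the inductive step I would carry the slightly stronger hypothesis that $\upkappa_{i-1}$ restricts to an equivalence $\Tilt_{i-1}(\scrA)\xrightarrow{\sim}\scrA_{i-1}$ identifying the ordered pair of simples. By \ref{def Ti}, $\Tilt_i(\scrA)$ is the right tilt of $\Tilt_{i-1}(\scrA)$ at $S_1$ when $i$ is odd and at $S_0$ when $i$ is even; since tilting at a simple commutes with equivalences, the hypothesis forces $\upkappa_{i-1}$ to send $\Tilt_i(\scrA)$ to $\righttilt_1(\scrA_{i-1})$, respectively $\righttilt_0(\scrA_{i-1})$. Now \ref{tilt=mut} asserts precisely that the mutation functor $\Phi_{i-1}$ carries such a tilted heart back to the standard heart $\scrA_i$ (for $i$ odd this is the case where $\Phi_{i-1}$ has even index applied to $\righttilt_1$, for $i$ even the case where $\Phi_{i-1}$ has odd index applied to $\righttilt_0$; both are among the remaining cases of that proposition), so $\upkappa_i=\Phi_{i-1}\funcompose\upkappa_{i-1}$ restricts to an equivalence $\Tilt_i(\scrA)\xrightarrow{\sim}\scrA_i$. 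To propagate the strengthened hypothesis I would then identify the two simples of $\Tilt_i(\scrA)$ with $S_0,S_1$ by combining the standard description of the simples of a one-step tilt with the behaviour of $\Phi_{i-1}$ on $S_0,S_1$ recorded in \eqref{S and Ps}; essential surjectivity onto $\scrA_i$ comes for free, since $\scrA_i=\mod\Lambda_i$ is a length category and both its simples lie in the image.

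The only genuinely delicate point, and where I expect the real work to lie, is this last piece of bookkeeping: one must verify that the ``$S_0$ versus $S_1$'' labelling of the new simple produced at each stage of \ref{def Ti} coincides with the labelling obtained by transporting $S_0,S_1$ forward along the chain of mutation functors, that is, that the parities line up ($i$ odd corresponding to tilting at $S_1$, $i$ even to tilting at $S_0$, consistently with the shifts $S_1\mapsto S_1[-1]$ and $S_0\mapsto S_0[-1]$ of \eqref{S and Ps}). Once this matching is pinned down the induction closes, the $i<0$ half follows verbatim using the left-tilt statements in \ref{tilt=mut} together with the reverse mutation functors, and the commutativity of the displayed square is then automatic.
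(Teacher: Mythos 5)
Your proposal is correct and follows essentially the same route as the paper: an iteration (the paper does $i=1,2$ explicitly and then iterates, you phrase it as induction with a strengthened hypothesis) reducing each step to the single-step statement \ref{tilt=mut} in its various $\scrA_j$ versions, with the simple-tracking/parity bookkeeping supplied by \eqref{S and Ps}, and the mirror argument via left tilts and $\uplambda_i$ for $i<0$. The "delicate" matching of which simple is new at each stage is handled in the paper exactly as you suggest, by noting that one simple of the tilted heart is the shifted old simple, so the other must be the preimage of the remaining standard simple.
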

%

Definitions~\ref{def Ti A} and \ref{def Ti} are compatible, as follows.
\begin{cor}\label{finite length alg}
$\finitelength\Tilt_i(\scrA)=\Tilt_i(\finitelength\scrA)$ for all $i\in\mathbb{Z}$.
\end{cor}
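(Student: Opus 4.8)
The plan is to deduce the equality from Lemma~\ref{Psii abelian equiv}, its finite-length counterpart flagged in Remark~\ref{finite length ok!}, and the elementary fact that an equivalence of abelian categories carries simple objects to simple objects, hence restricts to an equivalence between the subcategories of objects admitting a finite filtration by simples. Concretely, I would first recall that by Lemma~\ref{Psii abelian equiv} the functor $F\colonequals\Uppsi_i\circ\Uppsi^{-1}\colon\Db(\mod\Lambda)\to\Db(\mod\Lambda_i)$ — which is nothing but the composite of mutation functors ($\upkappa_i$ for $i\geq 0$, and $\uplambda_i^{-1}$ for $i<0$) — is an equivalence restricting to an equivalence of abelian categories $\Tilt_i(\scrA)\xrightarrow{\sim}\scrA_i=\mod\Lambda_i$. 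Applying the observation above to this restricted equivalence, $F$ restricts further to an equivalence $\finitelength\Tilt_i(\scrA)\xrightarrow{\sim}\finitelength\scrA_i$.

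Next I would invoke Remark~\ref{finite length ok!}. By \cite[2.5]{IR} each mutation functor restricts to an equivalence between the bounded derived categories $\Db(\finitelength\Lambda_j)=\mathrm{D}^{\mathrm{b}}_{\finitelength\Lambda}(\mod\Lambda_j)$, so $F$ restricts to an equivalence $\Db(\finitelength\Lambda)\xrightarrow{\sim}\Db(\finitelength\Lambda_i)$; and since the finite-length analogues of \ref{tilt=mut}, and hence of \ref{Psii abelian equiv}, hold inside these subcategories (again Remark~\ref{finite length ok!}), this restricted functor itself restricts to an equivalence of abelian categories $\Tilt_i(\finitelength\scrA)\xrightarrow{\sim}\finitelength\Lambda_i=\finitelength\scrA_i$, where now $\Tilt_i(\finitelength\scrA)$ is the iterated tilt computed inside $\Db(\finitelength\Lambda)\subseteq\Db(\mod\Lambda)$.

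Finally I would combine the two: both $\finitelength\Tilt_i(\scrA)$ and $\Tilt_i(\finitelength\scrA)$ are full, isomorphism-closed subcategories of $\Db(\mod\Lambda)$ that the single equivalence $F$ carries onto $\finitelength\scrA_i$. A one-line chase then forces them to coincide — given $x\in\finitelength\Tilt_i(\scrA)$, essential surjectivity of $F|_{\Tilt_i(\finitelength\scrA)}$ produces $y\in\Tilt_i(\finitelength\scrA)$ with $F(y)\cong F(x)$, whence $x\cong y\in\Tilt_i(\finitelength\scrA)$ since $F$ is an equivalence; the reverse inclusion is symmetric. I do not expect a genuine obstacle here: the only content beyond bookkeeping is the availability of the finite-length version of \ref{Psii abelian equiv}, which is exactly what Remark~\ref{finite length ok!} records, and the care required is simply to keep track of which heart is being viewed inside which triangulated category.
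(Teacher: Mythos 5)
Your proposal is correct and follows essentially the same route as the paper's proof: both identify each of $\finitelength\Tilt_i(\scrA)$ and $\Tilt_i(\finitelength\scrA)$ as the preimage of $\finitelength\scrA_i$ under the single equivalence $\Uppsi_i\circ\Uppsi^{-1}$, using Lemma~\ref{Psii abelian equiv} for the first identification and Remark~\ref{finite length ok!} for the second. You merely spell out the final bookkeeping (that two full subcategories carried onto the same target by one equivalence must coincide) which the paper leaves tacit.
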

\begin{proof}
The statement that $\finitelength\Tilt_i(\scrA)=\Uppsi\circ\Uppsi_i^{-1}(\finitelength\scrA_i)$ holds tautologically, since the bottom map in \ref{Psii abelian equiv} is an abelian equivalence.  On the other hand, by  \ref{finite length ok!}, $\Uppsi\circ\Uppsi_i^{-1}(\finitelength\scrA_i)=\Tilt_i(\finitelength\scrA)$.
\end{proof}
In what follows, set $\mathbb{D}_\Lambda\colonequals \Uppsi\circ\mathbb{D}\circ\Uppsi^{-1}$, where $\mathbb{D}$ is Grothendieck duality.
\begin{prop}\label{Tilt duality alg}
For all $i\in\mathbb{Z}$, the following hold.
\begin{enumerate}
\item\label{Tilt duality alg 1} $\mathbb{D}_\Lambda(\Tilt_{i}(\finitelength\scrA))=\Tilt_{1-i}(\finitelength\scrA)$.
\item\label{Tilt duality alg 2} $\mathbb{D}_\Lambda$ preserves the ordering on the simples.
\end{enumerate}
\end{prop}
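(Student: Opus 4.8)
The plan is to prove \eqref{Tilt duality alg 1} and \eqref{Tilt duality alg 2} simultaneously, by induction on $|i|$, working throughout in $\scrD\colonequals\Db(\finitelength\Lambda)$; this is legitimate since by \ref{finite length alg} and \ref{finite length ok!} each $\Tilt_i(\finitelength\scrA)=\finitelength\Tilt_i(\scrA)$ is the heart of a bounded t-structure in $\scrD$, and since $X$ is Gorenstein, $\mathbb{D}$ is an involution, so $\mathbb{D}_\Lambda=\Uppsi\circ\mathbb{D}\circ\Uppsi^{-1}$ restricts to an exact duality of $\scrD$ with $\mathbb{D}_\Lambda^2\cong\Id$. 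The engine is \ref{t structure D duality}: it exchanges a right tilt with a left tilt under the duality, matching $S\leftrightarrow\mathbb{D}_\Lambda S$. I would phrase the inductive statement as: $\mathbb{D}_\Lambda(\Tilt_i(\finitelength\scrA))=\Tilt_{1-i}(\finitelength\scrA)$, and $\mathbb{D}_\Lambda$ carries the simple of $\Tilt_i(\finitelength\scrA)$ in position $j$ to the simple of $\Tilt_{1-i}(\finitelength\scrA)$ in position $j$, for $j=0,1$, with the ordering conventions of \S\ref{ordering section}.

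\textbf{Base case $i=0$.} Across $\Uppsi$ the simples of $\finitelength\scrA$ are $S_1\leftrightarrow\scrO_{\Curve}(-1)$ and $S_0\leftrightarrow\omega_{\ell\Curve}[1]$. Using \ref{D to kC} together with $\omega_{\Curve}\cong\scrO_{\Curve}(-2)$ gives $\mathbb{D}_X(\scrO_{\Curve}(-1))=\omega_{\Curve}(1)[1]=\scrO_{\Curve}(-1)[1]$ and $\mathbb{D}_X(\omega_{\ell\Curve}[1])=\scrO_{\ell\Curve}$, where $\scrO_{\ell\Curve}\in\Per$ since it is a sheaf with vanishing first cohomology by \ref{Katz prop}(2). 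Transporting back, $\mathbb{D}_\Lambda(S_1)=S_1[1]$ and $\mathbb{D}_\Lambda(S_0)=\Uppsi(\scrO_{\ell\Curve})\in\finitelength\scrA$, so $\mathbb{D}_\Lambda(\finitelength\scrA)\subseteq\langle\finitelength\scrA,\finitelength\scrA[1]\rangle$ and is therefore the tilt of $\finitelength\scrA$ at a torsion pair $(\scrT',\scrF')$, with torsion-free class $\scrF'=\{a\in\finitelength\scrA\mid\mathbb{D}_\Lambda a\in\finitelength\scrA[1]\}$. Clearly $S_1\in\scrF'$ and $S_0\notin\scrF'$. I claim $\scrF'=\langle S_1\rangle$: since $\scrF'$ is closed under subobjects, any $X\in\scrF'$ having $S_0$ as a composition factor has $S_0$ not a subobject, hence reduces to the case of a non-split $0\to S_1^{\oplus a}\to X\to S_0\to 0$ with $a\le m\colonequals\dim\Ext^1_{\scrA}(S_0,S_1)$; applying $\mathbb{D}_\Lambda$ to this and taking $\finitelength\scrA$-cohomology of the resulting triangle $\Uppsi\scrO_{\ell\Curve}\to\mathbb{D}_\Lambda X\to S_1[1]^{\oplus a}\to$ shows $\mathrm{H}^0_{\scrA}(\mathbb{D}_\Lambda X)=\Cok(\psi_X\colon S_1^{\oplus a}\to\Uppsi\scrO_{\ell\Curve})$, which is nonzero because $\Uppsi\scrO_{\ell\Curve}$ is not a direct sum of copies of $S_1$ (indeed $\Hom_X(\scrO_X,\scrO_{\ell\Curve})=\mathbb{C}\neq 0=\Hom_X(\scrO_X,\scrO_{\Curve}(-1))$), using also that $\dim\Hom_{\scrA}(S_1,\Uppsi\scrO_{\ell\Curve})=m$, which is automatic since $\mathbb{D}_\Lambda$ is a duality. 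Hence $X\notin\scrF'$, so $\scrF'=\langle S_1\rangle$, $\scrT'=\{a\mid\Hom_{\scrA}(a,S_1)=0\}$, and $\mathbb{D}_\Lambda(\finitelength\scrA)=\righttilt_{S_1}(\finitelength\scrA)=\Tilt_1(\finitelength\scrA)$. The ordering claim follows because, by \ref{mut track lem} and \ref{tilt=mut}, $S_1[1]=\mathbb{D}_\Lambda(S_1)$ is the position-$1$ simple of $\Tilt_1(\finitelength\scrA)$, forcing $\mathbb{D}_\Lambda(S_0)$ into position $0$.

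\textbf{Inductive step.} Assume the statement for some $i\ge 0$. By \ref{def Ti}, $\Tilt_{i+1}(\finitelength\scrA)=\righttilt_{S}(\Tilt_i(\finitelength\scrA))$ where $S$ is the simple in position $1$ if $i$ is even and position $0$ if $i$ is odd; call this position $j$. Applying \ref{t structure D duality} with $\scrA\rightsquigarrow\Tilt_i(\finitelength\scrA)$, $\scrB\rightsquigarrow\mathbb{D}_\Lambda(\Tilt_i(\finitelength\scrA))=\Tilt_{1-i}(\finitelength\scrA)$ and this $S$ gives $\mathbb{D}_\Lambda(\Tilt_{i+1}(\finitelength\scrA))=\lefttilt_{\mathbb{D}_\Lambda S}(\Tilt_{1-i}(\finitelength\scrA))$. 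By the inductive ordering hypothesis $\mathbb{D}_\Lambda S$ is the position-$j$ simple of $\Tilt_{1-i}(\finitelength\scrA)$; meanwhile the chain edge $\Tilt_{-i}(\finitelength\scrA)\to\Tilt_{1-i}(\finitelength\scrA)$ in \ref{def Ti} is the right tilt at the position-$j$ simple too (the two parities agree), and by \ref{mut track lem} this right tilt places the shifted simple in position $j$ of $\Tilt_{1-i}(\finitelength\scrA)$; since $\righttilt$ and $\lefttilt$ at corresponding simples are mutually inverse, $\lefttilt_{\mathbb{D}_\Lambda S}(\Tilt_{1-i}(\finitelength\scrA))=\Tilt_{-i}(\finitelength\scrA)=\Tilt_{1-(i+1)}(\finitelength\scrA)$. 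The ordering claim at stage $i+1$ propagates because the duality furnished by \ref{t structure D duality} matches the shifted simple of the right tilt with the shifted simple of the left tilt and the new simple with the new simple, and these occupy matching positions. Finally, the remaining values $\mathbb{D}_\Lambda(\Tilt_m(\finitelength\scrA))$ for $m\ge 2$ are obtained by applying $\mathbb{D}_\Lambda$ to the already-proved identity $\mathbb{D}_\Lambda(\Tilt_{1-m}(\finitelength\scrA))=\Tilt_m(\finitelength\scrA)$ (valid since $1-m\le -1$) and using $\mathbb{D}_\Lambda^2\cong\Id$; similarly for the ordering statement.

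\textbf{Main obstacle.} Everything after the base case is formal bookkeeping with \ref{t structure D duality} and the alternating $(S_0,S_1)$-conventions. The substantive point is the base case, and within it the delicate step is showing the torsion-free class is exactly $\langle S_1\rangle$ --- i.e.\ that $\mathbb{D}_\Lambda$ shifts the simple $\scrO_{\Curve}(-1)$ but shifts no larger object --- which rests on the cokernel computation above and on the fact that $\scrO_{\ell\Curve}$ has a nonzero global section and so is not semisimple in $\Per$. (If one has available the identification $\mathbb{D}_X({}^0\Per)={}^{-1}\Per$ of perverse hearts, the base case becomes immediate, since $\Tilt_1(\finitelength\scrA)$ corresponds to $\finitelength{}^{-1}\Per=\finitelength\righttilt_{\scrO_{\Curve}(-1)}({}^0\Per)$.)
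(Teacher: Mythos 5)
Your inductive step is essentially the paper's: both prove the base case and then crank through \ref{t structure D duality}, using \ref{mut track lem}/\ref{tilt=mut} and the parity of the $(S_0,S_1)$-ordering to match positions after each tilt. The genuine difference is the base case. The paper establishes $\mathbb{D}_\Lambda(\Tilt_1(\finitelength\scrA))=\finitelength\scrA$ by invoking two external results: Bridgeland's $\flop(\Per X^+)=\PerOne X$ together with $\flop=\Phi_0^{-1}$ to identify $\Tilt_1(\scrA)=\Uppsi(\PerOne X)$, and then Van den Bergh's $\mathbb{D}(\finitelength\PerOne X)=\finitelength\Per X$. You instead compute $\mathbb{D}_X$ directly on the two simples via \ref{D to kC} (getting $\scrO_\Curve(-1)[1]$ and $\scrO_{\ell\Curve}$), observe $\mathbb{D}_\Lambda(\finitelength\scrA)$ sits in $\langle\finitelength\scrA,\finitelength\scrA[1]\rangle$, and identify the resulting torsion-free class as $\langle S_1\rangle$. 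This is more self-contained — it does not need the $\PerOne$/flop machinery — at the cost of the torsion-theory bookkeeping. A small inaccuracy: your reduction inside the torsion-free class argument to a sequence $0\to S_1^{\oplus a}\to X\to S_0\to 0$ is not quite right, since the maximal $\langle S_1\rangle$-subobject $Y$ of a minimal offending $X$ need not be semisimple; the sequence should read $0\to Y\to X\to S_0\to 0$ with $Y\in\langle S_1\rangle$. Your cokernel argument goes through unchanged (the image of $\mathrm{H}^{-1}(\mathbb{D}_\Lambda Y)\to\Uppsi\scrO_{\ell\Curve}$ is still in $\langle S_1\rangle$, and $\Uppsi\scrO_{\ell\Curve}$ is not, because of its $S_0$-component), so this is a presentational fix rather than a gap. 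You could also shorten the base case further by noting that a finite-length heart of a bounded $t$-structure is the extension closure of its simples, so once you know $\mathbb{D}_\Lambda$ sends the simples of $\finitelength\scrA$ to $S_1[1]$ and $\Uppsi\scrO_{\ell\Curve}$, and that these are the simples of $\Tilt_1(\finitelength\scrA)$, the identity of the two hearts is immediate without computing the torsion pair at all.
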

\begin{proof}
Consider first the case $i=1$. As observed by Bridgeland \cite[(4.8)]{Bridgeland}, with corrected sign in \cite[(27)]{Toda GV}, the image of $\Per X^+$ under the flop functor is $\PerOne X$.  Since the flop functor is the inverse of the mutation functor $\Phi_0$ \cite[4.2]{HomMMP},  we see that $\PerOne X=\Uppsi^{-1}(\Tilt_1(\scrA))$.  In particular  $\finitelength\PerOne X=\Uppsi^{-1}(\finitelength\Tilt_1(\scrA))$, and so
\[
\mathbb{D}_\Lambda(\Tilt_{1}(\finitelength\scrA))
\stackrel{\scriptstyle\ref{finite length alg}}{=}
\mathbb{D}_\Lambda(\finitelength\Tilt_{1}(\scrA))
=\Uppsi\circ\mathbb{D}(\finitelength\PerOne X).
\]
But by \cite[3.5.8]{VdB1d} $\mathbb{D}(\finitelength\PerOne X)=\finitelength\Per X$, and so the right hand side equals $\Uppsi(\finitelength\Per X)$.  In turn, this is just $\finitelength \scrA$.  This establishes the statement~\eqref{Tilt duality alg 1} in the case $i=1$.

Applying $\mathbb{D}_\Lambda$ gives $\mathbb{D}_\Lambda(\finitelength\scrA)=\Tilt_{1}(\finitelength\scrA)$.  Now track $S_1\in\finitelength\scrA$. Note first that $\mathbb{D}\colon \scrO_{\Curve}(-1)\mapsto\scrO_{\Curve}(-1)[1]$, hence $\mathbb{D}_\Lambda\colon S_1\mapsto S_1[1]$, which is the first simple in $\Tilt_{1}(\finitelength\scrA)$.  It follows that the zeroth simple must get sent to the zeroth simple.  This then establishes~\eqref{Tilt duality alg 2} in the case $i=1$.  

When $i=2$, apply \ref{t structure D duality} to hearts $\righttilt_1(\finitelength\scrA)$ and $\finitelength\scrA$, with $S$ the zeroth simple of $\righttilt_1(\finitelength\scrA)=\Tilt_1(\finitelength\scrA)$.  Since by the above $\mathbb{D}_\Lambda(S)$ is the zeroth simple of $\finitelength\scrA$, 
\[
\Tilt_{1-i}(\finitelength\scrA)
=\lefttilt_0(\finitelength\scrA)
\stackrel{{\scriptstyle\ref{t structure D duality}}}{=}
\mathbb{D}(\righttilt_0\righttilt_1(\finitelength\scrA))
=
\mathbb{D}(\Tilt_{2}(\finitelength\scrA)),
\]
establishing the case $i=2$.  The fact that $\mathbb{D}_\Lambda$ preserves the ordering on the simples can be seen this time by tracking the zeroth simple, which tracks to the zeroth simple.  The first simple must thus do likewise.   This establishes both~\eqref{Tilt duality alg 1} and~\eqref{Tilt duality alg 2} in the case $i=2$.  The proof then simply proceeds by induction, alternating the simples: for $i=3$, apply \ref{t structure D duality} to $\righttilt_0\righttilt_1(\finitelength\Per)$ and $\lefttilt_0(\finitelength\Per)$, with $S$ the first simple.
\end{proof}

\subsection{Tensors and Dualities on Perverse Tilts}

\begin{defin}\label{def Ti Per}
The $i$th tilt of perverse sheaves, written $\Tilt_i(\Per)$, is defined in an identical way to $\Tilt_i(\scrA)$ in \ref{def Ti}, using the ordering on its simples from \S\ref{ordering section}.
\end{defin}
Equivalently, we can define $\Tilt_i(\Per)\colonequals\Uppsi^{-1}\Tilt_i(\scrA)$.   Combining with \ref{Psii abelian equiv}, for all $i\in\mathbb{Z}$ there is thus a commutative diagram as follows.
\begin{equation}
\begin{array}{c}
\begin{tikzpicture}
\node (A0) at (-4,0) {$\Db(\coh X)$};
\node (A1) at (0,0) {$\Db(\mod\Lambda)$};
\node (A2) at (4,0) {$\Db(\mod\Lambda_i)$};
\node (B0) at (-4,-1.5) {$\Tilt_i(\Per)$};
\node (B1) at (0,-1.5) {$\Tilt_i(\scrA)$};
\node (B2) at (4,-1.5) {$\scrA_i$};
\draw[->] (A0) -- node[above] {$\scriptstyle \Uppsi$} node[below]{$\scriptstyle\sim$}(A1);
\draw[->] (A1) -- node[above] {$\scriptstyle \Uppsi_i\circ\Uppsi^{-1}$} node[below]{$\scriptstyle\sim$}(A2);
\draw[->] (B0) -- node[above] {$\scriptstyle$} node[below]{$\scriptstyle\sim$}(B1);
\draw[->] (B1) -- node[above] {$\scriptstyle$} node[below]{$\scriptstyle\sim$}(B2);
\draw[right hook->] (B0) -- (A0);
\draw[right hook->] (B1) -- (A1);
\draw[right hook->] (B2) -- (A2);
\end{tikzpicture}
\end{array}\label{comm for tilt}
\end{equation}

In the following two propositions, we prove that the categories $\Tilt_i(\Per)$ admit two key properties: they are closed under tensor, and their finite length subcategories are closed under duality. 
\begin{prop}\label{T via tensor}
For all $i\in\mathbb{Z}$, and $k\geq 0$, 
\[
\Tilt_{i+kN}(\Per)= \Tilt_i(\Per)\otimes \scrO(k).
\] 
Furthermore, tensoring by $\scrO(k)$ relates the ordering on simples as follows.
\begin{enumerate}
\item\label{T via tensor 1} When $\ell=1$, and so $N=1$, the ordering on simples is swapped when $k$ is odd, and preserved when $k$ is even.
\item\label{T via tensor 2} When $\ell>1$, the ordering on simples is preserved.
\end{enumerate}
\end{prop}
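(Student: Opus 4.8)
The plan is to reduce everything to the algebraic side via the commutative diagram~\eqref{comm for tilt}, where $\Tilt_i(\Per)=\Uppsi^{-1}\Tilt_i(\scrA)$, and then apply the wall-crossing/line-bundle dictionary already assembled in \S\ref{line bundle twists section}. Concretely, tensoring by $\scrO(k)$ on $\Db(\coh X)$ corresponds, after transporting through $\Uppsi$, to the composite mutation functor; by \ref{tensor cor} the square relating $-\otimes\scrO(-k)$ to $\Phi_{i+kN-1}\circ\hdots\circ\Phi_i$ commutes, with the vertical equivalences $\Uppsi_i$ and $\upbeta\circ\Uppsi_i$. So the first step is: express $\Tilt_{i+kN}(\scrA)$ as the image of $\Tilt_i(\scrA)$ under these mutation functors, using \ref{Psii abelian equiv}, which identifies $\Tilt_i(\scrA)$ with $\scrA_i=\mod\Lambda_i$ across $\Uppsi_i\circ\Uppsi^{-1}$. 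Since tilting at simples and mutation functors commute in the way recorded by \ref{tilt=mut} and \ref{Psii abelian equiv}, the iterated tilt $\Tilt_{i+kN}$ is carried to $\Tilt_i$ tensored by $\scrO(k)$; it suffices to check $N$ tilts correspond to one application of $-\otimes\scrO(1)$, which is exactly the content of \ref{class action comb}/\ref{tensor cor} since crossing $N$ walls returns to a chamber labelled by a rank-one module, i.e.\ one $L$-translate.

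The second step is bookkeeping on the ordering of simples. Here the key input is \ref{beta order}: the isomorphism $\upbeta\colon\Lambda_i\to\Lambda_{i+N}$ fixes $P_0,P_1$ (hence $S_0,S_1$) when $\ell>1$, and swaps them when $\ell=1$. Combining this with \ref{tensor cor}, which says $\upbeta\circ\Uppsi_i$ is the vertical map on the $-\otimes\scrO(-k)$ side, one reads off directly: for $\ell>1$ each application of $-\otimes\scrO(1)$ preserves the labelling of simples, giving~\eqref{T via tensor 2}; for $\ell=1$ (so $N=1$) each application swaps $S_0\leftrightarrow S_1$, so the parity of $k$ governs the final ordering, giving~\eqref{T via tensor 1}. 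One should also note the mild abuse that $P_0,P_1,S_0,S_1$ depend on which $\Lambda_j$ is in play, but \S\ref{ordering section} fixes these uniformly, so the statement is unambiguous once one tracks which $\scrA_j$ is being tensored to which.

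The main obstacle I expect is not any single computation but the careful matching of \emph{which} simple gets tilted at \emph{which} stage when one conjugates the iterated-tilt recipe of \ref{def Ti} by $-\otimes\scrO(k)$: one must verify that tensoring by a line bundle intertwines the alternating sequence $\righttilt_1,\righttilt_0,\righttilt_1,\ldots$ with itself (up to the index shift by $N$), rather than with the reversed sequence. For $\ell>1$ this works because $N$ is even and $\upbeta$ respects the $(S_0,S_1)$-labelling, so an even number of mutation steps returns the alternation pattern to its original phase; for $\ell=1$ the single mutation step flips the roles, and one has to confirm that this flip is precisely the $S_0\leftrightarrow S_1$ swap predicted by \ref{beta order} and not an off-by-one artifact. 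Once that phase-matching is nailed down, the rest follows formally from \eqref{comm for tilt}, \ref{tensor cor}, and \ref{Psii abelian equiv}. It remains only to extend from $k\geq 1$ to $k\geq 0$, which is trivial since $k=0$ is the identity.
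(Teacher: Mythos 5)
Your proposal is correct and follows the paper's proof essentially verbatim: chase the commutative square of \ref{tensor cor} using the abelian equivalences from \eqref{comm for tilt}/\ref{Psii abelian equiv} as the vertical maps, thereby identifying $\Tilt_{i+kN}(\Per)$ with $\Tilt_i(\Per)\otimes\scrO(k)$, and then read off the ordering statement from \ref{beta order}. The ``phase-matching'' worry you flag at the end is in fact moot: because \ref{Psii abelian equiv} identifies $\Tilt_j(\Per)$ with $\mod\Lambda_j$ wholesale, the diagram chase matches entire hearts together with their ordered simples, so there is never any need to unwind the alternating $\righttilt_1,\righttilt_0,\ldots$ recipe of \ref{def Ti} step by step.
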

\begin{proof}
We chase the heart $\mod\Lambda_{i+kN}$ both ways from the bottom right to top left in \ref{tensor cor}.  On one hand, traversing the inverses of the top and right takes
\[
\mod\Lambda_{i+kN}\xrightarrow{\upbeta^{-1}}\mod\Lambda_i\xrightarrow{\Uppsi_i^{-1}}\Tilt_i(\Per)\xrightarrow{-\otimes\scrO(k)}\Tilt_i(\Per)\otimes\scrO(k),
\]
where the middle equivalence is \eqref{comm for tilt}.  On the other hand, by \eqref{obvious comp} the composition of the left and bottom functors is $\Uppsi_{i+kN}$, and so traversing the inverses of the bottom and left functors gives, again using \eqref{comm for tilt}, $\Tilt_{i+kN}(\Per)$.  Since \ref{tensor cor} is commutative, the first stated result follows.  Both statements~\eqref{T via tensor 1} and~\eqref{T via tensor 2} follow, using \ref{beta order}.
\end{proof}

\begin{prop}[Perverse--Tilt Duality]\label{Per tilt duality}
 For all $i\in\mathbb{Z}$, the following hold.
\begin{enumerate}
\item\label{Per tilt duality 1} $\mathbb{D}(\finitelength\Tilt_{i}(\Per))=\finitelength\Tilt_{1-i}(\Per)$.
\item\label{Per tilt duality 2} $\mathbb{D}$ preserves the ordering on the simples.
\end{enumerate}
\end{prop}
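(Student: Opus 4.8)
The plan is to deduce this directly from its algebraic counterpart, Proposition~\ref{Tilt duality alg}, by transporting everything across the derived equivalence $\Uppsi$ of~\eqref{Psi per 0}. Recall that $\mathbb{D}_\Lambda=\Uppsi\circ\mathbb{D}\circ\Uppsi^{-1}$, equivalently $\mathbb{D}=\Uppsi^{-1}\circ\mathbb{D}_\Lambda\circ\Uppsi$. By the alternative definition $\Tilt_i(\Per)=\Uppsi^{-1}\Tilt_i(\scrA)$ given after Definition~\ref{def Ti Per} (equivalently, by the commutative diagram~\eqref{comm for tilt}), the functor $\Uppsi$ restricts to an equivalence of abelian categories $\Tilt_i(\Per)\xrightarrow{\sim}\Tilt_i(\scrA)$. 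In particular it identifies the finite length subcategories, so that $\finitelength\Tilt_i(\Per)=\Uppsi^{-1}\big(\finitelength\Tilt_i(\scrA)\big)$, and it carries the chosen ordering of simples on $\Tilt_i(\Per)$ (fixed via \S\ref{ordering section} in Definition~\ref{def Ti Per}) to the chosen ordering on $\Tilt_i(\scrA)$.

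Granting this, part~\eqref{Per tilt duality 1} becomes a one-line computation. First I would write
\[
\mathbb{D}\big(\finitelength\Tilt_i(\Per)\big)
=\Uppsi^{-1}\mathbb{D}_\Lambda\Uppsi\big(\finitelength\Tilt_i(\Per)\big)
=\Uppsi^{-1}\mathbb{D}_\Lambda\big(\finitelength\Tilt_i(\scrA)\big),
\]
then apply Corollary~\ref{finite length alg} to rewrite $\finitelength\Tilt_i(\scrA)=\Tilt_i(\finitelength\scrA)$, apply Proposition~\ref{Tilt duality alg}\eqref{Tilt duality alg 1} to obtain $\mathbb{D}_\Lambda\big(\Tilt_i(\finitelength\scrA)\big)=\Tilt_{1-i}(\finitelength\scrA)$, and finally use Corollary~\ref{finite length alg} once more together with the identification $\finitelength\Tilt_{1-i}(\Per)=\Uppsi^{-1}\big(\finitelength\Tilt_{1-i}(\scrA)\big)$ from the previous paragraph to conclude that $\mathbb{D}\big(\finitelength\Tilt_i(\Per)\big)=\finitelength\Tilt_{1-i}(\Per)$. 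Part~\eqref{Per tilt duality 2} then follows by tracking simples through the same composite: $\Uppsi$ and $\Uppsi^{-1}$ respect the orderings by the first paragraph, and $\mathbb{D}_\Lambda$ preserves the ordering by Proposition~\ref{Tilt duality alg}\eqref{Tilt duality alg 2}, so $\mathbb{D}=\Uppsi^{-1}\circ\mathbb{D}_\Lambda\circ\Uppsi$ preserves it as well.

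There is essentially no new homological content here; the genuine work was already carried out in Proposition~\ref{Tilt duality alg} (Bridgeland's identification of the image of the flop functor with $\PerOne X$, Van den Bergh's self-duality $\mathbb{D}(\finitelength\PerOne X)=\finitelength\Per X$, and the Calabi--Yau duality computation). The only point requiring care is bookkeeping of conventions: one must check that the abelian equivalence in~\eqref{comm for tilt} intertwines $\mathbb{D}$ with $\mathbb{D}_\Lambda$ --- which holds by the very definition of $\mathbb{D}_\Lambda$ --- and that it matches up the two choices of ordering on simples as well as the two notions of finite length subcategory. Since the ordering on $\Tilt_i(\Per)$ was fixed in Definition~\ref{def Ti Per} precisely so as to correspond under $\Uppsi$ to the ordering used on the algebra side, this compatibility is automatic once unwound, and I expect this to be the only (routine) obstacle.
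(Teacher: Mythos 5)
Your proposal is correct and follows essentially the same route as the paper: both transport the statement through $\Uppsi$ to the algebraic side, invoke Proposition~\ref{Tilt duality alg} (which contains all the homological content) together with Corollary~\ref{finite length alg}, and then transport back, with part~\eqref{Per tilt duality 2} following by the same tracking of orderings. The paper phrases the bookkeeping via the intermediate categories $\finitelength\scrA_n$ and the equalities $\Tilt_n(\finitelength\scrA)=\Uppsi\circ\Uppsi_n^{-1}(\finitelength\scrA_n)$ and $\Uppsi_n^{-1}(\finitelength\scrA_n)=\finitelength\Tilt_n(\Per)$, whereas you stay with $\finitelength\Tilt_n(\scrA)$ throughout, but these are interchangeable by~\eqref{comm for tilt} and the argument is the same.
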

\begin{proof}
Since $\mathbb{D}_\Lambda=\Uppsi\circ\mathbb{D}\circ\Uppsi^{-1}$,  by \ref{Tilt duality alg} it follows that
\[
\Uppsi\circ\mathbb{D}\circ\Uppsi^{-1}(\Tilt_{i}(\finitelength\scrA))=\Tilt_{1-i}(\finitelength\scrA).
\] 
But $\Tilt_n(\finitelength\scrA)=\Uppsi\circ\Uppsi_n^{-1}(\finitelength\scrA_n)$
for all $n$, by \ref{finite length alg}, thus
\[
\Uppsi\circ\mathbb{D}\circ\Uppsi_i^{-1}(\finitelength\scrA_i)=\Uppsi\circ\Uppsi_{1-i}^{-1}(\finitelength\scrA_{1-i}).
\]
Since the bottom line in \eqref{comm for tilt} is an abelian equivalence, $\Uppsi_n^{-1}(\finitelength\scrA_n)= \finitelength\Tilt_n(\Per)$ for all $n$.  Substituting and applying $\Uppsi^{-1}$ to each side establishes \eqref{Per tilt duality 1}.

For \eqref{Per tilt duality 2}, the fact that the ordering is preserved is \ref{Tilt duality alg}\eqref{Tilt duality alg 2}, given that the ordering on the simples in $\Tilt_{i}(\Per)$ is induced from the algebraic ordering in $\Tilt_{i}(\scrA)$.
\end{proof}

Combining with \ref{T via tensor}, the following will later allow us to compute the simples in $\Tilt_i(\Per)$ for all $i\in\mathbb{Z}$, by computing them only in a finite region. 

\begin{cor}\label{simples in Tn cor}
With notation as above, the two simples of \opt{ip}{the heart }$\Tilt_i(\Per)$ are given as follows.
\begin{enumerate}
\item If $i\geq 0$ the simples are $\Uppsi^{-1}_i(S_0)$ and $\Uppsi^{-1}_i(S_1)$.
\item If $i< 0$ the simples are $\mathbb{D}\Uppsi^{-1}_{1-i}(S_0)$ and $\mathbb{D}\Uppsi^{-1}_{1-i}(S_1)$.
\end{enumerate}
\end{cor}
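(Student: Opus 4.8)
The proof of Corollary~\ref{simples in Tn cor} is a routine bookkeeping exercise that combines the abelian equivalence of~\ref{Psii abelian equiv} with Perverse--Tilt duality of~\ref{Per tilt duality}, so the plan is to treat the two ranges of $i$ separately and reduce the negative range to the non-negative one.

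\smallskip
\noindent\textbf{The case $i\geq 0$.} The commutative diagram~\eqref{comm for tilt} exhibits an abelian equivalence $\Uppsi_i\circ\Uppsi^{-1}\colon\Tilt_i(\Per)\xrightarrow{\sim}\scrA_i=\mod\Lambda_i$, or equivalently $\Uppsi_i^{-1}\colon\scrA_i\xrightarrow{\sim}\Tilt_i(\Per)$ after composing with $\Uppsi$ and $\Uppsi^{-1}$. An abelian equivalence sends simples to simples bijectively, and $\scrA_i$ has exactly the two simples $S_0$ and $S_1$ by the conventions of~\S\ref{ordering section}. Hence the two simples of $\Tilt_i(\Per)$ are precisely $\Uppsi_i^{-1}(S_0)$ and $\Uppsi_i^{-1}(S_1)$, which is (1). (Strictly one should record that $\Uppsi_i^{-1}$ here is the composite $\Uppsi^{-1}\circ(\Uppsi_i\circ\Uppsi^{-1})^{-1}$ landing in $\Db(\coh X)$, but this is exactly the equivalence of~\eqref{comm for tilt}, so no ambiguity arises.)

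\smallskip
\noindent\textbf{The case $i<0$.} Here I would invoke Perverse--Tilt duality. By~\ref{Per tilt duality}\eqref{Per tilt duality 1} the functor $\mathbb{D}$ restricts to an equivalence $\mathbb{D}\colon\finitelength\Tilt_{1-i}(\Per)\xrightarrow{\sim}\finitelength\Tilt_i(\Per)$. When $i<0$ we have $1-i\geq 1>0$, so part (1) applies to $\Tilt_{1-i}(\Per)$: its two simples are $\Uppsi^{-1}_{1-i}(S_0)$ and $\Uppsi^{-1}_{1-i}(S_1)$. Since $\mathbb{D}$ is an (exact, contravariant) duality restricting to an equivalence of the finite-length subcategories, it sends simples to simples, so the simples of $\Tilt_i(\Per)$ are $\mathbb{D}\Uppsi^{-1}_{1-i}(S_0)$ and $\mathbb{D}\Uppsi^{-1}_{1-i}(S_1)$, giving (2). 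The labelling is consistent with~\S\ref{ordering section} because $\mathbb{D}$ preserves the ordering on simples by~\ref{Per tilt duality}\eqref{Per tilt duality 2}, and $\Uppsi^{-1}_{1-i}$ sends $S_j$ to the $j$th simple of $\Tilt_{1-i}(\Per)$ by construction of the ordering.

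\smallskip
\noindent There is essentially no obstacle here: the content has already been done in~\ref{Psii abelian equiv}, \ref{finite length alg}, and especially~\ref{Per tilt duality}. The only point requiring a word of care is that~\ref{Per tilt duality}\eqref{Per tilt duality 1} is stated for the finite-length subcategories $\finitelength\Tilt_i(\Per)$ rather than for $\Tilt_i(\Per)$ itself; but simples automatically lie in the finite-length subcategory, and the simples of an abelian category coincide with the simples of its finite-length part, so this causes no difficulty and one simply states the conclusion for $\Tilt_i(\Per)$.
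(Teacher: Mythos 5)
Your proof is correct and follows exactly the paper's argument: case $i\geq 0$ from the abelian equivalence of~\eqref{comm for tilt}, case $i<0$ by transporting through Perverse--Tilt duality~\ref{Per tilt duality}. The only difference is that you spell out the (harmless) point that duality is stated on finite-length subcategories, which the paper leaves implicit.
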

\begin{proof}
The case $i\geq 0$ follows immediately from \eqref{comm for tilt}.  The case $i<0$ is then immediate, since $\finitelength\Tilt_{i}(\Per)=\mathbb{D}(\finitelength\Tilt_{1-i}(\Per))$ by Perverse-Tilt Duality \ref{Per tilt duality}.  
\end{proof}

\subsection{The Simples Helix}\label{section simples helix}
We now define the sequence $\{\scrS_i\}_{i\in\mathbb{Z}}$.  The sheaf $\scrS_0\colonequals \scrO_{\Curve}(-1)$, and the terms $\scrS_1,\hdots,\scrS_{N/2}$ are defined as being either 
\[
\left\{
\begin{array}{ll}
\scrO_{\ell\Curve},\hdots,\scrO_{2\Curve}&\mbox{if }\ell\leq 4\\
\scrO_{\ell\Curve},\hdots,\scrO_{3\Curve},\scrZ,\scrO_{2\Curve} & \mbox{if }\ell=5,6.
\end{array}
\right.
\]   
The sheaf $\scrZ$ will be constructed in the proof of \ref{simples main} below, and it will be shown in \ref{Ext23} that $\scrZ$ occurs as the unique non-split extension 
\[
0\to \scrO_{3\Curve}\to\scrZ\to \scrO_{2\Curve}\to 0.
\]
This extension group vanishes when $\ell\leq 4$ (also shown in \ref{Ext23} below), which explains why $\scrZ$ only appears for high length $E_8$ flops.

The terms $\scrS_{N/2+1},\hdots,\scrS_{N-1}$ are then defined to be either
\[
\left\{
\begin{array}{ll}
\omega_{3\Curve}(1),\hdots,\omega_{\ell\Curve}(1)&\mbox{if }\ell\leq 4\\
\scrZ^{\omega}(1),\omega_{3\Curve}(1),\hdots,\omega_{\ell\Curve}(1) & \mbox{if }\ell=5,6
\end{array}
\right.
\]   
where $\scrZ^{\omega}\colonequals \mathbb{D}(\scrZ)[-1]$, and $\mathbb{D}$ is the Grothendieck dual.  The full simples helix $\{\scrS_i\}_{i\in\mathbb{Z}}$ is then defined by translating the region $\scrS_0,\hdots,\scrS_{N-1}$ via the rule $\scrS_{i+N} = \scrS_i\otimes\scrO(1)$ for all~$i$.  Note that, by construction, the terms $\scrS_{-N/2},\hdots,\scrS_{-1}$ satisfy the rule
\begin{equation}
\scrS_{-i}=\mathbb{D}(\scrS_i)[-1].\label{Other side SKMS simples}
\end{equation}
In most cases these are just $\omega_{k\Curve}$, by \ref{D to kC}.

\begin{thm}\label{simples main}
Consider the simples helix $\{ \scrS_{i}\}_{i\in\mathbb{Z}}$ above.  
\begin{enumerate}
\item For all $i\geq 0$, the category $\Tilt_i(\Per)$ has simples $\scrS_{i-1}[1]$ and $\scrS_i$.  Furthermore, with respect to the order in \S\ref{ordering section}
\[
\Uppsi^{-1}_i S_0=\left\{
\begin{array}{ll}
\scrS_{i-1}[1]&\mbox{if $i$\! even}\\
\scrS_{i}&\mbox{if $i$\! odd}
\end{array}
\right.
\quad
\mbox{and}
\quad
\Uppsi^{-1}_i S_1=\left\{
\begin{array}{ll}
\scrS_{i}&\mbox{if $i$\! even}\\
\scrS_{i-1}[1]&\mbox{if $i$\! odd.}
\end{array}
\right.
\]
\item For all $i<0$, the category $\Tilt_i(\Per)$ has simples $\scrS_{i-1}[1]$ and $\scrS_i$.  
\end{enumerate}
\end{thm}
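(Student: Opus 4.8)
We prove part~(1), including the $S_0/S_1$ labelling, for $0\le i\le N/2$ when $\ell\ge 2$, and then propagate everything by symmetry; the case $\ell=1$ (where $N=1$, $\scrS_i=\scrO_\Curve(i-1)$, and $\Tilt_i(\Per)=\Per\otimes\scrO(i)$) follows at once from the base case below together with \ref{T via tensor}. For $\ell\ge 2$, applying $\mathbb{D}$ and invoking \ref{simples in Tn cor} together with the identity
\[
\scrS_{-i}=\mathbb{D}(\scrS_i)[-1]\qquad(i\in\mathbb{Z}),
\]
which holds by the explicit lists of \S\ref{section simples helix}, by \ref{D to kC} and \ref{omega is scr shift D4}, and by $\mathbb{D}(-\otimes\scrO(1))=\mathbb{D}(-)\otimes\scrO(-1)$, transports the statement for $0\le i\le N/2$ to all $i$ with $1-N/2\le i\le 1$; these two ranges together are $N$ consecutive integers, so \ref{T via tensor} (periodicity $\Tilt_{i+N}(\Per)=\Tilt_i(\Per)\otimes\scrO(1)$, compatible with $\scrS_{j+N}=\scrS_j\otimes\scrO(1)$) yields all $i\in\mathbb{Z}$. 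In particular, for $i<0$ the simples of $\Tilt_i(\Per)$ are by \ref{simples in Tn cor}\,(2) the images under $\mathbb{D}$ of those of $\Tilt_{1-i}(\Per)$ with $1-i\ge 1$, hence $\{\mathbb{D}(\scrS_{-i}[1]),\mathbb{D}(\scrS_{1-i})\}=\{\scrS_i,\scrS_{i-1}[1]\}$, giving~(2) (no labelling is claimed there). The labelling in~(1) propagates since $-\otimes\scrO(1)$ preserves the ordering by \ref{T via tensor}\eqref{T via tensor 2} for $\ell>1$, $\mathbb{D}$ preserves it by \ref{Per tilt duality}\eqref{Per tilt duality 2}, and for $\ell=1$ the swap in \ref{T via tensor}\eqref{T via tensor 1} is exactly offset by the change of parity of $i$.

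It remains to treat $0\le i\le N/2$ (for $\ell\ge 2$) by induction on $i$, the inductive hypothesis including the labelling. The base $i=0$ is \S\ref{ordering section}: $\Per$ has simples $\omega_{\ell\Curve}[1]=\scrS_{-1}[1]$ ($=S_0$) and $\scrO_\Curve(-1)=\scrS_0$ ($=S_1$). For the step, by \ref{def Ti Per} the heart $\Tilt_{i+1}(\Per)$ is a single right tilt of $\Tilt_i(\Per)$, and comparing the alternation $\righttilt_1,\righttilt_0,\righttilt_1,\dots$ of \ref{def Ti} with the labelling shows the tilt is always at the simple $\scrS_i$; hence $\scrS_i[1]$ is one simple of $\Tilt_{i+1}(\Per)$. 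For the other, transport to $\mod\Lambda_{i+1}$ using $\Uppsi_{i+1}=\Phi_i\circ\Uppsi_i$ and the abelian equivalences \eqref{comm for tilt}: by \ref{mut track lem} the new simple of $\mod\Lambda_{i+1}$ has dimension vector $(1,n_i)$ or $(n_i,1)$ over $\Lambda_{i+1}$, so it is a universal extension of one simple of $\mod\Lambda_i$ by $n_i$ copies of the shift of the other, and pulling this back through $\Uppsi_i^{-1}\Phi_i^{-1}$ identifies the remaining simple of $\Tilt_{i+1}(\Per)$ with the cone $W_i$ of a morphism $\scrS_{i-1}\to\scrS_i^{\oplus n_i}$ transported from the exchange sequence \eqref{exchange 1}. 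Thus the theorem is reduced to the claim $W_i\cong\scrS_{i+1}$ for $0\le i\le N/2-1$.

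For $\ell\le 4$ this is a chase through the Katz sequences \eqref{Katz ses}. At $i=0$ it is cleanest to anchor the chain on the known description $\Tilt_1(\Per)=\PerOne X$ used in the proof of \ref{Tilt duality alg}, whose simples are $\scrO_\Curve(-1)[1]=\scrS_0[1]$ and $\scrO_{\ell\Curve}=\scrS_1$. For $1\le i\le N/2-1$ one has $\scrS_{i-1},\scrS_i\in\{\scrO_{a\Curve}\}$; after checking that the transported exchange morphism $\scrS_{i-1}\to\scrS_i^{\oplus n_i}$ is, up to the standard identifications, assembled from the Katz inclusions of \eqref{Katz ses}, one reads off from \eqref{Katz ses} that $W_i$ is the next thickening, i.e.\ $\scrS_{i+1}$. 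For $\ell=5,6$ the same chase applies verbatim, except at the single step where $\scrS_i$ is not a standard thickening: there one \emph{defines} $\scrZ\colonequals W_i$, and then verifies, using \eqref{Katz ses}, \ref{omega is scr shift D4}, and the self-extension computation recorded in \ref{Ext23}, that $\scrZ$ is the unique nonsplit extension $0\to\scrO_{3\Curve}\to\scrZ\to\scrO_{2\Curve}\to 0$, which makes the following step close up. Combined with the reduction of the first paragraph, this proves both parts.

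The main obstacle is the content of the third paragraph: making the transported exchange morphisms $\scrS_{i-1}\to\scrS_i^{\oplus n_i}$ explicit enough to compute their cones --- in particular matching them against the Katz inclusions (a minimality/uniqueness argument, constrained by the rank and multiplicity data of \ref{ns summary}) and checking that the resulting complexes are the asserted sheaves --- together with the $\scrZ$-analysis for $\ell=5,6$, where the cone is not a priori one of the standard thickenings. By contrast, the reduction via periodicity and Perverse--Tilt duality, the mutation bookkeeping through \ref{mut track lem} and \eqref{comm for tilt}, and the parity/labelling conventions are routine given the cited results.
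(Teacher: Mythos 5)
Your first paragraph -- the reduction to the fundamental region $\{0,\dots,N/2\}$ (when $\ell\ge 2$) using Perverse--Tilt Duality \ref{Per tilt duality} together with the periodicity of \ref{T via tensor}, and dispatching $\ell=1$ via \ref{T via tensor}\eqref{T via tensor 1} -- is correct and is precisely Steps 1--2 of the paper's proof. The problem is the inductive step.

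Your claim that ``by \ref{mut track lem} the new simple of $\mod\Lambda_{i+1}$ has dimension vector $(1,n_i)$ or $(n_i,1)$'' is not what \ref{mut track lem} says, and is false as stated: simples have dimension vectors $(1,0)$ or $(0,1)$. What \ref{mut track lem} controls is $\Phi_i$ applied to the \emph{old} non-shifted simple, and that image (dimension vector $(1,n_i)$ or $(n_i,1)$) is the image of $\scrS_{i-1}[1]$ in $\mod\Lambda_{i+1}$, not the new simple. From there the inference that the new simple of $\Tilt_{i+1}(\Per)$ ``is the cone $W_i$ of a morphism $\scrS_{i-1}\to\scrS_i^{\oplus n_i}$ transported from the exchange sequence \eqref{exchange 1}'' does not follow, and is not the right object: the exchange sequences live among the reflexive modules $V_j$, equivalently the projectives $\Hom_R(M_i,V_j)$, and what they transport to under $\Uppsi_i^{-1}$ are the generalised Euler sequences $0\to\scrV_{i-1}\to\scrV_i^{\oplus n_i}\to\scrV_{i+1}\to 0$ among the \emph{vector bundles} of \ref{V pos are bundles}, not a sequence among the simples $\scrS_j$. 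Indeed your proposed identification already fails at $\ell=3$, $i=2$: there $n_2=3$ and $\scrS_3=\omega_{3\Curve}(1)$, but this sheaf is not the cone of any morphism $\scrO_{3\Curve}\to\scrO_{2\Curve}^{\oplus 3}$.

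The paper's inductive step works in the opposite direction and avoids this issue. It applies the chain of functors $\Phi_{i-1}\cdots\Phi_0\Uppsi$ to the non-split Katz triangles
\[
\scrO_{(\ell-k)\Curve}\to\scrO_{\Curve}(-1)[1]\to \scrO_{(\ell-k+1)\Curve}[1],
\]
tracks the image of the middle term $\scrO_\Curve(-1)[1]$ step by step (its dimension vector evolves at each mutation exactly as \ref{mut track lem} dictates), and uses the fact that the outgoing map to $\scrO_{(\ell-k+1)\Curve}[1]$ stays non-zero to read the dimension vector of the incoming term from the long exact sequence. This shows directly that $\Phi_{i-1}\cdots\Phi_0\Uppsi(\scrO_{(\ell-k)\Curve})$ is the expected simple, and it is in this process that $\scrZ$ is \emph{discovered} as $\Uppsi^{-1}\Phi_0^{-1}\cdots\Phi_3^{-1}(S_1)$ when $\ell=5,6$, with its extension description being a by-product. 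So while you have the right reduction and the right ingredients (\ref{mut track lem}, Katz sequences, \ref{ns summary}), the specific triangle you propose to compute is the wrong one, and I think you would hit a wall trying to carry out the third paragraph as written.
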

\begin{proof}
\step{1} When $i=0,1$, both statements in (1) are of course already known.  Indeed, when $i=0$  the projectives are
\[P_0=\Hom_R(R\oplus N,R)\quad\mbox{and}\quad P_1=\Hom_R(R\oplus N,N),\]
and the corresponding simples are $\omega_{\ell\Curve}[1]$ and $\scrO_{\Curve}(-1)$ respectively~\cite[3.5.8]{VdB1d}, which are $\scrS_{-1}[1]$ and $\scrS_0$. Similarly, when $i=1$, the projectives are
\[
P_0=\Hom_R(R\oplus M,R)\quad\mbox{and}\quad P_1=\Hom_R(R\oplus M,M),
\]
and as explained in \ref{finite length alg} it is known that $\Tilt_1(\Per)=\PerOne$.  Again, by \cite[3.5.8]{VdB1d} the corresponding simples are $\scrO_{\ell\Curve}$ and $\scrO_{\Curve}(-1)[1]$ respectively, which are $\scrS_1$ and $\scrS_0[1]$.

We next observe that in Type $A$ (i.e.\ $\ell=1$), by \ref{T via tensor}, we are already done.  All the simples appearing in $\Tilt_i(\Per)$ are line bundle tensors of $\scrS_0$ and $\scrS_1$, and the order is as claimed due to the permutation in \ref{T via tensor}.  This proves all statements  when $\ell=1$.

\step{2} We can now assume that $\ell>1$, in which case $N$ is even.  Observe next that we just need to prove the statements in (1) for $i\in\{0,\hdots,N/2\}$ because, by Perverse-Tilt Duality \ref{Per tilt duality}, if $i\in\{-N/2+1,\hdots,-1\}$ then $\Tilt_{i}(\Per)$ is dual to $\Tilt_{1-i}(\Per)$.  If $|i|$ is even, then $1-i$ is odd, so the ordered simples in $\Tilt_{i}(\Per)$ are thus $\mathbb{D}(\scrS_{1-i})$ and $\mathbb{D}(\scrS_{-i}[1])$. By \eqref{Other side SKMS simples}, these equal $\scrS_{i-1}[1]$ and $\scrS_{i}$ respectively.  A similar analysis holds if $|i|$ is odd, showing in both cases that the (unordered) simples are $\scrS_{i}$ and $\scrS_{i-1}[1]$, and furthermore the ordered version matches the statement in (2). It then follows we have identified the ordered simples in both (1) and (2) within the region
\[
\{-N/2+1,\hdots,N/2\}.
\]
By \ref{T via tensor} this is a fundamental region, and we are done.

\step{3} We now prove the statements in (1) for $i\in\{0,\hdots,N/2\}$.  If $\ell=2$ then $N=2$, so we are done by the first paragraph of this proof.  Thus we can assume that $\ell\geq 3$, in which case $N/2\geq 2$.  Since $i=0,1$ is known by the first paragraph, it suffices to prove all the statements in (1) for the region $i\in\{2,\hdots,N/2\}$, for $\ell\geq 3$.  Since all functors are equivalences, by \ref{simples in Tn cor}  it suffices to show that 
\def\eqnSA{S_0\opt{ip}{&}=\left\{
\begin{array}{ll}
\upkappa_i\Uppsi(\scrS_{i-1}[1])&\mbox{if $i$\! even}\\
\upkappa_i\Uppsi(\scrS_{i})&\mbox{if $i$\! odd}
\end{array}
\right.}
\def\eqnSB{S_1\opt{ip}{&}=\left\{
\begin{array}{ll}
\upkappa_i\Uppsi(\scrS_{i})&\mbox{if $i$\! even}\\
\upkappa_i\Uppsi(\scrS_{i-1}[1])&\mbox{if $i$\! odd}
\end{array}
\right.\label{reduction 1}}
\opt{ams}{\begin{equation}
\eqnSA
\quad
\mbox{and}
\quad
\eqnSB
\end{equation}}
\opt{ip}{\begin{align}
\begin{split}
\eqnSA \\
\eqnSB
\end{split}
\end{align}}
for all $2\leq i\leq N/2$.  

We first show the case $i=2$ in \eqref{reduction 1}, which is even, where the claim is that
\begin{equation}
S_0=\Phi_1\Phi_0\Uppsi(\scrO_{\ell\Curve}[1])\quad\mbox{ and }\quad S_1=\Phi_1\Phi_0\Uppsi(\scrO_{(\ell-1)\Curve})\label{reduct 2}
\end{equation}
in $\mod\Lambda_2$.  The case $i=1$ implies that, in $\mod\Lambda_1$, there are equalities 
\begin{equation*}
S_0=\Phi_0\Uppsi(\scrO_{\ell\Curve})\quad\mbox{ and }\quad S_1=\Phi_0\Uppsi(\scrO_{\Curve}(-1)[1]).  
\end{equation*}
Simply applying $\Phi_1$ to the first of these, using $\Phi_1(S_0)=S_0[-1]$ by \ref{mut track lem}, gives the first required equality in \eqref{reduct 2}.  Now applying $\Phi_1\Phi_0\Uppsi$ to the non-split Katz triangle 
\[
\scrO_{(\ell-1)\Curve}\to\scrO_{\Curve}(-1)[1]\to \scrO_{\ell\Curve}[1]
\]
from \ref{Katz prop}, where the last map is clearly non-zero, gives a triangle
\begin{equation}
\Phi_1\Phi_0\Uppsi(\scrO_{(\ell-1)\Curve})\to \Phi_1(S_1) \to S_0\to\label{reduct 4}
\end{equation}
in $\Db(\mod\Lambda_2)$ where the last map is non-zero.  As the last term is just $S_0=\mathbb{C}$, this non-zero map must be surjective.  But by \ref{mut track lem}, the middle term is a module of dimension vector $(n_1,1)$, so the long exact sequence in cohomology then implies that $\Phi_1\Phi_0\Uppsi(\scrO_{(\ell-1)\Curve})$ is a module.  By inspection of \ref{ns summary} we see that $n_1=1$ since $\ell\geq 3$, thus we conclude that $\Phi_1\Phi_0\Uppsi(\scrO_{(\ell-1)\Curve})$ has dimension vector $(1,0)$, and hence is the simple\opt{ams}{ }\opt{ip}{~}$S_0$.  

\step{4} We have now established the case $i=2$ in \eqref{reduction 1}, so when $N=4$ (equivalently, when $\ell=3$), we are done.   Thus we can assume that $\ell\geq 4$.  The next step is to establish the case $i=3$ in \eqref{reduction 1},  where the claim is that
\begin{equation}
S_0=\Phi_2\Phi_1\Phi_0\Uppsi(\scrO_{(\ell-2)\Curve})\quad\mbox{ and }\quad S_1=\Phi_2\Phi_1\Phi_0\Uppsi(\scrO_{(\ell-1)\Curve}[1]).\label{reduct 5}
\end{equation}
Applying $\Phi_2$ to the previous $i=2$ equality $S_1=\Phi_1\Phi_0\Uppsi(\scrO_{(\ell-1)\Curve})$, using \ref{mut track lem}, immediately gives the second claim in \eqref{reduct 5}.  Now note that \eqref{reduct 4} does not split, else the Katz triangle would split.  Hence in the rotated triangle 
\[
\Phi_1(S_1) \to S_0\to \Phi_1\Phi_0\Uppsi(\scrO_{(\ell-1)\Curve})[1],
\]
the last map is non-zero.  Applying $\Phi_2$ gives a triangle
\[
\Phi_2\Phi_1(S_1) \to \Phi_2(S_0)\to S_1,
\]
where the last map is non-zero, and thus is surjective.  By \ref{mut track lem} the middle term is a module of dimension vector $(1,n_2)$, and by inspection of \ref{ns summary} we see that $n_2=2$ since $\ell\geq 4$.  From the long exact sequence in cohomology, we deduce that $\Phi_2\Phi_1(S_1)$ is a module, of dimension vector $(1,1)$.  Now apply $\Phi_2\Phi_1\Phi_0\Uppsi$ to the non-split Katz triangle
\[
\scrO_{(\ell-2)\Curve}\to\scrO_{\Curve}(-1)[1]\to \scrO_{(\ell-1)\Curve}[1],
\]
where the last map is clearly non-zero, to obtain a triangle
\begin{equation}
\Phi_2\Phi_1\Phi_0\Uppsi(\scrO_{(\ell-2)\Curve})\to \Phi_2\Phi_1(S_1) \to S_1\label{reduct 8}
\end{equation}
where the rightmost two terms are modules, of dimension vectors $(1,1)$ and $(0,1)$ respectively.   The last map is non-zero and hence is surjective.  We deduce that $\Phi_2\Phi_1\Phi_0\Uppsi(\scrO_{(\ell-2)\Curve})$ is a module, of dimension vector $(1,0)$, and so is isomorphic to $S_0$.

\step{5} We have now established the case $i=3$ in \eqref{reduction 1}, so when $N=6$ (equivalently, when $\ell=4$), we are done.   Thus we can assume that $\ell\geq 5$.   From here, a unified proof, although possible, is notationally very heavy, so for notational ease, we now split the proof.

\emph{Case: $\ell=5$}.  Since $N=10$, we just need to verify \eqref{reduction 1} for $i=4,5$.    For the case $i=4$, we know that $\scrO_{3\Curve}$ and $\scrO_{4\Curve}[1]$ are the two previous simples, and by inspection of \ref{ns summary} we know $n_3=3$.  As always, by \ref{mut track lem} $\Phi_3\Phi_2\Phi_1\Phi_0\Uppsi(\scrO_{3\Curve}[1])=S_0$, and so $\scrO_{3\Curve}[1]$ is one of the simples.  For the other, first apply $\Phi_3$ to \eqref{reduct 8} and rotate to obtain 
\[
\Phi_3 \Phi_2\Phi_1(S_1) \to \Phi_3(S_1)\to S_0.
\]
The rightmost two terms are modules, of dimension vectors $(3,1)$ and $(1,0)$ respectively.  The last map is non-zero, hence surjective, and so $\Phi_3 \Phi_2\Phi_1(S_1)$ is a module of dimension vector $(2,1)$.  Applying $\Phi_3\hdots\Phi_0\Uppsi$ to the non-split Katz triangle
\[
\scrO_{2\Curve}\to\scrO_{\Curve}(-1)[1]\to \scrO_{3\Curve}[1],
\]
then gives a triangle
\[
\Phi_3\hdots\Phi_0\Uppsi(\scrO_{2\Curve})\to \Phi_3\Phi_2\Phi_1(S_1)\to S_0.
\]
Applying the same logic as above, we deduce that the above is a non-split short exact sequence of modules, and $\Phi_3\hdots\Phi_0\Uppsi(\scrO_{2\Curve})$ has dimension vector $(1,1)$.  In particular, it is not simple.  It must have a filtration by the two simples, and since
\begin{align*}
\Hom(S_0,\Phi_3\hdots\Phi_0\Uppsi(\scrO_{2\Curve}))
&=
\Hom(\Phi_3\hdots\Phi_0\Uppsi(\scrO_{3\Curve})[1],\Phi_3\hdots\Phi_0\Uppsi(\scrO_{2\Curve}))\\
&=\Hom(\scrO_{3\Curve}[1],\scrO_{2\Curve})\\
&=0,
\end{align*}
we first see that the filtration is necessarily of the form
\begin{equation}
0\to S_1\to\Phi_3\hdots\Phi_0\Uppsi(\scrO_{2\Curve})\to
S_0\to 0,\label{reduct 10}
\end{equation}
and second that this sequence cannot split. Applying the inverse of $\Phi_3\hdots\Phi_0 \Uppsi$ and rotating, we obtain a non-split triangle
\begin{equation}
\scrO_{3\Curve}\to\scrZ\to \scrO_{2\Curve}\label{Z sequence}
\end{equation}
where
\begin{equation}\label{equation definition Z}
\scrZ=\Uppsi^{-1}\Phi_0^{-1}\hdots\Phi_3^{-1}(S_1).
\end{equation}
Necessarily the above triangle~\eqref{Z sequence} arises from a non-split short exact sequence of sheaves.  This proves that $\scrZ$ is a sheaf, corresponding to the simple $S_1$.  

It thus remains to verify the case \eqref{reduction 1} for $i=5$.  By inspection of \ref{ns summary},  $n_4=1$.  As always, by \ref{mut track lem} $\Phi_4\hdots\Phi_0\Uppsi(\scrZ[1])=S_1$, and so $\scrZ[1]$ is one of the simples. For the other, simply apply $\Phi_4$ to the rotation of \eqref{reduct 10} to obtain a triangle
\[
\Phi_4\hdots\Phi_0\Uppsi(\scrO_{2\Curve})\to
\Phi_4(S_0)\to S_1
\]
where the last map is non-zero, and by \ref{mut track lem} the rightmost two terms are modules, of dimension vector $(1,n_4)=(1,1)$ and $(0,1)$ respectively.  The last map must thus be surjective, and so $\Phi_4\hdots\Phi_0\Uppsi(\scrO_{2\Curve})$ is a module of dimension vector $(1,0)$.  In particular, it is isomorphic to $S_0$, as required.

\emph{Case: $\ell=6$}.  Since $N=12$, we just need to verify \eqref{reduction 1} for $i=4,5,6$.    Since $n_4=2$ by \ref{ns summary}, the proof for $i=4$ is very similar to the proof of $(i=3,\ell\geq 4)$ above, using instead the Katz sequence
\[
\scrO_{(\ell-3)\Curve}\to\scrO_{\Curve}(-1)[1]\to \scrO_{(\ell-2)\Curve}[1].
\]
 This settles the case $i=4$.  For the case $i=5$, since the previous simples are $\scrO_{3\Curve}$ and $\scrO_{4\Curve}[1]$, and $n_4=3$, the proof for $i=5$ is very similar to the proof for $(i=4,\ell=5)$ above.  Similarly, since $n_5=1$, the proof for $i=6$ is very similar to the proof for $(i=5,\ell=5)$ above.
\end{proof}

The following is a consequence, which may be of independent interest.
\begin{cor}\label{Ext23}
With notation as above, suppose that $\ell\geq 3$, so that $\scrO_{2\Curve},\scrO_{3\Curve}$ exist.   Then
\[
\Ext^1_X(\scrO_{2\Curve},\scrO_{3\Curve})\neq 0\iff\ell=5,6.
\]
In this case, $\Ext^1_X(\scrO_{2\Curve},\scrO_{3\Curve})=\mathbb{C}$, and so the sheaf $\scrZ$ defined in \eqref{equation definition Z} is the unique non-split extension of $\scrO_{2\Curve}$ by $\scrO_{3\Curve}$.

\end{cor}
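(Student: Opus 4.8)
The plan is to split according to the length, in both ranges using the identification of the simple objects of the iterated tilts $\Tilt_i(\Per)$ furnished by \ref{simples main}. Throughout we use that $\Ext^1_X(\scrO_{2\Curve},\scrO_{3\Curve})=\Hom_{\Db(\coh X)}(\scrO_{2\Curve},\scrO_{3\Curve}[1])$, together with the standard fact that, for objects $a,b$ of a heart $\scrH$ of a bounded t-structure, $\Hom_{\Db}(a,b)=\Hom_{\scrH}(a,b)$; in particular this vanishes when $a,b$ are non-isomorphic simple objects of $\scrH$, and $\End_{\scrH}(a)=\mathbb{C}$ when $a$ is simple. For $\ell=3,4$ the argument is then immediate: by the definition of the simples helix, $\scrO_{3\Curve}=\scrS_{N/2-1}$ and $\scrO_{2\Curve}=\scrS_{N/2}$, so \ref{simples main}(1) identifies the two simples of $\Tilt_{N/2}(\Per)$ as $\scrO_{3\Curve}[1]$ and $\scrO_{2\Curve}$; as these are non-isomorphic (their cohomology sheaves sit in different degrees), $\Ext^1_X(\scrO_{2\Curve},\scrO_{3\Curve})=\Hom_{\Tilt_{N/2}(\Per)}(\scrO_{2\Curve},\scrO_{3\Curve}[1])=0$, which is the ``only if'' direction.

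For $\ell=5,6$ we have instead $\scrO_{3\Curve}=\scrS_{N/2-2}$ and $\scrZ=\scrS_{N/2-1}$, so by \ref{simples main}(1) the heart $\scrA'\colonequals\Tilt_{N/2-1}(\Per)$ has simple objects $\scrO_{3\Curve}[1]$ and $\scrZ$. On the other hand, the proof of \ref{simples main} produces a non-split triangle $\scrO_{3\Curve}\to\scrZ\to\scrO_{2\Curve}\to\scrO_{3\Curve}[1]$ arising from a non-split short exact sequence of sheaves --- for $\ell=5$ this is \eqref{Z sequence}, and for $\ell=6$ the analogous triangle is produced in the same step. Rotating it displays $\scrO_{2\Curve}$ as an extension in $\Db(\coh X)$ of $\scrO_{3\Curve}[1]$ by $\scrZ$, both of which lie in $\scrA'$; since a bounded heart is closed under extensions, $\scrO_{2\Curve}\in\scrA'$, and the triangle becomes a short exact sequence $0\to\scrZ\to\scrO_{2\Curve}\to\scrO_{3\Curve}[1]\to 0$ in $\scrA'$. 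Applying $\Hom_{\scrA'}(-,\scrO_{3\Curve}[1])$ and using $\Hom_{\scrA'}(\scrZ,\scrO_{3\Curve}[1])=0$ and $\End_{\scrA'}(\scrO_{3\Curve}[1])=\mathbb{C}$ then gives $\Ext^1_X(\scrO_{2\Curve},\scrO_{3\Curve})=\Hom_{\scrA'}(\scrO_{2\Curve},\scrO_{3\Curve}[1])\cong\mathbb{C}$, in particular non-zero, which is the ``if'' direction. Finally, a one-dimensional $\Ext^1$-group has a unique non-split extension up to isomorphism, and the non-split sequence above realises $\scrZ$ as such an extension of $\scrO_{2\Curve}$ by $\scrO_{3\Curve}$; hence $\scrZ$ is the unique non-split extension, as claimed.

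Once \ref{simples main} is available there is no real obstacle; the points needing care are the index bookkeeping matching $\scrO_{2\Curve},\scrO_{3\Curve}$ (and, for $\ell=5,6$, also $\scrZ$) to their positions $\scrS_{N/2},\scrS_{N/2-1},\scrS_{N/2-2}$ in the helix, and the verification that $\scrO_{2\Curve}$ genuinely lies in the heart $\scrA'$ --- for which the only ingredient is closure of a bounded heart under extensions, applied to the rotated form of \eqref{Z sequence}. An equivalent route for $\ell=5,6$, avoiding the heart $\scrA'$, is to transport the computation through the equivalence $\Uppsi_{N/2-1}$ used in \ref{simples main} to construct $\scrZ$, reducing it to computing a $\Hom$ from the length-two $\Lambda_{N/2-1}$-module appearing there to its simple top, which is one-dimensional.
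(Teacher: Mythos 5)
Your proposal is correct and follows the paper's own strategy: for $\ell=3,4$ you use exactly the paper's observation that $\scrO_{3\Curve}[1]$ and $\scrO_{2\Curve}$ are the two distinct simples of an iterated tilt (via \ref{simples main}), and for $\ell=5,6$ you combine the non-split triangle \eqref{Z sequence} with the same two inputs the paper uses, namely $\Hom_{\Db(\coh X)}(\scrZ,\scrO_{3\Curve}[1])=0$ and $\End(\scrO_{3\Curve})=\mathbb{C}$. The only cosmetic difference is that you run the dimension count inside the heart $\Tilt_{N/2-1}(\Per)$ using the rotated sequence $0\to\scrZ\to\scrO_{2\Curve}\to\scrO_{3\Curve}[1]\to 0$, whereas the paper applies $\Hom_X(-,\scrO_{3\Curve})$ to the sheaf sequence $0\to\scrO_{3\Curve}\to\scrZ\to\scrO_{2\Curve}\to 0$; these are the same computation.
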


\def\extA{\Ext^1_X(\scrO_{2\Curve},\scrO_{3\Curve})\neq 0,}
\def\extB{\Ext^1_X(\scrO_{2\Curve},\scrO_{3\Curve})=\mathbb{C}.}

\begin{proof}
Note first that for consecutive entries $\scrS_{i-1}$ and $\scrS_i$ in the simples helix,
\[
\Ext^1_X(\scrS_{i},\scrS_{i-1})=
\Hom_{\Db(\coh X)}(\scrS_{i},\scrS_{i-1}[1])=0,
\]
since by \ref{simples main} $\scrS_{i-1}[1]$ and $\scrS_{i}$ are the two distinct simples in the heart of some t-structure on $\Db(\coh X)$, and so there can be no homomorphisms between them.

When $\ell=3,4$, by inspection the simples helix contains consecutive entries $\scrO_{3\Curve},\scrO_{2\Curve}$ (in that order).  By the above, $
\Ext^1_X(\scrO_{2\Curve},\scrO_{3\Curve})=0$.

In contrast, when $\ell=5,6$ the sheaf $\scrZ$ separates the sheaves $\scrO_{3\Curve}$ and $\scrO_{2\Curve}$ in the simples helix, and by \eqref{Z sequence} there is a non-split short exact sequence
\[
0\to \scrO_{3\Curve}\to\scrZ\to \scrO_{2\Curve}\to 0.
\]
In particular, $\Ext^1_X(\scrO_{2\Curve},\scrO_{3\Curve})\neq 0$.  To compute the precise dimension, observe that $\Ext^1_X(\scrZ,\scrO_{3\Curve})=0$ since $\scrO_{3\Curve},\scrZ$ are consecutive entries in the simples helix. Hence applying $\Hom_X(-,\scrO_{3\Curve})$ to the above gives a long exact sequence
\def\eqnLES{0\to \Hom_X(\scrO_{2\Curve},\scrO_{3\Curve})\to \Hom_X(\scrZ,\scrO_{3\Curve})\to \opt{ip}{ \phantom{texttext} \\ \phantom{texttext} \to }
\Hom_X(\scrO_{3\Curve},\scrO_{3\Curve})\to
\Ext^1_X(\scrO_{2\Curve},\scrO_{3\Curve})\to
0.}
\opt{ams}{\[
\eqnLES
\]}
\opt{ip}{\[
\begin{split}
\eqnLES
\end{split}
\]}
Note that $\Hom_X(\scrO_{3\Curve},\scrO_{3\Curve})=\mathbb{C}$ since $\scrO_{3\Curve}$ is a simple.  Since \opt{ams}{$\extA$}\opt{ip}{\[\extA\]} and a one-dimensional vector space surjects onto it, necessarily \opt{ams}{$\extB$}\opt{ip}{\[\extB\]}\end{proof}

The following is also a straightforward consequence of the properties of the helix.

\def\eqnTiltA{\Tilt_{-N/2+1}(\Per)}
\def\eqnTiltB{\Tilt_{N/2+1}(\Per)}

\begin{cor}\label{omega is scr shift D4}
If $\ell\geq 2$,  then $\omega_{2\Curve}\cong\scrO_{2\Curve}(-1)$.
\end{cor}
\begin{proof}
As $\ell\geq 2$, $N$ is even. We compare the hearts $\Tilt_{-N/2+1}(\Per)$ and $\Tilt_{N/2+1}(\Per)$. By definition of the helix $\scrS_{N/2} =\scrO_{2\Curve}$ and furthermore 
\[
\scrS_{-N/2}=\mathbb{D}(\scrS_{N/2})[-1] =\mathbb{D}(\scrO_{2\Curve})[-1] \stackrel{{\scriptstyle\ref{D to kC}}}{=} \omega_{2\Curve}.
\]
Then by \ref{simples main} we know that the hearts \opt{ams}{$\eqnTiltA$ and $\eqnTiltB$}\opt{ip}{\[\eqnTiltA \quad\mbox{ and }\quad \eqnTiltB\]} have a simple $\scrS_{-N/2}[1]$ and $\scrS_{N/2}[1]$ respectively. Noting the ordering of simples, by \ref{T via tensor} these are related by tensor by $\scrO(1)$, namely $\scrS_{-N/2}[1] \otimes\scrO(1) = \scrS_{N/2}[1],$ and the result follows.
\end{proof}

\section{Tilting Sheaves and Progenerators}

In this section we establish that all the tilted hearts $\Tilt_i(\Per)$ have progenerators, and these are described by consecutive terms of a helix of vector bundles $\{\scrV_i\}_{i\in\mathbb{Z}}$ constructed in \S\ref{The Vector Bundle Helix}.   When $\ell=1$, the helix will just be $\{\scrV_i=\scrO(i)\}_{i\in\mathbb{Z}}$, but for higher length flops it is more complicated.   We show in \ref{progen main} that the helix gives rise to a $\mathbb{Z}$-indexed set of tilting bundles on $X$, and we prove in \ref{all tilting bundles} that when $X$ is smooth, these are all reflexive tilting sheaves on $X$.

\opt{ip}{\smallskip}
\subsection{The Vector Bundle Helix}\label{The Vector Bundle Helix}

\begin{defin}
For $i\in\mathbb{Z}$, define  
$\scrV_i\colonequals
\Uppsi_i^{-1}\Hom_R(M_i,V_{i})$, and set $\scrP_i\colonequals \scrV_{i-1}\oplus\scrV_i$.
\end{defin}

It is clear that $\scrV_i\in\Db(\coh X)$, but we will prove later that in fact $\scrV_i$ is a vector bundle.  The following two lemmas are elementary.

\begin{lemma}\label{LemmaA}
With notation as above, the following statements hold.
\begin{enumerate}
\item\label{LemmaA 1}  $\scrV_{-1}=\scrN$, $\scrV_0=\scrO$ and $\scrV_1=\scrM$.
\item\label{LemmaA 2} $\scrV_{i+kN}\cong\scrV_i\otimes\scrO(k)$ for all $i\in\mathbb{Z}$ and all $k\geq 0$.
\end{enumerate}
\end{lemma}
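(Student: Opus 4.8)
\textbf{Proof plan for Lemma~\ref{LemmaA}.}
The plan is to unwind the definition $\scrV_i = \Uppsi_i^{-1}\Hom_R(M_i,V_i)$ together with the conventions fixed in Sections~\ref{section per sheaf}--\ref{line bundle twists section}, so that both statements reduce to bookkeeping that has already been carried out in the earlier parts of the paper. For~\eqref{LemmaA 1}, recall that $\Uppsi_0 = \Uppsi = \RHom_X(\scrO\oplus\scrN,-)$ and $M_0 = V_{-1}\oplus V_0 = \scrN\oplus\scrO$ by the convention $V_0 = R$, $V_1 = f_*\scrM$, $V_{-1} = ?$ in \eqref{arrangement labelled} and the identification $\scrO\leftrightarrow P_0$, $\scrN\leftrightarrow P_1$ in \S\ref{ordering section}. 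Thus $\Hom_R(M_0,V_0) = \Hom_R(\scrN\oplus\scrO, R)$ corresponds under $\Uppsi$ to the indecomposable projective $P_0$, whose preimage is $\scrO$; similarly $\Hom_R(M_0, V_{-1})$ corresponds to $P_1$, with preimage $\scrN$, and $\Hom_R(M_1, V_1)$ corresponds under $\Uppsi_1 = \Phi_0\circ\Uppsi$ to a projective of $\Lambda_1 = \End_R(R\oplus f_*\scrM)$ whose preimage is $\scrM$. Concretely one checks $\Uppsi_1^{-1}\Hom_R(M_1,V_1) = \scrM$ using that $\Phi_0$ is the mutation functor sending the standard projectives of $\Lambda$ to those of $\Lambda_1$ in the prescribed order (this is exactly the content of \eqref{S and Ps}, on the projective side), and that $\Uppsi^{-1}$ of the relevant $\Lambda$-projective is the bundle $\scrM$ appearing in the defining sequence $0\to\scrO^{\oplus(\ell-1)}\to\scrM\to\scrL\to 0$ of \S\ref{section per sheaf}.

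For~\eqref{LemmaA 2}, the key input is \ref{tensor cor}, which says that for all $i$ and $k\geq 1$ the square relating $-\otimes\scrO(-k)$ on $\Db(\coh X)$ to $\Phi_{i+kN-1}\circ\hdots\circ\Phi_i$ on the module categories commutes, with vertical equivalences $\Uppsi_i$ and $\upbeta\circ\Uppsi_i$. Unravelling: $\scrV_{i+kN} = \Uppsi_{i+kN}^{-1}\Hom_R(M_{i+kN}, V_{i+kN})$, and by \ref{class action comb} and the isomorphism $\upbeta\colon\Lambda_i\xrightarrow{\sim}\Lambda_{i+kN}$ of \S\ref{line bundle twists section} one has $\upbeta(\Hom_R(M_i,V_i)) = \Hom_R(M_{i+kN},V_{i+kN})$, because $\upbeta$ is induced by $(-\otimes L)^{**}$ and $V_{i+kN}\cong V_i\cdot L^{\otimes k}$. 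Meanwhile $\upbeta\circ\Uppsi_i = \Uppsi_{i+kN}\circ(-\otimes\scrO(-k))$ by \ref{tensor cor} (this is just the commutativity of the outer rectangle, using \eqref{obvious comp} to identify the bottom composite $\Phi_{i+kN-1}\circ\hdots\circ\Phi_i\circ\Uppsi_i$ with $\Uppsi_{i+kN}$). Combining these gives
\[
\scrV_{i+kN}
= \Uppsi_{i+kN}^{-1}\upbeta\big(\Hom_R(M_i,V_i)\big)
= (-\otimes\scrO(-k))^{-1}\Uppsi_i^{-1}\big(\Hom_R(M_i,V_i)\big)
= \scrV_i\otimes\scrO(k),
\]
as required. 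One should be slightly careful that \ref{tensor cor} is stated only for $k\geq 1$, which matches the range in the lemma; the case $k=0$ is trivial.

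The step I expect to need the most care is~\eqref{LemmaA 1}, specifically pinning down that $\Uppsi_1^{-1}\Hom_R(M_1,V_1)$ is genuinely $\scrM$ and not, say, $\scrM$ up to a shift or a twist: this requires tracking the precise ordering conventions of \S\ref{ordering section} through the single mutation $\Phi_0$, using that $V_1 = f_*\scrM$ (so $\Hom_R(M_1,V_1)$ is the projective $\Hom_R(R\oplus f_*\scrM, f_*\scrM)$) and that $\Uppsi$ identifies $\scrM$ with the corresponding tilting summand on the $X$ side — essentially a restatement of \cite[3.5.4--3.5.8]{VdB1d} together with \S\ref{ordering section}. Everything else is a formal consequence of \ref{tensor cor}, \ref{class action comb}, and the definitions.
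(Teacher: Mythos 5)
Your plan follows the paper's approach for both parts, and your argument for part~\eqref{LemmaA 2} is precisely the paper's: track $\Hom_R(M_{i+kN},V_{i+kN})$ around the diagram of~\ref{tensor cor}, using~\eqref{obvious comp} and the identity $\upbeta(\Hom_R(M_i,V_i))=\Hom_R(M_{i+kN},V_{i+kN})$ (the paper does this literally, by appending $\upbeta^{-1}$ to make both verticals equal $\Uppsi_i$).

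For part~\eqref{LemmaA 1}, however, your reasoning for $\scrV_1$ contains an error. You write that ``$\Uppsi^{-1}$ of the relevant $\Lambda$-projective is the bundle $\scrM$'' — but the indecomposable $\Lambda$-projectives are $P_0=\Hom_R(M_0,V_0)$ and $P_1=\Hom_R(M_0,V_{-1})$, which $\Uppsi^{-1}$ sends to $\scrO$ and $\scrN$ respectively, and no $\Lambda$-projective is sent to $\scrM$. Likewise, the claim that ``$\Phi_0$ sends the standard projectives of $\Lambda$ to those of $\Lambda_1$'' is not quite right: a mutation functor sends the \emph{summands of its tilting bimodule} to projectives, and for $\Phi_0$ this bimodule is $\Hom_R(M_0,M_1)=\Hom_R(M_0,V_0)\oplus\Hom_R(M_0,V_1)$, so only the common summand $\Hom_R(M_0,V_0)=P_0$ is both a $\Lambda$-projective and a $\Lambda_1$-projective image; $P_1=\Hom_R(M_0,V_{-1})$ is not a tilting summand and is not sent to a projective. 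Indeed \eqref{S and Ps} records exactly $P_0\mapsto P_0$ and $S_1\mapsto S_1[-1]$ under $\Phi_0$ and asserts nothing about $P_1$. The correct chain is $\Phi_0^{-1}\Hom_R(M_1,V_1)=\Hom_R(M_0,V_1)$ (tilting summand goes to projective), and then $\Uppsi^{-1}\Hom_R(M_0,V_1)=\scrM$ because $\RHom_X(\scrO\oplus\scrN,\scrM)$ is concentrated in degree zero and identified by $f_*$ (reflexive equivalence) with $\Hom_R(R\oplus f_*\scrN, f_*\scrM)=\Hom_R(M_0,V_1)$ — not because the latter is projective. The paper's explicit $\scrV_{-1}$ case is cleaner precisely because there $\Phi_{-1}\Hom_R(M_{-1},V_{-1})=\Hom_R(M_0,V_{-1})$ happens to land on the genuine $\Lambda$-projective $P_1$; you correctly sensed $\scrV_1$ was the delicate case, but the mechanism is the one above.
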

\begin{proof}
\def\eqnV{\scrV_{-1} = \Uppsi^{-1}\Phi_{-1}\Hom_R(M_{-1},V_{-1})= \Uppsi^{-1}\Hom_R(M_{0},V_{-1})\cong\scrN}
(1) Note that \opt{ams}{$\eqnV$.}\opt{ip}{\[
\eqnV.\]}  The statements for $\scrV_0$ and $\scrV_1$ are similar.\\
(2) If $k=0$ there is nothing to prove, so suppose that $k>0$.  Consider the commutative diagram from \ref{tensor cor}, namely
\[
\begin{tikzpicture}
\node (A0) at (0,1.5) {$\Db(\coh X)$};
\node (A5) at (8,1.5) {$\Db(\coh X)$};
\node (B0) at (0,0) {$\Db(\mod\Lambda_i)$};
\node (N4) at (5,0) {$\Db(\mod\Lambda_{i+kN})$};
\node (N5) at (8,0) {$\Db(\mod\Lambda_i).$};
\draw[->] (A0) -- node[above] {$\scriptstyle -\otimes\scrO(-k)$}(A5);
\draw[->] (B0) -- node[above] {$\scriptstyle \Phi_{i+kN-1}\circ\hdots\circ\Phi_i$}(N4);
\draw[->] (N4) -- node[above] {$\scriptstyle \upbeta^{-1}$}(N5);
\draw[<-] (B0) -- node[left] {$\scriptstyle \Uppsi_i$}(A0);
\draw[<-] (N5) -- node[left] {$\scriptstyle \Uppsi_i$}(A5);
\end{tikzpicture}
\]
We track $\Hom_R(M_{i+kN},V_{i+kN})\in\Db(\mod\Lambda_{i+kN})$ both ways back to  $\Db(\coh X)$ in the top left.  On one hand, using \eqref{obvious comp}, it tracks back to $\Uppsi_{i+kN}^{-1}\Hom_R(M_{i+kN},V_{i+kN})$, which by definition is $\scrV_{i+kN}$.  On the other hand, under $\upbeta^{-1}$ it tracks to $\Hom_R(M_i,V_i)$, which then under $\Uppsi_i^{-1}$ tracks to $\scrV_i$, which then under $(-\otimes\scrO(-k))^{-1}$ tracks to $\scrV_i\otimes\scrO(k)$.  The isomorphism $\scrV_{i+kN}\cong\scrV_i\otimes\scrO(k)$ follows.
\end{proof}

\begin{lemma}\label{LemmaB}
For all $i\in\mathbb{Z}$, the following hold.
\begin{enumerate}
\item\label{LemmaB 1}  $\scrP_i\cong\Uppsi_i^{-1}\Lambda_i$.
\item\label{LemmaB 2} $\scrP_i$ progenerates $\Tilt_i(\Per)$.
\end{enumerate}
\end{lemma}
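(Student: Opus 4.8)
The plan is to obtain both parts from the strip of mutation equivalences already constructed, with essentially all of the content lying in the identification \eqref{LemmaB 1}; part \eqref{LemmaB 2} then follows formally.

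For \eqref{LemmaB 1}, since $M_i=V_{i-1}\oplus V_i$ the regular module splits as $\Lambda_i=\Hom_R(M_i,M_i)=\Hom_R(M_i,V_{i-1})\oplus\Hom_R(M_i,V_i)$, so it suffices to show that $\Uppsi_i^{-1}$ carries $\Hom_R(M_i,V_i)$ to $\scrV_i$ and $\Hom_R(M_i,V_{i-1})$ to $\scrV_{i-1}$. The first of these is the definition of $\scrV_i$. For the second, I would apply \eqref{obvious comp} with $j=1$ to write $\Uppsi_i\cong\Phi_{i-1}\circ\Uppsi_{i-1}$, so that $\Uppsi_i^{-1}\Hom_R(M_i,V_{i-1})\cong\Uppsi_{i-1}^{-1}\,\Phi_{i-1}^{-1}\Hom_R(M_i,V_{i-1})$. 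Now $M_{i-1}=V_{i-2}\oplus V_{i-1}$ and $M_i=V_{i-1}\oplus V_i$ share the summand $V_{i-1}$, and the mutation equivalence $\Phi_{i-1}\colon\Db(\mod\Lambda_{i-1})\to\Db(\mod\Lambda_i)$ carries the projective at this common summand to the projective at the common summand, that is, $\Phi_{i-1}\Hom_R(M_{i-1},V_{i-1})\cong\Hom_R(M_i,V_{i-1})$. This I would read off from the projective rows of \eqref{S and Ps} via the conventions of \S\ref{ordering section}: the summand $\Hom_R(M_\bullet,V_{i-1})$ is $P_0$ for both $\Lambda_{i-1}$ and $\Lambda_i$ when $i$ is odd, is $P_1$ for both when $i$ is even, and $\Phi_{i-1}$ preserves the labels $P_0,P_1$; it is the same identity used in the proof of \ref{LemmaA}. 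Hence $\Phi_{i-1}^{-1}\Hom_R(M_i,V_{i-1})\cong\Hom_R(M_{i-1},V_{i-1})$, and applying $\Uppsi_{i-1}^{-1}$ gives $\scrV_{i-1}$ by definition. Therefore $\Uppsi_i^{-1}\Lambda_i\cong\scrV_{i-1}\oplus\scrV_i=\scrP_i$.

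For \eqref{LemmaB 2}, I would combine \ref{Psii abelian equiv} with the definition $\Tilt_i(\Per)=\Uppsi^{-1}\Tilt_i(\scrA)$ — equivalently, use the commutative square \eqref{comm for tilt} — to see that $\Uppsi_i$ restricts to an equivalence of abelian categories $\Tilt_i(\Per)\xrightarrow{\sim}\scrA_i=\mod\Lambda_i$. Equivalences of abelian categories preserve projective generators, and $\Lambda_i$ is one for $\mod\Lambda_i$; since $\Uppsi_i^{-1}\Lambda_i\cong\scrP_i$ by \eqref{LemmaB 1}, it follows that $\scrP_i$ is a progenerator of $\Tilt_i(\Per)$.

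I do not expect a genuine obstacle: the argument is purely formal once the strip of mutation equivalences and the t-structure calculus of Sections~2--4 are available. The only place that really needs care is the parity-and-ordering check showing that $\Phi_{i-1}$ fixes the common projective, which is why I would write out the $P_0/P_1$ bookkeeping rather than merely assert it; the substantive work in this circle of ideas — that each $\scrV_i$ is in fact a vector bundle, and the generalised Euler sequences — is contained in the later results collected in \ref{Vi summary intro} and is not needed here.
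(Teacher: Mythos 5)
Your proof is correct and takes essentially the same route as the paper. The paper establishes $\Uppsi_i^{-1}\Hom_R(M_i,V_{i-1})\cong\scrV_{i-1}$ by rewriting $\upkappa_{i-1}^{-1}=\upkappa_i^{-1}\Phi_{i-1}$ (respectively $\uplambda_{i-1}=\uplambda_i\Phi_{i-1}$) and invoking \eqref{S and Ps}, which is precisely your identity $\Uppsi_i\cong\Phi_{i-1}\circ\Uppsi_{i-1}$ from \eqref{obvious comp} together with $\Phi_{i-1}\Hom_R(M_{i-1},V_{i-1})\cong\Hom_R(M_i,V_{i-1})$; your unified treatment via \eqref{obvious comp} in place of the paper's separate $\upkappa$/$\uplambda$ cases, and the explicit $P_0,P_1$ parity bookkeeping, are only presentational differences, and part (2) is deduced from \eqref{comm for tilt} in both.
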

\begin{proof}
(1) The case $i=0$ is \ref{LemmaA}\eqref{LemmaA 1}. When $i> 0$, note that
\[
\scrV_{i-1}=\Uppsi^{-1}\upkappa_{i-1}^{-1}\Hom_R(M_{i-1},V_{i-1})
=\Uppsi^{-1}\upkappa_{i}^{-1}\Phi_i\Hom_R(M_{i-1},V_{i-1}),
\]
which by \eqref{S and Ps} equals $\Uppsi^{-1}\upkappa_{i}^{-1}\Hom_R(M_i,V_{i-1})$.  It thus follows that $\scrP_i=(\upkappa_i\Uppsi)^{-1}\Lambda_i$.  Lastly, when $i<0$, again using \eqref{S and Ps} we see that
\[
\scrV_{i-1}=\Uppsi^{-1}\uplambda_{i-1}\Hom_R(M_{i-1},V_{i-1})
=\Uppsi^{-1}\uplambda_{i}\Hom_R(M_{i},V_{i-1}),
\]
from which $\scrP_i=\Uppsi^{-1}\uplambda_i\Lambda_i$ follows. \\
(2) This follows from (1), since  by \eqref{comm for tilt} $\Uppsi_i\colon \Tilt_i(\Per)\to\mod\Lambda_i$
is an equivalence.
\end{proof}

\begin{remark}
We remark that, amongst other things, when $\ell=2$ (and so $N=2$) the above \ref{LemmaA} establishes the non-obvious fact that $\scrM\cong\scrN\otimes\scrO(1)$.
\end{remark}

The content in this section is to establish that for all $i\in\mathbb{Z}$, the $\scrV_i$ are vector bundles, and that there are short exact sequences of sheaves
\[
0\to\scrV_{i-1}\to\scrV_i^{\oplus n_i}\to\scrV_{i+1}\to 0.
\]
To do this requires the following preparatory lemma.

\begin{lemma}\label{only neg}
If $i\geq 0$, then with respect to the standard t-structure on \opt{ip}{the derived category }$\Db(\coh X)$, objects in $\Tilt_i(\Per)$ have cohomology only in non-positive degrees.
\end{lemma}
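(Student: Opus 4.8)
The statement to prove is that for $i \geq 0$, every object of $\Tilt_i(\Per)$ has standard cohomology concentrated in non-positive degrees. The natural approach is induction on $i$, using the chain of tilts
\[
\Per = \Tilt_0(\Per) \xrightarrow{\righttilt_1} \Tilt_1(\Per) \xrightarrow{\righttilt_0} \Tilt_2(\Per) \to \cdots
\]
The base case $i=0$ is immediate: $\Per = \Per(X,R)$ consists, by its very definition in \S\ref{section per sheaf}, of complexes with $\mathrm{H}^j = 0$ for $j \neq 0, -1$, so cohomology lives in degrees $\{0,-1\}$, which are non-positive. For the inductive step, I would exploit the fact that a right tilt $\righttilt_S(\scrA)$ at a simple $S$ is obtained from $\scrA$ by the torsion-pair recipe $(\scrT, \langle S\rangle)$ recalled in \S\ref{section per sheaf}: objects of $\righttilt_S(\scrA)$ have $\mathrm{H}^0_\scrA$ in $\scrT$ and $\mathrm{H}^{-1}_\scrA$ in $\langle S\rangle$, with all other $\scrA$-cohomology vanishing. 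In particular, every object of $\righttilt_S(\scrA)$ sits in a triangle with one term in $\scrA$ (namely $\mathrm{H}^0_\scrA$, placed in degree $0$) and one term in $\scrA[1]$ (namely $\mathrm{H}^{-1}_\scrA[1]$, a shift of an object of $\scrA$). Hence, measured against the standard $t$-structure on $\Db(\coh X)$, every object of $\Tilt_{i+1}(\Per)$ is an extension of an object of $\Tilt_i(\Per)$ by an object of $\Tilt_i(\Per)[1]$.

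The key point is then that this extension cannot introduce any positive cohomology, \emph{provided} one additionally knows that the degree-$(-1)$ piece — which after shifting becomes a degree-$0$ contribution — does not secretly have positive standard cohomology of its own. By the inductive hypothesis, an object $a \in \Tilt_i(\Per)$ has $\mathrm{H}^j_{\mathrm{std}}(a) = 0$ for $j > 0$; then $a[1]$ has $\mathrm{H}^j_{\mathrm{std}}(a[1]) = \mathrm{H}^{j+1}_{\mathrm{std}}(a) = 0$ for $j+1 > 0$, i.e.\ for $j \geq 0$ — so $a[1]$ has cohomology only in strictly negative degrees, which is certainly non-positive. Therefore an object $x \in \Tilt_{i+1}(\Per)$, sitting in a triangle $a' [1] \to x \to a \to$ with $a = \mathrm{H}^0_{\Tilt_i}(x) \in \Tilt_i(\Per)$ and $a' = \mathrm{H}^{-1}_{\Tilt_i}(x) \in \langle S_\bullet \rangle \subseteq \Tilt_i(\Per)$, has a long exact cohomology sequence
\[
\cdots \to \mathrm{H}^j_{\mathrm{std}}(a'[1]) \to \mathrm{H}^j_{\mathrm{std}}(x) \to \mathrm{H}^j_{\mathrm{std}}(a) \to \cdots
\]
in which both outer terms vanish for $j > 0$, forcing $\mathrm{H}^j_{\mathrm{std}}(x) = 0$ for $j > 0$ as well. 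This closes the induction. One small care is needed in identifying which simple one tilts at — alternately $S_1$, $S_0$, $S_1, \ldots$ by Definition~\ref{def Ti Per} — but this does not affect the argument, since the cohomological bound only uses that both constituents of the defining triangle lie in $\Tilt_i(\Per)$ and $\Tilt_i(\Per)[1]$ respectively.

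The main obstacle, to the extent there is one, is bookkeeping the $t$-structure truncations cleanly: one must be careful that the cohomology objects $\mathrm{H}^k_{\Tilt_i(\Per)}$ appearing in the recursive description of $\Tilt_{i+1}(\Per)$ are themselves genuine objects of $\Tilt_i(\Per)$ (so that the inductive hypothesis applies to them), and that the triangle relating $x$ to these cohomology objects is the expected one — i.e.\ that a bounded heart obtained by a simple tilt really does have its objects realised as two-step extensions in the stated way. All of this is standard for tilts at a torsion pair of the form $(\scrT, \langle S\rangle)$ with $S$ simple, and is exactly the content of the torsion-theory description recalled before Definition~\ref{def Ti}; so the proof is genuinely short once the induction is set up. An alternative, essentially equivalent route would be to transport the question through the equivalence $\Uppsi_i \colon \Tilt_i(\Per) \xrightarrow{\sim} \mod\Lambda_i$ of \eqref{comm for tilt} and argue on the algebra side, but the direct cohomological induction above seems cleanest and avoids tracking the image of the standard $t$-structure under $\Uppsi_i$.
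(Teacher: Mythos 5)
Your proof is correct, but it takes a genuinely different route from the paper's. The paper transports the question across $\Uppsi$ to $\Db(\mod\Lambda)$, identifies $\Tilt_i(\mod\Lambda)=\upkappa_i^{-1}(\mod\Lambda_i)$ via \ref{tilt=mut}, and then invokes the nontrivial structural fact that $\upkappa_i=\Phi_{i-1}\circ\cdots\circ\Phi_0$, despite being a composition of $i$ mutation functors, is induced by a \emph{single} tilting $\Lambda$-module $\Hom_R(M_0,M_i)$ of projective dimension one; this forces $\Tilt_i(\mod\Lambda)$ to sit in standard degrees $\{-1,0\}$, and pulling back through $\Uppsi^{-1}$ (which sends modules to degrees $\{-1,0\}$) gives the uniform range $\{-2,-1,0\}$ for $\Tilt_i(\Per)$, independent of $i$. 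Your induction stays entirely on the geometric side and is more elementary: the only inputs are the general torsion-pair description of a right tilt — every object of $\righttilt_S(\scrA)$ fits in a triangle $a'[1]\to x\to a\to$ with $a,a'\in\scrA$ — and the long exact sequence in standard cohomology. The trade-off is that the a priori range of negative degrees grows by one at each step, so your argument establishes non-positivity but not a uniform lower bound; since Lemma~\ref{only neg} asserts only non-positivity, and its single application in the proof of \ref{V pos are bundles} uses only that, your proof is a complete substitute. To recover the paper's uniform bound from your viewpoint, one would need precisely the input that the iterated tilt collapses to a single projective-dimension-one tilt, which is the content the paper leans on.
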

\begin{proof}
Since $i\geq 0$, by definition $\Tilt_i(\mod\Lambda)=\upkappa_i^{-1}(\mod\Lambda_i)$.  Since the composition $\upkappa_i$ is induced by a single tilting module of projective dimension one, it follows that with respect to the standard t-structure on $\Db(\mod\Lambda)$, the heart $\Tilt_i(\mod\Lambda)$ lives in homological degrees $-1$ and $0$.  Tracking this over the equivalence $\Uppsi^{-1}$, which takes modules to homological degrees $-1$ and $0$, it is easy to see that $\Tilt_i(\Per)=\Uppsi^{-1}(\Tilt_i(\mod\Lambda))$ lives in homological degrees $-2$, $-1$ and $0$.
\end{proof}

The following is also required, and becomes a key concept in \S\ref{def applications}.
\begin{defin}\label{def alg def}
For each $i\in\mathbb{Z}$, the $i$-th helix deformation algebra is defined to be
\[
\Lambda_i^{\deform}\colonequals \Lambda_i/[V_i]\cong \End_R(V_{i-1})/[V_i]
\]  
where $[V_i]$ is the two-sided ideal of $\Lambda_i\cong\End_R(V_{i-1}\oplus V_i)$ consisting of those morphisms which factor through $\add V_i$.
\end{defin}

The connection with deformations will be explained in \ref{all in helix have rep} later.  For now, the following suffices.

\begin{lemma}\label{def alg fd and complete}
For all $i\in\mathbb{Z}$, the algebra $\Lambda_i^{\deform}$ is finite dimensional, and complete with respect to its augmentation ideal.
\end{lemma}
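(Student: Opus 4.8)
The algebra $\Lambda_i^{\deform} = \Lambda_i/[V_i]$ is a quotient of $\Lambda_i = \End_R(V_{i-1}\oplus V_i)$, so it suffices to understand $\End_R(V_{i-1}\oplus V_i)$ as an $R$-module and then pass to the quotient. The plan is to argue in two steps: finite-dimensionality, then completeness.

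\emph{Finite dimensionality.} Since $R$ is complete local and the singularity is isolated (indeed $R$ is an isolated cDV singularity, as recorded in \S\ref{IW9 summary section}), the $V_j$ are rigid reflexive $R$-modules that are locally free on the punctured spectrum $\Spec R \setminus \{\m\}$. I would use the standard fact that for such modules, any homomorphism $V_{i-1}\to V_{i-1}$ that does not factor through $\add V_i$ becomes, after localization at any non-maximal prime, expressible in terms of maps factoring through a projective (equivalently, through $V_i$, since $V_0 = R$ appears and the mutation class behaves well locally). More precisely: $[V_i]$ contains the identity on $V_{i-1}$ away from $\m$, because locally $V_{i-1}$ is a summand of a power of $V_i$ — this is exactly the content of the exchange sequences \eqref{exchange 1}, \eqref{exchange 2}, which split locally on the punctured spectrum since everything is locally free there and the sequences are short exact after applying $\Hom_R(V_{i-1}\oplus V_i,-)$. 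Therefore $\Lambda_i^{\deform}$ is supported only at $\m$, hence is a finite-length $R$-module, hence finite-dimensional over $\mathbb{C} = R/\m$. An alternative, cleaner route: by \ref{LemmaB}\eqref{LemmaB 1}, $\scrP_i = \Uppsi_i^{-1}\Lambda_i$, and the quotient $\Lambda_i^{\deform}$ corresponds under the derived equivalence $\Uppsi_i$ to the endomorphism-type algebra controlling deformations of $\scrS_i$, which is supported set-theoretically on $\Curve$; finite-dimensionality then follows from properness of $\Curve$ together with the isolatedness of the singularity, exactly as in \cite{DW1}. I would cite whichever of these is already available — the module-theoretic statement that $\End_R(V_{i-1})/[V_i]$ is finite length for isolated cDV singularities is essentially \cite[9.28 or thereabouts]{IW9} applied to the mutation class, or follows from the general theory of contraction-type algebras in \cite{DW1, DW3}.

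\emph{Completeness.} The algebra $\Lambda_i = \End_R(V_{i-1}\oplus V_i)$ is a module-finite $R$-algebra over the complete local ring $R$, hence it is semiperfect and complete with respect to its Jacobson radical; any quotient of a semiperfect complete algebra is again semiperfect and complete with respect to its own radical. Since $\Lambda_i^{\deform}$ is finite-dimensional over $\mathbb{C}$ by the previous step, its Jacobson radical is nilpotent, so completeness is automatic once we know the algebra is basic/local in the appropriate sense — but in fact for a \emph{finite-dimensional} algebra, being complete with respect to the augmentation ideal is only a nontrivial assertion when the algebra is local (augmentation ideal $=$ radical). I would note that $\Lambda_i^{\deform}$ \emph{is} local: it is the endomorphism algebra of the (single) module $V_{i-1}$ modulo an ideal, and $V_{i-1}$ is indecomposable (it corresponds to a single wall in the arrangement \eqref{arrangement labelled}, equivalently to a single simple $\scrS_i$ under $\Uppsi_i$), so $\End_R(V_{i-1})$ is local, hence so is its quotient $\Lambda_i^{\deform}$. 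A finite-dimensional local algebra is trivially complete with respect to its maximal (= augmentation) ideal, since that ideal is nilpotent. This finishes the proof.

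\emph{Main obstacle.} The genuinely nontrivial point is the finite-dimensionality, and within that, the claim that $[V_i]$ generates the identity of $\End_R(V_{i-1})$ locally on the punctured spectrum. This hinges on the exchange sequences splitting after localization at non-maximal primes, which in turn uses that $R$ has isolated singularities so that all the $V_j$ are locally free there. Everything else — completeness, and the reduction to the quotient — is formal. If a direct citation for "$\End_R(V_{i-1})/[V_i]$ is finite length" is available from \cite{IW9} or \cite{DW1}, the proof collapses to one line plus the locality observation for completeness; I would prefer to phrase it that way and relegate the punctured-spectrum argument to a parenthetical remark.
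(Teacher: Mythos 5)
Your proof is correct and follows essentially the same route as the paper: localize at a height-two prime $\p$, use the isolated cDV hypothesis to conclude that $(V_j)_\p$ is free (over the two-dimensional regular local ring $R_\p$, reflexive equals CM equals free), deduce $(\Lambda_i^{\deform})_\p = 0$, and conclude that $\Lambda_i^{\deform}$ is supported at $\m$ and therefore finite dimensional. The one place where you take an unnecessarily long path is the claim that $(V_{i-1})_\p$ is a summand of a power of $(V_i)_\p$: you argue via the splitting of the exchange sequences, but this is a detour. Once both $(V_{i-1})_\p$ and $(V_i)_\p$ are free over the local ring $R_\p$, one has $\add (V_i)_\p = \proj R_\p \ni (V_{i-1})_\p$ directly, so the identity of $(V_{i-1})_\p$ factors through $\add (V_i)_\p$ and the localized ideal is the whole ring; no appeal to \eqref{exchange 1} or its local splitting is needed (and in fact the exchange sequence is not written as a short exact sequence, so invoking its local splitting would require a small extra argument about the cokernel). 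For completeness, your argument (finite-dimensional plus local implies nilpotent radical implies complete) is valid but leans on indecomposability of $V_{i-1}$ to establish locality of $\Lambda_i^{\deform}$; the paper's route avoids this and is marginally more robust: $\Lambda_i$ is a module-finite algebra over the $\m$-adically complete ring $R$, hence $\m$-adically complete, and by Nakayama both $\Lambda_i$ and its factor $\Lambda_i^{\deform}$ are complete with respect to their augmentation ideals. Both arguments are acceptable.
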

\begin{proof}
Let $\p$ be a height two prime.  Then since localisation is exact
\begin{equation}
(\Lambda_i^{\deform})_\p\cong \End_{R_\p}((V_{i-1})_\p)/[(V_i)_\p].\label{loc of def alg}
\end{equation}
But $(V_j)_\p\in\refl R_\p=\CM R_\p$ for all $j\in\mathbb{Z}$, since $\dim R_\p=2$. Since $R$ has only an isolated singularity, we conclude that all $(V_j)_\p$ are free, and so it follows from \eqref{loc of def alg} that $(\Lambda_i^{\deform})_\p=0$. Thus $\Lambda_{i}^{\deform}$ is only supported on the maximal ideal as an $R$-module, and thus is finite dimensional.  The fact that it is complete follows exactly as in \cite[3.9(4)]{DW2}: $R$~is $\m$-adically complete, hence so too is $\Lambda_i$, and thus by Nakayama both $\Lambda_i$ and its factor $\Lambda_i^{\deform}$ are complete with respect to their augmentation ideals.
\end{proof}

When $\ell=1$  all the $n_i=2$, and the following are all just twists of the Euler sequence.  We thus view the following, which is the main result of this subsection, as giving generalised Euler sequences for higher length flops.

\begin{thm}\label{V pos are bundles}
For all $i\in\mathbb{Z}$, the $\scrV_i$ are vector bundles, and there are short exact sequences 
\[
0\to\scrV_{i-1}\to\scrV_i^{\oplus n_i}\to\scrV_{i+1}\to 0
\]
where the $n_i$ are from \ref{ns summary}.
\end{thm}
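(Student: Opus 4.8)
The strategy is to transport the exchange sequences \eqref{exchange 1} from the module side to $X$ via the equivalences $\Uppsi_i$, and then to use the t-structure positivity result \ref{only neg} to force the image of each exchange sequence to be a genuine short exact sequence of sheaves concentrated in degree zero. First, I would fix $i$ and apply the exchange sequence \eqref{exchange 1}, which after applying $\Hom_R(M_i,-)$ becomes a short exact sequence of $\Lambda_i$-modules
\[
0\to\Hom_R(M_i,V_{i-1})\to\Hom_R(M_i,V_i)^{\oplus n_i}\to\Hom_R(M_i,V_{i+1})\to 0.
\]
The first two terms are, by definition and by \eqref{S and Ps}, $\Uppsi_i(\scrV_{i-1})$ and $\Uppsi_i(\scrV_i)^{\oplus n_i}$. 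The subtle point is identifying the third term: since $\Hom_R(M_i,V_{i+1})$ is one of the two projectives of $\Lambda_{i+1}$ pulled back along the mutation functor $\Phi_i$, I would use \eqref{S and Ps} again to see that $\Phi_i^{-1}$ (equivalently passing through $\Uppsi_{i+1}$) sends it to $\Uppsi_{i+1}(\scrV_{i+1})$. Applying $\Uppsi_i^{-1}$ to the whole sequence then gives a triangle
\[
\scrV_{i-1}\to\scrV_i^{\oplus n_i}\to\scrV_{i+1}\to
\]
in $\Db(\coh X)$. Since this is the image under an exact equivalence of a short exact sequence in the heart $\scrA_i=\mod\Lambda_i$, it is in fact a short exact sequence in the heart $\Tilt_i(\Per)$.

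Next I would establish that the $\scrV_j$ are sheaves, i.e.\ have cohomology only in degree zero. They lie in $\Tilt_i(\Per)$ for appropriate $i$, so by \ref{only neg} (using also that tensoring by $\scrO(k)$ and \ref{LemmaA}\eqref{LemmaA 2} let us reduce any fixed $\scrV_j$ to the case of a heart $\Tilt_i(\Per)$ with $i\geq 0$) each $\scrV_j$ has cohomology only in non-positive degrees. But $\scrV_0=\scrO$, $\scrV_1=\scrM$, $\scrV_{-1}=\scrN$ are honest sheaves (\ref{LemmaA}\eqref{LemmaA 1}), and running up and down the triangles $\scrV_{i-1}\to\scrV_i^{\oplus n_i}\to\scrV_{i+1}\to$ together with the long exact cohomology sequence propagates the vanishing: if $\scrV_{i-1}$ and $\scrV_i$ are sheaves, the triangle forces $\mathrm{H}^{<-1}(\scrV_{i+1})=0$, and combined with $\mathrm{H}^{>0}(\scrV_{i+1})=0$ from \ref{only neg} we get $\scrV_{i+1}$ concentrated in degrees $-1,0$; then $\mathrm{H}^{-1}(\scrV_{i+1})\hookrightarrow$ a term that must vanish since $\scrV_i^{\oplus n_i}$ and $\scrV_{i-1}$ are sheaves, giving $\scrV_{i+1}$ a sheaf. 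One runs the symmetric argument going down. Once all $\scrV_j$ are sheaves, the triangles become genuine short exact sequences $0\to\scrV_{i-1}\to\scrV_i^{\oplus n_i}\to\scrV_{i+1}\to 0$ of coherent sheaves.

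Finally, to see the $\scrV_i$ are \emph{vector bundles} rather than merely coherent sheaves: each $\scrV_i$ is reflexive, since $\Uppsi_i$ matches it to the reflexive $R$-module $\Hom_R(M_i,V_i)=\Hom_R(M_i,V_i)$ and reflexivity is detected locally in codimension one where $X\to\Spec R$ is an isomorphism; alternatively, away from $\Curve$ the equivalence $\Uppsi_i$ is essentially the identity, so $\scrV_i$ is locally free there, and on a neighbourhood of $\Curve\cong\mathbb{P}^1$ one argues that a reflexive sheaf which is an extension-iterate built from $\scrO,\scrM,\scrN$ is locally free because $X$ is regular in codimension two and reflexive modules over a two-dimensional regular local ring are free. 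Concretely, since $\scrP_i\cong\Uppsi_i^{-1}\Lambda_i$ progenerates $\Tilt_i(\Per)$ by \ref{LemmaB}, and $\Lambda_i$ has finite global dimension, the tilting object $\scrP_i$ is a perfect complex; being reflexive and perfect on a variety with only Gorenstein terminal (hence isolated, Cohen--Macaulay, codimension-two-regular) singularities forces $\scrP_i$, and therefore its summand $\scrV_i$, to be a vector bundle. I would spell this last reduction out carefully, as it is the one genuinely geometric input.

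\textbf{Main obstacle.} The step I expect to be most delicate is not the cohomological bookkeeping but the clean identification of the third term of the transported exchange sequence with $\scrV_{i+1}$ — one must track carefully through the mutation functor $\Phi_i$ and the ordering conventions of \S\ref{ordering section} (the parity-of-$i$ bookkeeping in \eqref{S and Ps}) to be sure it is $\scrV_{i+1}$ and not, say, a shift of it. After that, promoting "reflexive and perfect" to "locally free" is the only place where the geometry of $X$ (Gorenstein terminal $\Rightarrow$ isolated singularities $\Rightarrow$ the relevant Serre-type criterion) genuinely enters, so I would present that argument in full rather than leaving it as routine.
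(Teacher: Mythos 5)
Your overall plan follows the paper's in spirit (transport exchange sequences via $\Uppsi_i$, use \ref{only neg} for cohomological positivity, induct), but there are two genuine gaps, each at precisely the places you flagged as "delicate," and neither is repaired by the reasoning you propose.

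\textbf{First gap: showing $\scrV_{i+1}$ is a sheaf.} You use only the \emph{forward} exchange sequence \eqref{exchange 1}, which presents $\scrV_{i+1}$ as a quotient in $\Tilt_i(\Per)$ via the triangle $\scrV_{i-1}\to\scrV_i^{\oplus n_i}\to\scrV_{i+1}\to$. The long exact cohomology sequence then gives $\mathrm{H}^j(\scrV_{i+1})=0$ for $j\leq -2$, and \ref{only neg} kills $j\geq 1$, so $\scrV_{i+1}$ lives in degrees $\{-1,0\}$; this much is right. But the remaining piece of the sequence reads
\[
0\to\mathrm{H}^{-1}(\scrV_{i+1})\to\mathrm{H}^0(\scrV_{i-1})\to\mathrm{H}^0(\scrV_i^{\oplus n_i}),
\]
so $\mathrm{H}^{-1}(\scrV_{i+1})$ injects into $\scrV_{i-1}$, which is emphatically nonzero. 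Your claim that it injects into "a term that must vanish" misidentifies the long exact sequence; what you actually need is that the induced \emph{sheaf} map $\scrV_{i-1}\to\scrV_i^{\oplus n_i}$ is injective, and this is exactly what is not yet known (injectivity in the tilted heart does not transfer to the standard heart). The paper sidesteps this by invoking the \emph{second} exchange sequence \eqref{exchange 2}, from right-to-left mutation, whose push-down reads $0\to\Hom_R(M_i,V_{i+1})\to\Hom_R(M_i,V_i)^{\oplus n_i}\to\Hom_R(M_i,V_{i-1})\to\Lambda_i^{\deform}\to 0$. Pulling this back and splicing exhibits $\scrV_{i+1}$ as a \emph{sub}-object, $0\to\scrV_{i+1}\to\scrV_i^{\oplus n_i}\to\scrK_i\to 0$, together with $0\to\scrK_i\to\scrV_{i-1}\to\scrE_i\to 0$ where $\scrE_i$ corresponds to $\Lambda_i^{\deform}$. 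Since $\Lambda_i^{\deform}$ is finite-dimensional and filtered by the relevant simple, $\scrE_i$ is a sheaf (filtered by $\scrS_i$). Now the subobject presentation, combined with \ref{only neg}, does cascade correctly: the cohomological constraints on a subobject point the right way, and one deduces first that $\scrK_i$ is a sheaf, then that $\scrV_{i+1}$ is. Without the second exchange sequence you have no handle on $\mathrm{H}^{-1}(\scrV_{i+1})$.

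\textbf{Second gap: promoting sheaves to vector bundles.} Your proposed criterion, "reflexive and perfect on a $3$-fold with isolated Gorenstein terminal singularities $\Rightarrow$ locally free," is false even in the regular case: over a regular local ring of dimension $3$ the second syzygy $\Omega^2(k)$ is reflexive (being a second syzygy) and has finite (indeed, all) projective dimension, but $\pd\Omega^2(k)=1$, so it is not free. Reflexivity gives $\depth\geq 2$, and perfection plus Auslander--Buchsbaum then gives only $\pd\leq 1$, not $\pd=0$. The paper closes this gap using the chain of short exact sequences just constructed together with the depth lemma: the first sequence gives $\pd_{\scrO_{X,x}}(\scrV_{i+1})_x\leq 1$; the key input is that $\scrS_i$, and hence $\scrE_i$, has depth $\geq 1$, so applying the depth lemma successively to $0\to\scrK_i\to\scrV_{i-1}\to\scrE_i\to 0$ and then $0\to\scrV_{i+1}\to\scrV_i^{\oplus n_i}\to\scrK_i\to 0$ forces $\depth(\scrV_{i+1})_x=3$; Auslander--Buchsbaum then gives $\pd=0$. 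This depth bookkeeping, anchored on the sheaf $\scrE_i$ that you never bring into the picture, is the genuinely necessary geometric input — your heuristics about reflexivity in codimension two do not substitute for it.

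You also omit the explicit base case $i=0$ (the short exact sequence $0\to\scrN\to\scrO^{\oplus 2\ell}\to\scrM\to 0$), which the paper needs to start the induction; your reduction to $\scrO,\scrM,\scrN$ gestures at this but does not supply the pullback-square argument that produces it.
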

\begin{proof}
By \ref{LemmaA}\eqref{LemmaA 2} it clearly suffices to prove the result for $i\geq 0$, as the result for $i<0$ can be obtained by twisting by a line bundle. 

We already know from \ref{LemmaA}\eqref{LemmaA 1} that $\scrV_{-1}=\scrN$, $\scrV_0=\scrO$ and $\scrV_1=\scrM$, and these are vector bundles.  Furthermore, we have a pullback diagram with short exact sequences as follows, where the right hand and bottom rows are as in~\cite[3.5.2]{VdB1d}.
\[
\begin{tikzpicture}
\node (A) at (0,4) {$0$};
\node (B) at (0,3) {$\scrN$};
\node (C) at (0,2) {$\scrO^{\oplus \ell+1}$};
\node (D) at (0,1) {$\scrL$};
\node (E) at (0,0) {$0$};
\node (a) at (-4.5,1) {$0$};
\node (b) at (-3,1) {$\scrO^{\oplus \ell -1}$};
\node (c) at (-1.5,1) {$\scrM$};
\node (e) at (1.3,1) {$0$};
\node (F) at (-1.5,4) {$0$};
\node (G) at (-1.5,3) {$\scrN$};
\node (H) at (-1.5,2) {$\scrF$};
\node (J) at (-1.5,0) {$0$};
\node (f) at (-4.5,2) {$0$};
\node (g) at (-3,2) {$\scrO^{\oplus \ell -1}$};
\node (j) at (1.3,2) {$0$};

\draw[->] (A)--(B);
\draw[->] (B)--(C);
\draw[->] (C)--(D);
\draw[->] (D)--(E);
\draw[->] (a)--(b);
\draw[->] (b)--(c);
\draw[->] (c)--(D);
\draw[->] (D)--(e);
\draw[->] (F)--(G);
\draw[->] (G)--(H);
\draw[->] (H)--(c);
\draw[->] (c)--(J);
\draw[->] (f)--(g);
\draw[->] (g)--(H);
\draw[->] (H)--(C);
\draw[->] (C)--(j);
\draw[-,transform canvas={xshift=+\equalsSep}] (b) to (g);
\draw[-,transform canvas={xshift=-\equalsSep}] (b) to (g);
\draw[-,transform canvas={yshift=+\equalsSep}] (B) to (G);
\draw[-,transform canvas={yshift=-\equalsSep}] (B) to (G);
\end{tikzpicture}
\]
Now $\Ext^1_X(\scrO,\scrO)=0$, so $\scrF \cong \scrO^{\oplus 2 \ell}$, which gives a short exact sequence
\[
0\to\scrN\to\scrO^{\oplus 2\ell}\to\scrM\to 0.
\]
Since $n_0=2\ell$ by \ref{ns summary}, this establishes the short exact sequence in the case $i=0$.

\def\eqnHomR{\Hom_R(M_i,V_{i+1})=\Phi_{i}^{-1}\Hom_R(M_{i+1},V_{i+1}).}

Next we assume that $\scrV_{i}$ and $\scrV_{i-1}$ are bundles, from this establish that so too is $\scrV_{i+1}$, and furthermore the exact sequence between $\scrV_{i-1}$, $\scrV_{i}$ and $\scrV_{i+1}$ holds.   By induction, the result follows.  To do this, consider the exact sequences
\begin{align*}
0\to\Hom_R(M_i,V_{i-1})\to\Hom_R(M_i,V_i)^{\oplus n_i}\to\Hom_R(M_i,V_{i+1})&\to 0\\
0\to\Hom_R(M_i,V_{i+1})\to\Hom_R(M_i,V_i)^{\oplus n_i}\to\Hom_R(M_i,V_{i-1})&\to\opt{ams}{\Lambda_i^{\deform}\to 0}\opt{ip}{\\ \to \Lambda_i^{\deform}\to 0}
\end{align*}
in $\mod\Lambda_i$.  By construction of the mutation functor, the tilting module gets sent to the projectives, and so 
\opt{ams}{$\eqnHomR$}\opt{ip}{\[\eqnHomR\]} Combining with \ref{LemmaB}\eqref{LemmaB 1}, \opt{ams}{this implies that }pulling back the first via $(\upkappa_i\Uppsi)^{-1}$ gives an exact sequence
\begin{align*}
0\to\scrV_{i-1}\to&\scrV_i^{\oplus n_i}\to\scrV_{i+1}\to 0
\end{align*}
in $\Tilt_i(\Per)$.  Pulling back the lower sequence, and splicing, gives two exact sequences
\begin{align*}
0\to\scrV_{i+1}\to&\scrV_i^{\oplus n_i}\to \scrK_i\to 0\\
0\to\scrK_i\to&\scrV_{i-1}\to\scrE_i\to 0 
\end{align*}
in $\Tilt_i(\Per)$, and thus three exact sequences in total.

Since $\Lambda_i^{\deform}$ is finite dimensional by \ref{def alg fd and complete}, and is filtered by the simple $S_0$ (when $i$ is odd) or $S_1$ (when $i$ is even), using \ref{simples main} we see that $\scrE_i$ is a sheaf, filtered by the sheaf $\scrS_i$.   Using the long exact sequence in usual cohomology applied to the lower exact sequence above, which is a triangle, then since  $\scrK_i\in\Tilt_i(\Per)$ can only live in non-positive degrees by \ref{only neg}, it follows that $\scrK_i$ is a sheaf.  In turn, the middle exact sequence then implies that $\scrV_{i+1}$ is a sheaf, and so all of the three exact sequences above are in fact exact sequences of sheaves.  In particular, by the top sequence $\pd_{\scrO_{X,x}}(\scrV_{i+1})_x\leq 1$ for all $x\in X$.  

But by \ref{Katz prop} it is easy to see that all $\scrS_i$ have  depth $1$, so it follows that $\depth_{\scrO_{X,x}}(\scrE_i)_x\geq 1$ for all $x\in X$.  Applying the depth lemma to the bottom two exact sequences, we deduce that $\depth_{\scrO_{X,x}}(\scrV_{i+1})_x=3$.  Since $\pd_{\scrO_{X,x}}(\scrV_{i+1})_x<\infty$ by the above, Auslander--Buchsbaum then implies that 
\[
\pd_{\scrO_{X,x}}(\scrV_{i+1})_x=\dim \scrO_{X,x} - \depth_{\scrO_{X,x}}(\scrV_{i+1})_x = 3-3=0,
\] 
and so $\scrV_{i+1}$ is locally free.  
\end{proof}

\subsection{Functorial Properties}

\begin{thm}\label{progen main}
For all $i\in\mathbb{Z}$, the following statements hold.
\begin{enumerate}
\item\label{progen main 1} $\scrP_i$ is a progenerator of $\Tilt_i(\Per)$, and is a tilting bundle on $X$.
\item\label{progen main 2} $f_*(\scrV_i)\cong V_i$, where the $V_i$ are from \S\ref{IW9 summary section}.
\item\label{progen main 3} There is a functorial isomorphism $\Uppsi_i\cong\RHom_X(\scrP_i,-)$.
\end{enumerate}
\end{thm}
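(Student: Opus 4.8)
The plan is to establish part \eqref{progen main 3} first, since parts \eqref{progen main 1} and \eqref{progen main 2} will follow from it together with \ref{V pos are bundles}, \ref{LemmaB}, and the codimension-two behaviour of $f_*$. For \eqref{progen main 3} I would induct on $i$, running the argument out from $i=0$ in both directions. When $i=0$ there is nothing to prove: $\Uppsi_0=\Uppsi=\RHom_X(\scrO\oplus\scrN,-)$ and $\scrP_0=\scrV_{-1}\oplus\scrV_0=\scrN\oplus\scrO$ by \ref{LemmaA}\eqref{LemmaA 1}. For the step $i\rightsquigarrow i+1$ with $i\geq 0$, recall from \S\ref{mut notation section} that the mutation functor $\Phi_i\colon\Db(\mod\Lambda_i)\to\Db(\mod\Lambda_{i+1})$ is $\RHom_{\Lambda_i}(T,-)$ for the tilting bimodule $T=\Hom_R(M_i,M_{i+1})$, and from \eqref{obvious comp} that $\Uppsi_{i+1}=\Phi_i\circ\Uppsi_i$. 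Assuming $\Uppsi_i\cong\RHom_X(\scrP_i,-)$, the derived Hom--tensor adjunction gives a functorial isomorphism
\[
\Uppsi_{i+1}(-)=\RHom_{\Lambda_i}\!\big(T,\RHom_X(\scrP_i,-)\big)\;\cong\;\RHom_X\!\big(T\Lotimes{\Lambda_i}\scrP_i,\,-\big),
\]
so it remains only to identify $T\Lotimes{\Lambda_i}\scrP_i=\Uppsi_i^{-1}(T)$ with $\scrP_{i+1}$. Since $\Phi_i$ sends its tilting module $T$ to the progenerator $\Lambda_{i+1}$, we get $\Uppsi_i^{-1}(T)=\Uppsi_i^{-1}\Phi_i^{-1}(\Lambda_{i+1})=\Uppsi_{i+1}^{-1}(\Lambda_{i+1})$, which is exactly $\scrP_{i+1}$ by \ref{LemmaB}\eqref{LemmaB 1}. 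This closes the induction for $i\geq 0$; the case $i\leq 0$ is identical after replacing $\Phi_i$ by the reverse mutation functor $\RHom_{\Lambda_{i+1}}(\Hom_R(M_{i+1},M_i),-)$ and using $\Uppsi_{i-1}\cong\Phi_{i-1}^{-1}\circ\Uppsi_i$, or alternatively it follows from the positive half by twisting, via $\scrP_i\cong\scrP_{i+kN}\otimes\scrO(-k)$ from \ref{LemmaA}\eqref{LemmaA 2} together with \ref{tensor cor} and \ref{beta order}.

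Given \eqref{progen main 3}, part \eqref{progen main 1} is then quick: each $\scrV_i$ is a vector bundle by \ref{V pos are bundles}, so $\scrP_i=\scrV_{i-1}\oplus\scrV_i$ is a vector bundle, hence perfect on $X$, and it progenerates $\Tilt_i(\Per)$ by \ref{LemmaB}\eqref{LemmaB 2}. By \eqref{progen main 3} the functor $\RHom_X(\scrP_i,-)=\Uppsi_i$ is a composite of triangulated equivalences sending $\scrP_i$ to the projective generator $\Lambda_i$; reading off cohomology gives $\Ext^{>0}_X(\scrP_i,\scrP_i)=\mathrm{H}^{>0}(\Lambda_i)=0$, the object $\scrP_i$ generates $\Db(\coh X)$ because $\Lambda_i$ generates $\Db(\mod\Lambda_i)$, and $\End_X(\scrP_i)\cong\Lambda_i$. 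Thus $\scrP_i$ is a tilting bundle on $X$.

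For \eqref{progen main 2} I would induct along the generalised Euler sequences $0\to\scrV_{i-1}\to\scrV_i^{\oplus n_i}\to\scrV_{i+1}\to 0$ of \ref{V pos are bundles}. A parallel induction first shows that every $\scrV_i$ is $f$-acyclic: the cases $\scrV_{-1},\scrV_0,\scrV_1$ hold as these are summands of Van den Bergh's tilting bundle \cite{VdB1d} (respectively $\scrO_X$), and if $\Rfi{1}_*\scrV_{i-1}=\Rfi{1}_*\scrV_i=0$ then, since $\Rfi{\geq 2}_*=0$ because the fibres are at most one-dimensional, the long exact sequence forces $\Rfi{1}_*\scrV_{i+1}=0$. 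Applying the now-exact functor $f_*$ to the generalised Euler sequences yields short exact sequences $0\to f_*\scrV_{i-1}\to(f_*\scrV_i)^{\oplus n_i}\to f_*\scrV_{i+1}\to 0$, and comparing these with the exchange sequences \eqref{exchange 1}, starting from the base cases $f_*\scrV_{-1}=V_{-1}$, $f_*\scrV_0=R=V_0$, $f_*\scrV_1=V_1$ provided by \ref{LemmaA}\eqref{LemmaA 1} and the conventions of \S\ref{IW9 summary section}, gives $f_*\scrV_i\cong V_i$ for all $i$. (An alternative route uses \eqref{progen main 3} directly: as $f$ is crepant and an isomorphism outside a codimension-two locus one has $f_*\RsHom_X(\scrP_i,\scrP_i)=\End_R(f_*\scrP_i)$, while $\RHom_X(\scrP_i,\scrP_i)=\Lambda_i=\End_R(M_i)$ sits in degree zero, so $\End_R(f_*\scrV_{i-1}\oplus f_*\scrV_i)\cong\End_R(V_{i-1}\oplus V_i)$, and matching idempotents — pinned down by $f_*\scrV_0=R=V_0$ — forces $f_*\scrV_j\cong V_j$.)

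The content here is mostly bookkeeping rather than a genuine obstacle. In \eqref{progen main 3} the point that needs care is that the chain of adjunction isomorphisms is natural and $\Lambda_{i+1}$-linear rather than merely object-wise; this rests on the mutation functors being honest $\RHom$-against-a-bimodule functors, as recorded in \S\ref{mut notation section}, together with \ref{LemmaB}\eqref{LemmaB 1}. In \eqref{progen main 2} the delicate step is that pushing forward the generalised Euler sequences reproduces the exchange sequences \eqref{exchange 1} \emph{with the correct labelling}, i.e.\ that the geometric ordering of the $\scrV_i$ matches the algebraic ordering of the $V_i$ (and in particular that $f_*\scrV_{-1}=V_{-1}$); this is precisely the compatibility between the functors $\Phi_i$ and algebraic mutation established in \cite{IW9,HomMMP}.
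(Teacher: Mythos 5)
Your proof of part \eqref{progen main 3} takes a genuinely different route from the paper: a one-step induction $\Uppsi_{i+1}=\Phi_i\circ\Uppsi_i\cong\RHom_X(\scrP_{i+1},-)$ via the derived Hom--tensor adjunction against the tilting bimodule $T=\Hom_R(M_i,M_{i+1})$, closed by the identification $\Uppsi_i^{-1}(T)\cong\Uppsi_{i+1}^{-1}(\Lambda_{i+1})\cong\scrP_{i+1}$ from \ref{LemmaB}\eqref{LemmaB 1}. This is sound modulo the usual left/right bookkeeping, which you flag. The paper instead packages the whole chain $\upkappa_i$ as a single bimodule $\scrT=\Hom_X(\scrP_0,\scrP_i)$, first establishing (from the degree-zero fact \eqref{ind in deg 0} proved inside part \eqref{progen main 2}) that $\RHom_X(\scrP_0,\scrP_i)$ is concentrated in degree zero, and then verifies one commuting square; for $i<0$ it shows $\RHom_X(\scrP_i,\scrP_0)$ is in degree zero by a line-bundle twist. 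Your stepwise version is more modular and decouples \eqref{progen main 3} from the degree-zero computation; the paper's approach yields that computation anyway as a byproduct of \eqref{progen main 2}. Part \eqref{progen main 1} is the same in both.

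Part \eqref{progen main 2} is where there is a genuine gap. Your $f$-acyclicity induction is fine and gives short exact sequences $0\to f_*\scrV_{i-1}\to(f_*\scrV_i)^{\oplus n_i}\to f_*\scrV_{i+1}\to 0$ of $R$-modules. But the exchange sequence \eqref{exchange 1} is only asserted to be \emph{exact}, not short exact: the map $V_i^{\oplus n_i}\to V_{i+1}$ is a right $\add V_i$-approximation and need not be surjective. It becomes short exact only after applying $\Hom_R(M_i,-)$, and reflexive equivalence is not exact, so this surjectivity does not descend to the level of $R$-modules. Consequently, even if the map $V_{i-1}\to V_i^{\oplus n_i}$ in your pushed-forward sequence agrees with the exchange map, the cokernel --- which you identify with $f_*\scrV_{i+1}$ --- is a priori only a submodule of $V_{i+1}$, and the induction cannot close. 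Your parenthetical alternative ($\End_R(f_*\scrP_i)\cong\End_R(M_i)$ plus matching idempotents pinned by $f_*\scrV_0=R$) is closer to the mark but also not yet a proof: isomorphic endomorphism $R$-algebras do not by themselves determine the underlying basic reflexive module --- one must control a twist in $\Cl(R)$ and check the isomorphism is compatible with the distinguished summand, and this requires an argument rather than an appeal to ``matching idempotents''. The paper's route is cleaner and avoids both issues: having computed $\Uppsi(\scrV_i)\cong\Hom_R(M_0,V_i)$ in degree zero, it simply chains
\[
f_*\scrV_i\cong\Hom_X(\scrV_0,\scrV_i)\cong\Hom_{\Lambda_0}\big(\Uppsi\scrV_0,\Uppsi\scrV_i\big)\cong\Hom_R(V_0,V_i)\cong V_i
\]
using the derived equivalence $\Uppsi$ and two applications of reflexive equivalence, then handles $i<0$ by twisting. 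I'd recommend replacing your argument for \eqref{progen main 2} with one of this shape.
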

\begin{proof}
(1) The fact $\scrP_i$ is a progenerator is \ref{LemmaB}\eqref{LemmaB 2}, and the fact it is a vector bundle is \ref{V pos are bundles}.  That it is a tilting bundle is clear using $\scrP_i\cong\Uppsi^{-1}\Lambda_i$ in \ref{LemmaB}\eqref{LemmaB 1}.\\
(2) The statement is true for $i=0$. For $i>0$, we have that
\begin{equation}
\Uppsi(\scrV_i)=\upkappa_i^{-1}\Hom_R(M_i,V_i)\cong\Hom_R(M_0,V_i),\label{ind in deg 0}
\end{equation}
where the last isomorphism holds since $\upkappa_i$ is given by the tilting module $\Hom_R(M_0,M_i)$, and thus it sends summands of this tilting module to the projectives. It follows that  $\Uppsi(\scrV_i)$ is in degree zero.  But then 
\begin{align*}
f_*\scrV_i&\cong\Hom_R(f_*\scrV_0,f_*\scrV_i)\tag{since $f_*\scrV_0\cong R$}\\
&\cong\Hom_X(\scrV_0,\scrV_i)\tag{by reflexive equivalence}\\
&\cong\Hom_{\Lambda_0}(\Hom_R(M_0,V_0),\Hom_R(M_0,V_i))\tag{apply $\Uppsi$}\\
&\cong\Hom_R(V_0,V_i),\tag{by reflexive equivalence}
\end{align*}
which, since $V_0=R$, is isomorphic to $V_i$.  This establishes the claim for all $i\geq 0$.  But then for any $i<0$, by \ref{LemmaA} there exists some $k$ such that $\scrV_i\cong\scrV_{i+kN} \otimes \scrO(-k)$ with $i+kN\geq 0$.  Since $f_*$ is a reflexive equivalence, it follows that
\[
f_*(\scrV_i)\cong f_*(\scrV_{i+kN}\otimes \scrO(-k))\cong f_*\scrV_{i+kN}\cdot f_*\scrO(-k)\cong V_{i+kN}\cdot L^{-k},
\]
which by \ref{class action comb} is isomorphic to $V_i$. The claim follows.\\
\def\eqnPsi{\Uppsi(\scrP_i)=\RHom_X(\scrP_0,\scrP_i)\cong\Hom_X(\scrP_0,\scrP_i),}
(3)  Suppose first that $i>0$.  As 
\opt{ams}{$\eqnPsi$}\opt{ip}{\[\eqnPsi\]}  given $\Uppsi(\scrP_i)$ is in degree zero by \eqref{ind in deg 0}, the equivalence $\Uppsi$ implies that  the adjunction map 
\[
\scrP_0\otimes^{\bf L}_{\End_X(\scrP_0)}\Hom_X(\scrP_0,\scrP_i)\to\scrP_i
\]  
is an isomorphism, both as sheaves and as right $\End_X(\scrP_i)$-modules. Thus, setting $\scrT\colonequals\Hom_X(\scrP_0,\scrP_i)$ the top half of the following diagram commutes
\opt{ams}{

}
\[
\begin{tikzpicture}
\node (0) at (0,0) {$\Db(\coh X)$};
\node (0b) at (5,0) {$\Db(\coh X)$};
\node (1a) at (0,-1.5) {$\Db(\mod\End_X(\scrP_0))$};
\node (1b) at (5,-1.5) {$\Db(\mod\End_X(\scrP_i))$};
\node (2a) at (0,-3) {$\Db(\mod\Lambda_0)$};
\node (2b) at (5,-3) {$\Db(\mod\Lambda_i)$};
\draw[->] (0) -- node[left] {$\scriptstyle \Uppsi=\RHom_X(\scrP_0,-)$}(1a);
\draw[->] (0) --  node[above] {$\scriptstyle \Id$}(0b);
\draw[->] (0b) -- node[right]  {$\scriptstyle \RHom_X(\scrP_i,-)$} (1b);
\draw[->] (1a) -- node[above,sloped] {$\scriptstyle\RHom(\scrT,-)$}(1b);
\draw[->] (1a) -- (2a);
\draw[->] (1b) -- (2b);
\draw[->] (2a) -- node[above]{$\scriptstyle \upkappa_i$}(2b);
\end{tikzpicture}
\]
The bottom half clearly commutes, since the outermost maps are isomorphisms induced by global sections,   $f_*\scrP_0\cong M_0$ and $f_*\scrP_i\cong M_i$ by (2), and $\upkappa_i$ is given by the tilting module $\Hom_R(M_0,M_i)$, which is global sections of $\scrT$.  Part (3) follows, for all $i\geq 0$.

\def\eqnRhom{\RHom_X(\scrP_j,\scrP_k)\cong\Hom_R(M_j,M_k),}

We finally assume that $i<0$, and prove that (3) holds for $i$.  This then finishes the proof.  We first claim $\RHom_X(\scrP_j,\scrP_k)$ is in degree zero for all $k> j\geq 0$.  By the above, the left hand side, and the large rectangle in the following diagram are commutative.
\[
\begin{array}{c}
\begin{tikzpicture}[xscale=\opt{ams}{1}\opt{ip}{0.85}]
\node (0) at (0,0) {$\Db(\coh X)$};
\node (0b) at (5,0) {$\Db(\coh X)$};
\node (0c) at (10,0) {$\Db(\coh X)$};
\node (2a) at (0,-1.5) {$\Db(\mod\Lambda_0)$};
\node (2b) at (5,-1.5) {$\Db(\mod\Lambda_j)$};
\node (2c) at (10,-1.5) {$\Db(\mod\Lambda_k)$};
\draw[->] (0) -- node[gap] {$\scriptstyle \Uppsi=\RHom_X(\scrP_0,-)$}(2a);
\draw[->] (0) -- node[above] {$\scriptstyle \Id$}(0b);
\draw[->] (0b) -- node[above] {$\scriptstyle \Id$}(0c);
\draw[->] (0b) -- node[gap]  {$\scriptstyle \RHom_X(\scrP_j,-)$} (2b);
\draw[->] (0c) -- node[gap]  {$\scriptstyle \RHom_X(\scrP_k,-)$} (2c);
\draw[->] (2a) -- node[above]{$\scriptstyle \upkappa_j$}(2b);
\draw[->] (2b) -- node[above]{$\scriptstyle \Phi_{k-1}\circ\hdots\circ\Phi_{j}$}(2c);
\end{tikzpicture}
\end{array}
\]
Hence it follows that the right hand side is commutative.  Tracking $\Lambda_k$ back both ways round the right hand diagram, we see that \opt{ams}{$\eqnRhom$}\opt{ip}{\[\eqnRhom\]} and hence in particular is in degree zero.

\def\eqnLambda{\uplambda_i\circ\RHom_X(\scrP_{i},-)\opt{ip}{&} \cong \RHom_X(\scrP_{i}\otimes^{\bf L}_{\End_X(\scrP_i)}\Hom_X(\scrP_i,\scrP_{0}),-)
\opt{ip}{\\ &} \cong \RHom_X(\scrP_0,-)\opt{ip}{\\ &}=\Uppsi.}

This being the case, after tensoring by a line bundle, it follows that $\RHom_X(\scrP_{i},\scrP_0)$ is only in degree zero.  This, together with the fact that  $\scrP_i$ is a tilting bundle by (1), implies that the adjunction map
\[
\scrP_{i}\otimes^{\bf L}_{\End_X(\scrP_i)}\Hom_X(\scrP_i,\scrP_{0})\to\scrP_0
\]   
is an isomorphism.   As above, abusing notation slightly we see that
\opt{ams}{\[
\eqnLambda
\]}
\opt{ip}{\[
\begin{split}
\eqnLambda
\end{split}
\]}
Hence since $\Uppsi_{i} = \uplambda_i^{-1}\circ\Uppsi$, applying $\uplambda_i^{-1}$ to the above line gives $\Uppsi_i\cong\RHom_X(\scrP_i,-)$.
\end{proof}

\begin{remark}\label{rank scrV eq rank V}
By \ref{progen main}\eqref{progen main 2}, the rank of the vector bundle $\scrV_i$ equals the rank of the \mbox{$R$-module $V_i$}.  This is given explicitly by the table in \ref{ns summary}.
\end{remark}

For reference later, we \opt{ams}{also }record the following, which  generalises\opt{ams}{ }\opt{ip}{~}\eqref{Psi per 0}.

\begin{cor}\label{Psi per 0 general}
For all $i\in\mathbb{Z}$, there is a commutative diagram as follows.
\[
\begin{array}{c}
\begin{tikzpicture}
\node (A1) at (0,0) {$\Db(\coh X)$};
\node (A2) at (5,0) {$\Db(\mod\Lambda_i)$};
\node (B1) at (0,-1.5) {$\Tilt_i(\Per)$};
\node (B2) at (5,-1.5) {$\mod\Lambda_i$};
\draw[->] (A1) -- node[above] {$\scriptstyle \RHom_X(\scrP_i,-)$} node[below]{$\scriptstyle\sim$}(A2);
\draw[->] (B1) -- node[above] {$\scriptstyle$} node[below]{$\scriptstyle\sim$}(B2);
\draw[right hook->] (B1) -- (A1);
\draw[right hook->] (B2) -- (A2);
\end{tikzpicture}
\end{array}
\]
\end{cor}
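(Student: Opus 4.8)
The plan is to assemble the diagram from pieces already established. The top horizontal arrow $\RHom_X(\scrP_i,-)\colon\Db(\coh X)\to\Db(\mod\Lambda_i)$ is an equivalence because $\scrP_i$ is a tilting bundle on $X$ by \ref{progen main}\eqref{progen main 1}; moreover by \ref{progen main}\eqref{progen main 3} this functor is isomorphic to $\Uppsi_i$. The right vertical inclusion $\mod\Lambda_i\hookrightarrow\Db(\mod\Lambda_i)$ is the canonical one realising $\mod\Lambda_i=\scrA_i$ as the heart of the standard bounded $t$-structure. The left vertical inclusion $\Tilt_i(\Per)\hookrightarrow\Db(\coh X)$ is likewise the canonical one, since $\Tilt_i(\Per)$ is by construction (\ref{def Ti Per}) a heart of a bounded $t$-structure on $\Db(\coh X)$.

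First I would invoke \ref{Psii abelian equiv} together with the identification $\Tilt_i(\Per)=\Uppsi^{-1}\Tilt_i(\scrA)$ from \ref{def Ti Per}, equivalently the commutative square \eqref{comm for tilt}, to conclude that $\Uppsi_i=(\Uppsi_i\circ\Uppsi^{-1})\circ\Uppsi$ restricts to an abelian equivalence $\Tilt_i(\Per)\xrightarrow{\sim}\scrA_i=\mod\Lambda_i$ that is compatible with the two inclusions into the respective bounded derived categories. This is precisely the outer rectangle of \eqref{comm for tilt} with the middle column contracted. Then I would replace $\Uppsi_i$ by the isomorphic functor $\RHom_X(\scrP_i,-)$ using \ref{progen main}\eqref{progen main 3}; since isomorphic functors have the same essential image and the same restriction to any subcategory, the square with $\RHom_X(\scrP_i,-)$ on top still commutes and the top arrow remains an equivalence.

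Concretely, the argument is: by \ref{progen main}\eqref{progen main 3} there is a natural isomorphism $\eta\colon\RHom_X(\scrP_i,-)\xRightarrow{\sim}\Uppsi_i$; by \eqref{comm for tilt} (the outer rectangle), $\Uppsi_i$ carries $\Tilt_i(\Per)$ onto $\mod\Lambda_i$ and the resulting functor $\Tilt_i(\Per)\to\mod\Lambda_i$ is an equivalence of abelian categories; hence $\RHom_X(\scrP_i,-)$ does the same, and the required square commutes up to the natural isomorphism $\eta$ — which is exactly what ``commutative diagram'' means here (the vertical maps being full embeddings, $\eta$ restricted to $\Tilt_i(\Per)$ lands in natural isomorphisms between functors into $\mod\Lambda_i$). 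I do not expect a genuine obstacle: everything has been set up in \ref{progen main} and \eqref{comm for tilt}. The only point requiring a word of care is that $\RHom_X(\scrP_i,-)$, a priori valued in $\Db(\mod\End_X(\scrP_i))$, is being viewed through the tautological identification $\End_X(\scrP_i)=\End_X(\Uppsi^{-1}\Lambda_i)\cong\Lambda_i$ supplied by \ref{LemmaB}\eqref{LemmaB 1} and its proof, so that the target reads $\Db(\mod\Lambda_i)$; this identification is exactly the one implicit in \ref{progen main}\eqref{progen main 3}, so no ambiguity arises. Thus the diagram commutes and both the top arrow and the induced bottom arrow are equivalences.
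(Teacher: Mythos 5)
Your argument is the same as the paper's: invoke \eqref{comm for tilt} to get the square with $\Uppsi_i$ on top, then substitute $\RHom_X(\scrP_i,-)$ for $\Uppsi_i$ via the natural isomorphism of \ref{progen main}\eqref{progen main 3}. The extra care you take about the identification $\End_X(\scrP_i)\cong\Lambda_i$ is a correct and harmless elaboration of the same two-step argument.
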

\begin{proof}
By \eqref{comm for tilt} we already know that the diagram commutes if the top functor is replaced by $\Uppsi_i$. The result is simply then \ref{progen main}\eqref{progen main 3}.
\end{proof}

Theorem~\ref{progen main} also allows us to classify all possible tilting bundles on\opt{ams}{ }\opt{ip}{~}$X$, when $X$ is smooth.  Recall that a sheaf is called basic if there are no repetitions in its Krull--Schmidt decomposition into indecomposables.
\begin{cor}\label{all tilting bundles}
Suppose that $\scrP$ is a basic reflexive tilting sheaf on\opt{ams}{ }\opt{ip}{~}$X$, and that $X$ is smooth.  Then $\scrP\cong\scrP_i$ for some $i\in\mathbb{Z}$.  In particular, 
\begin{enumerate}
\item The set of all basic tilting bundles on $X$ equals  $\{\scrP_i=\scrV_{i-1}\oplus\scrV_i\}_{i\in\mathbb{Z}}$.
\item All reflexive tilting sheaves on $X$ are vector bundles.
\end{enumerate}
\end{cor}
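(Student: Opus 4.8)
The plan is to transfer the problem from $X$ down to the base $\Spec R$ by pushing forward, to pin down the resulting $R$-module inside the mutation class of $R\oplus f_*\scrM$ studied in \S\ref{IW9 summary section}, and then to transfer back. The bridge throughout is the reflexive equivalence $f_*$ used in the proof of \ref{progen main}\eqref{progen main 2}, which is fully faithful on reflexive sheaves and satisfies $\Hom_X(\scrF,\scrG)\cong\Hom_R(f_*\scrF,f_*\scrG)$.

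First I would set $M\colonequals f_*\scrP$, a basic reflexive $R$-module with $\End_R(M)\cong\End_X(\scrP)$. Since $\scrP$ is a reflexive tilting sheaf on the smooth $3$-fold $X$, Rickard's theorem gives a derived equivalence $\Db(\coh X)\simeq\Db(\mod\End_X(\scrP))$, so $\End_X(\scrP)$ has finite global dimension; as $R$ is a $3$-dimensional Gorenstein normal domain, this forces $\End_R(M)$ to be an NCCR of $R$, and hence $M$ to be a basic maximal modifying $R$-module.

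Next I would locate $M$ in the combinatorics of \S\ref{IW9 summary section}. Because $R$ is an isolated cDV singularity admitting the small crepant resolution $X$ with irreducible exceptional curve, its basic maximal modifying modules form a single mutation class; combining this with \ref{IW9 summary} and the labelling convention $V_0=R$, $V_1=f_*\scrM$ — and the translation action of $\Cl(R)\cong\mathbb{Z}\langle L\rangle$ from \ref{class action comb} — this class is exactly $\{\,V_{i-1}\oplus V_i\mid i\in\mathbb{Z}\,\}$. Hence $M\cong V_{i-1}\oplus V_i$ for some $i\in\mathbb{Z}$. Applying the reflexive equivalence to the isomorphism $f_*\scrP\cong f_*\scrP_i$ (using $f_*\scrV_j\cong V_j$ from \ref{progen main}\eqref{progen main 2}) yields $\scrP\cong\scrV_{i-1}\oplus\scrV_i=\scrP_i$, which is a vector bundle by \ref{V pos are bundles}. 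This gives the first assertion. For (1), each $\scrP_i$ is a basic tilting bundle by \ref{progen main}\eqref{progen main 1}, and every basic reflexive tilting sheaf is some $\scrP_i$ by the above; for (2), a general reflexive tilting sheaf $\scrQ$ has $\add\scrQ=\add\scrP$ for its basic part $\scrP\cong\scrP_i$, so $\scrQ$ is a finite direct sum of copies of the bundles $\scrV_{i-1},\scrV_i$, hence locally free.

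The main obstacle is the input opening the previous paragraph: that every basic maximal modifying module of $R$ genuinely lies in the mutation class of $R\oplus f_*\scrM$ — equivalently, that the mutation graph of basic tilting bundles on $X$ is connected — so that no exotic tilting bundle escapes the $\mathbb{Z}$-indexed family. This is where one must lean on the homological minimal model program machinery of \cite{IW9,HW} rather than anything elementary. The remaining points are routine: that $\Cl(R)$ is infinite cyclic on $L$ (so the mutation class unwinds to a full $\mathbb{Z}$-family rather than collapsing periodically, consistently with $\scrP_{i+N}=\scrP_i\otimes\scrO_X(1)$ via \ref{LemmaA}\eqref{LemmaA 2}), and that the a priori merely reflexive $\scrP$ is forced to be a bundle, which is read off from \ref{V pos are bundles}.
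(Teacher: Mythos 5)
Your proof is correct and takes essentially the same route as the paper: push $\scrP$ forward via $f_*$ to a basic reflexive $R$-module giving an NCCR, identify it as $V_{i-1}\oplus V_i$ using \ref{IW9 summary}, transfer back across the reflexive equivalence, and read off the bundle conclusions from \ref{V pos are bundles} and \ref{progen main}\eqref{progen main 2}. You are somewhat more explicit than the paper in justifying the NCCR step (via tilting forcing finite global dimension, plus $R$ being Gorenstein normal) and in flagging that the whole argument hinges on every basic maximal modifying module lying in the mutation class of $R\oplus f_*\scrM$ — a connectedness fact absorbed into the citation of \ref{IW9 summary} in the paper's one-line proof — but the underlying argument is the same.
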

\begin{proof}
Certainly $f_*\scrP$ is a basic reflexive module giving an NCCR.  By \ref{IW9 summary} all such basic reflexive modules are isomorphic to $V_{i-1}\oplus V_i$ for some $i\in\mathbb{Z}$. Hence $f_*\scrP\cong V_{i-1}\oplus V_i$, say.  Since  $f_*\scrP_i\cong V_{i-1}\oplus V_i$ by \ref{progen main}\eqref{progen main 2}, we see that $f_*\scrP\cong f_*\scrP_i$.  Then reflexive equivalence, see e.g.\ \cite[4.2.1]{VdB1d}, implies that $\scrP\cong \scrP_i$.  The final two statements follow immediately, since by \ref{V pos are bundles} all the $\scrV_i$ are vector bundles.
\end{proof}

\section{Monodromy on \texorpdfstring{$\cM_{\scrS\scrK}$}{MSK}}  

This section applies the theory developed to construct actions on the derived categories of the $3$-fold~$X$, and the sequence of algebras~$\Lambda_i$ associated to it.  Section \ref{alg actions} first describes the monodromy action on the algebraic side, namely on the categories $\Db(\mod\Lambda_i)$.  In \S\ref{twist local sect} we construct local twist functors for the simples helix following \cite{DW1}, and in \S\ref{geo actions} we construct the monodromy action on $\Db(\coh X)$ in terms of these twist functors. In particular, this allows us to prove the main results in \S\ref{section main results}.

\def\eqnS{S^2\,\backslash \{ N+2\mbox{ points}\}}

\subsection{Algebraic Actions}\label{alg actions}
Write $\cM_{\scrS\scrK}$ for the  punctured sphere \opt{ams}{$\eqnS$}\opt{ip}{\[\eqnS\]} as before. It is convenient to think of this via an orientation-preserving identification of the grey regions in the punctured rectangle below.  As notation, we write $q_-\in S^2$ for the top pole, and $q_+$ for the bottom pole, and we refer to $q_i\in S^2$ as the \emph{equatorial punctures}.

\def\halfwidth{3}
\def\halfheight{1.5}
\def\greywidth{1}
\def\pointshift{0.3}
\def\vertspherescale{0.25}
\def\spherescale{0.8 }
\def\colorlinebundletop{green!60!black}
\def\colorlinebundlebottom{red}
\def\colorflop{blue}
\def\colortwist{black}
\def\colorfront{black!20}
\def\colorback{black!10}
\def\monodromystyle{semithick}
\[
\begin{array}{ccc}
\begin{array}{c}
\begin{tikzpicture}[>=stealth,xscale=1]
\draw[fill,black!10] (0,0) -- (\greywidth,0) -- (\greywidth,2*\halfheight) -- (0,2*\halfheight) -- cycle;
\draw[fill,black!10] (2*\halfwidth-\greywidth,0) -- (2*\halfwidth,0) -- (2*\halfwidth,2*\halfheight) -- (2*\halfwidth-\greywidth,2*\halfheight) -- cycle;
\node (PL) at (-\pointshift,\halfheight) {}; 
\node (P0) at (\greywidth+\pointshift,\halfheight) {}; 
\node (P1) at (\greywidth+1.7+\pointshift,\halfheight) {}; 
\node (P2) at (\greywidth+3.4+\pointshift,\halfheight) {}; 
\node (PN1) at (2*\halfwidth-\greywidth-1.7-\pointshift,\halfheight) {}; 
\node (PN) at (2*\halfwidth-\greywidth-\pointshift,\halfheight) {}; 
\node (PR) at (2*\halfwidth+\pointshift,\halfheight) {}; 
\node (H0) at (\greywidth+0.4+\pointshift,\halfheight) {}; 
\node (H1) at (\greywidth+1+\pointshift,\halfheight) {}; 
\node (HN1) at (2*\halfwidth-\greywidth-0.7-\pointshift,\halfheight) {}; 
\draw[\colorfront] (0,\halfheight) -- (6,\halfheight);
\node (dots) at (3.1,\halfheight+0.35) {$\cdots$};
\filldraw[fill=white,draw=black] (H0) circle (3pt);
\node (H0L) at (H0) [above=0.15] {$\scriptstyle q_0$};
\filldraw[fill=white,draw=black] (H1) circle (3pt);
\node (H1L) at (H1) [above=0.15] {$\scriptstyle q_1$};
\filldraw[fill=white,draw=black] (HN1) circle (3pt);
\node (HN1L) at (HN1) [above=0.15] {$\scriptstyle q_{N-1}$};
\filldraw[white] (-0.5,0) -- (0,0) -- (0,2*\halfheight) -- (-0.5,2*\halfheight) -- cycle;
\filldraw[white] (6.5,0) -- (6,0) -- (6,2*\halfheight) -- (6.5,2*\halfheight) -- cycle;
\draw[thick] (0,0) -- (6,0) -- (6,2*\halfheight) -- (0,2*\halfheight) -- cycle;
\end{tikzpicture}
\end{array}
\opt{ams}{&
\begin{tikzpicture}
\draw [thick,->,decorate, 
decoration={snake,amplitude=.6mm,segment length=3mm,post length=1mm}] 
(3,0.4) -- (4,0.4);
\draw [thick]
(2.85,0.4) -- (3,0.4);
\end{tikzpicture}
&}
\hspace{0.3cm}
\begin{array}{c}
\begin{tikzpicture}[>=stealth,scale=2]
\draw[thick] ([shift=(-84:\spherescale)]0,0) arc (-84:84:\spherescale)  
   [bend left] to (96:\spherescale)
   arc (96:264:\spherescale)
   [bend left] to cycle;
\draw[\colorfront,line cap=round,dash pattern=on 0pt off 3\pgflinewidth] (\spherescale,0) arc (0:180:\spherescale and \vertspherescale);
\draw[\colorfront] (\spherescale,0) arc (0:-180:\spherescale and \vertspherescale)
coordinate[pos=0.72] (A) coordinate[pos=0.58] (B) coordinate[pos=0.47] (dots)   coordinate[pos=0.35] (C);
\filldraw[fill=white,draw=black] (A) circle (1.5pt);
\filldraw[fill=white,draw=black] (B) circle (1.5pt);
\filldraw[fill=white,draw=black] (C) circle (1.5pt);
\node [rotate=0] (l) at (dots) [above=0.2] {$\cdots$};
\node (Alabel) at (A) [above=0.2] {$\scriptstyle q_0$};
\node (Blabel) at (B) [above=0.2] {$\scriptstyle q_1$};
\node (Clabel) at (C) [shift=(70:0.4)] {$\scriptstyle q_{N-1}$};
\node (toplabel) at (A) [above=0.2] {$\scriptstyle q_0$};
\node at (0.2*\spherescale,0.8*\spherescale) {$\scriptstyle q_-$};
\node at (0.2*\spherescale,-0.8*\spherescale) {$\scriptstyle q_+$};
\node (p) at (-0.1*\spherescale,0.6*\spherescale) {};
\node (q) at (-0.1*\spherescale,-0.7*\spherescale) {};
\draw[thick] ([shift=(-84:\spherescale)]0,0) arc (-84:84:\spherescale)  
   [bend left] to (96:\spherescale)
   arc (96:264:\spherescale)
   [bend left] to cycle;
\end{tikzpicture}
\end{array}
\end{array}
\]

\begin{prop}\label{prop algebraic groupoid}
For all $i\in\mathbb{Z}$, the \opt{ams}{fundamental }groupoid $\uppi_1(\cM_{\scrS\scrK}, \{p_j\})$ acts on the categories $\Db(\mod\Lambda_{j})$ as follows: \opt{ams}{assign }the category $\Db(\mod\Lambda_j)$ \opt{ip}{is assigned }to each point $p_j$ shown in the diagram below, and functors to homotopy classes of paths as indicated:
\medskip

\begin{center}
\def\halfwidth{5}
\def\halfheight{1.6}
\def\greywidth{1.5}
\def\pointshift{0.3}
\def\bend{40}
\begin{tikzpicture}[>=stealth,scale=1]
\draw[fill,black!10] (0,0) -- (\greywidth,0) -- (\greywidth,2*\halfheight) -- (0,2*\halfheight) -- cycle;
\draw[fill,black!10] (2*\halfwidth-\greywidth,0) -- (2*\halfwidth,0) -- (2*\halfwidth,2*\halfheight) -- (2*\halfwidth-\greywidth,2*\halfheight) -- cycle;
\node (PL) at (-\pointshift,\halfheight) {}; 
\node (P0) at (\greywidth+\pointshift,\halfheight) {}; 
\node (P1) at (\greywidth+1.7+\pointshift,\halfheight) {}; 
\node (P2) at (\greywidth+3.4+\pointshift,\halfheight) {}; 
\node (PN1) at (2*\halfwidth-\greywidth-1.7-\pointshift,\halfheight) {}; 
\node (PN) at (2*\halfwidth-\greywidth-\pointshift,\halfheight) {}; 
\node (PR) at (2*\halfwidth+\pointshift,\halfheight) {}; 
\node (H0) at (\greywidth+0.85+\pointshift,\halfheight) {}; 
\node (H1) at (\greywidth+1.7+0.85+\pointshift,\halfheight) {}; 
\node (HN1) at (2*\halfwidth-\greywidth-0.85-\pointshift,\halfheight) {}; 
\node (dots) at (5.85,\halfheight) {$\cdots$};
\filldraw (P0) circle (1.5pt); 
\node (P0L) at (P0) [above=0.18] {$\scriptstyle p_i$};
\filldraw (P1) circle (1.5pt); 
\node (P1L) at (P1) [above=0.18,transform canvas={xshift=1pt,yshift=1pt}] {$\scriptstyle p_{i+1}$};
\filldraw (PN) circle (1.5pt); 
\node (PNL) at (PN) [above=0.18,transform canvas={xshift=3pt}] {$\scriptstyle p_{i+N}$};
\filldraw[fill=white,draw=black] (H0) circle (3pt);
\node (H0L) at (H0) [above=0.08] {$\scriptstyle $};
\filldraw[fill=white,draw=black] (H1) circle (3pt);
\node (H1L) at (H1) [above=0.08] {$\scriptstyle $};
\filldraw[fill=white,draw=black] (HN1) circle (3pt);
\node (HN1L) at (HN1) [above=0.04] {$\scriptstyle $};
\draw[->] (P0) to[bend left=\bend] node[above] {$\scriptstyle \Phi_i$}(P1);
\draw[->] (P1) to[bend left=\bend] node[above] {$\scriptstyle \Phi_{i+1}$}(P2);
\draw[->] (PN1) to[bend left=\bend] node[above] {$\scriptstyle \Phi_{i+N-1}$}(PN);
\draw[<-] (P0) to[bend right=\bend] node[below] {$\scriptstyle \Phi_i$}(P1);
\draw[<-] (P1) to[bend right=\bend] node[below] {$\scriptstyle \Phi_{i+1}$}(P2);
\draw[<-] (PN1) to[bend right=\bend] node[below] {$\scriptstyle \Phi_{i+N-1}$}(PN);
\draw[->] (PL) to[bend left=20] node[above] {$\scriptstyle \upbeta^{-1}$}(P0);
\draw (PN) to[bend left=20] (PR);
\draw[->] (PR) to[bend left=20] node[below] {$\scriptstyle \upbeta$}(PN);
\draw (P0) to[bend left=20] (PL);

\filldraw[white] (-0.5,0) -- (0,0) -- (0,2*\halfheight) -- (-0.5,2*\halfheight) -- cycle;
\filldraw[white] (10.5,0) -- (10,0) -- (10,2*\halfheight) -- (10.5,2*\halfheight) -- cycle;

\draw[thick] (0,0) -- (10,0) -- (10,2*\halfheight) -- (0,2*\halfheight) -- cycle;
\end{tikzpicture}
\end{center}
\medskip

\noindent Recall that $\Phi_j$ are the mutation functors from \S\ref{mut notation section}, and that $\upbeta$ is the isomorphism induced by line bundle twist from \S\ref{line bundle twists section}.
\end{prop}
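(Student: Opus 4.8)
The plan is to exhibit the action by prescribing it on a set of generating morphisms of $\uppi_1(\cM_{\scrS\scrK},\{p_j\})$ and then checking the handful of relations. Since $\cM_{\scrS\scrK}$ is a $2$-sphere with $N+2$ points removed, it has genus zero, so its fundamental group(oid) contributes no ``surface relations'': reading the presentation off the picture, the groupoid is generated by, for each $j\in\mathbb{Z}$, the arc $\alpha_j\colon p_j\to p_{j+1}$ passing above the equatorial puncture $q_j$, the arc $\beta_j\colon p_{j+1}\to p_j$ passing below $q_j$, and the gluing identifications $\upbeta\colon p_j\to p_{j+N}$ of the left/right edges of the fundamental rectangle. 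The only relations are the $\upbeta$-periodicity relations $\alpha_{j+N}\circ\upbeta=\upbeta\circ\alpha_j$ and $\beta_{j+N}\circ\upbeta=\upbeta\circ\beta_j$, together with one ``sphere relation'' per $N$-block, asserting that the loop encircling $q_-,q_j,\dots,q_{j+N-1}$ once is the inverse of the loop encircling $q_+$; the loops $\beta_j\circ\alpha_j$ and $\alpha_j\circ\beta_j$ around $q_j$ are then free generators and impose nothing.

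On these generators I would set $\alpha_j\mapsto\Phi_j\colon\Db(\mod\Lambda_j)\to\Db(\mod\Lambda_{j+1})$ and $\beta_j\mapsto\Phi_j\colon\Db(\mod\Lambda_{j+1})\to\Db(\mod\Lambda_j)$, the two mutation equivalences of \S\ref{mut notation section} (both equivalences by \cite[9.25, 9.29]{IW9}), and $\upbeta\mapsto\upbeta$, the equivalence of \S\ref{line bundle twists section}. As every generator is sent to an equivalence, invertibility is automatic, so the assignment descends to a functor on the groupoid modulo its relations once those relations are verified. The $\upbeta$-periodicity relations translate precisely into the isomorphism $\upbeta\circ\Phi_j\cong\Phi_{j+N}\circ\upbeta$ together with its inverse-direction analogue, and this is exactly what is proved inside \ref{tensor cor}, where the tilting bimodules $\Hom_R(M_j,M_{j+1})$ (with its $\Lambda_{j+N+1}$-action transported along $\upbeta$) and $\Hom_R(M_{j+N},M_{j+N+1})$ are identified via $-\otimes L$; one should also record here that for $\ell=1$ the relabelling $P_0\leftrightarrow P_1$ of \ref{beta order} is consistent with the gluing. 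The sphere relation carries no content for the present statement: the functor attached to the loop around $q_+$ is not prescribed here, so it is simply taken to be whatever the relation forces, and the identity then holds strictly.

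The substantive part of the argument is therefore the topological bookkeeping: writing the fundamental groupoid of the punctured sphere in terms of the arcs drawn so that the monodromy loop around $q_j$ based at $p_j$ is $\beta_j\circ\alpha_j$, i.e.\ algebraically $\Phi_j\circ\Phi_j$, matching the ``monodromy around wall $j$'' remark of \S\ref{mut notation section}; and confirming that no relation beyond $\upbeta$-periodicity constrains the assignment. I expect this identification of the groupoid to be the main — though routine — obstacle, precisely because the only genuinely homological input needed, namely \ref{tensor cor}, is already in hand from the earlier sections. With the groupoid homomorphism in place, the geometric statements \ref{prop geom monod} and \ref{monodromy main} will follow by transporting this action across the equivalences $\Uppsi_i$ of \S\ref{mut notation section} and matching the two pole loops with the line-bundle twist $-\otimes\scrO_X(-1)$ and its flop conjugate.
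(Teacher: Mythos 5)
Your argument is correct and follows the same overall strategy as the paper's — exhibit a presentation of the fundamental groupoid, prescribe the action on generators, and check the relations — but you take a more elaborate route than the paper needs, and one point in your accounting of the relations is off.

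The paper's own proof is a single sentence: the fundamental groupoid on the $N+1$ basepoints $p_i,\dots,p_{i+N}$ shown in the picture is \emph{free} on the arcs drawn (the $2N$ mutation arcs, two per puncture $q_j$, plus the single gluing arc through the grey region, whose two directed traversals are labelled $\upbeta$ and $\upbeta^{-1}$). One verifies this by a deformation retract of $\cM_{\scrS\scrK}$ onto this graph, or simply by the Euler-characteristic count $(N+1)-(2N+1)=-N=2-(N+2)$. Consequently the only relation in the presentation is the tautological one identifying $\upbeta$ and $\upbeta^{-1}$ as the two traversals of the same arc, and assigning $\Phi_j$ and $\upbeta$ to the arcs defines the action with nothing further to check.

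You instead adopt a redundant $\mathbb{Z}$-indexed generating set $\{\alpha_j,\beta_j\}_{j\in\mathbb{Z}}$ together with $\upbeta\colon p_j\to p_{j+N}$, and impose the periodicity relations $\alpha_{j+N}\circ\upbeta=\upbeta\circ\alpha_j$, which you rightly identify with the commutativity $\upbeta\circ\Phi_j\cong\Phi_{j+N}\circ\upbeta$ established inside the proof of \ref{tensor cor} (and the $\ell=1$ relabelling from \ref{beta order}). This is a valid presentation and the verification is sound; note however that this commutativity is not actually required by \ref{prop algebraic groupoid} itself, only by the later passage to \ref{prop alg monod} and \ref{prop geom monod}, so your approach front-loads work belonging to the subsequent corollaries. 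The one imprecision is the ``sphere relation'': because the groupoid is free on the drawn arcs, there is no such relation in the presentation to impose or to dismiss — the loop around $q_+$ is a product of generators and the relation holds automatically, not ``because the $q_+$ loop is unprescribed.'' You arrive at the right conclusion (the relation carries no content) but the reasoning given for it is circular; the clean statement is just that the groupoid is free, so the only thing to verify when defining the action is that arcs traversed in opposite directions receive inverse functors, which is immediate.
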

\begin{proof}
This follows just since the fundamental groupoid is generated by the arrows shown, subject to the relation that arrows marked $\upbeta^{-1}$ and $\upbeta$ are inverse.
\end{proof}

The above action induces a similar action with basepoints a subset of~$\{p_j\}$.  In the following, we just consider the case $i=0$.  Recall from \S\ref{line bundle twists section} that $\upkappa_i$ is the composition of mutation functors.

\begin{cor}\label{prop alg monod} 
There is a group homomorphism
\begin{align*}
\uppi_1(\cM_{\scrS\scrK})&\pad{\to}\Auteq\Db(\mod \Lambda_0) \\
q_i &\pad{\mapsto}\upkappa_{i}^{-1}\circ (\Phi_i\circ\Phi_i)\circ\upkappa_i\\
q_-&\pad{\mapsto}\upbeta^{-1}\circ(\Phi_{N-1}\circ\hdots\circ\Phi_0)\\
q_+ &\pad{\mapsto}(\Phi_0\circ\hdots\circ\Phi_{N-1})\circ\upbeta
\end{align*}
where we reuse the notation $q$ for monodromy around each hole, and take loop orientations as follows.
\[
\sphOnePoint{2}
\]
\end{cor}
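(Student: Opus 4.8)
The proof of Corollary~\ref{prop alg monod} is essentially a matter of reading off the group homomorphism from the groupoid action of Proposition~\ref{prop algebraic groupoid}, specialising to the single basepoint $p_0$.

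\begin{proof}
By Proposition~\ref{prop algebraic groupoid}, the fundamental groupoid $\uppi_1(\cM_{\scrS\scrK},\{p_j\})$ acts on the collection $\{\Db(\mod\Lambda_j)\}$, with the edges between consecutive basepoints $p_j,p_{j+1}$ realised by the mutation functors $\Phi_j$ (in both directions), and the edges across the left and right grey strips realised by $\upbeta^{\pm 1}$. Restricting this groupoid action to the full subgroupoid on the single basepoint $p_0$ yields a group homomorphism $\uppi_1(\cM_{\scrS\scrK},p_0)\to\Auteq\Db(\mod\Lambda_0)$, and it remains only to identify the images of the standard loop generators $q_-$, $q_+$ and $q_i$ indicated in the diagram $\sphOnePoint{2}$.

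For the equatorial puncture $q_i$: a loop based at $p_0$ encircling the hole between $p_i$ and $p_{i+1}$ is homotopic, after conjugating by the path $p_0\to p_i$, to the loop at $p_i$ that travels $p_i\to p_{i+1}\to p_i$ going around the puncture. As recorded in \S\ref{mut notation section} (see the remark following \eqref{functors strip 1}), monodromy in the complexified complement around wall $i$ corresponds precisely to the composition $\Phi_i\circ\Phi_i$ of the two (distinct, since they are not mutually inverse) mutation functors $\Phi_i\colon\Db(\mod\Lambda_i)\to\Db(\mod\Lambda_{i+1})$ and $\Phi_i\colon\Db(\mod\Lambda_{i+1})\to\Db(\mod\Lambda_i)$. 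The connecting path $p_0\to p_i$ in the groupoid is realised by $\upkappa_i=\Phi_{i-1}\circ\hdots\circ\Phi_0$, so the loop acts on $\Db(\mod\Lambda_0)$ by $\upkappa_i^{-1}\circ(\Phi_i\circ\Phi_i)\circ\upkappa_i$, as claimed. For the pole $q_-$ (respectively $q_+$): a loop based at $p_0$ encircling the top (respectively bottom) pole travels all the way around the equator. Going once around the rectangle from $p_0$ back to $p_0$ passes through $p_1,\hdots,p_{N}$ via $\Phi_0,\Phi_1,\hdots,\Phi_{N-1}$ and then across the right grey strip via $\upbeta$ (identifying $\Db(\mod\Lambda_N)$ with $\Db(\mod\Lambda_0)$); composing gives $\upbeta^{-1}\circ(\Phi_{N-1}\circ\hdots\circ\Phi_0)$ for the loop around $q_-$, and its inverse $(\Phi_0\circ\hdots\circ\Phi_{N-1})\circ\upbeta$ for the oppositely-oriented loop around $q_+$. (The relation $q_-\cdot q_0\cdot q_1\cdots q_{N-1}\cdot q_+ = 1$ in $\uppi_1$ of the $(N+2)$-punctured sphere is automatically respected, being visibly a telescoping identity among the $\Phi_j$ and $\upbeta$.) The stated orientations match the diagram $\sphOnePoint{2}$, completing the proof.
\end{proof}

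The only genuine subtlety is bookkeeping: one must be careful that the two arrows labelled $\Phi_i$ in the strip \eqref{functors strip 1} genuinely compose to the monodromy around wall $i$ rather than to the identity, which is exactly the content flagged in \S\ref{mut notation section}, and that the direction of $\upbeta$ versus $\upbeta^{-1}$ is consistent with traversing the rectangle in the chosen orientation; both are routine once the conventions of \S\ref{mut notation section} and \S\ref{line bundle twists section} are in place.
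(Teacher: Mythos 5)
Your proof is correct and follows the same approach as the paper, which proves the corollary simply by citing Proposition~\ref{prop algebraic groupoid} and composing paths. You have merely unpacked that one-line proof in more detail, explicitly identifying the images of the generators $q_i$, $q_-$, $q_+$ and checking the defining relation of the fundamental group.
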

\begin{proof}
This is immediate from \ref{prop algebraic groupoid}, simply by composing paths there.
\end{proof}

\subsection{Local Twist Functors}\label{twist local sect} 
Let $\scrE_k$  be the object corresponding to $\Lambda_k^{\deform}$ across the equivalence in \ref{Psi per 0 general}. Note that $\Lambda_k^{\deform}$ has finite projective dimension, since by \eqref{exchange 1} and \eqref{exchange 2} mutation twice returns us to our original module, so we can use the analogue of \cite[(5.B)]{DW1} to conclude that $\scrE_k\in\Perf(X)$.  Further, since $\Lambda_k^{\deform}$  is filtered by the corresponding simple $\Lambda_k$-module, $\scrE_k$ is filtered by $\scrS_k$, and thus is itself also a sheaf. 
 
\def\eqntwist{\twistGen_{\scrS_k}\colon\Db(\coh X)\to\Db(\coh X),}

\begin{thm}\label{local twist functors}
For all $k\in\mathbb{Z}$, there is an equivalence \opt{ams}{$\eqntwist$}\opt{ip}{\[\eqntwist\]} which fits into a functorial triangle as follows.
\[
\RHom_X(\scrE_{k},-)\otimes_{\Lambda_k^{\deform}}^{\bf L}\scrE_{k}\to \Id\to\twistGen_{\scrS_k}\to
\]
\end{thm}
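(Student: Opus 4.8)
The idea is to construct $\twistGen_{\scrS_k}$ as a \emph{spherical-type twist around a tilting object}, exactly as in the setup of \cite{DW1}, and then verify that the ambient hypotheses needed for that construction are satisfied here. Recall from \ref{progen main}\eqref{progen main 1} that $\scrP_k$ is a tilting bundle on $X$, and from \ref{Psi per 0 general} that $\RHom_X(\scrP_k,-)$ is an equivalence $\Db(\coh X)\xrightarrow{\sim}\Db(\mod\Lambda_k)$ restricting to $\Tilt_k(\Per)\xrightarrow{\sim}\mod\Lambda_k$. The object $\scrE_k$ is, by definition, the preimage under this equivalence of the $\Lambda_k$-module $\Lambda_k^{\deform}=\Lambda_k/[V_k]$; the fact that $\scrE_k\in\Perf(X)$ and is a sheaf filtered by $\scrS_k$ is recorded in the paragraph preceding the statement, using that $\Lambda_k^{\deform}$ has finite projective dimension (mutating twice returns to the original module, as in \eqref{exchange 1}--\eqref{exchange 2}) and the argument of \cite[(5.B)]{DW1}.

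\textbf{Key steps.} First I would transport the whole problem across the tilting equivalence $\Uppsi_k\cong\RHom_X(\scrP_k,-)$ of \ref{progen main}\eqref{progen main 3}: it suffices to produce an autoequivalence of $\Db(\mod\Lambda_k)$ fitting in a functorial triangle
\[
\RHom_{\Lambda_k}(\Lambda_k^{\deform},-)\otimes^{\bf L}_{\Lambda_k^{\deform}}\Lambda_k^{\deform}\to\Id\to\twistGen\to,
\]
because the equivalence intertwines $\RHom_X(\scrE_k,-)$ with $\RHom_{\Lambda_k}(\Lambda_k^{\deform},-)$ and carries $\scrE_k$ to $\Lambda_k^{\deform}$. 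Second, $\Lambda_k^{\deform}$ is a factor algebra of $\Lambda_k$ that is finite-dimensional (\ref{def alg fd and complete}), has finite projective dimension as a $\Lambda_k$-module, and is \emph{self-dual} in the appropriate sense: $\Lambda_k$ is a $3$-Calabi--Yau algebra in the complete local sense (being $\End_R(M_k)$ for $R$ isolated cDV, so an NCCR), hence its perfect, finite-length objects satisfy $\Ext^3_{\Lambda_k}(N,N')\cong D\Hom_{\Lambda_k}(N',N)$ — this is the CY-duality already invoked in the proof of \ref{tilt=mut}. Third, with these properties in hand one applies the general \emph{twist functor around a spherical-like object} machinery: the object $\Lambda_k^{\deform}\in\Perf(\mod\Lambda_k)\cap\Db(\finitelength\Lambda_k)$, together with the fact that $E:=\End_{\Db}(\Lambda_k^{\deform})$-module structure makes the evaluation map well-behaved, yields that the cone of $\RHom(\Lambda_k^{\deform},-)\otimes^{\bf L}_E\Lambda_k^{\deform}\to\Id$ is an equivalence. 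Concretely this is exactly the construction and proof of \cite[Theorem 6.?]{DW1} (the "twist around the contraction algebra"), generalised from $k=0$ to arbitrary $k$: since all the input hypotheses used there — $\Lambda_k$ is a (complete, local) $3$-CY algebra, $\Lambda_k^{\deform}$ is finite-dimensional of finite projective dimension and is filtered by a single simple — hold verbatim for every $k$, the proof goes through unchanged. Finally, pushing the resulting triangle and equivalence back across $\RHom_X(\scrP_k,-)$ gives the statement on $\Db(\coh X)$, and functoriality of the triangle is inherited since every arrow in the construction is functorial.

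\textbf{Main obstacle.} The genuine work is verifying that $\Lambda_k^{\deform}$ really does satisfy the hypotheses of the \cite{DW1} twist construction \emph{uniformly in $k$}, most delicately that $\Lambda_k$ is $3$-Calabi--Yau (as a complete local, or suitably bimodule-complete, algebra) so that the required Serre-duality statement $\Ext^3_{\Lambda_k}(\Lambda_k^{\deform},-)\cong D\Hom_{\Lambda_k}(-,\Lambda_k^{\deform})$ holds — this is what forces the cone to be invertible. For $k=0$ this is the content of \cite{DW1}; here one must note that each $\Lambda_k=\End_R(M_k)$ with $M_k=V_{k-1}\oplus V_k$ a reflexive rigid module giving an NCCR of the isolated cDV singularity $R$ (by \ref{IW9 summary}), hence each $\Lambda_k$ is itself an NCCR and so symmetric $3$-CY by the standard theory, with $\Lambda_k^{\deform}$ of finite projective dimension because mutating twice is the identity. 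Once this is in place, together with \ref{def alg fd and complete} for finite-dimensionality and completeness, everything else is formal transport across the equivalences \ref{progen main}\eqref{progen main 3} and \ref{Psi per 0 general}, and the uniformity over $k$ is exactly what the preceding sections were built to supply.
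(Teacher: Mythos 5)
Your proposal follows essentially the same route as the paper: transport the problem across the tilting equivalence $\RHom_X(\scrP_k,-)$ of \ref{progen main}\eqref{progen main 3} and \ref{Psi per 0 general}, apply the twist construction of \cite{DW1} with $\scrP_k$ replacing $\scrP_0$, and observe that the input hypotheses (finite projective dimension of $\Lambda_k^{\deform}$ via \eqref{exchange 1}--\eqref{exchange 2}, finite dimensionality via \ref{def alg fd and complete}, CY-duality for perfect finite-length complexes) hold uniformly in $k$. The paper phrases the definition slightly differently — as conjugation of $\RHom_{\Lambda_k}(I_k,-)$, where $I_k=\ker(\Lambda_k\twoheadrightarrow\Lambda_k^{\deform})$, then invokes the analogues of \cite[6.10, 6.11, 6.13, 6.16]{DW1} — but this is the same construction you describe.

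One small imprecision to correct: you assert that each $\Lambda_k$ is an NCCR and hence $3$-Calabi--Yau. That is false when $X$ is merely Gorenstein terminal rather than smooth: the $\Lambda_k$ then have infinite global dimension. What you actually use, and what does hold in the singular setting, is the restricted CY-duality $\Ext^3_{\Lambda_k}(N,N')\cong D\Hom_{\Lambda_k}(N',N)$ for $N$ perfect and finite length — exactly as invoked in the proof of \ref{tilt=mut}. Since you already state this in the correct form, your argument survives the correction; just drop the NCCR/3-CY justification and cite the singular CY-duality directly.
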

\begin{proof}
The natural ring surjection $\Lambda_k\twoheadrightarrow\Lambda_k^{\deform}$  gives rise to a short exact sequence of \mbox{$\Lambda_k$-bimodules} $0\to I_k\to \Lambda_k\to\Lambda_k^{\deform}\to 0$. As in \cite[6.13]{DW1}, but now simply replacing $\scrP_0$ with the new tilting bundle\opt{ams}{ }\opt{ip}{~}$\scrP_k$, we define 
\begin{equation}
\twistGen_{\scrS_k}\colonequals \RHom_X(\scrP_k,-)^{-1}\circ\RHom_{\Lambda_k}(I_k,-)\circ\RHom_X(\scrP_k,-).\label{twist def}
\end{equation}
Using the analogue of \cite[6.16]{DW1}, $\twistGen_{\scrS_k}$ is then a Fourier--Mukai functor that, being a composition of equivalences, is an equivalence.  The functorial triangle follows exactly as in \cite[6.10, 6.11]{DW1} (see also \cite[end of \S5.1]{DW3}), with the bundle $\scrP_k$ replacing $\scrP_0$.
\end{proof}
Combining with  \ref{progen main}\eqref{progen main 3}, the following will be used extensively below.
\begin{cor} \label{alg twist is mono}
For all $k\in\mathbb{Z}$ there is a functorial isomorphism
\[
\Uppsi_k^{-1}\circ (\Phi_k\circ\Phi_k)\circ \Uppsi_k\cong
\twistGen_{\scrS_k}.
\]
\end{cor}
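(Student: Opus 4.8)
The plan is to reduce the corollary to an algebraic identity of autoequivalences of $\Db(\mod\Lambda_k)$, and then to invoke the bimodule computation underlying the twist construction of \cite{DW1}.

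First I would rewrite the right-hand side in terms of $\Uppsi_k$. By \ref{progen main}\eqref{progen main 3} there is a functorial isomorphism $\Uppsi_k\cong\RHom_X(\scrP_k,-)$, so substituting into the defining formula \eqref{twist def} gives a functorial isomorphism
\[
\twistGen_{\scrS_k}\cong\Uppsi_k^{-1}\circ\RHom_{\Lambda_k}(I_k,-)\circ\Uppsi_k ,
\]
where $0\to I_k\to\Lambda_k\to\Lambda_k^{\deform}\to 0$ is the bimodule sequence from the proof of \ref{local twist functors}, so that $I_k=[V_k]$ in the notation of \ref{def alg def}. Comparing with the statement, it therefore suffices to prove that
\[
\Phi_k\circ\Phi_k\cong\RHom_{\Lambda_k}(I_k,-)
\]
as autoequivalences of $\Db(\mod\Lambda_k)$, where $\Phi_k\circ\Phi_k$ denotes the composite $\Db(\mod\Lambda_k)\xrightarrow{\Phi_k}\Db(\mod\Lambda_{k+1})\xrightarrow{\Phi_k}\Db(\mod\Lambda_k)$.

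Next I would realise the double mutation functor as $\RHom$ of a bimodule. By \S\ref{mut notation section}, the first $\Phi_k$ is $\RHom_{\Lambda_k}(T_k,-)$ with $T_k\colonequals\Hom_R(M_k,M_{k+1})$, and the second is $\RHom_{\Lambda_{k+1}}(T'_k,-)$ with $T'_k\colonequals\Hom_R(M_{k+1},M_k)$. Tensor--hom adjunction then gives
\[
\Phi_k\circ\Phi_k\cong\RHom_{\Lambda_k}\big(T_k\Lotimes{\Lambda_{k+1}}T'_k,\,-\big),
\]
so I am reduced to producing an isomorphism of $\Lambda_k$-bimodules $T_k\Lotimes{\Lambda_{k+1}}T'_k\cong I_k$.

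This last step is the technical heart, and is the singular analogue of the computation in \cite{DW1}. Since $R$ is isolated cDV and $M_k,M_{k+1}$ are rigid reflexive modules related by a single mutation (\cite[9.25, 9.29]{IW9}), applying $\Hom_R(M_k,-)$ and $\Hom_R(M_{k+1},-)$ to the exchange sequences \eqref{exchange 1} and \eqref{exchange 2} at the wall $V_k$ produces projective presentations of the summands of $T_k$ over $\Lambda_k$ and of $T'_k$ over $\Lambda_{k+1}$; one uses these, exactly as in \cite{DW1}, to show that $T_k\Lotimes{\Lambda_{k+1}}T'_k$ is concentrated in degree zero and that the composition-of-morphisms map $c\colon T_k\otimes_{\Lambda_{k+1}}T'_k\to\End_R(M_k)=\Lambda_k$ is injective with image the ideal $[V_k]=I_k$ of morphisms factoring through $\add V_k$. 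The only points where the argument of \cite{DW1} must be adapted rather than copied are those using smoothness: here one instead invokes that $R$ has an isolated singularity, so that every $V_j$ localises to a free module at each height-two prime, together with the finiteness of $\Lambda_k^{\deform}$ (\ref{def alg fd and complete}) and the fact that $\Lambda_k^{\deform}$ has finite projective dimension over $\Lambda_k$ (noted in the paragraph preceding \ref{local twist functors}). Assembling the displayed isomorphisms then yields $\twistGen_{\scrS_k}\cong\Uppsi_k^{-1}\circ(\Phi_k\circ\Phi_k)\circ\Uppsi_k$. I expect the identification of the image of $c$ with $I_k$ — equivalently, the vanishing of the relevant higher $\Tor$ together with the surjectivity of $c$ onto $[V_k]$ — to be the main obstacle, being the one place where the singular generalisation of \cite{DW1} carries genuine content rather than being a formal transcription.
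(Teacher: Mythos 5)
Your reduction to the algebraic identity $\Phi_k\circ\Phi_k\cong\RHom_{\Lambda_k}(I_k,-)$ is exactly the paper's: substituting $\Uppsi_k\cong\RHom_X(\scrP_k,-)$ from \ref{progen main}\eqref{progen main 3} into the definition \eqref{twist def} rewrites $\twistGen_{\scrS_k}$ as $\Uppsi_k^{-1}\circ\RHom_{\Lambda_k}(I_k,-)\circ\Uppsi_k$, so the corollary is equivalent to that one identity of endofunctors of $\Db(\mod\Lambda_k)$.

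The divergence is in how that identity is then obtained. The paper simply cites \cite[4.3]{DW3} for $\RHom_{\Lambda_k}(I_k,-)\cong\Phi_k\circ\Phi_k$, and the corollary becomes a one-line substitution. You instead attempt to reprove the identity from scratch via the bimodule computation $T_k\Lotimes{\Lambda_{k+1}}T'_k\cong I_k$, and then leave the crux of that computation --- vanishing of higher $\Tor$, injectivity of the multiplication map, and identification of its image with $[V_k]$ --- explicitly flagged as the "main obstacle" without carrying it out. So as written your proposal is incomplete precisely where the paper's proof is not: what you present as an open technical step is in fact an off-the-shelf citation. Your concern about a "singular generalisation" of the \cite{DW1} computation is also somewhat misplaced: the identity $\Phi_k\circ\Phi_k\cong\RHom_{\Lambda_k}(I_k,-)$ lives entirely at the level of the algebras $\Lambda_k=\End_R(V_{k-1}\oplus V_k)$ over the complete local isolated cDV ring $R$, and the geometry of $X$ (smooth or singular) does not enter at this stage. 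It is the same $R$-level mutation computation either way, and \cite[4.3]{DW3} already supplies it in the required generality. Had you noticed that this identity is a cited result rather than something to be re-derived, the remainder of your argument would have collapsed to the paper's short proof.
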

\def\eqnRhomX{\RHom_X(\scrP_k,-)\cong\Uppsi_k}
\begin{proof}
Since $\RHom_{\Lambda_k}(I_k,-)\cong\Phi_k\circ\Phi_k$ by \cite[4.3]{DW3}, and \opt{ams}{$\eqnRhomX$}\opt{ip}{\[\eqnRhomX\]} by \ref{progen main}\eqref{progen main 3}, the result follows directly from the definition \eqref{twist def}.
\end{proof}

\subsection{Geometric Monodromy}\label{geo actions}

The following is one of our main results.  In particular, it shows that the monodromies around equatorial punctures correspond to noncommutative twists around the members of our simples helix.

\begin{thm}\label{prop geom monod} 
There is a group homomorphism
\begin{align*}
\uppi_1(\cM_{\scrS\scrK})&\pad{\to}\Auteq\Db(\coh X) \\
q_i &\pad{\mapsto}\twistGen_{\scrS_i}\\
q_-&\pad{\mapsto}-\otimes\scrO(-1) \\
q_+ &\pad{\mapsto}\flop^{-1}\circ\big(-\otimes\scrO(- 1)\big)\circ\flop
\end{align*}
where $\scrS_i \in \coh X$ is as in~\S\ref{section simples helix}, the functors $\twistGen_{\scrS_i}$ are defined in~\ref{local twist functors}, and with orientations as below. 
\[
\sphOnePoint{3}
\]
\end{thm}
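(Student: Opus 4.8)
The plan is to transport the algebraic monodromy action of \ref{prop alg monod} across the equivalence $\Uppsi\colon\Db(\coh X)\xrightarrow{\sim}\Db(\mod\Lambda_0)$, and then to identify each of the three types of generator with its claimed geometric avatar. First I would recall from \ref{prop algebraic groupoid} that the fundamental groupoid $\uppi_1(\cM_{\scrS\scrK},\{p_j\})$ acts on the chain of categories $\Db(\mod\Lambda_j)$ with equatorial loops going to $\Phi_i\circ\Phi_i$ and the two pole loops going to the line-bundle-twist isomorphisms $\upbeta^{\pm 1}$ composed with the full string of mutation functors; restricting to the single basepoint $p_0$ and conjugating by $\Uppsi$ gives a homomorphism $\uppi_1(\cM_{\scrS\scrK})\to\Auteq\Db(\coh X)$. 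It then remains to compute the three images.

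For the equatorial punctures $q_i$, the algebraic action sends $q_i$ to $\upkappa_i^{-1}\circ(\Phi_i\circ\Phi_i)\circ\upkappa_i$ on $\Db(\mod\Lambda_0)$ by \ref{prop alg monod}. Conjugating by $\Uppsi^{-1}$ and using $\Uppsi_i=\upkappa_i\circ\Uppsi$ (for $i\ge 0$, and $\uplambda_i^{-1}\circ\Uppsi$ for $i<0$), this becomes $\Uppsi_i^{-1}\circ(\Phi_i\circ\Phi_i)\circ\Uppsi_i$, which is precisely $\twistGen_{\scrS_i}$ by \ref{alg twist is mono}. So the equatorial generators land exactly where claimed, with no further work beyond assembling the definitions.

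For the pole loops, I would identify $\upbeta$ and the flop functor with geometric operations. Around $q_-$ the algebraic monodromy is $\upbeta^{-1}\circ(\Phi_{N-1}\circ\hdots\circ\Phi_0)$, and by \ref{tensor cor} (applied with $i=0$, $k=1$) the composite $\Phi_{N-1}\circ\hdots\circ\Phi_0$ fits into the commuting square relating $\Uppsi_0$, $\upbeta\circ\Uppsi_0$ and $-\otimes\scrO(-1)$ on $\Db(\coh X)$; conjugating through $\Uppsi=\Uppsi_0$ one obtains exactly $-\otimes\scrO(-1)$, so $q_-\mapsto -\otimes\scrO_X(-1)$. For $q_+$, the algebraic monodromy is $(\Phi_0\circ\hdots\circ\Phi_{N-1})\circ\upbeta$, the inverse direction around the other pole; here I would use that the flop functor $\flop$ is the inverse of the mutation functor $\Phi_0$ (as in \cite[4.2]{HomMMP}, invoked already in the proof of \ref{Tilt duality alg}), so that crossing all the walls back and twisting corresponds to the conjugated twist $\flop^{-1}\circ(-\otimes\scrO_{X^+}(-1))\circ\flop$. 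Concretely: $\Uppsi$ intertwines $\Per X$ with $\mod\Lambda_0$, while the flop carries $\Per X^+$ isomorphically to $\PerOne X$ and the chain $\Phi_0\circ\hdots\circ\Phi_{N-1}$ tracks the tensor-by-$\scrO_{X^+}(-1)$ symmetry on the $X^+$-side across these identifications; matching t-structures and using \ref{tensor cor} on the $X^+$ version gives the stated formula. Finally, checking that these assignments respect the groupoid relations (the only relation being that the two pole arrows $\upbeta^{\pm 1}$ are mutually inverse) is immediate, and by construction the orientations in the displayed diagram $\sphOnePoint{3}$ match those of $\sphOnePoint{2}$ under $\Uppsi$.

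The main obstacle I expect is the bookkeeping for the $q_+$ generator: one must be careful to set up the flop functor's compatibility with the perverse and one-perverse hearts on $X$ and $X^+$, to track how $\upbeta$ (the $\scrO(1)$-twist isomorphism $\Lambda_0\cong\Lambda_N$) interacts with the choice of ordering on projectives and simples (which by \ref{beta order} is preserved for $\ell>1$ but swapped for $\ell=1$), and to verify that the composite really conjugates $-\otimes\scrO_{X^+}(-1)$ rather than its inverse or a shifted variant. The $\ell=1$ case, where $N=1$ and the ordering flips, should be checked separately against Figure~\ref{fig.length 1}, but there $\twistGen_{\scrS_i}$ is literally a spherical twist around $\scrO_{\Curve}(i-1)$ and the diagram reduces to the classical Atiyah-flop picture, so it serves as a sanity check rather than an extra difficulty.
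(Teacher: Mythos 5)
Your proposal follows essentially the same route as the paper: conjugate the algebraic action of \ref{prop alg monod} by $\Uppsi$, identify the equatorial loops via \ref{alg twist is mono}, handle $q_-$ via \ref{tensor cor}, and for $q_+$ use the fact that the flop functor inverts $\Phi_0$ together with the $X^+$ version of \ref{tensor cor}. The paper merely makes the $q_+$ step explicit by drawing out the relevant four-square commutative diagram (outer squares from \cite[4.2]{HomMMP}, middle from \cite[7.4]{HW}), which you describe in words; the content is the same.
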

\begin{proof}
We apply $\Uppsi^{-1} \circ (\placeholder) \circ \Uppsi$ to the representation from \ref{prop alg monod}.  The action of the loops~$q_i$ follows using \ref{alg twist is mono}, and the action of $q_-$ follows from \ref{tensor cor}. For the action of the loop $q_+$, consider the following diagram
\[
\begin{tikzpicture}[xscale=\opt{ams}{1}\opt{ip}{0.9}]
\node (Am1) at (-2.5,1.5) {$\Db(\opt{ams}{\coh} X)$};
\node (A0) at (0,1.5) {$\Db(\opt{ams}{\coh} X^+)$};
\node (A5) at (6.5,1.5) {$\Db(\opt{ams}{\coh} X^+)$};
\node (A6) at (9,1.5) {$\Db(\opt{ams}{\coh} X)$};
\node (Bm1) at (-2.5,0) {$\Db(\opt{ams}{\mod}\Lambda_{0})$};
\node (B0) at (0,0) {$\Db(\opt{ams}{\mod}\Lambda_{1})$};
\node (N4) at (4,0) {$\Db(\opt{ams}{\mod}\Lambda_{1-N})$};
\node (N5) at (6.5,0) {$\Db(\opt{ams}{\mod}\Lambda_{1})$};
\node (N6) at (9,0) {$\Db(\opt{ams}{\mod}\Lambda_{0})$};
\draw[->] (Am1) -- node[above] {$\scriptstyle \flop$}(A0);
\draw[->] (A0) -- node[above] {$\scriptstyle -\otimes\scrO_{X^+}(-1)$}(A5);
\draw[->] (A5) -- node[above] {$\scriptstyle \flop^{-1}$}(A6);
\draw[->] (Bm1) -- node[above] {$\scriptstyle \Phi_{0}^{-1}$}(B0);
\draw[->] (B0) -- node[above] {$\scriptstyle \Phi_{1-N}\circ\hdots\circ\Phi_{0}$}(N4);
\draw[->] (N4) -- node[above] {$\scriptstyle \upbeta$}(N5);
\draw[->] (N5) -- node[above] {$\scriptstyle \Phi_0$}(N6);
\draw[->] (Am1) -- node[left] {$\scriptstyle \Uppsi$}(Bm1);
\draw[->] (A0) -- node[left] {$\scriptstyle \Uppsi^+$}(B0);
\draw[->] (A5) -- node[left] {$\scriptstyle \Uppsi^+$}(N5);
\draw[->] (A6) -- node[left] {$\scriptstyle \Uppsi$}(N6);
\end{tikzpicture}
\]
where the outer squares commute by \cite[4.2]{HomMMP}, and the middle commutes by \cite[7.4]{HW} applied to $X^+$.  Then, exactly as in \ref{tensor cor}, the bottom row is isomorphic to 
\[
\Db(\mod\Lambda_0)\xrightarrow{\upbeta}\Db(\mod\Lambda_{N})\xrightarrow{\Phi_0\circ \hdots\circ\Phi_{N-1}}\Db(\mod\Lambda_0).\qedhere
\]
\end{proof}
\begin{remark} For the case $\ell=1$ there is a single equatorial puncture, and the fundamental group is given by the relation $q_0^{-1} \circ q_+ \circ q_- = \Id$. In this setting Toda showed that $\twistGen_{\scrS_0}^{-1}=\flop\circ\flop$~ \cite[3.1]{TodaFlop}, so the result of~\ref{prop geom monod} is equivalent to the functorial isomorphism
\[
\flop\circ\big(-\otimes \scrO_{X^+}(-1)\big)\circ\flop \circ \big(-\otimes \scrO_X(-1)\big)=\Id.
\]
This was verified in the case of the Atiyah flop by the first author in~\cite[7.12]{Donovan}. It follows for other length one flops from work of Toda: see~\cite[end of \S5.2, Example]{TodaRes} where $\cM_{\scrS\scrK}$ is realised as a quotient of  the normalized Bridgeland stability manifold for $X$.
\end{remark}

\begin{remark}\label{Kawamata Q}
In all Dynkin types, the simples helix always contains the sheaves
\[
\scrO_{\Curve},\scrO_{2\Curve},\hdots,\scrO_{\ell\Curve},
\]
albeit not in that order.  Consequently, by \ref{local twist functors} all of the above give rise to a twist autoequivalence (see also \ref{simples global auto} later).  The more surprising fact is that the relation between them in \ref{prop geom monod}  requires other sheaves, including $\scrZ$ and their Grothendieck duals.  This answers a question of Kawamata \cite[6.8]{Kawamata}.
\end{remark}

\subsection{Symmetric Geometric Monodromy}
It is possible to obtain a more symmetric version of the above.  Write $p_\pm\in\cM_{\scrS\scrK}$ for a choice of basepoints near the poles $q_\pm\in S^2$.  We think of these points $p_\pm$ as large radius limits in $\cM_{\scrS\scrK}$.

\begin{thm}\label{monodromy main}
The fundamental groupoid $\uppi_1(\cM_{\scrS\scrK}, \{p_\pm\})$ acts on the pair of categories $\Db(\coh X)$ and $\Db(\coh X^+)$ as follows: assign categories to points $p_\pm$, and functors to paths as below, where the left hand side is $\ell=1$, and the right hand side $\ell>1$. 
\[
\sphLengthOne{0}
\qquad
\sphTwoPoint{2}
\]
\end{thm}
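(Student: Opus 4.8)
The plan is to deduce this groupoid statement from the two ``single-basepoint'' computations already in hand, namely \ref{prop geom monod} applied to $X$ and the same result applied to $X^+$, glued together via the flop functor. First I would set up the combinatorial picture: the sphere $\cM_{\scrS\scrK}$ with $N+2$ punctures admits a fundamental groupoid on the two basepoints $p_\pm$ which is generated by $N+2$ arcs, one hugging each equatorial puncture from $p_-$ to $p_+$ (or a path through it), together with the paths that realize the ``large radius'' identifications at the poles. The target is to exhibit, for each generating arc, the prescribed functor, and to check that the defining relations of the groupoid (the product of all loops around punctures is trivial, suitably based) are respected. Since \ref{prop geom monod} already verifies the $p_-$-based version, and the groupoid is connected, it suffices to check compatibility along a single path from $p_-$ to $p_+$ together with the symmetry $X\leftrightarrow X^+$.

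The key steps, in order, would be: (1) Recall from \ref{prop algebraic groupoid} that on the algebraic side the groupoid $\uppi_1(\cM_{\scrS\scrK},\{p_j\})$ acts on the chain $\Db(\mod\Lambda_j)$ by the mutation functors $\Phi_j$ and the twist isomorphisms $\upbeta^{\pm1}$; the categories $\Db(\mod\Lambda_0)$ and $\Db(\mod\Lambda_1)$ sit at adjacent chambers and correspond, via $\Uppsi$ and $\Uppsi^+$ respectively, to $\Db(\coh X)$ and $\Db(\coh X^+)$ — this last identification being exactly the statement that $\PerOne X = \Uppsi^{-1}\Tilt_1(\scrA)$ together with the fact that the flop functor is $\Phi_0^{-1}$, as used in \ref{Tilt duality alg} and in the proof of \ref{prop geom monod}. (2) Assign $\Db(\coh X)$ to $p_-$ and $\Db(\coh X^+)$ to $p_+$, and transport the algebraic groupoid action of \ref{prop algebraic groupoid} through $\Uppsi$ at the $\Lambda_0$-vertex and through $\Uppsi^+$ at the $\Lambda_1$-vertex. (3) Identify the resulting functors: the monodromy loops at equatorial punctures based at $p_-$ become $\twistGen_{\scrS_i}$ on $\Db(\coh X)$ by \ref{alg twist is mono} and \ref{local twist functors}, exactly as in \ref{prop geom monod}; the loops based at $p_+$ become $\twistGen_{\scrS'_i}$ on $\Db(\coh X^+)$ by the same argument applied to the flop; the two polar paths become $-\otimes\scrO_X(1)$ and $-\otimes\scrO_{X^+}(1)$ by \ref{tensor cor} (applied to $X$ and to $X^+$); and the arc connecting the two basepoints through the ``central'' region is the flop functor $\flop$, since across the $\Lambda_0$–$\Lambda_1$ wall the mutation functor $\Phi_0$ is $\flop^{-1}$ and its inverse is $\flop$ by \cite[4.2]{HomMMP}. (4) For the case $\ell=1$, where $N=1$ and there is a single equatorial puncture, note $\upbeta$ swaps the two simples (\ref{beta order}), so the two polar monodromies are $\otimes\scrO_X(1)$ and $\otimes\scrO_{X^+}(1)$ up to conjugation by $\flop$, matching the left-hand picture — here one additionally uses Toda's relation $\twistGen_{\scrS_0}^{-1}=\flop\circ\flop$ recorded in the remark after \ref{prop geom monod} to see consistency. (5) Check the groupoid is well-defined: the only relations are that composing all the equatorial arcs (with the polar arcs) around is trivial and that $\upbeta,\upbeta^{-1}$ are mutually inverse; the first follows because the corresponding composite on the algebra side is the identity after one full period (this is the content of \eqref{obvious comp} together with \ref{tensor cor}), and the second is immediate.

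The main obstacle I expect is \textbf{bookkeeping of orientations, shifts, and the parity of $N$} when gluing the $X$-picture to the $X^+$-picture: one must make sure that the arc realizing $\flop:\Db(\coh X)\to\Db(\coh X^+)$ lands at the correct vertex $\Lambda_1$ with the correct ordering of projectives/simples, that the return arc $\flop^{-1}$ composed with $\otimes\scrO_{X^+}(1)$ reproduces $q_+$ of \ref{prop geom monod} rather than its inverse, and that the $\ell=1$ degenerate case (where $\upbeta$ is a nontrivial permutation and a single loop plays both equatorial roles) is handled without sign errors. Concretely, the delicate point is verifying that the composite along the bottom row of the large diagram in the proof of \ref{prop geom monod} — namely $\Db(\mod\Lambda_0)\xrightarrow{\upbeta}\Db(\mod\Lambda_N)\xrightarrow{\Phi_0\circ\cdots\circ\Phi_{N-1}}\Db(\mod\Lambda_0)$ — is compatible, vertex by vertex, with the two-basepoint refinement, so that the groupoid relation closes up; once that diagram chase is done carefully for both $\ell=1$ and $\ell>1$, the theorem follows formally by transporting \ref{prop algebraic groupoid} across $\Uppsi$ and $\Uppsi^+$.
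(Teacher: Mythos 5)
Your proposal is correct and takes essentially the same route as the paper: the paper also transports the algebraic groupoid action of \ref{prop algebraic groupoid} to geometry by replacing two adjacent vertices of the strip \eqref{functors strip 1} with $\Db(\coh X)$ and $\Db(\coh X^+)$ via $\Uppsi$ and $\Uppsi_+$ (using \cite[4.2]{HomMMP} that $\flop^{-1}\cong\Phi_0$ under these equivalences), places them at the poles, reads off the generators $a,b_i,c_i,d$ as explicit compositions along the strip, and identifies $b_i,c_i$ with the twists via \ref{alg twist is mono} and $a,d$ with line-bundle tensors as in \ref{prop geom monod}. The only presentational difference is that the paper treats well-definedness as tautological from the strip diagram, whereas you propose explicitly checking the closing-up relation; this is a harmless redundancy.
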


\begin{proof}
By \cite[4.2]{HomMMP}, the Bridgeland--Chen flop functors are inverse to the mutation functors $\Phi_0$ under the tilting equivalences $\Uppsi$ and $\Uppsi_+=\RHom_{X^+}(\scrO_{X^+}\oplus\scrM_{X^+},-)$, so we can functorially replace \eqref{functors strip 1} with the following strip.
\[
\begin{array}{c}
\begin{tikzpicture}[xscale=\opt{ams}{1.3}\opt{ip}{1.05}]
\node (A0) at (-0.5,0) {$\Db(\opt{ams}{\mod}\Lambda_{-1})$};
\node (A1) at (2,0) {$\Db(\opt{ams}{\coh} X)$};
\node (A2) at (4,0) {$\Db(\opt{ams}{\coh} X^+)$};
\node (A3) at (6.3,0) {$\Db(\opt{ams}{\mod}\Lambda_{2})$};
\draw[->] (-1.9,0.05) -- node[above] {$\scriptstyle \Phi_{-2}$}(-1.4,0.05);
\draw[<-] (-1.9,-0.05) -- node[below] {$\scriptstyle \Phi_{-2}$} (-1.4,-0.05);
\draw[->] (0.35,0.05) -- node[above] {$\scriptstyle \Uppsi^{-1}\Phi_{-1}$}(1.2,0.05);
\draw[<-] (0.35,-0.05) -- node[below] {$\scriptstyle \Phi_{-1}\Uppsi$} (1.2,-0.05);
\draw[->] (2.75,0.05) -- node[above] {$\scriptstyle \flop^{-1}$}(3.2,0.05);
\draw[<-] (2.75,-0.05) -- node[below] {$\scriptstyle \flop^{-1}$} (3.2,-0.05);
\draw[->] (4.75,0.05) -- node[above] {$\scriptstyle \Phi_1\Uppsi_+$}(5.5,0.05);
\draw[<-] (4.75,-0.05) -- node[below] {$\scriptstyle \Uppsi_+^{-1}\Phi_1$} (5.5,-0.05);
\draw[->] (7.05,0.05) -- node[above] {$\scriptstyle \Phi_2$}(7.55,0.05);
\draw[<-] (7.05,-0.05) -- node[below] {$\scriptstyle \Phi_2$} (7.55,-0.05);
\end{tikzpicture}
\end{array}
\]
Bring \opt{ams}{the category }$\Db(\coh X)$ to near the top pole,  \opt{ams}{the category }$\Db(\coh X_+)$ to near the bottom, and  notate monodromy and orientation as in the following diagram.
\[
\sphTwoPoint{3}
\]
Then, simply composing arrows and their inverses in the above strip to travel above and below holes as appropriate, we tautologically obtain a representation via
\[
\begin{array}{ccc@{\hspace{2pt}}c@{\hspace{2pt}}c@{\hspace{2pt}}c@{\hspace{2pt}}c@{\hspace{2pt}}c@{\hspace{2pt}}c@{\hspace{2pt}}c@{\hspace{2pt}}c@{\hspace{2pt}}c@{\hspace{2pt}}c@{\hspace{2pt}}c@{\hspace{2pt}}c@{\hspace{2pt}}c@{\hspace{2pt}}c@{\hspace{2pt}}c@{\hspace{2pt}}c@{\hspace{2pt}}c@{\hspace{2pt}}c}
b_i &\mapsto&\flop
&
\circ
&
(\Phi_1\Uppsi_+)^{-1}
&
\circ
&
\hdots
&
\circ
&
\Phi_{i-1}^{-1}
&
\circ
&
\opt{ams}{\Phi_{i}
&
\circ
&
\Phi_{i}}
\opt{ip}{\Phi_{i}^2}
&
\circ
& 
\Phi_{i-1}
&
\circ
&
\hdots
&
\circ
&
(\Phi_1\Uppsi_+)
&
\circ
&
\flop^{-1}\\[3pt]
c_i &\mapsto&
\flop
&
\circ
&
(\Phi_{-1}\Uppsi)^{-1}
&
\circ
&
\hdots
&
\circ
&
\Phi_{-i+1}^{-1}
&
\circ
&
\opt{ams}{\Phi_{-i}
&
\circ
&
\Phi_{-i}}
\opt{ip}{\Phi_{-i}^2}
&
\circ
&
\Phi_{-i+1}
&
\circ
&
\hdots
&
\circ
&
(\Phi_{-1}\Uppsi)
&
\circ
&
\flop^{-1}
\end{array}
\]
and
\[
\begin{array}{ccc@{\hspace{2pt}}c@{\hspace{2pt}}c@{\hspace{2pt}}c@{\hspace{2pt}}c@{\hspace{2pt}}c@{\hspace{2pt}}c@{\hspace{2pt}}c@{\hspace{2pt}}c@{\hspace{2pt}}c@{\hspace{2pt}}c@{\hspace{2pt}}c@{\hspace{2pt}}c@{\hspace{2pt}}c@{\hspace{2pt}}c@{\hspace{2pt}}c}
a&\mapsto&(\Uppsi^{-1}\Phi_{-1})
&
\circ
&
\hdots
&
\circ
&
\Phi_{-N/2}
&
\circ
&
\upbeta^{-1}
&
\circ
&
\Phi_{N/2-1}
&
\circ
&
\hdots
&
\circ
&
(\Phi_1\Uppsi_+)
&
\circ
&
\flop^{-1} \\[3pt]
d&\mapsto&(\Uppsi_+^{-1}\Phi_1)
&
\circ
&
\hdots
&
\circ
&
\Phi_{N/2-1}
&
\circ
&
\upbeta
&
\circ
&
\Phi_{-N/2}
&
\circ
&
\hdots
&
\circ
&
(\Phi_{-1}\Uppsi)
&
\circ
&
\flop^{-1}.
\end{array}
\]
By \cite[4.2]{HomMMP} we have $\Uppsi_+\flop^{-1}\cong\Phi_0\Uppsi$, so using \ref{alg twist is mono} we see that the image of $b_i$ is functorially isomorphic to $\twistGen_{\scrS_i}$.  Similarly, observe  that on $X^+$  the perverse zero tilted algebra is $\Lambda_0'\cong\Lambda_1$, and that the hyperplane arrangement swaps direction for $X^+$.  Hence, by the symmetry of the situation, using \cite[4.2]{HomMMP} and the $X^+$ version of \ref{alg twist is mono}, it follows that the image of $c_i$ is isomorphic to $\twistGen_{\scrS_i'}$, where $\{\scrS_i'\}_{i\in\mathbb{Z}}$ is the simples helix on $X^+$.  That the image of $a$ is $-\otimes\scrO_X(1)$ and the image of $d$ is $-\otimes\scrO_{X^+}(1)$ follows exactly as in \ref{prop geom monod}.
\end{proof}

\section{Applications to Deformation Theory and Curve Invariants}

In this section, we show that the noncommutative deformation functor associated to every member of the simples helix is representable.  We then control the representing object, describe precisely when it is not commutative, and give lower bounds on its dimension.  As a consequence, we construct global autoequivalences and produce tight lower bounds on Gopakumar--Vafa (GV) invariants for higher length flops.

\subsection{Deformation Theory}\label{def applications}
For background on noncommutative deformation theory, we refer the reader to \cite{DW1} or \cite{DW2}.  For our purposes here, for any $E\in\coh X$ there is a deformation functor
\[
\Def_E^X\colon\art_1\to\Sets
\]
from the category $\art_1$ of augmented $\mathbb{C}$-algebras to the category $\Sets$ of sets, controlled by the DGA $\sEnd_X(\scrI)$, where $0\to E\to\scrI$ is some injective resolution.  For any $M\in\mod \Lambda$ there is a similar functor $\Def_M^\Lambda$, again controlled by the endomorphism  DGA of $M$.

Recall from \ref{def alg def} that $\Lambda_i^{\deform}\colonequals\Lambda_i/[\scrV_i]$.  Since $\Lambda_i^{\deform}$ is clearly a factor of $\Lambda_i$, the techniques in \cite{DW1,DW2,DW3} still hold.

\begin{remark}\label{calibration remark}
As calibration, $\Lambda_0^{\deform}=\End_R(N)/[R]$. This is the contraction algebra $\CA$ in \cite[2.12, 3.1]{DW1}, which represents noncommutative deformations of $\scrO_{\Curve}(-1)$,  the zeroth member $\scrS_0$ of the simples helix. Further, $\Lambda_1^{\deform}=\End_R(R)/[M]$. This is the fibre algebra $\mathrm{B}_{\mathrm{fib}}$ in \cite[5.1, (5.B)]{DW2}, which represents noncommutative deformations of $\scrO_{\ell\Curve}$, the first member $\scrS_1$ of the simples helix.
\end{remark}

The following extends \ref{calibration remark} to all $i\in\mathbb{Z}$, and is the main result of this subsection.

\begin{thm}\label{all in helix have rep}
For all $i\in\mathbb{Z}$, write $S$ for the simple $\Lambda_i$-module corresponding to the projective $\Hom_X(\scrP_i,\scrV_{i-1})$. Then
\begin{enumerate}
\item There is a functorial isomorphism $\Def^X_{\scrS_i}\cong\Def^{\Lambda_i}_{S}$.
\item $\Lambda_i^{\deform}$ represents the noncommutative deformation functor of \opt{ip}{the objects }$\scrS_i$ in\opt{ams}{ }\opt{ip}{~}$X$. 
\end{enumerate}
\end{thm}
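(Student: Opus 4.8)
The strategy is to transport the deformation problem on $X$ to the algebra side via the tilting bundle $\scrP_i$, and then recognise the result as a known representability statement. First I would establish part (1). By \ref{progen main}\eqref{progen main 3} we have $\Uppsi_i\cong\RHom_X(\scrP_i,-)$, which by \ref{Psi per 0 general} restricts to an abelian equivalence $\Tilt_i(\Per)\xrightarrow{\sim}\mod\Lambda_i$. By \ref{simples main}, the sheaf $\scrS_i$ lies in $\Tilt_i(\Per)$ and is a simple object there, corresponding under $\Uppsi_i$ to the simple $\Lambda_i$-module $S$ attached to the projective $P_1=\Hom_X(\scrP_i,\scrV_{i-1})$ (the indexing is exactly the $(P_0,P_1)$ convention of \S\ref{ordering section}, via \ref{simples main}). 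An equivalence of abelian categories induces an isomorphism of the associated endomorphism DGAs of an object and its image — concretely, $\sEnd_X(\scrI)$ for an injective resolution $0\to\scrS_i\to\scrI$ is quasi-isomorphic to the endomorphism DGA of $S$ computed inside $\Db(\mod\Lambda_i)$, since $\RHom_X(\scrP_i,-)$ is an exact equivalence sending $\scrS_i$ to $S$. As noncommutative deformation functors are controlled by (the $A_\infty$-structure on) these DGAs, this yields the functorial isomorphism $\Def^X_{\scrS_i}\cong\Def^{\Lambda_i}_{S}$. One should be slightly careful that $\scrS_i$ is genuinely a \emph{sheaf} (not merely an object of the heart) so that $\Def^X_{\scrS_i}$ is the honest sheaf-deformation functor; this is already recorded in \ref{simples main} and the discussion of $\scrE_k$ in \S\ref{twist local sect}.

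Next, part (2) follows by identifying $\Lambda_i^{\deform}$ as the pro-representing object of $\Def^{\Lambda_i}_{S}$. Here the key input is that $\Lambda_i=\End_R(V_{i-1}\oplus V_i)$ is a (complete, module-finite) algebra arising from a rigid reflexive module over an isolated $\mathrm{cDV}$ singularity, exactly the setting of \cite{DW1}. The contraction-algebra machinery of \cite{DW1} (and its refinements in \cite{DW3}) shows that for such an algebra, the noncommutative deformations of the simple $S$ corresponding to a chosen indecomposable summand $V_{i-1}$ are represented by the factor $\End_R(V_{i-1}\oplus V_i)/[V_i]$, which is precisely $\Lambda_i^{\deform}$ by \ref{def alg def}. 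Two finiteness hypotheses need checking: that $\Lambda_i^{\deform}$ is finite-dimensional and complete (so that it is an object of the relevant category $\proart$), which is \ref{def alg fd and complete}; and that mutation is involutive so the relevant homological dimensions are finite — this is the remark in \S\ref{twist local sect} that mutating twice returns the original module, giving $\Lambda_i^{\deform}\in\Perf$, so the arguments of \cite[\S6]{DW1} apply verbatim with $\scrP_0$ replaced by $\scrP_i$. Combining with part (1) then gives that $\Lambda_i^{\deform}$ represents $\Def^X_{\scrS_i}$.

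\textbf{Main obstacle.} The routine-looking but genuinely delicate point is the passage from "equivalence of abelian categories" to "isomorphism of deformation functors": noncommutative deformation theory is governed not by the bare Ext-algebra but by the full $A_\infty$ (or DG) enhancement of $\RHom$, so one must ensure the equivalence $\Uppsi_i$ is realised by (or compatible with) an enhancement — which it is, being a Fourier–Mukai/tilting equivalence — and that it sends an injective resolution of $\scrS_i$ to something computing $\RHom_{\Lambda_i}(S,S)$ with its $A_\infty$-structure intact. In practice this is exactly the content of the cited results \cite[6.10, 6.11, 6.13, 6.16]{DW1} together with \cite[4.3]{DW3}, and the proof amounts to checking that each ingredient there used only formal properties of $\scrP_0$ as a tilting bundle inducing the perverse heart, all of which $\scrP_i$ shares by \ref{progen main} and \ref{Psi per 0 general}. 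The second, lesser, subtlety is matching conventions: one must confirm that the simple $S$ singled out in the statement (attached to $\Hom_X(\scrP_i,\scrV_{i-1})$) is the same one that $\scrS_i$ maps to under $\Uppsi_i$, which is pinned down by \ref{simples main} and the ordering fixed in \S\ref{ordering section}.
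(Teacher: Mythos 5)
Your proof is correct and follows the same route as the paper's: transport the deformation problem via the tilting equivalence $\Uppsi_i=\RHom_X(\scrP_i,-)$ using \ref{Psi per 0 general} as a drop-in replacement for \eqref{Psi per 0}, identify $\scrS_i$ with the simple $S$ attached to $\Hom_X(\scrP_i,\scrV_{i-1})$ via \ref{simples main}, invoke the representability machinery of \cite{DW1,DW2} for the resulting $\Lambda_i$-module, and check finiteness via \ref{def alg fd and complete}. One small indexing slip: $\Hom_X(\scrP_i,\scrV_{i-1})$ equals $P_1$ only when $i$ is even and equals $P_0$ when $i$ is odd (precisely the case-split that your own citation of \ref{simples main} records), but since you work with the projective itself rather than the label, the argument is unaffected.
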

\begin{proof}
This follows immediately from our previous papers \cite{DW1,DW2}, with the key point being that \ref{Psi per 0 general} just directly replaces \eqref{Psi per 0} in all proofs. Indeed, the simple $S$ corresponding to the projective summand $\Hom_X(\scrP_i,\scrV_{i-1})$
always corresponds to a sheaf (namely $\scrS_i$) across the equivalence \ref{Psi per 0 general}.

Using the fact that $S$ corresponds to a sheaf, just repeating word-for-word \cite[3.9, 5.3]{DW2}, or \cite[\S3]{DW1}, there is a functorial isomorphism
\[
\Def^{X}_{\scrS_i}\cong\Def^{\Lambda_i}_{S}.
\] 
That the right hand side is prorepresented by $\Lambda_i^{\deform}$ is just \cite[3.1]{DW1}.  That $\Lambda_i^{\deform}\in\art_1$, and so the functor is representable, is \ref{def alg fd and complete}.\end{proof}

When $X$ is smooth, GV invariants $\GV{1},\hdots,\GV{\ell}$ of the flopping curve were defined in \cite{BKL,Katz}.  The following shows how to obtain \emph{all} of these from the NCCRs of $R$, via mutation and taking factors.  We remark that the following is mildly awkward to state, due in part to the ordering in the simples helix, but also due to the existence of $\scrZ$ in high length.
\begin{cor}\label{comm defs rep}
The commutative deformations of $\scrS_i$ are represented by the abelianisation of $\Lambda_i^{\deform}$, written  $(\Lambda_i^{\deform})_{\ab}$. In particular, if $X$ is smooth, consider the GV invariants $\GV{1},\hdots,\GV{\ell}$.  Then 
\[
\GV{i}=\dim_{\mathbb{C}}(\Lambda_{j}^{\deform})_{\ab}
\]
for appropriate $j$, depending on  both $i$ and the length $\ell$.
\end{cor}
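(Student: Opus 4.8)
\textbf{Proof proposal for Corollary~\ref{comm defs rep}.} The plan is to prove the two statements in turn, reducing the commutative claim to \ref{all in helix have rep} and then matching the abelianised helix deformation algebras against the Katz/BKL definition of GV invariants. First, for the abelianisation statement: since $\Lambda_i^{\deform}$ prorepresents (in fact represents, by \ref{def alg fd and complete}) the noncommutative deformation functor $\Def^X_{\scrS_i}$ by \ref{all in helix have rep}, one passes to commutative Artinian test algebras. The commutative deformation functor $\Def^{X,\mathrm{comm}}_{\scrS_i}$ is the restriction of $\Def^X_{\scrS_i}$ along the inclusion of commutative augmented algebras into $\art_1$, and a standard fact from noncommutative deformation theory (see \cite{DW1,DW2}) is that if a noncommutative deformation functor is prorepresented by $D$, then its commutative restriction is prorepresented by the abelianisation $D_{\ab}=D/[D,D]$, where we close the commutator ideal and complete. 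Applying this with $D=\Lambda_i^{\deform}$, which is finite dimensional, gives that $(\Lambda_i^{\deform})_{\ab}$ is itself finite dimensional and represents $\Def^{X,\mathrm{comm}}_{\scrS_i}$. This is essentially \cite[Ch.\ 3]{DW1} applied verbatim, with \ref{Psi per 0 general} replacing \eqref{Psi per 0} exactly as in the proof of \ref{all in helix have rep}.

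Second, for the GV statement, assume $X$ is smooth. The key point is that the GV invariant $n_a$ of the flopping curve, in the definition of \cite{BKL,Katz}, is computed as $\dim_\mathbb{C}$ of the (commutative) deformation algebra of $\scrO_{a\Curve}$ — more precisely, of $\scrO_{a\Curve}$ viewed as a sheaf supported on $a\Curve$, whose commutative deformations record the local multiplicity data of the fibre. By the simples helix construction in~\S\ref{section simples helix}, each sheaf $\scrO_{a\Curve}$ for $1\le a\le\ell$ occurs as some member $\scrS_j$ of the helix, where the index $j=j(a,\ell)$ is read off from the explicit list: for $\ell\le 4$ one has $\scrS_1,\ldots,\scrS_{\ell-1}=\scrO_{\ell\Curve},\ldots,\scrO_{2\Curve}$ together with $\scrS_0=\scrO_\Curve(-1)$ accounting for $a=1$ via the Katz sequence normalisation, and for $\ell=5,6$ the sheaf $\scrZ$ is interposed, shifting the later indices by one. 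Hence by the first part, $n_a=\dim_\mathbb{C}(\Lambda^{\deform}_{j(a,\ell)})_{\ab}$, which is the claimed formula.

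I would organise the writeup as: (i) invoke \ref{all in helix have rep} and the abelianisation principle to get the first sentence; (ii) recall the definition of GV invariants from \cite{Katz} as dimensions of commutative deformation spaces of $\scrO_{a\Curve}$ (here one must be slightly careful, as \cite{Katz} works with the length filtration and \cite{BKL} with the genus-zero GW contributions — the needed input is that these agree and equal $\dim$ of the commutative contraction-type algebra, which for $\scrO_{\Curve}(-1)$ is the classical statement $n_1=\dim\CA_{\ab}$ of \cite{DW1}); (iii) tabulate $j(a,\ell)$ from the explicit helix in~\S\ref{section simples helix}. The main obstacle is step (ii): pinning down precisely which deformation-theoretic quantity the GV invariant $n_a$ is, and verifying that the sheaf whose commutative deformations it counts is exactly $\scrO_{a\Curve}$ (equivalently $\scrS_{j(a,\ell)}$), rather than some twist or the reduced structure sheaf. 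For $a=1$ this is the established identification $n_1=\dim(\CA)_{\ab}=\dim(\Lambda_0^{\deform})_{\ab}$ of \cite{DW1}; for $a\ge 2$ one appeals to the compatibility of the length stratification with the Katz sequences \eqref{Katz ses} and the fact that $\mathrm{H}^0(\scrO_{a\Curve})=\mathbb{C}$, $\mathrm{H}^1(\scrO_{a\Curve})=0$ from \ref{Katz prop}, so that the deformation theory is unobstructed in the relevant sense and the count is genuinely $\dim(\Lambda^{\deform}_{j(a,\ell)})_{\ab}$. The ordering subtlety noted in the statement (the "appropriate $j$" and the role of $\scrZ$) is then a purely bookkeeping matter resolved by the table above.
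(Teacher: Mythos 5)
Your proposal follows the paper's proof essentially verbatim: the abelianisation statement is immediate from \ref{all in helix have rep} together with the standard fact that the commutative restriction of a noncommutative deformation functor prorepresented by $D$ is prorepresented by $D_{\ab}$; the GV statement reduces to the \cite{BKL,Katz} identification of $n_a$ with the Hilbert scheme multiplicity of $\scrO_{a\Curve}$ (equivalently, the dimension of the prorepresenting hull of its commutative deformation functor), followed by locating $\scrO_{a\Curve}$ as some $\scrS_j$ in the helix. One small point worth tightening: your justification for why $\scrS_0=\scrO_{\Curve}(-1)$ "accounts for $a=1$" via the "Katz sequence normalisation" is not quite the right reason. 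The cleanest argument is either that $\scrO_{\Curve}$ itself appears as $\scrS_N=\scrS_0\otimes\scrO(1)$ with $\Lambda_N^{\deform}\cong\Lambda_0^{\deform}$ (since $\upbeta$ is a ring isomorphism), or that $-\otimes\scrO(1)$ is an autoequivalence of $\coh X$ so the deformation functors of $\scrO_{\Curve}(-1)$ and $\scrO_{\Curve}$ are naturally isomorphic; the identity $n_1=\dim(\CA)_{\ab}$ is then the established result of \cite{DW1}. The caution you raise in step (ii) — that one must make sure the Hilbert scheme multiplicity equals the dimension of the sheaf-theoretic deformation space — is reasonable, and the input which resolves it is exactly what you cite: $\mathrm{H}^0(\scrO_{a\Curve})=\mathbb{C}$ and $\mathrm{H}^1(\scrO_{a\Curve})=0$ from \ref{Katz prop} guarantee the two functors agree locally; the paper treats this as absorbed into the \cite{BKL,Katz} citation.
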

\begin{proof}
By \ref{all in helix have rep} noncommutative deformations of $\scrS_i$ are represented by $\Lambda_{i}^{\deform}$ and so, as is standard, its abelianisation represents the commutative deformations.

It is well-known \cite{BKL, Katz} that $\GV{i}$ is the multiplicity of $\scrO_{i\Curve}$ in the Hilbert scheme, which is precisely the dimension of the representing object of the commutative deformations of the sheaf $\scrO_{i\Curve}$.  Then simply observe that the simples helix begins either $\scrO_{\Curve}(-1),\scrO_{\ell\Curve},\hdots,\scrO_{2\Curve}$ when $\ell\leq 4$, or $\scrO_{\Curve}(-1),\scrO_{\ell\Curve},\hdots,\scrO_{3\Curve},\scrZ,\scrO_{2\Curve}$ when $\ell=5,6$.  In all cases, the sheaves $\scrO_{i\Curve}$ appear as some $\scrS_j$.
\end{proof}

\subsection{Commutativity of Deformation Algebras}
We can go further, and determine when $\Lambda_i^{\deform}$ is commutative, and thus establish when commutative deformations suffice.  From this, we also determine the only sheaves in the simples helix that can possibly be genuinely spherical, and furthermore later give lower bounds on GV invariants. 

The techniques in this subsection are applicable when $X$ is Gorenstein terminal, however, as our main goal is controlling GV invariants, we restrict here to smooth $X$.  To determine $\Lambda_i$ and thus $\Lambda_i^{\deform}$ requires first an understanding of its quiver.  By the Perverse--Tilt duality, we just need to know the quivers  for $i=1,\hdots,N/2$.  To do this, we first slice by a generic element $g$. 

\begin{thm}\label{quivers and dims}
Suppose that $X\to\Spec R$ where $X$ is smooth. Writing $\mathbb{F}=-\otimes_RR/g$, then the dimensions of $\mathbb{F}\Lambda_i^{\deform}$ and $(\mathbb{F}\Lambda_i^{\deform})_{\ab}$, and whether $\Lambda_i$ is commutative, is summarised in the following table\opt{ip}{s}
\def\table{$1$ & $0$ & $1$ &$1$ \opt{ams}{&\cmark}\\
$2$ & $2,0$  & $4$,$1$& $3$,$1$\opt{ams}{& \xmark,\,\cmark}\\
$3$ & $2$,$0$,$1$ & $12$,$1$,$3$& $5$,$1$,$3$\opt{ams}{&  \xmark,\,\cmark,\,\cmark}\\
$4$ & $2$,$0$,$1$,$2$ & $24$,$1$,$2$,$6$&$6$,$1$,$2$,$4$\opt{ams}{&\xmark,\,\cmark,\,\cmark,\,\xmark}\\
$5$ & $2$,$0$,$1$,$1$,$0$,$2$ & $40$,$1$,$2$,$4$,$1$,$10$ &$7$,$1$,$2$,$4$,$1$,$6$\opt{ams}{&\xmark,\,\cmark,\,\cmark,\,\cmark,\,\cmark,\,\xmark}\\
$6$ & $2$,$0$,$1$,$1$,$2$,$1$,$2$ & $60$,$1$,$2$,$3$,$6$,$2$,$15$ &$6$,$1$,$2$,$3$,$4$,$2$,$6$\opt{ams}{&\xmark,\,\cmark,\,\cmark,\,\cmark,\,\xmark,\,\cmark,\,\xmark}\\}
\def\tableB{$1$ & $0$ &\cmark\\
$2$ & $2,0$  & \xmark,\,\cmark\\
$3$ & $2$,$0$,$1$ &  \xmark,\,\cmark,\,\cmark\\
$4$ & $2$,$0$,$1$,$2$ &\xmark,\,\cmark,\,\cmark,\,\xmark\\
$5$ & $2$,$0$,$1$,$1$,$0$,$2$ &\xmark,\,\cmark,\,\cmark,\,\cmark,\,\cmark,\,\xmark\\
$6$ & $2$,$0$,$1$,$1$,$2$,$1$,$2$ &\xmark,\,\cmark,\,\cmark,\,\cmark,\,\xmark,\,\cmark,\,\xmark\\}
\opt{ams}{\[
\begin{tabular}{C{0.6cm}llll}
\toprule
$\ell$&
\textnormal{\# loops in $\mathbb{F}\Lambda_i^{\deform}$}&
\textnormal{$\dim \mathbb{F}\Lambda_i^{\deform}$}&\textnormal{$\dim (\mathbb{F}\Lambda_i^{\deform})_{\ab}$}&\textnormal{Is $\Lambda_i^{\deform}$ commutative?}\\
\midrule
\table
\bottomrule
\end{tabular}
\]}
\opt{ip}{\[\begin{tabular}{C{0.6cm}lll}
\toprule
$\ell$&
\textnormal{\# loops in $\mathbb{F}\Lambda_i^{\deform}$}&
\textnormal{$\dim \mathbb{F}\Lambda_i^{\deform}$}&\textnormal{$\dim (\mathbb{F}\Lambda_i^{\deform})_{\ab}$}\opt{ams}{&\textnormal{Is $\Lambda_i^{\deform}$ commutative?}}\\
\midrule
\table
\bottomrule
\end{tabular}\]
\[\begin{tabular}{C{0.6cm}ll}
\toprule
$\ell$&
\textnormal{\# loops in $\mathbb{F}\Lambda_i^{\deform}$}
&\textnormal{Is $\Lambda_i^{\deform}$ commutative?}\\
\midrule
\tableB
\bottomrule
\end{tabular}\]
\smallskip
}
where in each row the sequence is over the range $i=0,\hdots,N/2.$ 
\end{thm}
\begin{proof}
We prove the $\ell=3$ row\opt{ip}{ of the first table}, with all other rows being similar. To ease notation, set $\mathsf{V}_i\colonequals \mathbb{F}V_i$ and $S\colonequals \mathsf{V}_0$.   It\opt{ams}{ }\opt{ip}{~}is well-known that $\mathbb{F}\Lambda_i\cong\End_S(\mathsf{V}_{i-1}\oplus \mathsf{V}_i)$. By Katz--Morrison \cite{KM}, the CM modules $\mathsf{V}_{-1}$ and $\mathsf{V}_1$ correspond to the middle vertex of the extended $E_6$ Dynkin diagram via McKay correspondence, and by the combinatorics in the proof of \ref{ns summary}, $\mathsf{V}_2$ corresponds to the rank 2 vertex between the middle and extending vertices.

The quivers of the $\mathbb{F}\Lambda_i$, and the dimension and a presentation of the $\mathbb{F}\Lambda_i^{\deform}$ can be obtained via knitting, which we very briefly outline here, referring the reader to \cite[\S5.4]{HomMMP} and \cite[\S4]{GL2} for more details.   Indeed, the number of arrows between the vertices in the quiver of $\mathbb{F}\Lambda_i$ is identical to \cite[\S4]{GL2}, and we see that the quivers for $\mathbb{F}\Lambda_i\cong\End_S(\mathsf{V}_{i-1}\oplus\mathsf{V}_i)$ for $i=0,1,2$  are
\[
\begin{array}{c}
\begin{tikzpicture}[scale=1.25,bend angle=15, looseness=1]
\node at (-0.05,0) {$\scriptstyle -1$};
\node (a) at (0,0) [pvertex] {$\phantom 0$};
\node (b) at (1,0) [pvertex] {$0$};
\draw[->,bend left] (b) to (a);
\draw[->,bend left] (a) to (b);
\draw[<-]  (a) edge [in=-120,out=-65,loop,looseness=7]  (a);
\draw[<-]  (a) edge [in=120,out=65,loop,looseness=7]  (a);
\end{tikzpicture}
\end{array}
\qquad
\begin{array}{c}
\begin{tikzpicture}[scale=1.25,bend angle=15, looseness=1]
\node (a) at (0,0) [pvertex] {$1$};
\node (b) at (-1,0) [pvertex] {$0$};
\draw[->,bend left] (b) to (a);
\draw[->,bend left] (a) to (b);
\draw[<-]  (a) edge [in=-120,out=-65,loop,looseness=7]  (a);
\draw[<-]  (a) edge [in=120,out=65,loop,looseness=7]  (a);
\end{tikzpicture}
\end{array}
\qquad
\begin{array}{c}
\begin{tikzpicture}[scale=1.25,bend angle=15, looseness=1]
\node (a) at (1,0) [pvertex] {$2$};
\node (b) at (0,0) [pvertex] {$1$};
\draw[->,bend left] (b) to (a);
\draw[->,bend left] (a) to (b);
\draw[draw=none]  (b) to [in=-120,out=-65,loop,looseness=7]  (b);
\draw[<-]  (b) to [in=120,out=65,loop,looseness=7]  (b);
\end{tikzpicture}
\end{array}
\]
where a vertex labelled $i$ corresponds to the module $\mathsf{V}_i$.   As for the dimension of say $\mathbb{F}\Lambda_0^{\deform}$, exactly as in \cite[3.16]{DW1} we can calculate it via knitting as follows:
\[
\begin{tikzpicture}[scale=0.75,>=stealth]
\node (s1) at (0,0) {$\scriptstyle {\phantom 0}$};
\node (s2) at (2,0) {$\scriptstyle \bullet$};
\node (s3) at (4,0) {$\scriptstyle \bullet$};
\node (s4) at (6,0) {$\scriptstyle \bullet$};
\node (s5) at (8,0) {$\scriptstyle \bullet$};
\node (s6) at (10,0) {$\scriptstyle \bullet$};
\node (t1) at (1,-1) {$\scriptstyle 1$};
\node (t2) at (3,-1) {$\scriptstyle 1$};
\node (t3) at (5,-1) {$\scriptstyle 2$};
\node (t4) at (7,-1) {$\scriptstyle 1$};
\node (t5) at (9,-1) {$\scriptstyle 1$};
\node (t6) at (11,-1) {$\scriptstyle 0$};
\node (a) at (0,-2) {$\scriptstyle 1$};
\node (b) at (1,-2) {$\scriptstyle 1$};
\node (c) at (2,-2) {$\scriptstyle 2$};
\node (d) at (3,-2) {$\scriptstyle 2$};
\node (e) at (4,-2) {$\scriptstyle 3$};
\node (f) at (5,-2) {$\scriptstyle 2$};
\node (g) at (6,-2) {$\scriptstyle 3$};
\node (h) at (7,-2) {$\scriptstyle 2$};
\node (i) at (8,-2) {$\scriptstyle 2$};
\node (j) at (9,-2) {$\scriptstyle 1$};
\node (k) at (10,-2) {$\scriptstyle 1$};
\node (l) at (11,-2) {$\scriptstyle 0$};
\node (A) at (0,-3) {$\scriptstyle {\phantom 0}$};
\node (B) at (1,-3) {$\scriptstyle 1$};
\node (C) at (2,-3) {$\scriptstyle 1$};
\node (D) at (3,-3) {$\scriptstyle 2$};
\node (E) at (4,-3) {$\scriptstyle 1$};
\node (F) at (5,-3) {$\scriptstyle 2$};
\node (G) at (6,-3) {$\scriptstyle 1$};
\node (H) at (7,-3) {$\scriptstyle 2$};
\node (I) at (8,-3) {$\scriptstyle 1$};
\node (J) at (9,-3) {$\scriptstyle 1$};
\node (K) at (10,-3) {$\scriptstyle 0$};
\node (L) at (11,-3) {$\scriptstyle 0$};
\node (x1) at (0,-4) {$\scriptstyle {\phantom 0}$};
\node (x2) at (2,-4) {$\scriptstyle 1$};
\node (x3) at (4,-4) {$\scriptstyle 1$};
\node (x4) at (6,-4) {$\scriptstyle 1$};
\node (x5) at (8,-4) {$\scriptstyle 1$};
\node (x6) at (10,-4) {$\scriptstyle 0$};
\draw[->] (s1) -- (t1);
\draw[->] (s2) -- (t2);
\draw[->] (s3) -- (t3);
\draw[->] (s4) -- (t4);
\draw[->] (s5) -- (t5);
\draw[->] (s6) -- (t6);

\draw[->] (t1) -- (s2);
\draw[->] (t2) -- (s3);
\draw[->] (t3) -- (s4);
\draw[->] (t4) -- (s5);
\draw[->] (t5) -- (s6);

\draw[->] (a) -- (t1);
\draw[->] (c) -- (t2);
\draw[->] (e) -- (t3);
\draw[->] (g) -- (t4);
\draw[->] (i) -- (t5);
\draw[->] (k) -- (t6);

\draw[->] (t1) -- (c);
\draw[->] (t2) -- (e);
\draw[->] (t3) -- (g);
\draw[->] (t4) -- (i);
\draw[->] (t5) -- (k);

\draw[->] (a) -- (b);
\draw[->] (b) -- (c);
\draw[->] (c) -- (d);
\draw[->] (d) -- (e);
\draw[->] (e) -- (f);
\draw[->] (f) -- (g);
\draw[->] (g) -- (h);
\draw[->] (h) -- (i);
\draw[->] (i) -- (j);
\draw[->] (j) -- (k);
\draw[->] (k) -- (l);

\draw[->] (a) -- (B);
\draw[->] (c) -- (D);
\draw[->] (e) -- (F);
\draw[->] (g) -- (H);
\draw[->] (i) -- (J);
\draw[->] (k) -- (L);

\draw[->] (B) -- (c);
\draw[->] (D) -- (e);
\draw[->] (F) -- (g);
\draw[->] (H) -- (i);
\draw[->] (J) -- (k);

\draw[->] (x1) -- (B);
\draw[->] (x2) -- (D);
\draw[->] (x3) -- (F);
\draw[->] (x4) -- (H);
\draw[->] (x5) -- (J);
\draw[->] (x6) -- (L);

\draw[->] (B) -- (x2);
\draw[->] (D) -- (x3);
\draw[->] (F) -- (x4);
\draw[->] (H) -- (x5);
\draw[->] (J) -- (x6);

\draw[->,densely dotted] (A) -- (b);
\draw[->,densely dotted] (C) -- (d);
\draw[->,densely dotted] (E) -- (f);
\draw[->,densely dotted] (G) -- (h);
\draw[->,densely dotted] (I) -- (j);
\draw[->,densely dotted] (K) -- (l);

\draw[->,densely dotted] (b) -- (C);
\draw[->,densely dotted] (d) -- (E);
\draw[->,densely dotted] (f) -- (G);
\draw[->,densely dotted] (h) -- (I);
\draw[->,densely dotted] (j) -- (K);

\draw (a) circle (5pt);
\draw (c) circle (5pt);
\draw (e) circle (5pt);
\draw (g) circle (5pt);
\draw (i) circle (5pt);
\draw (k) circle (5pt);

\end{tikzpicture}
\]
We deduce that $\dim_\mathbb{C}\mathbb{F}\Lambda_0^{\deform}=1+2+3+3+2+1=12$. A very similar calculation gives $\mathbb{F}\Lambda_1^{\deform}=1$, and $\mathbb{F}\Lambda_2^{\deform}=3$.

As is standard, from the dimension calculation above, it is possible to pick algebra generators of $\mathbb{F}\Lambda_i^{\deform}$.  Then, directly verifying that certain relations hold, and comparing dimensions on both sides, we can furthermore deduce by direct calculation that $\mathbb{F}\Lambda_0^{\deform}\cong\mathbb{C}\langle x,y\rangle/(x^2,y^3,(x+y)^3)$, $\mathbb{F}\Lambda_1^{\deform}\cong\mathbb{C}$, and $\mathbb{F}\Lambda_2^{\deform}\cong\mathbb{C}[x]/x^3$.  Taking the abelianisation of these, we see that 
\def\eqnDimA{\dim_\mathbb{C}(\mathbb{F}\Lambda_0^{\deform})_{\ab}\opt{ip}{&}=5\opt{ip}{,}}
\def\eqnDimB{\dim_\mathbb{C}(\mathbb{F}\Lambda_1^{\deform})_{\ab}\opt{ip}{&}=1\opt{ip}{,}}
\def\eqnDimC{\dim_\mathbb{C}(\mathbb{F}\Lambda_2^{\deform})_{\ab}\opt{ip}{&}=3.}
\opt{ams}{$\eqnDimA$, $\eqnDimB$, and $\eqnDimC$}\opt{ip}{\begin{align*}\eqnDimA \\ \eqnDimB \\ \eqnDimC\end{align*}}    

Repeating this calculation over all lengths $\ell$ gives all columns in the \opt{ams}{table except the last}\opt{ip}{first table}, and furthermore a presentation for each algebra $\mathbb{F}\Lambda_i^{\deform}$.  We claim that  $\Lambda_i^{\deform}$ is not commutative if and only if the quiver of $\mathbb{F}\Lambda_i^{\deform}$ has two loops.  The \opt{ams}{last column in the table}\opt{ip}{second table} then follows\opt{ams}{ from the first column}.

The ($\Leftarrow$) direction is a calculation.  Indeed, by the above knitting calculation (repeated for all lengths),  if the quiver of  $\mathbb{F}\Lambda_i^{\deform}$ has two loops then, by its explicit presentation in each case, we observe that $\mathbb{F}\Lambda_i^{\deform}$ is not commutative.  Hence since $\mathbb{F}\Lambda_i^{\deform}$ is a factor of $\Lambda_i^{\deform}$, certainly $\Lambda_i^{\deform}$ is not commutative.

For the ($\Rightarrow$) direction, we show the contrapositive.  Suppose that the quiver of $\mathbb{F}\Lambda_i^{\deform}$ does not have two loops.  By the \opt{ams}{above}\opt{ip}{first} table, it must have $\leq 1$ loops, and so there is an isomorphism $\mathbb{F}\Lambda_i^{\deform}\cong\mathbb{C}[y]/y^n$ for some $y$ and some $n$.   But since $\Lambda_i^{\deform}$ is local, and $g$ is not a unit, $g$ belongs to the radical $J$ of $\Lambda_i^{\deform}$. Further, every element of $A$ can be written in the form $1+ya_1+ga_2$ for some $a_1,a_2\in A$, and $J=yA+gA$.  But then $y$ and $g$ generate the finite dimensional algebra $\Lambda_i^{\deform}$, and so in particular $\Lambda_i^{\deform}$ is commutative.
\end{proof}

The following asserts the conditions under which noncommutative deformation theory is necessary, and yields additional information.
\begin{cor}\label{when NC needed}
Suppose that $X$ is smooth.  Then the representing object of $\scrS_i$ is not commutative if and only if $\scrS_i$ is, up to line bundle twist, one of the following: 
\begin{enumerate}
\item\label{sph1} $\scrO_{\Curve}$ when $\ell>1$.
\item\label{sph2} $\scrO_{2\Curve}$ when $\ell=4,5,6$.
\item\label{sph3} $\scrO_{3\Curve}$ and $\omega_{3\Curve}$ when $\ell=6$.
\end{enumerate}
Furthermore, $\scrS_i$ can be a spherical object only if $\scrS_i$ is,  up to line bundle twist, one of the following:
\begin{enumerate}[resume]
\item\label{sph4} $\scrO_{\ell\Curve}$ and $\omega_{\ell\Curve}$ in all types.
\item\label{sph5} $\scrZ$ and $\scrZ^\omega$  when $\ell=5$.
\end{enumerate}
The sheaves in \eqref{sph4} and \eqref{sph5} may or may not be spherical, depending on the example.  Indeed, the sheaves in \eqref{sph4} are spherical if and only if the top GV invariant is one.
\end{cor}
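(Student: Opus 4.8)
The plan is to read the statement off the table in \ref{quivers and dims}, after using periodicity and duality to reduce everything to the finite range of indices appearing there.

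For the noncommutativity assertions, recall from \ref{all in helix have rep} that the representing object of noncommutative deformations of $\scrS_i$ is $\Lambda_i^{\deform}$. Tensoring by $\scrO(1)$ identifies $\scrS_i$ with $\scrS_{i+N}$, and the isomorphism $\upbeta$ of \S\ref{line bundle twists section} carries $\Lambda_i^{\deform}$ onto $\Lambda_{i+N}^{\deform}$; moreover, since $\mathbb{D}$ is a contravariant equivalence and $\scrS_{-i}=\mathbb{D}(\scrS_i)[-1]$ by \eqref{Other side SKMS simples}, the representing object of $\scrS_{-i}$ is the opposite ring of that of $\scrS_i$, hence is commutative exactly when $\Lambda_i^{\deform}$ is. So commutativity of the representing object is constant on orbits under line bundle twist and under $\mathbb{D}[-1]$, and it suffices to inspect $i\in\{0,\hdots,N/2\}$, which is precisely the last column of \ref{quivers and dims}. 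The noncommutative entries there sit at $i=0$ (for $\ell\geq2$), at $i=N/2$ (for $\ell\geq4$), and additionally at $i=4$ (for $\ell=6$). By the description of the helix in \S\ref{section simples helix} these are $\scrS_0=\scrO_{\Curve}(-1)$, $\scrS_{N/2}=\scrO_{2\Curve}$, and $\scrS_4=\scrO_{3\Curve}$; adjoining the duals $\omega_{\Curve},\omega_{2\Curve},\omega_{3\Curve}$ and observing that $\omega_{a\Curve}$ and $\scrO_{a\Curve}(k)$ have Euler characteristics $-1$ and $1+ak$, so can agree only if $a\mid 2$ (giving $\omega_{\Curve}\cong\scrO_{\Curve}(-2)$ and $\omega_{2\Curve}\cong\scrO_{2\Curve}(-1)$, the latter already in \ref{omega is scr shift D4}, while $\omega_{3\Curve}$ is genuinely new), produces exactly the list \eqref{sph1}--\eqref{sph3}.

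For the spherical claims, note that since $X$ is smooth and $f$ is crepant we have $\omega_X\cong\scrO_X$, so Serre duality gives $\Ext^k_X(\scrS_i,\scrS_i)\cong\Ext^{3-k}_X(\scrS_i,\scrS_i)^{\vee}$ as $\scrS_i$ has proper support; combined with $\Hom_X(\scrS_i,\scrS_i)=\mathbb{C}$, which holds by Schur since each $\scrS_i$ is simple in a heart by \ref{simples main}, this shows $\scrS_i$ is spherical if and only if $\Ext^1_X(\scrS_i,\scrS_i)=0$. The latter is the tangent space of $\Def^X_{\scrS_i}$, so it vanishes precisely when the complete local algebra $\Lambda_i^{\deform}$ equals $\mathbb{C}$, which in particular forces $\dim_{\mathbb{C}}\mathbb{F}\Lambda_i^{\deform}=1$. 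Inspecting the $\dim\mathbb{F}\Lambda_i^{\deform}$ column of \ref{quivers and dims}, again after the periodicity/duality reduction, the only indices with this property are those with $\scrS_i=\scrO_{\ell\Curve}$ (namely $i=1$, or $i=0$ when $\ell=1$) together with, when $\ell=5$, the index $i=4$ where $\scrS_4=\scrZ$; taking duals adds $\omega_{\ell\Curve}$ and $\scrZ^{\omega}$. This gives \eqref{sph4} and \eqref{sph5}.

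For the final criterion, sphericality of the sheaves in \eqref{sph4} is invariant under line bundle twist, and under $\mathbb{D}[-1]$ since $\mathbb{D}$ is an anti-equivalence with $\Ext^k_X(\mathbb{D}E,\mathbb{D}E)\cong\Ext^k_X(E,E)$, so it is enough to treat $\scrO_{\ell\Curve}=\scrS_1$: by the above it is spherical if and only if $\Lambda_1^{\deform}\cong\mathbb{C}$, i.e.\ $\dim_{\mathbb{C}}\Lambda_1^{\deform}=1$. Since $\Lambda_1^{\deform}$ is commutative by \ref{quivers and dims}, $\dim_{\mathbb{C}}\Lambda_1^{\deform}=\dim_{\mathbb{C}}(\Lambda_1^{\deform})_{\ab}$, which by \ref{comm defs rep} equals the top GV invariant $n_\ell$; hence every sheaf in \eqref{sph4} is spherical if and only if $n_\ell=1$. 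The same argument shows $\scrZ$ in \eqref{sph5} is spherical iff $\dim_{\mathbb{C}}\Lambda_4^{\deform}=1$, a number not determined by $\ell$ alone, so one may exhibit flops where it is, and is not, equal to $1$. The one genuinely delicate step is the index bookkeeping of the middle paragraphs: matching the table entries $i\in\{0,\hdots,N/2\}$ to the correct helix members, and keeping straight which $\omega$-duals do, and do not, coincide with line bundle twists of structure sheaves.
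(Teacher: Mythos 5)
Your proof is correct and follows the same route as the paper: reduce to the finite index range $\{0,\hdots,N/2\}$ via helix periodicity and Perverse--Tilt duality, then read the (non)commutativity of $\Lambda_i^{\deform}$ and the constraint $\dim_{\mathbb{C}}\mathbb{F}\Lambda_i^{\deform}=1$ off the table in \ref{quivers and dims}, and finally invoke \ref{comm defs rep} for the GV criterion. You add two justifications the paper leaves implicit, both sound: the Serre-duality argument (using $\omega_X\cong\scrO_X$ and Schur's lemma) for the assertion that $\scrS_i$ is spherical if and only if $\Lambda_i^{\deform}\cong\mathbb{C}$, and the Euler-characteristic calculation showing $\omega_{a\Curve}$ is a line bundle twist of $\scrO_{a\Curve}$ only when $a\leq 2$, which explains why $\omega_{3\Curve}$ must be listed separately in \eqref{sph3} but $\omega_{2\Curve}$ need not be in \eqref{sph2}.
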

\begin{proof}
Accounting for line bundle twists and Perverse--Tilt duality, parts \eqref{sph1}--\eqref{sph3} are then a direct translation of the last column of the \opt{ip}{second }table in \ref{quivers and dims}.   The sheaf $\scrS_i$ is spherical if and only if $\Lambda_i^{\deform}\cong\mathbb{C}$.  Hence, if it is spherical, then certainly the factor $\mathbb{F}\Lambda_i^{\deform}$ also needs to be one-dimensional.  Thus, referring to the \opt{ip}{first }table in \ref{quivers and dims}, parts~\eqref{sph4} and \eqref{sph5} follow.  Finally, since the deformation algebra of $\scrS_1=\scrO_{\ell\Curve}$ is always commutative by \ref{quivers and dims}, its dimension equals the dimension of its abelianisation.  Hence the last statement is just \ref{comm defs rep}.
\end{proof}

\begin{remark}\label{higher mult def}
The above gives more evidence that noncommutative deformations of $\scrO_{a\Curve}$ controls how both $a\Curve$ and all its higher multiples $n(a\Curve) \colonequals (na)\Curve$ move. Indeed, by \ref{when NC needed} above, strictly noncommutative deformations of $\scrO_{a\Curve}$ exist if and only if  $2a\leq \ell$, which is if and only if higher multiples $n(a\Curve)$ of $a\Curve$ exist.
\end{remark}

\subsection{Gopakumar--Vafa corollaries}
The table below gives the first non-trivial lower bounds on GV invariants. The final column also greatly improves on results from both \cite[3.17]{DW1} and \cite[1.4]{Toda GV}; see also \ref{current bounds known} below.

\begin{cor}\label{GV bounds}
Lower bounds for GV invariants and the dimension of the contraction algebra are as follows.
\[
\begin{tabular}{C{0.6cm}lc}
\toprule
$\ell$&\textnormal{GV lower bound}&$\dim\CA$ \textnormal{lower bound}\\
\midrule
$1$ & $(1)$& $1$\\
$2$ & $(4,1)$& $8$\\
$3$ & $(5,3,1)$& $26$\\
$4$ & $(6,4,2,1)$& $56$\\
$5$ & $(7,6,4,2,1)$& $124$\\
$6$ &$(6,6,4,3,2,1)$& $200$\\
\bottomrule
\end{tabular}
\]
\end{cor}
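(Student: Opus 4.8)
The plan is to obtain both columns from three inputs: the identification of GV invariants with abelianised helix deformation algebras in \ref{comm defs rep}, the generic hyperplane slice computation of \ref{quivers and dims}, and the identity expressing $\dim_\mathbb{C}\CA$ in terms of the genus zero GV invariants. Recall from \ref{comm defs rep} that for smooth $X$ and each $1\leq k\leq\ell$ one has $n_k=\dim_\mathbb{C}(\Lambda_{j(k)}^{\deform})_{\ab}$, where $\scrS_{j(k)}$ is the term of the simples helix which, up to a line bundle twist, equals $\scrO_{k\Curve}$. By the explicit description in \S\ref{section simples helix}, the terms $\scrS_1,\ldots,\scrS_{N/2}$ run through $\scrO_{\ell\Curve},\ldots,\scrO_{2\Curve}$ when $\ell\leq 4$ and through $\scrO_{\ell\Curve},\ldots,\scrO_{3\Curve},\scrZ,\scrO_{2\Curve}$ when $\ell=5,6$, while $\scrO_{\Curve}=\scrS_0\otimes\scrO(1)$ up to twist; so every thickening $\scrO_{k\Curve}$ occurs as some $\scrS_j$ with $0\leq j\leq N/2$, which is exactly the range tabulated in \ref{quivers and dims}. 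Since $\Lambda_j^{\deform}$ is finite dimensional by \ref{def alg fd and complete} and the generic element $g$ is central in it, $\mathbb{F}\Lambda_j^{\deform}=\Lambda_j^{\deform}/g\Lambda_j^{\deform}$ is a quotient ring, and hence $(\mathbb{F}\Lambda_j^{\deform})_{\ab}$ is a quotient of $(\Lambda_j^{\deform})_{\ab}$. Thus
\[
n_k=\dim_\mathbb{C}(\Lambda_{j(k)}^{\deform})_{\ab}\;\geq\;\dim_\mathbb{C}(\mathbb{F}\Lambda_{j(k)}^{\deform})_{\ab},
\]
and the right-hand side is read off the $\dim_\mathbb{C}(\mathbb{F}\Lambda_i^{\deform})_{\ab}$ column of \ref{quivers and dims}.

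Carrying out this bookkeeping produces all of the claimed GV lower bounds except one. For instance when $\ell=5$ the helix segment is $\scrO_{\Curve}(-1),\scrO_{5\Curve},\scrO_{4\Curve},\scrO_{3\Curve},\scrZ,\scrO_{2\Curve}$ and the abelianised slice dimensions for $i=0,\ldots,5$ are $7,1,2,4,1,6$, giving $(n_1,\ldots,n_5)\geq(7,6,4,2,1)$; the remaining lengths are entirely analogous. The one entry not settled by the bare quotient inequality is $n_1$ when $\ell=2$: here $n_1=\dim_\mathbb{C}(\CA)_{\ab}$, and \ref{quivers and dims} only yields $\dim_\mathbb{C}(\mathbb{F}\CA)_{\ab}=3$, one short of the asserted $4$. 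To gain the extra unit one must show that $g$ does not vanish in $(\CA)_{\ab}$ for a length two flop --- equivalently, that the commutative deformations of $\scrO_{\Curve}$ are not confined to the generic surface slice --- so that $\dim_\mathbb{C}(\CA)_{\ab}\geq 1+\dim_\mathbb{C}(\mathbb{F}\CA)_{\ab}=4$; this can be extracted either from the superpotential presentation of $\CA$ produced by the knitting in \ref{quivers and dims}, or from the $D_4$ Dynkin combinatorics attached to a smooth length two contraction. I expect this $(\ell,k)=(2,1)$ case to be the \emph{main obstacle}; everywhere else the crude specialisation estimate already matches the asserted bound.

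For the second column I would then invoke the identity
\[
\dim_\mathbb{C}\CA=\sum_{k=1}^{\ell}k^2\, n_k,
\]
due to Toda \cite{Toda GV} (this is the identity that \cite[1.4]{Toda GV} and \cite[3.17]{DW1} bound from below, and which the sharper GV estimates now improve). Substituting the GV lower bounds from the previous paragraph gives $\dim_\mathbb{C}\CA\geq\sum_{k=1}^{\ell}k^2\cdot(\text{lower bound for }n_k)$, and a direct arithmetic check yields the stated values: $1\cdot 4+4\cdot 1=8$ for $\ell=2$; $5+12+9=26$ for $\ell=3$; $6+16+18+16=56$ for $\ell=4$; $7+24+36+32+25=124$ for $\ell=5$; and $6+24+36+48+50+36=200$ for $\ell=6$; while $\dim_\mathbb{C}\CA\geq 1$ for $\ell=1$ is immediate.

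The only remaining care is bookkeeping: one must keep the helix position of $\scrZ$ (for $\ell=5,6$) distinct from those of the thickenings, so that each $n_k$ is matched to the correct row of \ref{quivers and dims}, which I would verify against \ref{simples main} and the helix description in \S\ref{section simples helix}. Finally, the assertion that these bounds are sharp is substantiated by exhibiting, for each $\ell$, a flop in which every inequality $\dim_\mathbb{C}(\Lambda_j^{\deform})_{\ab}\geq\dim_\mathbb{C}(\mathbb{F}\Lambda_j^{\deform})_{\ab}$ --- and for $\ell=2$ the refined bound $\dim_\mathbb{C}(\CA)_{\ab}\geq 4$ --- is an equality; the standard minimal length $\ell$ flops provide such examples.
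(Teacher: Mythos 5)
Your overall strategy coincides with the paper's: identify each $n_k$ with $\dim_\mathbb{C}(\Lambda_{j}^{\deform})_{\ab}$ via \ref{comm defs rep}, observe that $(\mathbb{F}\Lambda_{j}^{\deform})_{\ab}$ is a quotient of $(\Lambda_{j}^{\deform})_{\ab}$ so the slice dimensions in \ref{quivers and dims} give lower bounds, match the thickenings $\scrO_{k\Curve}$ to the correct helix position $j$ (skipping $\scrZ$ for $\ell=5,6$), and then produce the $\dim\CA$ column from Toda's identity $\dim_\mathbb{C}\CA=\sum_k k^2 n_k$. You also correctly diagnose that the one case the slice bound cannot reach is $n_1$ for $\ell=2$: the table yields only $\dim_\mathbb{C}(\mathbb{F}\CA)_{\ab}=3$, not the asserted $4$. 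All of this matches what the paper does, and your making Toda's formula explicit is a fair completion of a step the paper leaves implicit.

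However, you do not actually close the $\ell=2$ gap. Your proposal is to show $g\neq 0$ in $(\CA)_{\ab}$ and conclude $\dim_\mathbb{C}(\CA)_{\ab}\geq 1+\dim_\mathbb{C}(\mathbb{F}\CA)_{\ab}=4$; this inference is valid, but you only say the nonvanishing of $g$ ``can be extracted'' from a superpotential presentation or $D_4$ combinatorics without doing so, and so your argument stops short. The paper instead argues directly on $\CA$ rather than via the slice: since the normal bundle of the curve is $(-3,1)$ for a smooth length two flop, \cite[2.15]{HomMMP} gives that the quiver of $\Lambda_0^{\deform}$ has two loops, so $(\CA)_{\ab}$ is a quotient of $\mathbb{C}[[x,y]]$ by two relations, both necessarily in degree $\geq 2$ by minimality of the presentation; such a quotient has dimension at least $1+2+1=4$. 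This bypasses any discussion of $g$ entirely.

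Two minor corrections. First, for $\ell=2$ the paper gets $n_2\geq 1$ simply because $\scrO_{2\Curve}$ is a nonzero sheaf (equivalently, the representing object $\Lambda_1^{\deform}$ is nonzero), not from the slice table, though the slice table happens to agree. Second, your closing claim that the bounds are sharp for every $\ell$ is not correct and not asserted by the corollary: \ref{current bounds known} records that sharpness is known only for $\ell=1,2,6$, and for $\ell=3,4,5$ the smallest known contraction algebras have dimensions $27,60,125$, strictly above the bounds $26,56,124$.
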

\begin{proof}
First recall from \ref{comm defs rep} that the GV invariants $\GV{i}$ can be obtained as $
\dim_{\mathbb{C}}(\Lambda_j^{\deform})_{\ab}$ for appropriate~$j$.

The case $\ell=1$ is clear.  When $\ell=2$, since the normal bundle of $\Curve$ is $(-3,1)$, by \cite[2.15]{HomMMP} there are two loops in the quiver of $\Lambda_0^{\deform}$.  Hence the abelianisation is at least $4$-dimensional, being a factor of $\mathbb{C}[[x,y]]$ modulo two relations, both quadratic or higher.  This proves that $\GV{1}\geq 4$. The fact that $\GV{2}\geq 1$ is obvious, given that $\scrS_1=\scrO_{2\Curve}$ is a non-zero object.  For all other cases, the dimension of a factor of the abelianisation is the column labelled $\dim \mathbb{F}(\Lambda_i^{\deform})_{\ab}$ in \ref{quivers and dims}. Obviously this gives  a lower bound for the dimension of the abelianisation.  The result then follows, just by matching the first $N/2$ members of the simples helix with the data in \ref{quivers and dims}, where recall that the simples helix begins either $\scrO_{\Curve}(-1),\scrO_{\ell\Curve},\hdots,\scrO_{2\Curve}$ when $\ell\leq 4$, or $\scrO_{\Curve}(-1),\scrO_{\ell\Curve},\hdots,\scrO_{3\Curve},\scrZ,\scrO_{2\Curve}$ when $\ell=5,6$.
\end{proof}

\begin{remark}\label{current bounds known}
It is known that the lower bounds in \ref{GV bounds} are obtained when $\ell=1$ or $\ell=2$.  From \cite{BW} it is known that the lower bound is also achieved for $\ell=6$.  The current lowest known GV invariants from \cite{BW} when $\ell=3,4,5$ are $(6,3,1)$, $(6,5,2,1)$, and $(8,6,4,2,1)$, which have contraction algebras of dimension $27$, $60$, and $125$ respectively.  Thus, the bounds in \ref{GV bounds} are very close to the current minimum. 
\end{remark}

\subsection{Global Equivalences} 
Deformation theory is analytic, but our results in the previous sections have global consequences.  Suppose that $f\colon Y\to Y_{\con}$ is a flopping contraction of quasi-projective $3$-folds, where $Y$ has at worst Gorenstein terminal singularities in a neighbourhood of the flopping curves.  Consider the exceptional fibre, which with its reduced scheme structure consists of a finite collection of curves, each isomorphic to $\mathbb{P}^1$.   Choose such a curve $\Curve$, and as notation, consider an affine open subset $U_{\con}=\Spec R$ around the point $f(\Curve)$, and set $U\colonequals f^{-1}(U_{\con})$.  Taking the formal fibre then gives the following commutative diagram.
\[
\begin{array}{c}
\begin{tikzpicture}
\node (U) at (1,0) {$U$};
\node (Uhat) at (-1,0) {$\mathfrak{U}$};
\node (X) at (3,0) {$Y$};
\node (Uc) at (1,-1.5) {$\Spec R$};
\node (Rhat) at (-1,-1.5) {$\Spec \mathfrak{R}$};
\node (Xc) at (3,-1.5) {$Y_{\con}$};
\draw[right hook->] (U) to (X);
\draw[->] (Uhat) to (U);
\draw[->] (Uhat) to (Rhat);
\draw[->] (Rhat) to (Uc);
\draw[right hook->] (Uc) to (Xc); 
\draw[->] (X) --  node[right] {$\scriptstyle f$} (Xc);
\draw[->] (U) --  node[right] {$\scriptstyle f|_U$}  (Uc);
\end{tikzpicture}
\end{array}
\]
The curve $\Curve$ can be contracted individually in $\mathfrak{U}$, and as such individually it has an associated length $\ell$, and thus has an associated simples helix $\{\scrS_i\}_{i\in\mathbb{Z}}$ in $\mathfrak{U}$.  We then view this simples helix on both $U$ and $Y$ via pushforward, but suppress this from the notation. 

\begin{lemma}\label{global rep}
With notation as above, there is a chain of isomorphisms
\[
\Def_Y^{\scrS_i}
\cong
\Def_U^{\scrS_i}
\cong
\Def_{\mathfrak{U}}^{\scrS_i}
\cong
\Hom_{\art_1}(\Lambda_i^{\deform},-).
\]
\end{lemma}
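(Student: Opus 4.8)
\textbf{Proof plan for Lemma \ref{global rep}.} The strategy is to establish the three isomorphisms separately, each reducing to a statement already available in the excerpt or in the cited companion papers \cite{DW1,DW2}. The middle isomorphism $\Def_U^{\scrS_i}\cong\Def_{\mathfrak{U}}^{\scrS_i}$ is the comparison between the deformation functor computed on the quasi-affine open $U$ and the one computed on its formal completion $\mathfrak{U}$ along $\Curve$; I would invoke the formal-functions/flat base change argument exactly as in \cite[\S3]{DW1} (or \cite[3.9]{DW2}), using that noncommutative deformations are supported on the curve $\Curve$, so that the functor only sees an infinitesimal neighbourhood, which is insensitive to passing from $U$ to $\mathfrak{U}$. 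The rightmost isomorphism $\Def_{\mathfrak{U}}^{\scrS_i}\cong\Hom_{\art_1}(\Lambda_i^{\deform},-)$ is precisely Theorem \ref{all in helix have rep}(2) applied in the formal local setting $X=\mathfrak{U}\to\Spec\mathfrak{R}$: the simples helix $\{\scrS_i\}$ is defined there, $\Lambda_i^{\deform}$ is finite-dimensional and complete by \ref{def alg fd and complete}, hence lies in $\art_1$, and \ref{all in helix have rep} identifies it as the representing object.

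The first isomorphism $\Def_Y^{\scrS_i}\cong\Def_U^{\scrS_i}$ is the genuinely global step, and I expect it to be the main obstacle. The point is that $\scrS_i$, viewed as a sheaf on $Y$ via pushforward from $\mathfrak{U}$, is scheme-theoretically supported on the single curve $\Curve$, which is proper over the affine base. One would argue that for any test algebra $A\in\art_1$, an $A$-flat deformation of $\scrS_i$ on $Y$ is automatically supported in a formal neighbourhood of $\Curve$ (flatness and the closedness of support force the deformed sheaf to have the same set-theoretic support), and thus is determined by its restriction to $U$; conversely any deformation on $U$ supported on $\Curve$ extends by zero, or rather is already a coherent sheaf on $Y$ since $U\hookrightarrow Y$ is an open immersion containing the support. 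This is the same mechanism as in \cite[3.9, 5.3]{DW2}, where the move from the global $3$-fold to the affine chart is made; I would cite that and check that the hypothesis ``$Y$ has Gorenstein terminal singularities in a neighbourhood of the flopping curves'' is exactly what is needed for those arguments to apply verbatim. One subtlety to handle carefully is that $\scrS_i$ may be one of the sheaves like $\scrZ$ or $\omega_{k\Curve}(1)$ rather than a structure sheaf, but since all members of the helix are sheaves on $\mathfrak{U}$ with support $\Curve$ (by \ref{simples main} and the construction in \S\ref{section simples helix}), the support argument is uniform across the helix.

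Assembling the three isomorphisms, all of which are functorial in the test algebra $A\in\art_1$, gives the displayed chain, and in particular shows $\Lambda_i^{\deform}$ represents $\Def_Y^{\scrS_i}$ — this is the statement quoted as Theorem \ref{global rep} in the introduction. The only real work beyond quoting \cite{DW1,DW2} is verifying that the support/base-change reductions go through for an arbitrary member of the simples helix and in the mildly singular (Gorenstein terminal) setting rather than the smooth one; since \ref{Psi per 0 general} provides the tilting equivalence $\Tilt_i(\Per)\simeq\mod\Lambda_i$ in exactly this generality, and \ref{all in helix have rep} already packages the local representability statement, I anticipate this to be short.
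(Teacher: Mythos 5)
Your plan is correct and matches what the paper actually does: the paper's proof of \ref{global rep} is a one-line citation (``This is standard; see e.g.\ \cite[(2.G)]{DW3}''), and your three-step decomposition — formal completion insensitivity for the middle isomorphism, \ref{all in helix have rep}(2) for the right, and the proper-support/open-immersion reduction for the left — is precisely the standard chain of comparisons that that citation packages, drawn from the same family of arguments in \cite{DW1,DW2,DW3}. The only minor calibration is that the passage $\Def_Y^{\scrS_i}\cong\Def_U^{\scrS_i}$ is arguably the \emph{least} delicate of the three (it amounts to $\Ext^k_Y(\scrS_i,\scrS_i)\cong\Ext^k_U(\scrS_i,\scrS_i)$ for a sheaf supported in the open $U$), rather than the main obstacle you anticipate.
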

\begin{proof}
This is standard; see e.g.\ \cite[(2.G)]{DW3}.
\end{proof}
Thus by \ref{global rep} the deformation functor associated to each sheaf $\scrS_i\in\coh Y$ is representable, and exactly as in \cite[3.3]{DW1} the resulting universal sheaf is given as the pushforward of the universal sheaf on $\mathfrak{U}$.  Again, we suppress this from the notation, and simply write $\scrE_i\in\coh Y$ for the universal sheaf on $Y$.   Exactly as in \cite[3.9]{DW1},  $\End_Y(\scrE_i)\cong \Lambda_{i}^{\deform}$.

\begin{thm}\label{simples global auto}
Suppose that $Y\to Y_{\con}$ is a flopping contraction of quasi-projective \mbox{$3$-folds}, where $Y$ has at worst Gorenstein terminal singularities.  For every contracted curve~$\Curve$ with reduced scheme structure, there is an associated length $\ell$, and simples helix $\{\scrS_i\}_{i\in\mathbb{Z}}$ viewed on $Y$. Then for all $k\in\mathbb{Z}$, there is a global autoequivalence
\[
\twistGen_{\scrS_k}\colon\Db(\coh Y)\to\Db(\coh Y),
\]
and the following statements hold.
\begin{enumerate}
\item\label{simples global auto 1} $\scrE_k\in\Perf(Y)$.
\item\label{simples global auto 2} There is a functorial triangle
\[
\RHom_Y(\scrE_{k},-)\otimes_{\Lambda_k^{\deform}}^{\bf L}\scrE_{k}\to \Id\to\twistGen_{\scrS_k}\to.
\]
\end{enumerate}
\end{thm}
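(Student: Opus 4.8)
The plan is to globalize the local construction of Theorem~\ref{local twist functors}, following the template of \cite[\S3, \S6]{DW1} and \cite[\S2--4]{DW3}, with the tilting bundle $\scrP_0$ replaced by $\scrP_k$ and the sheaf $\scrO_{\Curve}(-1)$ replaced by the helix member $\scrS_k$. Fix a contracted curve $\Curve$ and $k\in\mathbb{Z}$, and keep the notation $\mathfrak{U}\to\Spec\mathfrak{R}$, $U\to\Spec R$, $Y\to Y_{\con}$ of the diagram above. Recall, as noted before the statement, that $\Def^Y_{\scrS_k}\cong\Def^U_{\scrS_k}\cong\Def^{\mathfrak{U}}_{\scrS_k}$ is prorepresented by $\Lambda_k^{\deform}$ (\ref{global rep}), that $\End_Y(\scrE_k)\cong\Lambda_k^{\deform}$, that on the formal fibre $\mathfrak{U}$ the sheaf $\scrE_k$ is exactly the object corresponding to $\Lambda_k^{\deform}$ under \ref{Psi per 0 general}, and that $\scrE_k$ is set-theoretically supported on $\Curve\subseteq U$. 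The whole point is that the functorial triangle in~\eqref{simples global auto 2} can be written purely in terms of the \emph{global} objects $\scrE_k$ and $\Lambda_k^{\deform}$, so it suffices to reduce all the homological content back to the local statements already proven on $\mathfrak{U}$.

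First I would prove~\eqref{simples global auto 1}. Being perfect is a Zariski-local condition, and $\scrE_k$ vanishes on $Y\setminus\Curve$, so it suffices to check perfectness on $U$. Since the universal family for $\Def^U_{\scrS_k}$ lives over the finite scheme $\Spec\Lambda_k^{\deform}$, the sheaf $\scrE_k|_U$ is the honest universal sheaf on $U$, whose formal completion along $\Curve$ is the universal sheaf on $\mathfrak{U}$; thus it is enough to run the argument of the proof of \ref{local twist functors} on $\mathfrak{U}$. There, the exchange sequences \eqref{exchange 1} and \eqref{exchange 2} show that mutating at $V_k$ twice returns $M_k$, so $\Lambda_k^{\deform}$ is a factor of $\Lambda_k$ of finite projective dimension (the analogue of \cite[(5.B)]{DW1}); pulling back through the tilting equivalence $\Uppsi_k\cong\RHom_X(\scrP_k,-)$ of \ref{progen main}\eqref{progen main 3} then gives $\scrE_k\in\Perf$ on the formal fibre, hence $\scrE_k\in\Perf(Y)$.

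Next I would define the functor and establish~\eqref{simples global auto 2} by construction. Since $\scrE_k$ is perfect on $Y$, the endofunctor $-\otimes^{\bf L}_{\Lambda_k^{\deform}}\scrE_k$ of $\Db(\coh Y)$ has right adjoint $\RHom_Y(\scrE_k,-)$, and one sets
\[
\twistGen_{\scrS_k}\colonequals \mathrm{Cone}\big(\RHom_Y(\scrE_{k},-)\otimes_{\Lambda_k^{\deform}}^{\bf L}\scrE_{k}\xrightarrow{\ \mathrm{counit}\ }\Id\big).
\]
Exactly as in \cite[6.10, 6.11, 6.16]{DW1}, this is a Fourier--Mukai functor, with kernel the cone of $\scrE_k^{\dual}\boxtimes_{\Lambda_k^{\deform}}\scrE_k\to\cO_\Delta$, so the triangle in~\eqref{simples global auto 2} holds tautologically; the only thing left is that $\twistGen_{\scrS_k}$ is an equivalence.

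The hard part will be this last claim, and I would argue it by restriction to the open cover $Y=U\cup(Y\setminus\Curve)$. Over $Y\setminus\Curve$ the sheaf $\scrE_k$ is zero, so the Fourier--Mukai kernel restricts to $\cO_\Delta$ and the functor is the identity there. Over $U$, using \ref{global rep} and $\End_Y(\scrE_k)\cong\Lambda_k^{\deform}$ to identify the data, the kernel restricts to the kernel of the local twist $\twistGen_{\scrS_k}$ of \ref{local twist functors}, which is an equivalence. Since $\Curve$ is proper over $Y_{\con}$ and $\scrE_k$ is supported on it, a spanning-class/gluing argument as in \cite[6.13--6.16]{DW1} (see also \cite[p.~33]{DW3}) then upgrades full faithfulness and essential surjectivity from the cover to all of $Y$: one verifies that $\twistGen_{\scrS_k}$ composed with its adjoint is naturally isomorphic to $\Id$ separately on objects whose support meets $\Curve$ (reduce to $U$) and on objects supported away from $\Curve$ (trivial). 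The genuine obstacle is the bookkeeping that makes this rigorous — namely checking that $\RHom$ and $\otimes^{\bf L}$ over $\Lambda_k^{\deform}$ commute with restriction to the open $U$, so that the globally-defined kernel really does restrict to the local twist kernel on $U\times_{Y_{\con}}U$. Once this base-change compatibility is in place, everything reduces to the local statements of \S\ref{twist local sect}, and the theorem follows.
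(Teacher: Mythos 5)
Your proposal is essentially the same approach as the paper's: the key new input is that $\scrE_k$ is perfect (your argument for part~(1) matches the paper's almost word-for-word, reducing to the formal fibre and using the exchange sequences \eqref{exchange 1}, \eqref{exchange 2} to get $\pd_{\Lambda_k}\Lambda_k^{\deform}<\infty$, then invoking the analogue of~[DW1, (5.B)]), and part~(2) is then a bundle-swap in the existing DW1/DW3 machinery. The paper is slightly more careful about the order of operations for~(2): it does not define $\twistGen_{\scrS_k}$ as a cone, but instead constructs the dual functor $\twistGen^*_{\scrS_k}$ word-for-word as in~[DW3, \S5] with $\scrP_k$ replacing $\scrP_0$, proves that functor is an equivalence by the spanning class argument~[DW3, 5.22], and then defines $\twistGen_{\scrS_k}$ as its inverse, with the functorial triangle appearing afterwards as the analogue of~[DW3, 5.23]. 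Your kernel-level cone definition is fine, but the equivalence step as you phrase it — ``verify the adjunction is an isomorphism separately on objects supported near $\Curve$ and away from $\Curve$'' — does not literally partition $\Db(\coh Y)$ (objects can have mixed support), and gluing a natural isomorphism from an open cover requires more than checking on two classes of objects; to make this rigorous you would end up replaying precisely the spanning class argument of~[DW3, 5.22], which works via skyscraper sheaves and adjunction. So the substance of your approach agrees with the paper's, but the last paragraph should be replaced by a direct appeal to that argument rather than the ad-hoc cover reduction.
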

\begin{proof}
Part \eqref{simples global auto 1} follows exactly as in \cite[7.1]{DW1}.  Indeed, complete locally $\Lambda_k^{\deform}$ has finite projective dimension, since by \eqref{exchange 1} and \eqref{exchange 2} mutation twice returns us to our original module, so we can simply use the analogue of \cite[(5.B)]{DW1}.  Hence $\Lambda_k^{\deform}$ also has finite projective dimension Zariski locally, since projective dimension can be calculated complete locally, and $\Lambda_k^{\deform}$ is supported only at one point.  This translates into $\scrE_k$ being perfect on~$U$, and thus its pushforward is perfect on~$Y$.

There is a functor $\twistGen_{\scrS_k}^*$ which is constructed in a word-for-word identical manner to \cite[\S5]{DW3}, except that the tilting bundle $\scrP_k$ at all stages replaces the tilting bundle used in \cite{DW3}, which is $\scrP_0$ in the notation here. The key point is that $\scrE_k$ is still perfect, by~\eqref{simples global auto 1}.  Exactly the same spanning class argument in \cite[5.22]{DW3} shows that a functor $\twistGen_{\scrS_k}^*$ is an equivalence, and $\twistGen_{\scrS_k}$ is then defined to be its inverse.  The functorial triangle in part~\eqref{simples global auto 2} is again very similar: the bundle $\scrP_k$ just replaces $\scrP_0$, and the analogue of  \cite[5.23]{DW3} holds.
\end{proof}

\begin{remark}
The above establishes that all members $\scrS_i$ of the simples helix, although they need not be perfect since $X$ is singular, do deform to give a universal sheaf which is a perfect complex.
\end{remark}


\begin{thebibliography}{DW3}

\bibitem[A]{Asp}
P.~Aspinwall, \emph{A Point's Point of View of Stringy Geometry}, 
J.\ High Energy Phys.\ 2003, no.~1, 002, 15 pp, MR1970097, Zbl 1225.14033.

\bibitem[BB]{BB}
S.~Brenner and M.~C.~Butler, \emph{A spectral sequence analysis of classical tilting functors}, Handbook of tilting theory, 31--48, LMS Lecture Note Ser., \textbf{332}, Cambridge Univ.\ Press, Cambridge, 2007, MR2384607, Zbl 1106.16300. 

\bibitem[B1]{Bridgeland}
T.~Bridgeland, \emph{Flops and derived categories}, Invent.\ Math.\ \textbf{147} (2002), no.~3, 613--632, MR1893007, Zbl 1085.14017.

\bibitem[B2]{BridgelandLocalCY}
T.~Bridgeland, \emph{T-structures on some local Calabi--Yau varieties}, J.\ Alg.\ \textbf{289} (2005), no.~2, 453--483,  MR2142382, Zbl 1069.14044.

\bibitem[BS]{BS}
T.~Bridgeland and D.~Stern, \emph{Helices on del Pezzo surfaces and tilting Calabi--Yau algebras}, Adv.\ Math.\ \textbf{224} (2010), no. 4, 1672--1716, MR2646308, Zbl 1193.14022.


\bibitem[BW]{BW}
G.~Brown and M.~Wemyss, \emph{A database of $3$-fold flops}, in preparation.




\bibitem[BKL]{BKL}
J.~Bryan, S.~Katz and N.~Leung, \emph{Multiple covers and the integrality conjecture for rational curves in Calabi-Yau threefolds}, 
J.\ Algebraic Geom.\ \textbf{10} (2001), no.~3, 549--568, MR1832332, Zbl 1045.14019. 


\bibitem[D]{Donovan}
W.~Donovan, \emph{Perverse schobers on Riemann surfaces:\ constructions and examples}, Eur.\ J.\ Math.\ \textbf{5} (2019), 771--797, MR3993263, Zbl 1423.14120.

\bibitem[DW1]{DW1}
W.~Donovan and M.~Wemyss, \emph{Noncommutative deformations and flops}, Duke Math.\ J.\ \textbf{165} (2016), no.~8, 1397--1474, MR3504176, Zbl 1346.14031.


\bibitem[DW2]{DW2}
W.~Donovan and M.~Wemyss, \emph{Contractions and deformations}, Amer.\ J.\ Math.\ \textbf{141} (2019), no.~3, 563--592, MR3956515, Zbl 1420.14022.


\bibitem[DW3]{DW3}
W.~Donovan and M.~Wemyss, \emph{Twists and braids for general 3-fold flops}, J.\ Eur.\ Math.\ Soc.\ (JEMS), \textbf{21} (2019), no.~6, 1641--1701, MR3945738, Zbl 1431.14031.

\bibitem[HW1]{HW1}
Y.~Hirano and M.~Wemyss, \emph{Faithful actions from hyperplane arrangements}, Geom.\ Topol.\ \textbf{22} (2018), no.~6, 3395--3433, MR3858767, Zbl 1401.18033.


\bibitem[HW2]{HW} 
Y.~Hirano and M.~Wemyss, {\it Stability conditions for $3$-fold flops}, Duke.\ Math.\ J.\ \textbf{172} (2023), no.~16, 3105--3173.


\bibitem[IR]{IR}
O.~Iyama and I.~Reiten, \emph{Fomin--Zelevinsky mutation and tilting modules over Calabi-Yau algebras}, Amer.\ J.\ Math.\ \textbf{130} (2008), no.\ 4, 1087--1149, MR2427009, Zbl 1162.16007.

\bibitem[IW1]{IW4}
O.~Iyama and M.~Wemyss, \emph{Maximal modifications and Auslander--Reiten duality for non-isolated singularities}, Invent.\ Math.\ \textbf{197} (2014), no.~3, 521--586, MR3251829, Zbl 1308.14007.

 \bibitem[IW2]{IW9}
O.~Iyama and M.~Wemyss, \emph{Tits cones intersections and applications}, \href{https://www.maths.gla.ac.uk/~mwemyss/MainFile_for_web.pdf}{\sf preprint}. 

\bibitem[K1]{Katz}
S.~Katz, \emph{Genus zero Gopakumar-Vafa invariants of contractible curves}, J.\ Differential Geom.\ \textbf{79} (2008), no.~2, 185--195, MR2420017, Zbl 1142.32011.

\bibitem[KM]{KM}
S.~Katz and D.~R.~Morrison, \emph{Gorenstein threefold singularities with small resolutions via invariant theory for Weyl groups}, J.\ Algebraic Geom.\ \textbf{1} (1992), 449--530, MR1158626, Zbl 0788.14036.

 
\bibitem[K2]{Kawamata}
Y.~Kawamata, \emph{On multi-pointed non-commutative deformations and Calabi-Yau threefolds}, Compos.\ Math.\ \textbf{154} (2018), no.~9, 1815--1842, MR3867285, Zbl 1423.14017.


\bibitem[R1]{Pagoda}
M.~Reid, \emph{Minimal models of canonical 3-folds}, Algebraic varieties and analytic varieties (Tokyo, 1981), 131--180, Adv. Stud. Pure Math., 1, North-Holland, Amsterdam, 1983, MR0715649, Zbl 0558.14028.

\bibitem[R2]{Rudakov}
A.N.~Rudakov, \emph{Exceptional collections, mutations and helices}, Helices and Vector Bundles, 1--6, London Math. Soc. Lecture Note Ser., vol.\ 148, Cambridge Univ. Press, Cambridge, 1990, MR1074777, Zbl 0721.14011.

\bibitem[T1]{TodaFlop}
Y.~Toda, \emph{On a certain generalization of spherical twists},
Bulletin de la Soci\'et\'e Math\'ematique de France \textbf{135}, fascicule 1 (2007), 119--134, MR2430202, Zbl 1155.18010.

\bibitem[T2]{TodaRes}
Y.~Toda, \emph{Stability conditions and crepant small resolutions}, Trans.\ Amer.\ Math.\ Soc., \textbf{360} (2008), 6149--6178, MR2425708, Zbl 1225.14030.

\bibitem[T3]{TodaFib}
Y.~Toda, \emph{Stability conditions and Calabi-Yau fibrations}, J.\ Alg.\ Geom., \textbf{18} (2009), 101--133, MR2448280, Zbl 1157.14025.

\bibitem[T4]{Toda GV} 
Y.~Toda, \emph{Non-commutative width and Gopakumar--Vafa invariants}, Manuscripta Math.\ \textbf{148} (2015), no.~3--4, 521--533, MR3414491, Zbl 1348.14040.

\bibitem[V]{VdB1d}
M.~Van den Bergh, \emph{Three-dimensional flops and noncommutative rings}, 
Duke Math.\ J.\ \textbf{122} (2004), no.~3, 423--455,  MR2057015, Zbl 1074.14013. 

\bibitem[W1]{GL2}
M.~Wemyss, \emph{The $\mathrm{GL}(2,\mathbb{C})$ McKay correspondence}, Math.\ Ann.\ \textbf{350} (2011), no.~3, 631--659, MR2805639, Zbl 1233.14012.

\bibitem[W2]{HomMMP}
M.~Wemyss, \emph{Flops and Clusters in the Homological Minimal Model Program}, Invent.\ Math.\ \textbf{211} (2018), no.~2, 435--521, MR3748312, Zbl 1390.14012.

\bibitem[W3]{WOVERVIEW}
M.~Wemyss, \emph{Autoequivalences for $3$-fold flops: an overview}, Proceedings of Kinosaki Symposium on Algebraic Geometry (2018), 105--112.


\end{thebibliography}
\end{document}